\theoremstyle{plain} 
\newtheorem{Thm}{Theorem}[section] 
\newtheorem{Lem}[Thm]{Lemma}     
\newtheorem{Prop}[Thm]{Proposition}
\newtheorem{Cor}[Thm]{Corollary}
\theoremstyle{definition}
\theoremstyle{remark}
\newtheorem{Rem}[Thm]{Remark}
\numberwithin{equation}{section} 
\newcommand{\beq}{\begin{equation}}
\newcommand{\eeq}{\end{equation}}
\newcommand{\ben}{\begin{eqnarray}}
\newcommand{\een}{\end{eqnarray}}
\newcommand{\beno}{\begin{eqnarray*}}
\newcommand{\eeno}{\end{eqnarray*}}
\newcommand{\lt}{\left}
\newcommand{\rt}{\right}
\newcommand{\px}{\partial_x}
\newcommand{\py}{\partial_y}
\newcommand{\pz}{\partial_z}
\newcommand{\pt}{\partial_t}
\newcommand{\abs}[1]{\lvert#1\rvert}  
\begin{document}
	
\title[\vspace{-0.2cm}Global bounded solutions of the 3D PKS-NS system]{Global bounded solutions \\of the 3D Patlak-Keller-Segel-Navier-Stokes system\\ via Couette flow and logistic source}

\author{Shikun~Cui}
\address[Shikun~Cui]{School of Mathematical Sciences, Dalian University of Technology, Dalian, 116024,  China}
\email{cskmath@163.com}

\author{Lili~Wang}
\address[Lili~Wang]{School of Mathematical Sciences, Dalian University of Technology, Dalian, 116024,  China}
\email{wllmath@163.com}

\author{Wendong~Wang}
\address[Wendong~Wang]{School of Mathematical Sciences, Dalian University of Technology, Dalian, 116024,  China}
\email{wendong@dlut.edu.cn}

\author{Juncheng~Wei}
\address[Juncheng~Wei]{Department of Mathematics, Chinese University of Hong Kong, Shatin, NT, Hong Kong}
\email{wei@math.cuhk.edu.hk}

\author{Guoxu~Yang}
\address[Guoxu~Yang]{School of Mathematical Sciences, Dalian University of Technology, Dalian, 116024,  China}
\email{guoxu\_dlut@outlook.com}

\maketitle
\begin{abstract}
	As is well-known, the solutions to the Patlak-Keller-Segel system in 3D may blow up in finite time regardless of any initial cell mass. In this paper, we investigate the existence of global bounded solutions of the 3D Patlak-Keller-Segel-Navier-Stokes system with a large initial cell mass via Couette flow or logistic source in a finite channel. On the one hand, it is proved that as long as the Couette flow is strong enough  and the initial velocity is small,  
the bounded solutions of the system are global in time without any limitation on the initial mass $M$ if the logistic source term exists. On the other hand, if the logistic source term vanishes, it is  proved that as long as the Couette flow is strong enough and the initial velocity is small,  
the solutions with a finite initial mass $M$, whose lower bound is $ \left(\frac{8\pi}{9}\right)^-$,   are global in time.
\end{abstract}

{\small {\bf Keywords:} Patlak-Keller-Segel-Navier-Stokes; logistic source; stability; Couette flow}

\section{Introduction}    
   Consider the following 3D parabolic-elliptic Patlak-Keller-Segel (PKS) system with logistic source,  coupled with the Navier-Stokes (NS) equations, in a finite channel $\mathbb{T} \times \mathbb{I} \times \mathbb{T}$ with $\mathbb{T}=[0,2 \pi)$ and $\mathbb{I}=[-1,1]$ :
\begin{equation}\label{ini}
\left\{\begin{array}{l}
	\partial_t n  +  v \cdot \nabla n= \Delta n-\nabla \cdot(n \nabla c)-\mu n^{2}, \\
	\Delta c+n-c=0, \\
	\partial_t v+v \cdot \nabla v+\nabla P= \Delta v+n \nabla \phi, \\ \nabla \cdot v=0,
\end{array}\right.
\end{equation}
along with initial conditions
$$(n,v)|_{t=0}=(n_{\rm in},v_{\rm in}).$$
Here, $n$ represents the cell density, $c$ the chemoattractant concentration, $v$ the fluid velocity, $\mu$ a non-negative constant related to the logistic source, $P$ the pressure, and $\phi$ the potential function influencing the fluid dynamics.

When $v=0$, $\phi=0$ and $\mu=0$, the system (\ref{ini}) is reduced to the classical 3D parabolic-elliptic PKS system. The classical PKS system serves as a classical mathematical framework for modeling aggregation behavior driven by both random motion and chemotaxis, which was jointly developed by Patlak \cite{Patlak1}, Keller and Segel \cite{Keller1}. Comprising two core equations, it characterizes the dynamics of individual density under the combined influence of diffusion and chemotaxis, thereby elucidating how chemical signals govern collective aggregation. 
For the one-dimensional PKS system, all its solutions are globally well-posed. 
In spatial dimensions exceeding one, solutions of the classical PKS system may exhibit finite-time blow-up. For the two-dimensional parabolic-parabolic system, a critical mass threshold emerges at $8\pi$: when the initial cell mass $M:=\|n_{\rm in}\|_{L^{1}}$ satisfies $M<8\pi$, the solutions remain global in time \cite{Calvez1}, whereas $M>8\pi$ induces finite-time blow-up \cite{BDDM2023,CGMN2022,Schweyer1}. This criticality extends to the parabolic-elliptic case, where Wei \cite{wei11} established global well-posedness precisely when $M\leq 8\pi$. When the spatial dimension is greater than two, regardless of parabolic-elliptic form or parabolic-parabolic form, the finite-time blow-up may occur for arbitrarily small initial mass (see \cite{Na2000, SW2019} for the parabolic-elliptic form, and see \cite{winkler1} for the parabolic-parabolic form). More results on this topic can be found in \cite{BCM2008,DDDMW,TW2017} and the related ones.

As said in \cite{zeng}:
{\it An interesting question is to consider whether the stabilizing effect of the moving fluid can suppress the finite time blow-up?} 

Recently, for the 3D PKS system with a time-dependent shear flow, He \cite{he24-2} demonstrated that when the total
mass of the cell density is below a specific threshold ($8\pi|\mathbb{T}|$), the solution remains globally regular in $\mathbb{T}^3$ as long as the
flow is sufficiently strong.  For the 3D PKS system coupled with the Navier-Stokes flow:
\begin{equation}\label{ini1}
	\left\{
	\begin{array}{lr}
		\partial_tn+Ay\partial_x n+u\cdot\nabla n-\triangle n=-\nabla\cdot(n\nabla c), \\
		\triangle c+n-\bar{n}=0, \\
		\partial_tu+Ay\partial_x u+\left(
		\begin{array}{c}
			Au_2 \\
			0 \\
			0 \\
		\end{array}
		\right)
		-\triangle u+u\cdot\nabla u+\nabla P=\left(
		\begin{array}{c}
			n \\
			0 \\
			0 \\
		\end{array}
		\right), \\
		\nabla \cdot u=0,
	\end{array}
	\right.
\end{equation}
Cui-Wang-Wang-Wei \cite{CWW20251} proved the following theorem.
\begin{Thm}[Cui-Wang-Wang-Wei, \cite{CWW20251}]\label{result0}
	Assume that $0<n_{\rm in}(x,y,z)\in H^{2}(\mathbb{T}^{3})$ and $u_{\rm in}(x,y,z)\in H^{2}(\mathbb{T}^{3}).$
	There exist a sufficiently small positive constant $\epsilon$ depending on
	$\|n_{\rm in}\|_{ H^{2}(\mathbb{T}^{3})}$ and $\|(u_{\rm in})_{\neq}\|_{H^{2}(\mathbb{T}^{3})},$
	and a positive constant $A_{1}$ depending on $\|n_{\rm in}\|_{ H^{2}(\mathbb{T}^{3})}$ and $\|u_{\rm in}\|_{H^{2}(\mathbb{T}^{3})}$,
	such that if $A\geq A_{1},$
	\begin{equation}\label{conditions:u20 u30}
		\|(u_{2,\rm in})_{0}\|_{H^{2}}+\|(u_{3,\rm in})_{0}\|_{H^{1}} \leq \epsilon
	\end{equation}
	 and \begin{equation}\label{condition: M}
		M=\int_{\mathbb{T}^{3}} n_{\rm in}dxdydz< 16\pi^{2}.
	\end{equation}
	Then the solution of \eqref{ini1} is global in time.
\end{Thm}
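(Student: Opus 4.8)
The plan is to run a bootstrap argument built on the splitting of every unknown into its $x$-average (the zero mode, denoted by a subscript $0$ or a bar) and its $x$-oscillating part (the non-zero modes, subscript $\neq$), exploiting the fact that the Couette flow $Ay\partial_x$ acts only on the non-zero modes. First I would record local well-posedness in $H^2(\mathbb{T}^3)$ and let $T^*$ be the maximal existence time; the goal becomes to show that the controlling norms cannot blow up as $t\to T^*$, forcing $T^*=+\infty$. The decisive structural observation is that, after integrating \eqref{ini1} in $x$ (note $\overline{Ay\partial_x n}=Ay\partial_x\bar n=0$), the zero mode $n_0=\bar n$ obeys a two-dimensional parabolic-elliptic Keller-Segel equation in the $(y,z)$ variables, forced only by quadratic interactions of the non-zero modes. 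Since $\mu=0$ the total mass is conserved, and the hypothesis \eqref{condition: M} is exactly the statement $\int_{\mathbb{T}^2} n_0\,dy\,dz = M/(2\pi) < 8\pi$, i.e. the two-dimensional subcritical-mass condition of \cite{wei11}.

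Second, I would establish the \emph{enhanced-dissipation} estimates for the non-zero modes. For the scalar transport-diffusion operator $\partial_t-\Delta+Ay\partial_x$ the non-zero modes decay at the enhanced rate $\sim A^{1/3}$, and the same mechanism, together with inviscid damping, controls the oscillating velocity $u_{\neq}$ through the linearized Couette-Navier-Stokes semigroup. I would encode these rates in time-weighted energy norms (e.g. norms measuring $e^{cA^{1/3}t}$-weighted $H^2$ sizes of $n_{\neq}$, $c_{\neq}$ and $u_{\neq}$). The payoff is that the oscillating quantities acquire negative powers of $A$, so that once $A\geq A_1$ is large their feedback $\overline{u_{\neq}\cdot\nabla n_{\neq}}$ and $\overline{\nabla\cdot(n_{\neq}\nabla c_{\neq})}$ into the zero-mode equation becomes a small, time-integrable source.

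Third, I would close the zero-mode estimate by the critical-mass machinery of the two-dimensional Keller-Segel system. Using the free-energy functional together with the logarithmic Hardy-Littlewood-Sobolev inequality, the strict inequality $\int n_0<8\pi$ yields an a priori bound on the entropy and hence on $\|n_0\|_{L^p}$ and ultimately $\|n_0\|_{H^2}$, into which the small oscillating sources from the previous step are absorbed. Separately, the zero modes of the velocity need care because they feel no enhanced dissipation: $(u_2)_0$ and $(u_3)_0$ obey a dissipative two-dimensional Navier-Stokes-type system in $(y,z)$ and stay of size $\epsilon$ (decaying by ordinary viscosity), whereas the lift-up term $A(u_2)_0$ forces $(u_1)_0$. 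Controlling the resulting streamwise growth — precisely why the smallness hypothesis \eqref{conditions:u20 u30} is imposed — relies on $(u_2)_0$ being both small and exponentially decaying, so that its time-integrated lift-up stays bounded, and on the fact that $(u_1)_0$ couples back to the density only through the oscillating interaction $\overline{(u_1)_{\neq}\partial_x n_{\neq}}$, the direct term $(u_1)_0\partial_x n_0$ vanishing.

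Finally, I would assemble the pieces into a continuity argument: assuming on $[0,T]$ a set of bootstrap bounds (enhanced decay of the non-zero modes, $H^2$ control of the zero modes, smallness of the zero-mode velocity), I would re-derive strictly better constants by taking $\epsilon$ small and $A_1$ large, so the bounds persist up to $T^*$ and $T^*=+\infty$. The main obstacle I anticipate is the coupling at the borderline mass: the zero mode enjoys no help from the shear, so the entire burden of preventing concentration rests on the critical-mass inequality, whose constants degenerate as $M\uparrow 16\pi^2$; simultaneously one must make the enhanced-dissipation gain in $A$ beat both the chemotactic nonlinearity of the non-zero modes and the lift-up amplification of the zero-mode velocity. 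Reconciling these competing requirements — $A$ large for enhanced dissipation versus the $A$-amplified lift-up acting on an $\epsilon$-small but not $A$-small datum — is the delicate heart of the argument.
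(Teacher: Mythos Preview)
This theorem is not proved in the present paper: it is quoted from \cite{CWW20251} as background for the paper's own main result (Theorem~\ref{Thm:main}), which concerns a different domain $\mathbb{T}\times\mathbb{I}\times\mathbb{T}$ and a logistic source. There is therefore no proof here to compare your proposal against.

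That said, your sketch captures the correct architecture for results of this type --- zero/non-zero splitting, enhanced dissipation on the non-zero modes, bootstrap continuation --- and your identification of \eqref{condition: M} as the two-dimensional subcriticality condition $\int_{\mathbb{T}^2} n_0\,dy\,dz<8\pi$ is the right structural reading. Where your proposal diverges from what one sees in this paper (for its own related $\mu=0$ case, Section~\ref{Sec 4}) is the treatment of the zero-mode density: you invoke the free-energy/logarithmic HLS machinery of \cite{wei11}, whereas the paper instead decomposes $n_0=n_{(0,0)}+n_{(0,\neq)}$ further by the $z$-frequency and closes via direct $L^2$ energy estimates and Gagliardo--Nirenberg inequalities (Lemmas~\ref{lem:est of n00}--\ref{lem:est of n001}). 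The free-energy route you propose is conceptually cleaner and more robust near the critical mass, but it requires adapting the logarithmic HLS inequality to the periodic setting and tracking the entropy through the forcing terms; the direct energy route avoids the free energy entirely at the cost of a more hands-on, and typically non-sharp, mass threshold.
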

The index $16\pi^2$ in \eqref{condition: M} appears to be the sharp threshold for initial cell mass.
	Recall that the zero mode $ n_{0} $ satisfies 
	\begin{equation*}\label{eq:n000}
		\begin{aligned}
			\partial_{t}n_{0}=&\frac{1}{A}\triangle n_{0}-\frac{1}{A}\nabla\cdot(n_{0}\nabla c_{0})-\frac{1}{A}\nabla\cdot(n_{\neq}\nabla c_{\neq})_{0}-\frac{1}{A}(u_{0}\cdot\nabla n_{0})-\frac{1}{A}(u_{\neq}\cdot\nabla n_{\neq})_{0}.
		\end{aligned}
	\end{equation*}
	When the velocity $u$ vanishes and $n(t,x,y,z)=n(t,y,z)$, the critical mass is $8\pi$, which is similar to the 2D Keller-Segel system. With an additional advection term modeling ambient fluid flow, Kiselev-Xu \cite{Kiselev1} proved that for any initial data, there exist incompressible fluid flows such that the solution to the  2D or 3D Keller-Segel system stays globally regular. Bedrossian-He \cite{Bedro2} studied the suppression of blow-up for the parabolic-elliptic PKS system by shear flows in $\mathbb{T}^3$ and $\mathbb{T}\times\mathbb{R}^2$. He \cite{he0} investigated the suppression of blow-up for the parabolic-parabolic PKS model near the large strictly monotone shear flow in $\mathbb{T}\times\mathbb{R}$. 
Feng-Shi-Wang \cite{Feng1}  suppressed the blow-up for the advective Kuramoto-Sivashinsky and
the Keller-Segel equations via the planar helical flows. Shi-Wang \cite{wangweike2} considered the suppression effect of the Couette-Poiseuille flow $(z,z^2,0)$ in $\mathbb{T}^2\times\mathbb{R}$.  For more references, we refer to 
\cite{wangweike1,he24-1,he24-2} and the references therein.

Little progress has been made for the coupled PKS-NS system. Zeng-Zhang-Zi \cite{zeng} initiated the study of the 2D PKS-NS system near the Couette flow in $\mathbb{T}\times\mathbb{R}$, and
proved that the solution stays globally regular if the Couette flow is sufficiently strong. He \cite{he05} investigated the blow-up suppression for the parabolic-elliptic PKS-NS system in $\mathbb{T}\times\mathbb{R}$ with the coupling of buoyancy effects for a class of initial data with small vorticity.
Wang-Wang-Zhang \cite{Wanglili}  proved the blow-up suppression of 2D supercritical parabolic-elliptic PKS-NS system near the Couette flow in $\mathbb{T}\times\mathbb{R}.$
\textcolor[rgb]{0,0,0}{Li-Xiang-Xu \cite{Li0} suppressed the blow-up for the PKS-NS system via the Poiseuille flow  in $\mathbb{T}\times\mathbb{R},$
	and Cui-Wang \cite{cui1} considered the blow-up suppression for the PKS-NS system in $\mathbb{T}\times \mathbb{I}$ with Navier-slip boundary condition.
	In addition, Hu-Kiselev-Yao \cite{Hu0} considered the Patlak-Keller-Segel equation coupled
with a fluid flow that obeys Darcy’s law for incompressible porous media via
buoyancy force. Hu \cite{Hu2023} proved that sufficiently large buoyancy can also suppress the blow-up of the PKS system, see also the recent results by Hu-Kiselev \cite{Hu1}.} 
For the 3D PKS-NS system, fewer results are obtained. Cui-Wang-Wang \cite{CWW1} considered the PKS system coupled with
the linearized NS equations near $(Ay,0,0)$ in  $\mathbb{T}\times\mathbb{I}\times\mathbb{T},$ and showed that
 the solutions are global in time as long as  $M< \frac{8\pi}{9}$ and  $A$ is big enough.
Also Cui-Wang-Wang \cite{CWW2025} studied the blow-up suppression and the nonlinear stability of the PKS-NS system for the Couette flow $(Ay,0,0)$ in $\mathbb{T}\times\mathbb{R}\times\mathbb{T},$ and proved that
the solutions are global in time as long as the initial cell mass $M< \frac{24}{5}\pi^2$. For more progress, we refer to \cite{CWW20251} and the references therein.



When $v=0$ and $\phi=0,$ the system (\ref{ini}) becomes the 3D parabolic-elliptic PKS system with a quadratic logistic source term. The quadratic term $-\mu n^{2}$ introduces ecological carrying capacity, preventing infinite densities and aligning solutions with observed finite-size biological structures (e.g., bacterial colonies, tumor spheroids). At present, significant advances have been achieved in the analysis of the PKS system with a quadratic logistic source. Tello-Winkler \cite{TeM2007} established unique globally bounded solutions for arbitrary initial data when the spatial dimension $d\leq 2$ or $d\geq 3$ and $\mu>\frac{d-2}{d}$. Winkler \cite{Winkler0} proved global boundedness in the parabolic-parabolic case for $d\geq 3$ with sufficiently large $\mu$.  Tao-Winkler proved that the blow-up for the PKS-NS system can be suppressed by logistic source \cite{TW2016} in a 2D smooth bounded domain. 
 Xiang \cite{X2018,X20181} also showed that logistic source can prevent blow-up. 
 Fuest \cite{Fu2021} constructed some counterexample functions, which blow up at finite time when $\mu\in(0,\frac{d-4}{d})$ and the spacial dimension $d\geq 5$. Fuest-Lankeit-Mizukami \cite{FLM2025} studied the location of blow-up points in fully parabolic PKS system with spatially heterogeneous logistic source. For weak solutions, 
Lankeit \cite{L2015} demonstrated the existence of weak solutions in  for all $\mu>0$ in a smooth bounded convex domain $\Omega\subset\mathbb{R}^{d}$ with $d\geq 3$.
Tian-Xiang \cite{TX2025} established a result on global existence of weak solutions for the 3D parabolic-parabolic PKS-NS system with subquadratic logistic degradation in a bounded domain with smooth boundary. For more references, we refer to \cite{WMZ2014, GS2016, SS2017, SS2018_1, SS2018_2, zy2018, winkler2019, ILV2024, WCZ2025}. It is still unknown whether the bounded solutions will blow up in finite time for the three-dimensional space with $\mu\leq \frac13$. Next, we will investigate this issue.

Consider the nonlinear stability of the PKS-NS system (\ref{ini}) near the 3D Couette flow $( Ay,0,0 )$. Introduce a perturbation $u$ around the Couette flow $(Ay, 0, 0)$, which $u(t,x,y,z)=v(t,x,y,z)-( Ay,0,0 )$ satisfies $u |_{t=0}=u_{\rm in}=(u_{1,\rm in}, u_{2,\rm in}, u_{3,\rm in})$. Assuming $\phi=x$, we rewrite system (\ref{ini}) as
\begin{equation}\begin{aligned}
		\label{eq:main_ori}
		\left\{\begin{array}{l}
			\partial_t n+A y \partial_x n+u \cdot \nabla n- \Delta n=-\nabla \cdot(n \nabla c)-\mu n^{2}, \\
			\Delta c+n-c=0, \\
			\partial_t u+A y \partial_x u+Au_2e_1-\Delta u+u \cdot \nabla u+\nabla P^{N_1}+\nabla P^{N_2}+\nabla P^{N_3}=ne_1, \\
			\nabla \cdot u=0,
		\end{array}\right.
\end{aligned}\end{equation}
together with initial and boundary conditions
\begin{equation}\begin{aligned}\label{eq:boundary}
		\left\{\begin{array}{ll}
		(n,u)|_{t=0}=(n_{\rm in},u_{\rm in}),\\
		u(t,x,\pm1,z) = n(t,x,\pm1,z) = c(t,x,\pm1,z)=0.\end{array}\right.
\end{aligned}\end{equation}
Here, $e_1=(1,0,0)^{T}$ and the new pressures $P^{N_1}$, $P^{N_2}$ and $P^{N_3}$ are determined by
\begin{equation}\begin{aligned} \label{eq:pressure}
		\left\{\begin{array}{l}
			\Delta P^{N_1}=-2 A \partial_x u_2, \\
			\Delta P^{N_2}=\partial_x n, \\
			\Delta P^{N_3}=-\operatorname{div}(u \cdot \nabla u).
		\end{array}\right.
\end{aligned}\end{equation}
After time scaling $t \mapsto \frac{t}{A}$, it holds 
\begin{equation}\begin{aligned} \label{eq:main}
		\left\{\begin{array}{l}
			\partial_t n+y \partial_x n+\frac{1}{A} u \cdot \nabla n-\frac{1}{A} \Delta n=-\frac{1}{A} \nabla \cdot(n \nabla c)-\frac{1}{A}\mu n^{2}, \\
			\Delta c+n-c=0, \\
			\partial_t u+y \partial_x u+u_2e_1-\frac{1}{A} \Delta u+\frac{1}{A} u \cdot \nabla u+\frac{1}{A}\left(\nabla P^{N_1}+\nabla P^{N_2}+\nabla P^{N_3}\right)=\frac{n}{A}e_1, \\
			\nabla \cdot u=0.
		\end{array}\right.
\end{aligned}\end{equation}

Our main result is stated as follows.

\begin{Thm}\label{Thm:main}
	Assume that the non-negative initial cell density $ n_{\rm in} \in L^{\infty}\cap H^1(\mathbb{T} \times \mathbb{I} \times \mathbb{T})$ and the initial velocity $ u_{\rm in} \in H^2(\mathbb{T} \times \mathbb{I} \times \mathbb{T})$.
	There exists a positive constant $\mathcal{B}$ depending on $\|n_{\rm in}\|_{L^{\infty}\cap H^1(\mathbb{T} \times \mathbb{I} \times \mathbb{T})}$ and $\|u_{\rm in}\|_{H^2(\mathbb{T} \times \mathbb{I} \times \mathbb{T})}$, such that if $A \geq \mathcal{B}$ and
	\begin{equation}\begin{aligned}\label{eq:smallness of u_in}
			A^{\epsilon_{1}}\|u_{\rm in}\|_{H^2(\mathbb{T} \times \mathbb{I} \times \mathbb{T})} \leq C_0,
	\end{aligned}\end{equation}
	where $C_0$ is a positive constant and $\epsilon_{1}>\frac23$, then the solution of \eqref{eq:main} with the initial  boundary data \eqref{eq:boundary} is global in time, with the following two cases:
	
	\noindent
	\begin{itemize}
		\item Case I (With Logistic Source):  
	If \(\mu > 0\), no additional conditions are required.
	
	\noindent
	\item Case II (Without Logistic Source):  
	If \(\mu = 0\), the initial cell mass \(M = \|n_{\rm in}\|_{L^1}\) additionally satisfies \(M < \frac{8\pi}{9}\).
\end{itemize}
\end{Thm}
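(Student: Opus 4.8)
The plan is to run a bootstrap (continuity) argument on the rescaled system \eqref{eq:main}, exploiting the fact that a strong Couette flow forces the nonzero $x$-modes to decay through enhanced dissipation, so that the long-time dynamics is governed by the zero mode $n_{0}(t,y,z)$, which obeys an effectively two-dimensional Keller-Segel equation on $\mathbb{I}\times\mathbb{T}$. First I would record local well-posedness in the class $L^{\infty}\cap H^{1}\times H^{2}$ together with a continuation criterion phrased in terms of $\|n(t)\|_{L^{\infty}}+\|u(t)\|_{H^{2}}$, so that proving global existence is reduced to an a priori bound on these quantities. Throughout I write $f=f_{0}+f_{\neq}$ with $f_{0}=\frac{1}{2\pi}\int_{\mathbb{T}}f\,dx$, and I record at the outset the two elementary facts that drive the scheme: the mass law $\frac{d}{dt}\int n=-\frac{\mu}{A}\int n^{2}\le 0$, so that $M$ is non-increasing (and strictly dissipated when $\mu>0$), and the free-energy/entropy structure inherited from the Keller-Segel part.

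For the nonzero modes I would set up weighted energy estimates for the linear operator $\partial_{t}+y\partial_{x}-\frac{1}{A}\Delta$ restricted to $\{f_{\neq}\}$, which enjoys enhanced dissipation at rate $A^{-1/3}$ in the rescaled time (equivalently $A^{2/3}$ before rescaling). The bootstrap hypotheses would posit exponential-in-time decay of the form $\|n_{\neq}(t)\|_{X}+\|u_{\neq}(t)\|_{Y}\lesssim e^{-\delta A^{-1/3}t}(\cdots)$ in suitable spaces $X,Y$ built from $L^{2}$, $H^{1}$ and the time weights standard in enhanced-dissipation and inviscid-damping analysis, with the nonlinear terms $\frac{1}{A}u\cdot\nabla n$, $\frac{1}{A}\nabla\cdot(n\nabla c)$, $\frac{1}{A}u\cdot\nabla u$ and the structural terms $u_{2}e_{1}$, $\frac{n}{A}e_{1}$, $\nabla P^{N_{j}}$ treated perturbatively. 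The velocity analysis is the Navier-Stokes counterpart of the scalar estimate: inviscid damping and enhanced dissipation for $u_{\neq}$, together with a separate treatment of the slowly evolving $u_{0}$, whose smallness must be propagated using hypothesis \eqref{eq:smallness of u_in}; the exponent $\epsilon_{1}>\frac{2}{3}$ is dictated by the requirement that the velocity-driven contributions (notably through the transport term $\frac{1}{A}u\cdot\nabla n$ and the lift $u_{2}e_{1}$) remain subordinate to the enhanced-dissipation gain over the relevant timescale.

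The heart of the argument is the zero mode. After multiplying its equation by $A$, the coupling terms $(n_{\neq}\nabla c_{\neq})_{0}$ and $(u\cdot\nabla n)_{0}$ carry the decay gained above and can be absorbed as errors, so that $n_{0}$ is slaved to the two-dimensional equation $\partial_{t}n_{0}=\frac{1}{A}\bigl[\Delta n_{0}-\nabla\cdot(n_{0}\nabla c_{0})-\mu n_{0}^{2}\bigr]+(\text{error})$ on $\mathbb{I}\times\mathbb{T}$. In Case I ($\mu>0$) I would exploit the quadratic absorption: testing against powers of $n_{0}$ and iterating in the manner of Moser and Tello-Winkler in two dimensions, the damping $-\mu n_{0}^{2}$ controls the chemotactic aggregation $\nabla\cdot(n_{0}\nabla c_{0})$ for every $\mu>0$, yielding uniform $L^{p}$ and then $L^{\infty}$ bounds on $n_{0}$ with no restriction on $M$; this is precisely the two-dimensional Tello-Winkler phenomenon to which the enhanced dissipation has reduced the 3D problem. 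In Case II ($\mu=0$) I would instead use the free-energy functional $\mathcal{F}[n_{0}]=\int n_{0}\log n_{0}-\frac{1}{2}\int n_{0}c_{0}$ for the planar Keller-Segel problem; the logarithmic Hardy-Littlewood-Sobolev and Moser-Trudinger inequalities adapted to this channel, together with the conserved (non-increasing) mass, yield a coercive a priori bound on the entropy provided $M<\frac{8\pi}{9}$, the threshold that the present method affords (below the sharp planar value $8\pi$); bootstrapping the entropy bound to $L^{p}$ and then $L^{\infty}$ closes this case.

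Finally I would assemble the pieces into a single closed system of bootstrap inequalities: the decay of the nonzero modes feeds the zero-mode error terms, the zero-mode bounds feed back into the forcing $\frac{n}{A}e_{1}$ of the velocity and into the chemotactic nonlinearity, and choosing $A\ge\mathcal{B}$ large and $\|u_{\rm in}\|_{H^{2}}\le C_{0}A^{-\epsilon_{1}}$ small makes each constant in the bootstrap strictly smaller than the one assumed, so the continuity argument propagates the bounds to $t=\infty$. I expect the main obstacle to be closing the zero-mode estimate uniformly in $A$ \emph{simultaneously} with the enhanced-dissipation estimates: the errors injected into the $n_{0}$ equation are only small on the $A^{1/3}$ timescale set by the dissipation rate, whereas the (near-critical, in Case II) planar Keller-Segel flow is genuinely nonlinear and must be controlled on that same long timescale without sacrificing the constant $\frac{8\pi}{9}$. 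Balancing these competing timescales, and in particular verifying that the velocity contributions are genuinely subordinate, which is where the threshold $\epsilon_{1}>\frac{2}{3}$ enters, is the delicate technical core.
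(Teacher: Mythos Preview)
Your overall bootstrap architecture---enhanced dissipation for the nonzero $x$-modes reducing the problem to an effectively two-dimensional zero-mode equation, closed by a continuity argument with $A$ large---matches the paper. However, your treatment of $n_0$ diverges from the paper in both cases, and in Case~II the discrepancy is a genuine gap.

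For Case~II ($\mu=0$) you invoke the free-energy functional and log-HLS/Moser--Trudinger. The paper uses neither. Instead it further splits $n_0 = n_{(0,0)}(t,y) + n_{(0,\neq)}(t,y,z)$. The one-dimensional piece $n_{(0,0)}$ is subcritical and handled by a Nash-inequality ODE comparison. The critical piece $n_{(0,\neq)}$ is controlled by a direct $L^2$ energy estimate in which the aggregation term contributes $\frac{C_*^3}{\pi A}M\|\nabla n_{(0,\neq)}\|_{L^2}^2$, where $C_*$ is the sharp constant in $\|f\|_{L^3(\mathbb{I}\times\mathbb{T})}\le C_*\|f\|_{L^1}^{1/3}\|\nabla f\|_{L^2}^{2/3}$ for functions vanishing at $y=\pm1$ with zero $z$-mean. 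With $C_*^3=\tfrac{9}{4}$ and $\|n_{(0,\neq)}\|_{L^1}\le M/\pi$, this is absorbed exactly when $M<\tfrac{2\pi}{C_*^3}=\tfrac{8\pi}{9}$. Your free-energy route would not produce this particular constant, and you offer no derivation of why it should; the number $\tfrac{8\pi}{9}$ is an artifact of this Sobolev constant on this domain, not of log-HLS.

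For Case~I ($\mu>0$) your appeal to the two-dimensional Tello--Winkler mechanism is plausible, but the paper's argument is different and worth noting: integrating the zero-mode mass law gives $\frac{1}{A}\|n_0\|_{L^2_tL^2}^2\le \frac{1}{\mu}\|(n_{\rm in})_0\|_{L^1}$, and this time-integrated bound is fed into a Gr\"onwall estimate for $\|n_0(t)\|_{L^2}$ (when $0<\mu<\tfrac12$; for $\mu\ge\tfrac12$ a direct energy inequality already closes). Finally, you do not address how the 3D lift-up in $u_{1,0}$ is tamed. The paper splits $u_{1,0}=\widehat{u_{1,0}}+\widetilde{u_{1,0}}$, isolating the lift-up term $-u_{2,0}$ into $\widehat{u_{1,0}}$ and using the smallness $A^{\epsilon_1}\|u_{\rm in}\|_{H^2}\le C_0$ with $\epsilon_1>\tfrac23$ to keep $A^{-1}\|\widehat{u_{1,0}}\|_{L^\infty H^2}$ bounded; this decomposition is precisely where the threshold $\epsilon_1>\tfrac23$ enters, and without it the zero-mode velocity estimates do not close.
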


\begin{Rem} Even if the velocity $u$ of the system  \eqref{eq:main} vanishes, it is still unknown whether the bounded solutions of this system exist globally according to the counterexample of Fuest in \cite{Fu2021}. The above theorem shows that the bounded solutions of this system will  exist globally for large mass initial  data around the Couette flow. Moreover, the result establishes the global existence of solutions to the 3D PKS-NS system for the first time without any limitation for $\mu$.
	Besides, in the absence of the logistic source term $(\mu=0)$, our result provides a nonlinear extension of Theorem 1.2 in \cite{CWW1}. At this point, we need to overcome the great challenges from nonlinear interactions and the 3D lift-up effect.
\end{Rem}
\begin{Rem} The boundary data \eqref{eq:boundary}  indicates that the cell density (number of cells per unit area/volume) is fixed at the boundary and 
 the concentration of the chemical signal (the chemotactic substance secreted by cells) is fixed at the boundary. It corresponds to the actual boundary states in experiments or physiology. For example, when studying cell chemotaxis in a Petri dish, if no additional cells are supplied at the dish’s edge,  Dirichlet boundary condition for ``cell density" can be set to $0$ (indicating no cells at the boundary); alternatively, if the chemical signal concentration at the boundary is controlled to be constant via laboratory techniques,  Dirichlet boundary condition is used to fix this concentration.
\end{Rem}
\begin{Rem}
	In fact, the upper bound $\frac89\pi$ of $M$ in Theorem \ref{Thm:main} can be replaced by $\frac{2\pi}{C_{*}^3}$ (see around \eqref{eq:temp0.2} for more details), where $C_{*}$ is the sharp Sobolev constant of
		$$
		\left\{\begin{array}{l}
				\|f(y,z)\|_{L^3(\mathbb{I} \times \mathbb{T})} \leq C_{*}\|f(y,z)\|_{L^1(\mathbb{I} \times \mathbb{T})}^{\frac{1}{3}}\|\nabla f(y,z)\|_{L^2(\mathbb{I} \times \mathbb{T})}^{\frac{2}{3}}, \\
				\left.f(y, z)\right|_{y= \pm 1}=0, \quad \int_{\mathbb{T}} f(y, \cdot) d z=0.
			\end{array}\right.
		$$
		By Lemma A.3 in \cite{CWW1}, we can set $C_{*}^{3}=\frac94$, which provides an exact upper bound for the assumption on the initial mass $M$ in Theorem \ref{Thm:main}.
\end{Rem}
\begin{Rem}
  In Theorem \ref{Thm:main}, the smallness condition on $\|u_{\rm in}\|_{H^2(\mathbb{T} \times \mathbb{I} \times \mathbb{T})}$ arises from the well-known 3D lift-up effect. 
  To overcome the 3D lift-up effect,  we need to take more factors into consideration, and a potential approach would be to employ the method of freezing coefficient proposed by \cite{CWZ1}.
\end{Rem}

\begin{Rem}\label{local}
The local well-posedness of the system (\ref{eq:main}) follows from standard arguments, as detailed in \cite{CWW2025, Hu1, winkler1}, and is therefore omitted.
\end{Rem}

Here are some notations used in this paper.

\noindent\textbf{Notations}:
\begin{itemize}
	\item 
	For a given function $f(t,x,y,z)$, its Fourier transform can be defined by
	\begin{equation}
		f(t,x,y,z)=\sum_{k_{1}, k_{3}\in\mathbb{Z}}f^{k_{1},k_{3}}(t,y){\rm e}^{i(k_{1}x+k_{3}z)}, \nonumber
	\end{equation}
	where 
	$f^{k_{1},k_{3}}(t,y)=\frac{1}{|\mathbb{T}|^{2}}\int_{\mathbb{T}\times\mathbb{T}}f(t,x,y,z){\rm e}^{-i(k_{1}x+k_{3}z)}dxdz$. In addition, we denote $\eta :=\left(k_{1}^{2}+k_{3}^{2} \right)^{\frac12}$.
	
	\item 
	For a given function $f=f(t,x,y,z)$, define the following modes:
	$$P_{0}f=f_{0}=\frac{1}{|\mathbb{T}|}\int_{\mathbb{T}}f(t,x,y,z)dx,\quad P_{\neq}f=f_{\neq}=f-f_{0},$$
	$$P_{(0,0)}f=f_{(0,0)}=\frac{1}{|\mathbb{T}|^{2}}\int_{\mathbb{T}\times\mathbb{T}}f(t,x,y,z)dxdz,\quad P_{(0,\neq)}f=f_{(0,\neq)}=f_{0}-f_{(0,0)}.  $$ 
	Throughout this paper, $f_0$ and $f_{\neq}$ denote the zero mode and non-zero modes of $f$, respectively. Moreover, $f_{(0,0)}$ and $f_{(0,\neq)}$ represent the $z$-part zero mode and non-zero mode of $f_0$, respectively. Specifically, $u_{j,0}$ and $u_{j,\neq}$ refer to the zero and non-zero modes of the velocity component $u_j$ ($j \in \{1, 2, 3\}$), while $\omega_{2,0}$ and $\omega_{2,\neq}$ denote the zero and non-zero modes of the vorticity $\omega_2$. Similarly, $u_{j,(0,0)}$ and $u_{j,(0,\neq)}$ represent the $z$-part zero and non-zero modes of $u_{j,0}$, respectively.
			
	\item The  time-space norm $\|f\|_{L^{q}L^{p}}$ is defined by	
	$\|f\|_{L^{p}(\mathbb{T}\times\mathbb{I}\times\mathbb{T})}=\left(\int_{\mathbb{T}\times\mathbb{I}\times\mathbb{T}}|f|^p dxdydz\right)^{\frac{1}{p}}  $
	and 
	$\|f\|_{L^qL^p}=\left\|\|f\|_{L^p(\mathbb{T}\times\mathbb{I}\times\mathbb{T})}\ \right\|_{L^q(0,t)}.$
	Moreover, $\langle\cdot,\cdot\rangle$ denotes the standard $L^2$ scalar product. For simplicity, we write $\|f\|_{L^p(\mathbb{T}\times\mathbb{I}\times\mathbb{T})}$ as $\|f\|_{L^p}.$
	\item We define a norm related to the thermal diffusion
$\|f\|_{Y_{0}}^2:=\|f\|^2_{L^{\infty}L^{2}}
			+\frac{1}{A}\|\nabla f\|^2_{L^{2}L^{2}}.$
	\item  We sometimes denote the partial derivatives $\partial_x,$ $\partial_y$ and $\partial_z$ by
	$\partial_1,$ $\partial_2$ and $\partial_3,$  respectively.
			\item The total mass $ \|n_{\rm in}\|_{L^{1}}$ is denoted by $ M .$  Clearly,
			\begin{equation*}
					\begin{aligned}
							\|n(t)\|_{L^{1}}\leq \|n_{\rm in}\|_{L^{1}}=:M.
						\end{aligned}
				\end{equation*}
		\item Throughout this paper, we denote $C$ by  a positive constant independent of $A$, $t$ and the initial data, and it may be different from line to line.
\end{itemize}

The paper is organized as follows. Section \ref{Sec 2} presents the main ideas and steps for proving the Theorem \ref{Thm:main}.  Section \ref{Sec 4} is dedicated to deriving estimates for the energy $E_1(t)$ and proving Proposition~\ref{prop:F1}. In Section \ref{Sec 5}, we establish the estimates for the non-zero mode energy $E_2(t)$ and prove Proposition \ref{prop:F2}. Section \ref{Sec 6} completes the proof of Proposition \ref{prop:F3}. Some technical lemmas and auxiliary results are presented in the appendix.

\section{Key ideas and proof of the main results}\label{Sec 2}

\subsection{Process the cell density}\

First, let us consider the following simplified PKS system:
\begin{equation}\label{ini01}
	\left\{
	\begin{array}{lr}
		\partial_tn+Ay\partial_xn= \Delta n-\nabla\cdot(n\nabla c)-\mu n^{2}, \\
		\Delta c+n-c=0.
	\end{array}
	\right.
\end{equation}
When $A=0$ and $\mu>0,$ \eqref{ini01} is the so-called PKS system with logistic source;
when $A=0$ and $\mu=0,$ the logistic source disappears, then \eqref{ini01} is the classical PKS system. After time scaling, we get
\begin{equation}\label{ini02}
			\partial_tn+y\partial_xn=\frac{1}{A}\Delta n-\frac{1}{A}\nabla\cdot(n\nabla c)-\frac{1}{A}\mu n^{2}.
\end{equation}

\noindent \textbf{Case 1: $\mu>0$.} For \eqref{ini02}, the energy estimates show that 
\begin{equation*}
	\begin{aligned}
		\frac{d}{dt}\|n\|_{L^p}^p+\frac{4(p-1)}{Ap}\|\nabla (n^{\frac{p}{2}})\|_{L^2}^2
		=-\frac{p<\nabla\cdot(n\nabla c),n^{p-1}>}{A}-\frac{\mu p\|n\|_{L^{p+1}}^{p+1}}{A}.
	\end{aligned}
\end{equation*}
Using elliptic condition $\Delta c+n-c=0,$ we infer from the above equality that
\begin{equation*}
	\begin{aligned}
		\frac{d}{dt}\|n\|_{L^p}^p+\frac{4(p-1)}{Ap}\|\nabla (n^{\frac{p}{2}})\|_{L^2}^2
		=p\left(\frac{p-1}{p}-\mu\right)\frac{\|n\|_{L^{p+1}}^{p+1}}{A}+``{\rm good~terms}".
	\end{aligned}
\end{equation*}

When $p\geq\frac{3}{2}$ and $\mu>\frac{1}{3},$ by the Moser-Alikakos iteration, the solutions to \eqref{ini01} are global in time 
\begin{equation*}
	\begin{aligned}
		\|n\|_{L^{\infty}L^{\infty}}\leq C( \|n_{\rm in}\|_{L^1}, \|n_{\rm in}\|_{L^{\infty}}).
	\end{aligned}
\end{equation*} 

When $0 < \mu \leq \frac{1}{3}$, it remains unknown in three-dimension to determine whether solutions to \eqref{ini01} exist globally in time or blow up. This issue requires further investigation. 
According to the frequency $k_1$, we decompose the cell density $n$  as follows:
\begin{equation*}
	n(t,x,y,z) = n_{\neq}(t,x,y,z) + n_0(t,y,z),
\end{equation*}
where $n_{\neq}(t,x,y,z)$ denotes the $x$-dependent (non-zero mode) part and $n_0(t,y,z)$ denotes the $x$-independent (zero mode) part of $n$. Under the influence of Couette flow, when the flow strength is sufficiently large (i.e., $A$ is large), the non-zero mode $n_{\neq}$ experiences enhanced dispersion. In particular, it satisfies the decay estimate
\begin{equation*}
	\|n_{\neq}(t)\|_{L^2} \leq C {\rm e}^{-a A^{-\frac{1}{3}} t} \left( \|(n_{\rm in})_{\neq}\|_{L^2} + 1 \right),
\end{equation*}
for some constants $C > 0$ and $a > 0$. In this way, our focus shifts to the zero mode $n_0$.  
Using equation~\eqref{ini01} and accounting for the nonlinear interactions between different modes, we obtain the following evolution equation for $n_0$:
\begin{equation*}
	\partial_t n_0 - \frac{1}{A} \Delta n_0 = -\frac{1}{A} \nabla \cdot (n_0 \nabla c_0) - \frac{\mu}{A} n_0^2 + \text{``good terms''}, \quad \text{for}~0 < \mu < \frac{1}{2}.
\end{equation*}
Then, the $L^2$ energy estimate shows that 
\begin{equation}\label{n01}
	\begin{aligned}
	\frac{d}{dt}\|n_0\|_{L^2}^2+\frac{1}{A}\|\nabla n_0\|_{L^2}^2
		\leq\frac{1}{A}\|n_{0}\|_{L^{4}}^{2}\|\Delta c_{0}\|_{L^{2}}+``{\rm good~terms}".
	\end{aligned}
\end{equation}
By the 2D Gagliardo–Nirenberg inequality
\begin{equation*}
	\begin{aligned}
		\|n_0\|_{L^{4}(\mathbb{I}\times\mathbb{T})}^{2}
		\leq C\|n_0\|_{L^{2}(\mathbb{I}\times\mathbb{T})}\|\nabla n_0\|_{L^{2}(\mathbb{I}\times\mathbb{T})},
	\end{aligned}
\end{equation*}
it follows from \eqref{n01} that
\begin{equation*}\label{n02}
	\begin{aligned}
		\frac{d}{dt}\|n_{0}\|_{L^{2}}^{2}\leq\frac{C}{A}\|n_{0}\|_{L^{2}}^{2}\|\Delta c_{0}\|_{L^{2}}^{2}+``{\rm good~terms}".
	\end{aligned}
\end{equation*}
{\bf An important observation} is that the $L^{2}$- norm of cell density can be controlled by the $L^{1}$-norm of the initial mass (see Lemma \ref{lem:n n^2} for more details), that is
\begin{equation*}
	\begin{aligned}
		\frac{1}{A}\|n_{0}\|_{L^{2}L^{2}}^{2}\leq\frac{1}{\mu}\|(n_{\rm in})_{0}\|_{L^{1}}.
	\end{aligned}
\end{equation*}
Therefore, inspired by Tao-Winkler's work \cite{TW2016}, using the boundedness of the $L^{2}$-norm of the density function, elliptic estimates and the Gr\"{o}nwall inequality, we prove that (see more details in the proof of Lemma \ref{lem:n0 L2}) 
\begin{equation*}
	\|n_{0}(t)\|_{L^{2}}\leq C\left(\|(n_{\rm in})_{0}\|_{L^{2}}, \|(n_{\rm in})_{0}\|_{L^{1}} \right).
\end{equation*}

Finally, by the Moser-Alikakos iteration, it can be shown that the cell density stays globally regular with
\begin{equation*}
	\begin{aligned}
		\|n\|_{L^{\infty}L^{\infty}}\leq C(\|n_{\rm in}\|_{L^2}, \|n_{\rm in}\|_{L^{1}}, \|n_{\rm in}\|_{L^{\infty}}).
	\end{aligned}
\end{equation*} 

\noindent \textbf{Case 2: $\mu=0$.}
Similarly, the non-zero mode suffers from the enhanced dispersion:
\begin{equation*}
	\begin{aligned}
		\|n_{\neq}(t)\|_{L^2}\leq C{\rm e}^{-aA^{-\frac{1}{3}}t}(\|(n_{\rm in})_{\neq}\|_{L^2}+1).
	\end{aligned}
\end{equation*}
In order to handle the zero mode more precisely, we decompose $n_{0}$ into two parts:
\begin{equation*}
	n_0(t,y,z)=n_{(0,\neq)}(t,y,z)+n_{(0,0)}(t,y),
\end{equation*}
where $n_{(0,\neq)}(t,y,z)$ is the $z$-dependent part and $n_{(0,0)}(t,y)$ is the $z$-independent part.
It is easy to find that $n_{(0,0)}(t,y)$ satisfies 
\begin{equation}\begin{aligned}\label{nzero1}
	\partial_t n_{(0,0)}-\frac{1}{A} \partial_{y y} n_{(0,0)}= & -\frac{\partial_y(n_{(0,0)} \partial_y c_{(0,0)})+
		\partial_y(n_{(0, \neq)} \partial_y c_{(0, \neq)})_{(0,0)}}{A}+``{\rm good~terms}"
\end{aligned}\end{equation}
and that $n_{(0,\neq)}(t,y)$ satisfies 
\begin{equation*}
	\begin{aligned}
		\partial_t n_{(0, \neq)}-\frac{1}{A} \Delta n_{(0, \neq)}
		= -\frac{\py ( n_{(0, \neq)  } \py c_{\left(0, 0\right)  } ) +  \nabla \cdot ( n_{(0, \neq)  } \nabla c_{\left(0, \neq\right)  } )_{(0,\neq)}}{A}+``{\rm good~terms}".
	\end{aligned}
\end{equation*}
Since $n_{(0,0)}(t,y)$ depends only on the spatial variable $y$ and time $t$, we can prove, for \eqref{nzero1}, that when $A$ is sufficiently large, 
\begin{equation*}\begin{aligned}	 
		\|n_{(0,0)}\|_{L^{\infty}L^2}^2 \leq C \big( \| (n_{\rm in})_{(0, 0)}\|_{L^2}^2 + M^\frac32 \|n_{(0,\neq)}\|_{L^\infty L^2}^\frac{7}{6}+M^4 +1\big).
\end{aligned}\end{equation*}
Since $n_{(0,\neq)}(t,y,z)$ depends on the spatial variables $(y,z)$ and time $t$, it is critical in the $L^1$-norm.
By energy estimates and the Sobolev embedding, we obtain that 
\begin{equation*} 
	\begin{aligned}
		\frac{d}{d t}\|n_{(0, \neq)}\|_{L^2}^2+\frac{2}{A}\|\nabla n_{(0, \neq)}\|_{L^2}^2 
		\leq  \frac{C_{*}^3 M}{\pi A}\|\nabla n_{(0, \neq)}\|_{L^2}^2+``{\rm good~terms}",
	\end{aligned}
\end{equation*}
where $C_{*}$ is the optimal constant of 
\begin{equation*}\begin{aligned}
		\|n_{(0, \neq)}\|_{L^3(\mathbb{I}\times\mathbb{T})} \leq C_{*}\|n_{(0, \neq)}\|_{L^1(\mathbb{I}\times\mathbb{T})}^{\frac{1}{3}}\|\nabla n_{(0, \neq)}\|_{L^2(\mathbb{I}\times\mathbb{T})}^{\frac{2}{3}}.
\end{aligned}\end{equation*}
As long as $M<\frac{2\pi}{C_{*}^3},$ the related energy can be closed successfully, and it holds that
\begin{equation*}\begin{aligned}
		\|n_{\left(0, \neq\right)}\|_{L^\infty L^2}^2 \leq C \left( \|(n_{\rm in})_{(0, \neq)}\|_{L^2}^2 + \|(n_{\rm in})_{(0, 0)}\|_{L^2}^4 + M^{10}+ 1\right).
\end{aligned}\end{equation*}
Thus, we conclude that
\begin{equation*}
	\|n_{0}\|_{L^{\infty}L^2}^2\leq C(\|n_{(0,0)}\|_{L^{\infty}L^2}^2+\|n_{(0,\neq)}\|_{L^{\infty}L^2}^2 )
	\leq  C\left(\|(n_{\rm in})_{0}\|_{L^{2}}, \|(n_{\rm in})_{0}\|_{L^{1}}\right) 
\end{equation*}
and 
\begin{equation*}
	\begin{aligned}
		\|n\|_{L^{\infty}L^{\infty}}\leq C(\|n_{\rm in}\|_{L^2}, \|n_{\rm in}\|_{L^{1}}, \|n_{\rm in}\|_{L^{\infty}}).
	\end{aligned}
\end{equation*} 


\subsection{Velocity deformation}\

Recall that the zero mode $u_0$ satisfies
\begin{equation}\label{eq:u_k0}
	\begin{aligned} 
	\left\{
		\begin{array}{l}
		\partial_t u_{1,0}-\frac{1}{A} \Delta u_{1,0}+u_{2,0}+\frac{1}{A}\left(u_{2,0} \partial_y +u_{3,0} \partial_z \right)u_{1,0}
		+\frac{1}{A}\left(u_{\neq} \cdot \nabla u_{1, \neq}\right)_0 = \frac1A n_0,\\
		\partial_t u_{2, 0}-\frac1A \Delta u_{2, 0}  +  \frac1A\partial_y P_0  +  \frac1A\left(u_{2, 0} \partial_y+u_{3, 0} \partial_z\right) u_{2, 0}  +   \frac1A\lt(u_{\neq} \cdot \nabla u_{2, \neq}\rt)_{0}=0, \\
		\partial_t u_{3, 0}   - \frac1A \Delta u_{3, 0}  +  \frac1A\partial_z P_0  +  \frac1A\left(u_{2, 0} \partial_y+u_{3, 0} \partial_z\right) u_{3, 0}  +   \frac1A\lt(u_{\neq} \cdot \nabla u_{3, \neq}\rt)_{0}=0.
		\end{array}
	\right.
	\end{aligned}
\end{equation}
Among all  zero mode components, $u_{1,0}$ is affected by the 3D lift-up effect, making it the most challenging to control. Inspired by Chen-Wei-Zhang's work \cite{CWZ1}, we decompose $u_{1,0}$ into $u_{1,0}=\widehat{u_{1,0}}+\widetilde{u_{1,0}}$, satisfying
\begin{equation}\label{eq: decompose of u10}
	\begin{aligned} 
		\left\{\begin{array}{l}
			\partial_t \widehat{u_{1,0}}-\frac{1}{A} \Delta \widehat{u_{1,0}}=-\frac{1}{A}\left(u_{2,0} \partial_y \widehat{u_{1,0}}+u_{3,0} \partial_z \widehat{u_{1,0}}\right)-u_{2,0}, \\
			\partial_t \widetilde{u_{1,0}}-\frac{1}{A} \Delta \widetilde{u_{1,0}}=-\frac{1}{A}\left(u_{2,0} \partial_y \widetilde{u_{1,0}}+u_{3,0} \partial_z \widetilde{u_{1,0}}\right)-\frac{1}{A}\left(u_{\neq} \cdot \nabla u_{1, \neq}\right)_0  +  \frac1A n_0,  \\
			\left.\widehat{u_{1,0}}\right|_{t=0}=0,\left.\quad \widetilde{u_{1,0}}\right|_{t=0}=\left(u_{1, \rm in }\right)_0.
		\end{array}\right.
	\end{aligned}
\end{equation}
As a result, the velocity component $u_{1,0}$ is decomposed into a good part $\widetilde{u_{1,0}}$ and a bad part $\widehat{u_{1,0}},$
since $\widetilde{u_{1,0}}$ is not affected by the 3D lift-up effect. In addition, we have {\bf the following important observations}.
\begin{itemize}
	\item The good part $\widetilde{u_{1,0}}$ is not affected by the 3D lift-up effect. Its energy $\|\widetilde{u_{1,0}}\|_{L^\infty H^1}$ is sufficient to close the estimates. Furthermore, $\|\widetilde{u_{1,0}}\|_{L^\infty H^1}$ is well-related to $n_0$ through the inequality (see Lemma \ref{lem:est of E12} for more details)
	$$
	\|\widetilde{u_{1,0}}\|_{H^1} \leq C \left( \|(u_{1,\rm in})_0\|_{L^\infty H^1} + \|n_0\|_{L^\infty L^2} + 1 \right).
	$$

	\item Although the bad part $\widehat{u_{1,0}}$ is affected by the 3D lift-up effect, under the condition
	$	A^{\frac23+}\|u_{\rm in}\|_{H^2(\mathbb{T} \times \mathbb{I} \times \mathbb{T})} \leq C_0,$ 
	its energy $\|\widehat{u_{1,0}}\|_{L^{\infty}H^2}$ suffices to complete all the required computations.
\end{itemize}

In contrast to $u_{1,0}$, the well-behaved components $u_{2,0}$ and $u_{3,0}$ do not require any additional deformation. Due to incompressibility and the Sobolev embedding,  $\|u_{2,0}\|_{H^2}$ and  $\|u_{3,0}\|_{H^1}$ suffice to control
the $L^{\infty}$-norm of $u_{2,0}$ and $u_{3,0}$ as follows
$$\|u_{2,0}\|_{L^{\infty}}+\|u_{3,0}\|_{L^{\infty}}\leq C(\|u_{2,0}\|_{H^2}+\|u_{3,0}\|_{H^1}).$$

To facilitate the estimates of nonzero modes, we use the new vorticity $\omega_2=\partial_z u_1-\partial_x u_3$ and the new velocity $\Delta u_2,$ which satisfy
\begin{equation}\begin{aligned} \label{eq:main1}
		\left\{\begin{array}{l}
			\partial_t \omega_2+y \partial_x \omega_2-\frac{1}{A} \Delta \omega_2+\partial_z u_2   = \frac1A \partial_z n -\frac{1}{A} \partial_z\left(u \cdot \nabla u_1\right)+\frac{1}{A} \partial_x\left(u \cdot \nabla u_3\right), \\
			\partial_t \Delta u_2+y \partial_x \Delta u_2-\frac{1}{A} \Delta\left(\Delta u_2\right)= -\frac{1}{A} \partial_{x}\partial_{y} n -\frac{1}{A}\left(\partial_x^2+\partial_z^2\right)\left(u \cdot \nabla u_2\right) \\
			\qquad\qquad\qquad\qquad\qquad\qquad\qquad+\frac{1}{A} \partial_y\left[\partial_x\left(u \cdot \nabla u_1\right)+\partial_z\left(u \cdot \nabla u_3\right)\right], \\
			\nabla \cdot u=0 ,
		\end{array}\right.
\end{aligned}\end{equation}
with boundary conditions
\begin{equation*}\begin{aligned} \label{eq:boundary value}
		\left\{\begin{array}{l}
			\omega_2(t, x, \pm 1, z)=0, \\
			\partial_y u_2(t, x, \pm 1, z)=u_2(t, x, \pm 1, z)=0.
		\end{array}\right.
\end{aligned}\end{equation*}
We will use the resolvent estimates method developed by Chen-Wei-Zhang's method \cite{CWZ1}
to estimate the decoupled pair of $\{\partial_x\omega_{2,\neq}, \Delta u_{2,\neq}\}.$

\subsection{ Constructing of the energy functional}\

Denoting $\widehat{\Delta}=\Delta^{k_1, k_3}=\partial_y^2-k_1^2-k_3^2$ and $\eta=\sqrt{k_1^2+k_3^2}$, 
we introduce the following norms:
$$
\begin{aligned}
	\|f\|_{X_a^{k_1, k_3}}^2= & \eta | k_1| \| {\rm e}^{a A^{-\frac{1}{3}} t}\left(-\partial_y, i \eta\right) f\|_{L^2 L^2}^2+A^{-1} \eta^2
	\| {\rm e}^{a A^{-\frac{1}{3}} t}(\partial_y^2-\eta^2) f\|_{L^2 L^2}^2 \\
	& +A^{-\frac{3}{2}}\|{\rm e}^{a A^{-\frac{1}{3}}t} \partial_y(\partial_y^2-\eta^2) f\|_{L^2 L^2}^2
	+\eta^2\|{\rm e}^{a A^{-\frac{1}{3}} t}\left(-\partial_y, i \eta\right) f\|_{L^{\infty} L^2}^2 \\
	& +A^{-\frac{1}{2}}\|{\rm e}^{a A^{-\frac{1}{3} }t}(\partial_y^2-\eta^2) f\|_{L^{\infty} L^2}^2, \\
	\|f\|_{Y_a^{k_1, k_3}}^2= & \|{\rm e}^{a A^{-\frac{1}{3}} t} f\|_{L^{\infty} L^2}^2+A^{-1}\|{\rm e}^{a A^{-\frac{1}{3}} t} \partial_y f\|_{L^2 L^2}^2 
	 +\left(\left(A^{-1} k_1^2\right)^{\frac{1}{3}}+A^{-1} \eta^2\right)\|{\rm e}^{a A^{-\frac{1}{3} }t} f\|_{L^2 L^2}^2,
\end{aligned}
$$
and
$$
\|f\|_{X_a}^2=\sum_{k_1 \neq 0, k_3 \in \mathbb{Z}}\|f^{k_1, k_3}\|_{X_a^{k_1, k_3}}^2, \quad
\|f\|_{Y_a}^2=\sum_{k_1 \neq 0, k_3 \in \mathbb{Z}}\|f^{k_1, k_3}\|_{Y_a^{k_1, k_3}}^2 .
$$
Obviously, it follows that
\begin{equation}\begin{aligned} \label{eq:X_a}
		& \|{\rm e}^{a A^{-\frac{1}{3} }t} \partial_x \nabla f_{\neq}\|_{L^2 L^2}^2+A^{-1}\|{\rm e}^{a A^{-\frac{1}{3}} t}
		\left(\partial_x, \partial_z\right) \Delta f_{\neq}\|_{L^2 L^2}^2
		+A^{-\frac{3}{2}}\|{\rm e}^{a A^{-\frac{1}{3}} t} \partial_y \Delta f_{\neq}\|_{L^2 L^2}^2 \\
		& +\|{\rm e}^{a A^{-\frac{1}{3}} t}\left(\partial_x, \partial_z\right) \nabla f_{\neq}\|_{L^{\infty} L^2}^2
		+A^{-\frac{1}{2}}\|{\rm e}^{a A^{-\frac{1}{3} }t} \Delta f_{\neq}\|_{L^{\infty} L^2}^2 \leq C\|f\|_{X_a}^2
\end{aligned}\end{equation}
and
\begin{equation}\begin{aligned} \label{eq:Y_a}
		\|{\rm e}^{a A^{-\frac{1}{3}} t} f_{\neq}\|_{L^{\infty} L^2}^2+\frac{1}{A}\|{\rm e}^{a A^{-\frac{1}{3}} t} \nabla f_{\neq}\|_{L^2 L^2}^2+\frac{1}{A^{\frac{1}{3}}}\|{\rm e}^{a A^{-\frac{1}{3}} t} f_{\neq}\|_{L^2 L^2}^2 \leq C\|f\|_{Y_a}^2.
\end{aligned}\end{equation}

Next, we introduce the energy functional
\begin{equation*}
	\begin{aligned}
		& E_{1,1}(t)= \|n_0\|_{L^\infty L^2}, \\
		& E_{1,2}(t)= \| \widetilde{u_{1,0}}\|_{L^\infty H^1},\\
		& E_{1,3}(t)= A^\epsilon\big(A^{-1}\|\widehat{u_{1,0}}\|_{L^{\infty} H^2}   +  A^{-\frac32}\|\nabla \widehat{u_{1,0}}\|_{L^2 H^2} 
		+ \|u_{2,0}\|_{Y_0}+\|u_{3,0}\|_{Y_0} \big)\\
		&\qquad\qquad+ A^{\frac14\epsilon}\big(\|\nabla u_{2,0}\|_{Y_0}+\|\nabla u_{3,0}\|_{Y_0} +\|\Delta u_{2,0}\|_{Y_0}\big),\\
		& E_{2,1}(t) =  \|\px n_{\neq}\|_{Y_a} , \\
		& E_{2,2}(t)=A^{\frac{5}{12} \epsilon}\left(\|u_{2, \neq}\|_{X_a}+\|\partial_x \omega_{2, \neq}\|_{Y_a}\right),\\& E_3(t)=\|n\|_{L^{\infty} L^{\infty}},
	\end{aligned}
\end{equation*}
where 
\begin{equation*}\begin{aligned} 
		\epsilon=\left\{\begin{array}{l}
			\epsilon_{1},\quad \frac{2}{3}<\epsilon_{1}\leq\frac{4}{5},\\
			\frac45, \qquad \epsilon_{1}>\frac{4}{5}.
		\end{array}\right.
\end{aligned}\end{equation*}
Denote by
\begin{equation*}
	E_{1}(t):=E_{1,1}(t)+E_{1,2}(t)+E_{1,3}(t),\quad E_{2}(t):=E_{2,1}(t)+E_{2,2}(t).
\end{equation*}

\subsection{Closing the energy and proving the main result}\

	We will prove Theorem \ref{Thm:main} in the following three steps.
	
	\textbf{Step~1:} Let us designate $T$ as the terminal point of the largest range $[0, T]$ such that the following hypothesis hold:
	\begin{equation}\label{assumption}
		E_{1}(t)\leq 2F_{1},\quad E_{2}(t)\leq 2F_{2},\quad E_{3}(t)\leq 2F_{3},
	\end{equation}
	for any $t\in[0,T],$ where $F_{1}, F_{2}$ and $F_{3}$ are constants independent of $t$ and $A$ and will be decided during calculations.
	
	\textbf{Step~2:} We need to prove the following propositions:
	\begin{Prop}\label{prop:F1}
		Under the conditions of Theorem \ref{Thm:main}, assuming  \eqref{assumption} holds, there exists a positive constant $\mathcal{B}_{4}$ independent of $A$ and $t,$ such that if $A\geq \mathcal{B}_{4}$, it holds
		\begin{equation*}
			E_{1}(t)\leq F_{1}
		\end{equation*}
		for all $t\in[0,T].$
	\end{Prop}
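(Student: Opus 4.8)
The plan is to run a bootstrap (continuity) argument: assuming the a priori bounds \eqref{assumption} on $[0,T]$, I would improve the bound on $E_1$ by estimating each of its three constituents $E_{1,1}$, $E_{1,2}$, $E_{1,3}$ separately, then choose $F_1$ large relative to the initial data and $A$ large enough that every nonlinear feedback term carries a strictly negative power of $A$ and is absorbed. The logical order is dictated by the coupling: first close $E_{1,1}=\|n_0\|_{L^\infty L^2}$, which feeds into $E_{1,2}$; then treat the velocity block $E_{1,3}$, whose forcing from the nonzero modes is supplied by the hypothesis $E_2\le 2F_2$ together with the exponential-in-time weights ${\rm e}^{aA^{-1/3}t}$ built into the $X_a$, $Y_a$ norms.

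For $E_{1,1}$ I would invoke the density analysis of Section 2.1. When $\mu>0$, the logistic dissipation yields the key bound $\frac1A\|n_0\|_{L^2L^2}^2\le\frac1\mu M$ (Lemma \ref{lem:n n^2}); inserting this into the $L^2$ energy identity for $n_0$, using the elliptic estimate for $c_0$ and the 2D Gagliardo--Nirenberg inequality, and closing with Gr\"onwall (Lemma \ref{lem:n0 L2}) gives $E_{1,1}\le C(\|(n_{\rm in})_0\|_{L^2},M)$, independent of $A$. When $\mu=0$ one splits $n_0=n_{(0,0)}+n_{(0,\neq)}$, and the subcritical condition $M<\tfrac{8\pi}{9}=\tfrac{2\pi}{C_{*}^3}$ is exactly what makes the diffusion dominate the chemotactic production in the $n_{(0,\neq)}$ equation, again yielding an $A$-independent bound. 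The transport terms $u_0\cdot\nabla n_0$ and $(u_\neq\cdot\nabla n_\neq)_0$ are controlled using $\nabla\cdot u_0=0$ and the velocity bootstrap bounds, so they contribute only good terms. Then $E_{1,2}$ is immediate from the inequality $\|\widetilde{u_{1,0}}\|_{H^1}\le C(\|(u_{1,\rm in})_0\|_{H^1}+\|n_0\|_{L^\infty L^2}+1)$ (Lemma \ref{lem:est of E12}), bounding $E_{1,2}$ by a constant depending on $E_{1,1}$ and the data.

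For the velocity block $E_{1,3}$ I would perform weighted energy estimates on the system \eqref{eq:u_k0}. The components $u_{2,0},u_{3,0}$ carry no lift-up term, so their $Y_0$ norms, and those of $\nabla u_{2,0}$, $\nabla u_{3,0}$, $\Delta u_{2,0}$, close by parabolic energy estimates; the only inputs are the quadratic self-interactions, handled by $\|u_{2,0}\|_{L^\infty}+\|u_{3,0}\|_{L^\infty}\le C(\|u_{2,0}\|_{H^2}+\|u_{3,0}\|_{H^1})$ plus smallness, and the nonzero-mode forcing $(u_\neq\cdot\nabla u_{j,\neq})_0$, whose time integral is finite because the $X_a,Y_a$ norms decay exponentially. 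The prefactors $A^\epsilon$ and $A^{\frac14\epsilon}$ in $E_{1,3}$, combined with the hypothesis $A^{\epsilon_1}\|u_{\rm in}\|_{H^2}\le C_0$ with $\epsilon_1>\tfrac23$, are calibrated precisely so that each feedback term gains a factor $A^{-\delta}$ for some $\delta>0$.

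The main obstacle is the bad part $\widehat{u_{1,0}}$, driven by the lift-up term $-u_{2,0}$ in \eqref{eq: decompose of u10}. An $H^2$ energy estimate produces $\|\widehat{u_{1,0}}\|_{L^\infty H^2}\lesssim \|u_{2,0}\|_{L^2 H^2}+(\text{transport and nonzero-mode terms})$, and the danger is that $\|u_{2,0}\|_{L^2H^2}$ could grow in time. The resolution is that the enhanced dissipation encoded in the $Y_0$ norm renders the relevant time integral $A$-independent, while the prefactor $A^\epsilon A^{-1}$ in $E_{1,3}$ together with the smallness of $u_{\rm in}$ through $\epsilon_1>\tfrac23$ keeps $A^\epsilon A^{-1}\|\widehat{u_{1,0}}\|_{L^\infty H^2}$ bounded by a constant; the same mechanism controls $A^{\epsilon}A^{-\frac32}\|\nabla\widehat{u_{1,0}}\|_{L^2H^2}$. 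Summing the three blocks and choosing $F_1$ to dominate the data-dependent constants and $A\ge\mathcal{B}_4$ so that all $A^{-\delta}$ terms are at most $\tfrac12$, I obtain $E_1(t)\le F_1$ on $[0,T]$, which completes the proof.
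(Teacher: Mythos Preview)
Your proposal is correct and follows the same approach as the paper: the three blocks $E_{1,1},E_{1,2},E_{1,3}$ are closed in that order via exactly the lemmas you cite, with $F_1$ fixed by the initial data and $\mathcal{B}_4$ chosen so that every nonlinear feedback term carries a negative power of $A$ and can be absorbed. One small clarification: the control of $\widehat{u_{1,0}}$ does not come from ``enhanced dissipation encoded in the $Y_0$ norm'' (zero modes see only ordinary heat diffusion), but from propagating the smallness $A^{\epsilon_1}\|u_{\rm in}\|_{H^2}\le C_0$ to $A^\epsilon\|u_{2,0}\|_{Y_0}$, which is what renders the lift-up forcing integrable at the correct scale.
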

	\begin{Prop}\label{prop:F2}
			Under the conditions of Theorem \ref{Thm:main}, assuming that \eqref{assumption} holds, there exists a positive constant $\mathcal{B}_{5}$ independent of $A$ and $t,$ such that if $A\geq \mathcal{B}_{5},$ it holds
		\begin{equation*}
			E_{2}(t)\leq F_{2}
		\end{equation*}
		for all $t\in[0,T].$
	\end{Prop}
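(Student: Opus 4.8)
The plan is to close the bootstrap for the non-zero mode energy $E_2 = E_{2,1}+E_{2,2}$ by combining the enhanced dissipation generated by the Couette flow with the resolvent estimates of Chen-Wei-Zhang \cite{CWZ1}, while controlling every nonlinear forcing term through the bootstrap hypotheses \eqref{assumption}. I would estimate the density mode $E_{2,1}=\|\partial_x n_{\neq}\|_{Y_a}$ and the velocity-vorticity package $E_{2,2}=A^{\frac{5}{12}\epsilon}(\|u_{2,\neq}\|_{X_a}+\|\partial_x\omega_{2,\neq}\|_{Y_a})$ separately, then sum.

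For $E_{2,1}$, I would project \eqref{eq:main} onto nonzero $x$-frequencies to get the equation for $n_{\neq}$, apply $\partial_x$, and run a weighted $Y_a$-energy estimate with weight ${\rm e}^{aA^{-1/3}t}$. The linear part $\partial_t+y\partial_x-\frac1A\Delta$ supplies the enhanced dissipation encoded in the $(A^{-1}k_1^2)^{1/3}$ gain of the $Y_a$-norm, which is what lets the dangerous terms be absorbed. The forcing consists of the chemotaxis term $\nabla\cdot(n\nabla c)_{\neq}$, the transport term $(u\cdot\nabla n)_{\neq}$, and, when $\mu>0$, the logistic term $(n^2)_{\neq}$. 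Each is decomposed into zero-nonzero, nonzero-zero, and nonzero-nonzero mode products; the zero-mode factors are bounded by $E_1$ and $E_3$, the nonzero factors by the bootstrap $E_2$, and the elliptic relation $\Delta c+n-c=0$ is used to trade $c$-bounds for $n$-bounds.

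For $E_{2,2}$, I would feed the decoupled pair $\{\partial_x\omega_{2,\neq},\Delta u_{2,\neq}\}$ from \eqref{eq:main1} into the $X_a/Y_a$ resolvent framework of \cite{CWZ1}, which returns the $X_a$-bound for $u_{2,\neq}$ and the $Y_a$-bound for $\partial_x\omega_{2,\neq}$ in terms of initial data and forcing; compare \eqref{eq:X_a}--\eqref{eq:Y_a}. The forcing is the density source (the $\frac1A\partial_z n$ and $\frac1A\partial_x\partial_y n$ terms), which after applying $\partial_x$ couples $E_{2,2}$ back to $E_{2,1}$, together with the velocity self-interaction $u\cdot\nabla u$ and its derivatives appearing in \eqref{eq:main1}. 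I would split $u\cdot\nabla u$ by mode, controlling the zero-mode velocity factors through $E_{1,2}$, $E_{1,3}$ and the Sobolev bounds recorded in this section, and the nonzero factors through the bootstrap.

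The main obstacle is to verify that, after inserting the bootstrap bounds, every nonlinear contribution carries a strictly negative net power of $A$, so that the prefactor $A^{\frac{5}{12}\epsilon}$ and the $A$-weights built into $X_a$ and $Y_a$ actually improve the constant from $2F_2$ to $F_2$. This is delicate precisely because the density source in the velocity equation and the 3D lift-up couple $E_{2,1}$ and $E_{2,2}$ at critical order; the choice of $\epsilon$ (together with $\epsilon_1>\frac23$ and the smallness \eqref{eq:smallness of u_in}) and the exponents in the $X_a, Y_a$-norms are tuned exactly so that the net $A$-exponent is negative in each term for $A\geq\mathcal{B}_5$.
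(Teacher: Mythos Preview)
Your proposal is correct and follows essentially the same route as the paper: for $E_{2,1}$ the paper applies the resolvent/time-space estimate of Proposition~\ref{prop:key prop for omega2neq} to the Fourier-transformed density equation \eqref{eq:mian eq after fourier} and decomposes each forcing term $(un)_{\neq}$, $(n\nabla c)_{\neq}$, $(n^2)_{\neq}$ into zero--nonzero and nonzero--nonzero interactions (Lemmas in Section~\ref{Sec 5} leading to Lemma~\ref{lem:est of E21}); for $E_{2,2}$ it feeds $\omega_2^{k_1,k_3}$ and $\Delta u_2^{k_1,k_3}$ into Propositions~\ref{prop:key prop for omega2neq} and~\ref{prop:key prop for delta u2neq} respectively, and handles the $u_0\cdot\nabla u_{j,\neq}$ and $u_{\neq}\cdot\nabla u_{j,0}$ terms via the $\widehat{u_{1,0}}/\widetilde{u_{1,0}}$ split (Lemmas~\ref{lem:hatu10}--\ref{lem:px u0u1}), exactly as you outline. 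The final step is the $A$-power bookkeeping you describe, yielding $E_2\leq C(\|(\partial_x n_{\rm in})_{\neq}\|_{L^2}+1)=:F_2$ in Corollary~\ref{Cor:E2 end}.
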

	\begin{Prop}\label{prop:F3}
		Under the assumptions of Proposition \ref{prop:F1}, it holds
		\begin{equation*}
			\begin{aligned}
			E_{3}(t)\leq F_{3}
			\end{aligned}
		\end{equation*}
		for all $t\in[0,T].$
	\end{Prop}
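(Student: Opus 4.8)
The plan is to upgrade the $L^2$ control of the density, which is already encoded in the closed energies $E_1$ and $E_2$ under the bootstrap \eqref{assumption}, to the uniform bound $E_3(t)=\|n\|_{L^\infty L^\infty}\le F_3$ by a Moser--Alikakos iteration. A key structural observation is that the velocity never enters this argument directly: in every $L^p$ estimate the transport term $y\partial_x n$ and the advection $\frac1A u\cdot\nabla n$ are conservative and drop out (using $\nabla\cdot u=0$ together with the Dirichlet conditions in \eqref{eq:boundary}), so the fluid coupling manifests only through the $L^2$ bound of $n$, which I take as an input from Propositions~\ref{prop:F1} and \ref{prop:F2}.

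First I would extract a uniform-in-time $L^2$ bound. Writing $n=n_0+n_{\neq}$, the zero mode is controlled by $E_{1,1}(t)=\|n_0\|_{L^\infty L^2}\le 2F_1$, while the nonzero mode is controlled by $E_{2,1}(t)=\|\partial_x n_{\neq}\|_{Y_a}\le 2F_2$: indeed \eqref{eq:Y_a} gives $\|\mathrm e^{aA^{-1/3}t}\partial_x n_{\neq}\|_{L^\infty L^2}\le CF_2$, and since $n_{\neq}$ has vanishing $x$-average the Poincaré inequality in $x$ yields $\|n_{\neq}\|_{L^\infty L^2}\le CF_2$. Hence $\|n\|_{L^\infty L^2}\le C(F_1,F_2)=:\bar M_2$, a constant independent of $A$ and $t$.

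Next I would set up the $L^p$ energy inequality. Testing the $n$-equation in \eqref{eq:main} against $n^{p-1}$, using $\int n^{p-2}|\nabla n|^2=\frac{4}{p^2}\|\nabla n^{p/2}\|_{L^2}^2$, integrating the chemotactic term by parts, substituting $\Delta c=c-n$ from the elliptic relation, and discarding $-\frac{p-1}{p}\int c\,n^p\le0$ by the positivity $n,c\ge0$ (maximum principle), gives
\[
\frac{d}{dt}\|n\|_{L^p}^p+\frac{4(p-1)}{Ap}\|\nabla n^{p/2}\|_{L^2}^2\le \frac{p-1-\mu p}{A}\|n\|_{L^{p+1}}^{p+1}.
\]
I would then interpolate the supercritical term: with $w=n^{p/2}$ one has $\|n\|_{L^{p+1}}^{p+1}=\|w\|_{L^r}^r$, $r=2(p+1)/p$, and the three-dimensional Gagliardo--Nirenberg inequality with base norm $\|w\|_{L^1}=\|n\|_{L^{p/2}}^{p/2}$ bounds $\|w\|_{L^r}^r\le C\|\nabla w\|_{L^2}^{ra}\|w\|_{L^1}^{r(1-a)}$ with $ra=\frac{6(p+2)}{5p}<2$ precisely when $p>3$. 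Young's inequality then absorbs a fraction of $\|\nabla w\|_{L^2}^2$ into the diffusion; here the crucial bookkeeping is that the $\frac1A$ prefactors on the diffusion and on the chemotactic/logistic terms are identical, so they cancel and all iteration constants are $A$-independent. Adding the Poincaré-coercive term $\frac{c}{A}\|n\|_{L^p}^p\le \frac{c\,C_P}{A}\|\nabla w\|_{L^2}^2$ and comparing with the ODE $\frac{d}{dt}y+\frac{\lambda}{A}y\le\frac{b}{A}$ yields $\sup_t\|n\|_{L^p}^p\le\max\big(\|n_{\rm in}\|_{L^p}^p,\;C\,(\sup_t\|n\|_{L^{p/2}})^{\beta}\big)$.

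Finally I would run the recursion along $p_k=2^k$ starting from $M_1=\sup_t\|n\|_{L^2}\le\bar M_2$, so that every subsequent $p_k\ge4>3$ and the absorption is licit, and track $M_k:=\sup_t\|n\|_{L^{p_k}}$; the standard geometric control of the Moser constants gives $\lim_{k\to\infty}M_k=\|n\|_{L^\infty L^\infty}\le F_3$, with $F_3$ depending only on $\|n_{\rm in}\|_{L^1}$, $\|n_{\rm in}\|_{L^2}$ and $\|n_{\rm in}\|_{L^\infty}$ and independent of $A$ and $t$. I expect the main obstacle to be not the iteration itself but the uniform bookkeeping: verifying that the $A$-dependence genuinely cancels at every step, that the Gagliardo--Nirenberg exponent stays subcritical ($p>3$) so that the base case must be the $L^2$ bound rather than mere mass---which is exactly where the three-dimensional criticality and the shear-flow input are respected---and that the product of the $k$-dependent constants converges after taking $p_k$-th roots. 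The cases $\mu>0$ and $\mu=0$ are handled identically at this stage, the sole difference being the provenance of the $L^2$ bound inherited from Propositions~\ref{prop:F1} and \ref{prop:F2}.
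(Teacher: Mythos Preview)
Your argument is correct and the Moser--Alikakos scheme you describe does close, but the paper handles the chemotactic term differently and it is worth noting the distinction. You eliminate $\nabla c$ entirely by writing $-\int n^p\Delta c=\int n^{p+1}-\int c\,n^p$, discarding the sign-favourable piece, and then interpolating $\|n\|_{L^{p+1}}^{p+1}$ between $\|\nabla n^{p/2}\|_{L^2}$ and $\|n\|_{L^{p/2}}$; this requires $p>3$ for the absorption, so your iteration must be launched from the $L^2$ bound rather than from mass alone---which, as you rightly observe, is precisely where the three-dimensional criticality and the shear-flow input enter. The paper instead keeps $\nabla c$ and estimates $\|n^p\nabla c\|_{L^2}\le\|n^p\|_{L^4}\|\nabla c\|_{L^4}\le C\|n^p\|_{L^2}^{1/4}\|\nabla n^p\|_{L^2}^{3/4}\|\nabla c\|_{L^4}$, then invokes the elliptic estimates $\|\nabla c_0\|_{L^4}\le C\|n_0\|_{L^2}$ and $\|\nabla c_{\neq}\|_{L^4}\le C\|\partial_x n_{\neq}\|_{L^2}$ (Lemmas~\ref{lem:the zero mode of c and n} and \ref{lem:est of the non-zero mode of c and n}) to bound $\|\nabla c\|_{L^\infty L^4}\le C(F_1+F_2)$ uniformly. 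Both routes yield an $F_3$ depending only on $F_1,F_2$ and $\|n_{\rm in}\|_{L^\infty}$, with the $A$-dependence cancelling as you anticipated; the paper's version makes the polynomial growth of the iteration constants ($p^{12}$ per step) fully explicit and ties the chemotactic control directly to the elliptic lemmas already in hand, while your version is slightly more self-contained in that it bypasses the $L^4$ elliptic estimate altogether.
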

	
	{\bf Step 3:} Combining all the above propositions with the well-posedness of the system (\ref{eq:main}) presented by Remark \ref{local} and taking $\mathcal{B}=\max\{\mathcal{B}_{4}, \mathcal{B}_{5}\},$ we complete the proof of Theorem \ref{Thm:main}.

\section{Energy estimates for zero modes $E_{1}(t)$: Proof of Proposition \ref{prop:F1}}\label{Sec 4}

\subsection{Energy estimate for $E_{1, 1}(t)$.}
We first estimate the $L^{2}$-norm of the density by considering the cases when $\mu>0$ and $\mu=0.$
\subsubsection{The case of $\mu>0$}

In this case, the logistic source term breaks the conservation of the mass, causing the total mass $M$ to decay over time $t$.
\begin{Lem}\label{lem:n n^2}
	For all $t\in[0,T],$ it holds that
	\begin{equation*}
		\|n_{0}(t)\|_{L^{1}}\leq\left(\frac{\mu t}{4\pi A}+\frac{1}{\|(n_{\rm in})_{0}\|_{L^{1}}} \right)^{-1}
	\end{equation*}
	and
	\begin{equation*}
		\frac{1}{A}\|n_{0}\|_{L^{2}L^{2}}^{2}\leq\frac{1}{\mu}\|(n_{\rm in})_{0}\|_{L^{1}}.
	\end{equation*}
\end{Lem}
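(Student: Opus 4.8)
The plan is to reduce both estimates to a single differential inequality for the $L^1$ norm of $n_0$ over the cross-section $\mathbb{I}\times\mathbb{T}$ (this cross-sectional convention is what produces the constant $4\pi=|\mathbb{I}\times\mathbb{T}|$). The point is that at the $L^1$ level every nonlinear term is in divergence form and integrates to zero, so the only surviving contributions are the logistic dissipation and a favorably-signed boundary flux.

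First I would record the equation for $n_0$. Applying $P_0$ to the first equation of \eqref{eq:main} and using that the transport term has vanishing zero mode, $P_0(y\partial_x n)=y\,P_0(\partial_x n)=0$, gives
$$\partial_t n_0-\tfrac1A\Delta n_0=-\tfrac1A(u\cdot\nabla n)_0-\tfrac1A(\nabla\cdot(n\nabla c))_0-\tfrac{\mu}{A}(n^2)_0.$$
Since $n\geq0$ is preserved by the flow and $n|_{y=\pm1}=0$, we have $n_0\geq0$ and $n_0|_{y=\pm1}=0$, so $\|n_0\|_{L^1}=\int_{\mathbb{I}\times\mathbb{T}}n_0$ and I would integrate the above over $\mathbb{I}\times\mathbb{T}$, treating each term by its sign. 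The two nonlinear fluxes $(u\cdot\nabla n)_0=(\nabla\cdot(un))_0$ and $(\nabla\cdot(n\nabla c))_0$ integrate to $\tfrac1{|\mathbb{T}|}$ times their full-domain integrals, which vanish by $\nabla\cdot u=0$, periodicity, and $n|_{y=\pm1}=0$. The diffusion term leaves only the boundary flux $\tfrac1A\int_{\mathbb{T}}\big(\partial_y n_0|_{y=1}-\partial_y n_0|_{y=-1}\big)\,dz\leq0$, by the Hopf-type observation that $n_0\geq0$ with $n_0=0$ on $y=\pm1$ forces $\partial_y n_0|_{y=1}\leq0$ and $\partial_y n_0|_{y=-1}\geq0$. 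For the logistic term, orthogonality of the Fourier modes gives $\int_{\mathbb{I}\times\mathbb{T}}(n^2)_0=\tfrac1{|\mathbb{T}|}\int n^2\geq\int_{\mathbb{I}\times\mathbb{T}}n_0^2=\|n_0\|_{L^2}^2$. Combining, I obtain the master inequality
$$\frac{d}{dt}\|n_0\|_{L^1}\leq-\frac{\mu}{A}\|n_0\|_{L^2}^2.$$

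Both claims follow from this single inequality. For the second estimate I would integrate in time and discard the nonnegative term $\|n_0(t)\|_{L^1}$, yielding $\tfrac\mu A\int_0^t\|n_0\|_{L^2}^2\,ds\leq\|(n_{\rm in})_0\|_{L^1}$, which is exactly $\tfrac1A\|n_0\|_{L^2L^2}^2\leq\tfrac1\mu\|(n_{\rm in})_0\|_{L^1}$. For the first estimate I would insert Cauchy--Schwarz, $\|n_0\|_{L^2}^2\geq\tfrac1{|\mathbb{I}\times\mathbb{T}|}\|n_0\|_{L^1}^2=\tfrac1{4\pi}\|n_0\|_{L^1}^2$, to get the Riccati inequality $y'\leq-\tfrac{\mu}{4\pi A}y^2$ with $y(t)=\|n_0(t)\|_{L^1}$; dividing by $y^2$ turns this into $\tfrac{d}{dt}(1/y)\geq\tfrac{\mu}{4\pi A}$, and integrating from $0$ to $t$ produces $1/y(t)\geq 1/y(0)+\tfrac{\mu t}{4\pi A}$, i.e.\ the stated bound (the degenerate case $y\equiv0$ being trivial).

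The hard part is not computational but conceptual: the two genuinely nontrivial inputs are the sign of the diffusive boundary flux (requiring the nonnegativity of $n_0$ together with its Dirichlet vanishing, a maximum-principle fact rather than an algebraic identity) and the mode-orthogonality bound $(n^2)_0\geq n_0^2$ that lets the logistic dissipation control $\|n_0\|_{L^2}^2$ rather than merely $\|n\|_{L^2}^2$. Once these are in place, the rest is bookkeeping and a one-line ODE comparison, and I would make sure to state explicitly that the norms here are taken over the cross-section $\mathbb{I}\times\mathbb{T}$.
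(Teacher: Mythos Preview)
Your proposal is correct and follows essentially the same route as the paper: derive the zero-mode equation, integrate over $\mathbb{I}\times\mathbb{T}$, use the Dirichlet condition and nonnegativity of $n_0$ to obtain a favorable sign for the diffusive boundary flux, drop the divergence-form nonlinearities, and use $(n^2)_0\ge n_0^2$ (the paper writes this as $(n^2)_0=n_0^2+(n_{\neq}^2)_0$ and discards the last term) to arrive at $\tfrac{d}{dt}\|n_0\|_{L^1}\le -\tfrac{\mu}{A}\|n_0\|_{L^2}^2$, from which both conclusions follow by Cauchy--Schwarz/Riccati and direct time integration exactly as you describe.
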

\begin{proof}
	Recall that $n_{0}$ satisfies
	\begin{equation}\label{eq: n0}
		\partial_{t}n_{0}=\frac{1}{A}\Delta n_{0}-\frac{1}{A}\nabla\cdot(n\nabla c)_{0}-\frac{1}{A}\nabla\cdot(nu)_{0}-\frac{\mu}{A} n_{0}^{2}-\frac{\mu}{A}(n_{\neq}^{2})_{0}.
	\end{equation}
	As $n_{0}|_{y=\pm 1}=0$ and $n_{\rm in}\geq 0,$ we have
	\begin{equation}\label{n''}
		\int_{\mathbb{I}\times\mathbb{T}}\Delta n_{0}dydz=\int_\mathbb{T}\lt( \int_{\mathbb{I}}\partial_{yy}n_{0}dy\rt)dz=\int_\mathbb{T}\lt(\partial_{y}n_{0}|_{y=-1}^{y=1} \rt)dz \leq0.
	\end{equation}
    By (\ref{n''}), using ${\rm div}~u_{0}=0$ and integrating with respect to $(y,z)$ in (\ref{eq: n0}) over $\mathbb{I}\times\mathbb{T}$, we get
	\begin{equation}\label{n0 t}
		\frac{d}{dt}\int_{\mathbb{I}\times\mathbb{T}}n_{0}dydz\leq-\frac{\mu}{A}\int_{\mathbb{I}\times\mathbb{T}}n_{0}^{2}dydz-\frac{\mu}{A}\int_{\mathbb{I}\times\mathbb{T}}(n_{\neq}^{2})_{0}dydz\leq-\frac{\mu}{A}\int_{\mathbb{I}\times\mathbb{T}}n_{0}^{2}dydz.
	\end{equation}
	Due to $\int_{\mathbb{I}\times\mathbb{T}}n_{0}^{2}dydz\geq\frac{1}{|\mathbb{I}\times\mathbb{T}|}\left(\int_{\mathbb{I}\times\mathbb{T}}n_{0}dydz \right)^{2}=\frac{1}{4\pi}\left(\int_{\mathbb{I}\times\mathbb{T}}n_{0}dydz \right)^{2},$ there holds
	\begin{equation*}
		\frac{d}{dt}\int_{\mathbb{I}\times\mathbb{T}}n_{0}dydz\leq-\frac{\mu}{4\pi A}\left(\int_{\mathbb{I}\times\mathbb{T}}n_{0}dydz \right)^{2},
	\end{equation*}
	which implies that
	\begin{equation}\label{n0 L1}
		\int_{\mathbb{I}\times\mathbb{T}}n_{0}dydz\leq\left(\frac{\mu t}{4\pi A}+\frac{1}{\|(n_{\rm in})_{0}\|_{L^{1}}} \right)^{-1}.
	\end{equation}
	Lastly, integrating \eqref{n0 t} over $(0,t)$, we obtain that
	\begin{equation*}
		\frac{1}{A}\int_{0}^{t}\int_{\mathbb{I}\times\mathbb{T}}n_{0}^{2}dydzds\leq\frac{1}{\mu}\int_{\mathbb{I}\times\mathbb{T}}(n_{\rm in})_{0}dydz.
	\end{equation*}
    Combining the above with (\ref{n0 L1}), we complete the proof.
\end{proof}

\begin{Lem}\label{lem:n0 L2}
	Under the conditions of Theorem \ref{Thm:main} and the assumptions  \eqref{assumption}, there exists a positive constant $\mathcal{B}_{1,1}$ independent of $A$ and $t$, such that if $A\geq \mathcal{B}_{1,1},$ it holds
	\begin{equation*}
		\|n_{0}\|_{L^{\infty}L^{2}}\leq C{\rm e}^{C\|(n_{\rm in})_{0}\|_{L^{1}}}\left(\|(n_{\rm in})_{0}\|_{L^{2}}+\|(n_{\rm in})_{0}\|_{L^{1}}^{2}+1 \right).
	\end{equation*}
\end{Lem}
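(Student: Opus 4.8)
The plan is to run an $L^2$ energy estimate on the zero-mode equation \eqref{eq: n0} and close it by Grönwall, the decisive point being that the time-integrated $L^2L^2$-bound furnished by Lemma~\ref{lem:n n^2} forces the Grönwall coefficient to be uniformly bounded in $t$ and $A$. First I would test \eqref{eq: n0} against $n_0$ and integrate over $\mathbb{I}\times\mathbb{T}$. Since $n_0$ vanishes on $\{y=\pm1\}$, the diffusion term yields the full dissipation $\frac1A\|\nabla n_0\|_{L^2}^2$ with no boundary contribution, and the logistic self-term $-\frac\mu A\langle n_0^2,n_0\rangle=-\frac\mu A\|n_0\|_{L^3}^3\le 0$ (using $n_0\ge0$) may simply be discarded. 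Splitting $(n\nabla c)_0=n_0\nabla c_0+(n_{\neq}\nabla c_{\neq})_0$ and $(nu)_0=n_0u_0+(n_{\neq}u_{\neq})_0$, the dominant contribution is the chemotactic self-interaction coming from $n_0\nabla c_0$; the transport self-interaction $n_0u_0$ vanishes after integration by parts by incompressibility of $u_0$, and all remaining pieces (the cross terms $(n_{\neq}\nabla c_{\neq})_0$, $(n_{\neq}u_{\neq})_0$ and $(n_{\neq}^2)_0$) are collected as ``good terms''.

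For the dominant term I would integrate by parts and invoke the elliptic relation $\Delta c_0=c_0-n_0$ to rewrite it as $-\frac{1}{2A}\langle\Delta c_0,n_0^2\rangle$, bounded by $\frac{1}{2A}\|\Delta c_0\|_{L^2}\|n_0\|_{L^4}^2$. The two-dimensional Gagliardo--Nirenberg inequality $\|n_0\|_{L^4}^2\le C\|n_0\|_{L^2}\|\nabla n_0\|_{L^2}$ quoted in the outline, followed by Young's inequality, then lets me absorb part of $\frac1A\|\nabla n_0\|_{L^2}^2$ into the dissipation, leaving
\[
\frac{d}{dt}\|n_0\|_{L^2}^2+\frac{1}{2A}\|\nabla n_0\|_{L^2}^2\le \frac{C}{A}\|\Delta c_0\|_{L^2}^2\,\|n_0\|_{L^2}^2+``{\rm good~terms}".
\]
The structural observation that makes the whole argument work is that the prefactor $g(t):=\frac{C}{A}\|\Delta c_0\|_{L^2}^2$ is time-integrable uniformly in $t$ and $A$: by the standard elliptic bound $\|\Delta c_0\|_{L^2}\le C\|n_0\|_{L^2}$ (from $(1-\Delta)c_0=n_0$ with the Dirichlet condition $c_0|_{y=\pm1}=0$) together with Lemma~\ref{lem:n n^2},
\[
\int_0^t g(s)\,ds\le \frac{C}{A}\|n_0\|_{L^2L^2}^2\le \frac{C}{\mu}\|(n_{\rm in})_0\|_{L^1},
\]
where the constant may depend on the fixed parameter $\mu>0$.

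It remains to estimate the good terms, and here I would use the enhanced-dissipation decay $\|n_{\neq}(t)\|_{L^2}\le Ce^{-aA^{-1/3}t}(\|(n_{\rm in})_{\neq}\|_{L^2}+1)$ together with the bootstrap hypotheses \eqref{assumption} on $u_0$ and $u_{\neq}$. After integrating each cross term by parts, Cauchy--Schwarz and Young distribute any surviving gradient into the dissipation budget and leave remainders that are integrable in time thanks to the exponential decay and the $L^2L^2$ bound; the largeness $A\ge\mathcal B_{1,1}$ supplies the small factors needed for these absorptions. Integrating the differential inequality and applying Grönwall then gives
\[
\|n_0(t)\|_{L^2}^2\le e^{\int_0^t g(s)\,ds}\Big(\|(n_{\rm in})_0\|_{L^2}^2+\int_0^t``{\rm good~terms}"\,ds\Big)\le Ce^{C\|(n_{\rm in})_0\|_{L^1}}\big(\|(n_{\rm in})_0\|_{L^2}^2+\|(n_{\rm in})_0\|_{L^1}^4+1\big),
\]
and taking the supremum over $t$ and a square root yields the claim. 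The main obstacle I anticipate is the bookkeeping of the good terms --- in particular verifying that each cross interaction is genuinely integrable in time, so that it contributes only the polynomial-in-$M$ and constant terms rather than a growing factor, and that the several gradient absorptions remain compatible with the single dissipation term $\frac1A\|\nabla n_0\|_{L^2}^2$. By contrast, the boundedness of the Grönwall exponent is the clean part of the argument, being a direct consequence of Lemma~\ref{lem:n n^2}.
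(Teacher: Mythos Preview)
Your proposal is correct and matches the paper's approach in its Step~II (the case $0<\mu<\tfrac12$), which in fact works for every $\mu>0$ and already yields the stated bound via Gr\"onwall with the time-integrable coefficient supplied by Lemma~\ref{lem:n n^2}; the ``good terms'' are handled exactly as you anticipate, using the bootstrap assumptions \eqref{assumption} and the largeness of $A$ to make $G(t):=\frac{C}{A}\int_0^t\big(\|(n_{\neq}\nabla c_{\neq})_0\|_{L^2}^2+\|(u_{\neq}n_{\neq})_0\|_{L^2}^2\big)\,ds\le C$. The paper additionally treats $\mu\ge\tfrac12$ separately (Step~I) by an ODE comparison argument that avoids the exponential prefactor, but this refinement is not needed for the lemma as stated.
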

\begin{proof}
	{\bf{Step I: When $\mu\geq\frac12.$}}		
	Multiplying $(\ref{eq: n0})$ by $2n_{0}$ and integrating with respect to $(y,z)$ over $\mathbb{I}\times\mathbb{T}$,  we get
	\begin{equation}\label{n0 n0'}
		\begin{aligned}
			&	\frac{d}{dt}\|n_{0}\|_{L^{2}}^{2}+\frac{2}{A}\|\nabla n_{0}\|_{L^{2}}^{2}\\=&\frac{2}{A}\int_{\mathbb{I}\times\mathbb{T}}n_{0}\nabla n_{0}\cdot\nabla c_{0}dydz+\frac{2}{A}\int_{\mathbb{I}\times\mathbb{T}}\left[(n_{\neq}\nabla c_{\neq})_{0}+(u_{\neq}n_{\neq})_{0}\right]\cdot\nabla n_{0}dydz\\&-\frac{2\mu}{A}\int_{\mathbb{I}\times\mathbb{T}}n_{0}^{3}dydz-\frac{2\mu}{A}\int_{\mathbb{I}\times\mathbb{T}}(n_{\neq}^{2})_{0}n_{0}dydz\\ \leq&-\frac{1}{A}\int_{\mathbb{I}\times\mathbb{T}}n_{0}^{2}\Delta c_{0}dydz+\frac{2}{A}\|\nabla n_{0}\|_{L^{2}}\|(n_{\neq}\nabla c_{\neq})_{0}+(u_{\neq}n_{\neq})_{0}\|_{L^{2}}-\frac{2\mu}{A}\|n_{0}\|_{L^{3}}^{3},
		\end{aligned}
	\end{equation} 
	where we used $-\frac{2\mu}{A}\int_{\mathbb{I}\times\mathbb{T}}(n_{\neq}^{2})_{0}n_{0}dydz\leq 0$
	and  ${\rm div}~u_{0}=0.$ Due to Lemma \ref{lem:the zero mode of c and n} and the Gagliardo-Nirenberg inequality, there holds $$\|c_{0}\|_{L^{\infty}}\leq C\|c_{0}\|_{L^{2}}^{\frac12}\|\Delta 
		  c_{0}\|_{L^{2}}^{\frac12}\leq C\|n_{0}\|_{L^{2}}.$$ By $(\ref{eq:main})_{2}$, it follows that
	\begin{equation*}
		\begin{aligned}
			-\frac{1}{A}\int_{\mathbb{I}\times\mathbb{T}}n_{0}^{2}\Delta c_{0}dydz=&\frac{1}{A}\int_{\mathbb{I}\times\mathbb{T}}n_{0}^{3}dydz-\frac{1}{A}\int_{\mathbb{I}\times\mathbb{T}}n_{0}^{2}c_{0}dydz\\\leq&\frac{1}{A}\|n_{0}\|_{L^{3}}^{3}+\frac{1}{A}\|n_{0}\|_{L^{2}}^{2}\|c_{0}\|_{L^{\infty}}\leq \frac{1}{A}\|n_{0}\|_{L^{3}}^{3}+\frac{C}{A}\|n_{0}\|_{L^{2}}^{3},
		\end{aligned}
	\end{equation*}
    from which, along with (\ref{n0 n0'}), for  $\mu\geq\frac12$ we obtain
	\begin{equation}\label{n0 n0' 0}
		\begin{aligned}
			\frac{d}{dt}\|n_{0}\|_{L^{2}}^{2}+\frac{1}{A}\|\nabla n_{0}\|_{L^{2}}^{2}\leq&\frac{1-2\mu}{A}\|n_{0}\|_{L^{3}}^{3}+\frac{C}{A}\|n_{0}\|_{L^{2}}^{3}+\frac{C}{A}\|(n_{\neq}\nabla c_{\neq})_{0}+(u_{\neq}n_{\neq})_{0}\|_{L^{2}}^{2}\\\leq&\frac{C}{A}\|n_{0}\|_{L^{2}}^{3}+\frac{C}{A}\|(n_{\neq}\nabla c_{\neq})_{0}+(u_{\neq}n_{\neq})_{0}\|_{L^{2}}^{2}.
		\end{aligned}
	\end{equation}
	Using the Gagliardo-Nirenberg inequality 
	\begin{equation*}
		\|n_{0}\|_{L^{2}}\leq C\|n_{0}\|_{L^{1}}^{\frac12}\|\nabla n_{0}\|_{L^{2}}^{\frac12},
	\end{equation*}
	H$\ddot{\rm o}$lder inequality, Young inequality and Lemma \ref{lem:n n^2}, we infer from (\ref{n0 n0' 0}) that
	\begin{equation}\label{n0 n0' 2}
		\begin{aligned}
			\frac{d}{dt}\|n_{0}\|_{L^{2}}^{2}\leq&-\frac{\|n_{0}\|_{L^{2}}^{4}}{CA\|n_{0}\|_{L^{1}}^{2}}+\frac{C}{A}\|n_{0}\|_{L^{2}}^{3}+\frac{C}{A}\|(n_{\neq}\nabla c_{\neq})_{0}+(u_{\neq}n_{\neq})_{0}\|_{L^{2}}^{2}\\\leq&-\frac{\|n_{0}\|_{L^{2}}^{4}}{2CA\|(n_{\rm in})_{0}\|_{L^{1}}^{2}}
			+\frac{C^{4}\left(2CA\|(n_{\rm in})_{0}\|_{L^{1}}^{2}\right)^{3}}
			{4A^{4}(\frac43)^{3}}+\frac{C}{A}\|(n_{\neq}\nabla c_{\neq})_{0}+(u_{\neq}n_{\neq})_{0}\|_{L^{2}}^{2}.
		\end{aligned}
	\end{equation}
	We denote $G(t)$ by
	\begin{equation}\label{def G(t)}
		G(t):=\frac{C}{A}\int_{0}^{t}\left(\|(n_{\neq}\nabla c_{\neq})_{0}+(u_{\neq}n_{\neq})_{0}\|_{L^{2}} \right)ds,\quad {\rm for}~~t\geq 0.
	\end{equation}
Using Lemma \ref{lemma_u}, Lemma \ref{lem:est of the non-zero mode of c and n} and (\ref{assumption}), we arrive at
	\begin{equation}\label{G(t)}
		\begin{aligned}
			G(t)\leq&\frac{C}{A}\left(\|n_{\neq}\|_{L^{\infty}L^{\infty}}^{2}\|\nabla c_{\neq}\|_{L^{2}L^{2}}^{2}+\|n_{\neq}\|_{L^{\infty}L^{\infty}}^{2}\|u_{\neq}\|_{L^{2}L^{2}}^{2} \right)\\\leq&\frac{C}{A^{\frac23}}\|n\|_{L^{\infty}L^{\infty}}^{2}\left(\|\partial_{x}n_{\neq}\|_{Y_a}^{2}+\|\partial_{x}\omega_{2,\neq}\|_{Y_{a}}^{2}+\| u_{2,\neq}\|_{X_{a}}^{2} \right)\leq\frac{CF_{2}^{2}F_{3}^{2}}{A^{\frac23}}\leq C
		\end{aligned}
	\end{equation}
	provided with $$A\geq F_{2}^{3}F_{3}^{3}=:\mathcal{B}_{1,1}.$$ Then we rewrite \eqref{n0 n0' 2} as
	\begin{equation*}
		\begin{aligned}
			\frac{d}{dt}\left(\|n_{0}\|_{L^{2}}^{2}-G(t)\right)\leq&-\frac{1}{2CA\|(n_{\rm in})_{0}\|_{L^{1}}^{2}}\left[\|n_{0}\|_{L^{2}}^{2}-G(t)-\left(\frac{C^{4}(2CA\|(n_{\rm in})_{0}\|_{L^{1}}^{2})^{4}}{4A^{4}(\frac43)^{3}} \right)^{\frac12} \right]\\&\times\left[\|n_{0}\|_{L^{2}}^{2}+\left(\frac{C^{4}(2CA\|(n_{\rm in})_{0}\|_{L^{1}}^{2})^{4}}{4A^{4}(\frac43)^{3}} \right)^{\frac12} \right],
		\end{aligned}
	\end{equation*}
	which implies that
	\begin{equation*}
		\|n_{0}\|_{L^{2}}^{2}-G(t)\leq\|(n_{\rm in})_{0}\|_{L^{2}}^{2}+ 2\left(\frac{C^{4}(2CA\|(n_{\rm in})_{0}\|_{L^{1}}^{2})^{4}}{4A^{4}(\frac43)^{3}} \right)^{\frac12}.
	\end{equation*}
	Hence, using \eqref{G(t)}, one deduces
	\begin{equation}\label{end n0 1}
		\|n_{0}\|_{L^{2}}\leq C\left(\|(n_{\rm in})_{0}\|_{L^{2}}+\|(n_{\rm in})_{0}\|_{L^{1}}^{2}+1 \right).
	\end{equation}
	
	{\bf{Step II: When $0<\mu<\frac12.$}}            
%
%
	It follows from (\ref{n0 n0'}) that
	\begin{equation}\label{n0 1}
		\frac{d}{dt}\|n_{0}\|_{L^{2}}^{2}+\frac{1}{A}\|\nabla n_{0}\|_{L^{2}}^{2}\leq\frac{1}{A}\|n_{0}\|_{L^{4}}^{2}\|\Delta c_{0}\|_{L^{2}}+\frac{C}{A}\|(n_{\neq}\nabla c_{\neq})_{0}+(u_{\neq}n_{\neq})_{0}\|_{L^{2}}^{2}.
	\end{equation}
	Thanks to the Gagliardo-Nirenberg inequality, there holds
	\begin{equation*}
		\|n_{0}\|_{L^{4}}^{2}\leq C\|n_{0}\|_{L^{2}}\|\nabla n_{0}\|_{L^{2}},
	\end{equation*}
	 which, when substituted into \eqref{n0 1}, yields
	\begin{equation}\label{f g}
		\begin{aligned}
			\frac{d}{dt}\|n_{0}\|_{L^{2}}^{2}\leq\frac{C}{A}\|n_{0}\|_{L^{2}}^{2}\|\Delta c_{0}\|_{L^{2}}^{2}+\frac{C}{A}\left(\|(n_{\neq}\nabla c_{\neq})_{0}\|_{L^{2}}^{2}+\|(u_{\neq}n_{\neq})_{0}\|_{L^{2}}^{2} \right).
		\end{aligned}
	\end{equation}
	Using Lemma \ref{lem:the zero mode of c and n} and Lemma \ref{lem:n n^2}, for any $t\in[0,T]$, one deduces
	\begin{equation} \label{eq:temp0.1}
		\frac{C}{A}\int_{0}^{t}\|\Delta c_{0}(s)\|_{L^{2}}^{2}ds \leq\frac{C}{A}\int_{0}^{t}\|n_{0}(s)\|_{L^{2}}^{2}ds\leq C\|(n_{\rm in})_{0}\|_{L^{1}}.
	\end{equation}
	Using \eqref{eq:temp0.1}, integrating (\ref{f g}) in time over $(0,t)$ and applying the Gr$\ddot{\rm o}$nwall inequality, we get
	\begin{equation*}
		\begin{aligned}
			\|n_{0}(t)\|_{L^{2}}^{2}\leq&{\rm e}^{\frac{C}{A}\int_{0}^{t}\|\Delta c_{0}(s)\|_{L^{2}}^{2}ds}\left[\|(n_{\rm in})_{0}\|_{L^{2}}^{2}+\frac{C}{A}\int_{0}^{t}\left(\|(n_{\neq}\nabla c_{\neq})_{0}\|_{L^{2}}^{2}+\|(u_{\neq}n_{\neq})_{0}\|_{L^{2}}^{2} \right)ds \right]\\\leq&C{\rm e}^{C\|(n_{\rm in})_{0}\|_{L^{1}}}\left(\|(n_{\rm in})_{0}\|_{L^{2}}^{2}+1 \right),
		\end{aligned}
	\end{equation*}
	where we make use of \eqref{G(t)}. Therefore, we infer that
	\begin{equation}\label{end n0 2}
		\|n_{0}\|_{L^{2}}\leq C{\rm e}^{C\|(n_{\rm in})_{0}\|_{L^{1}}}\left(\|(n_{\rm in})_{0}\|_{L^{2}}+1\right).
	\end{equation}
	
	Combining (\ref{end n0 1}) with (\ref{end n0 2}), the proof is completed.
\end{proof}

\subsubsection{The case of  $\mu=0$}\

When $\mu=0$, $n_0$ satisfies the equation
\begin{equation*}
	\begin{aligned}
		\partial_t n_0-\frac{1}{A} \Delta n_0= & -\frac{1}{A}\left[\nabla \cdot(n_{\neq} \nabla c_{\neq})_0+\partial_y(n_0 \partial_y c_0)+\partial_z(n_0 \partial_z c_0)\right] \\
		& -\frac{1}{A}\left[\nabla \cdot(u_{\neq} n_{\neq})_0+\partial_y(u_{2,0} n_0)+\partial_z(u_{3,0} n_0)\right].
	\end{aligned}
\end{equation*}
Decomposing $n_0$ into $n_0=n_{(0,0)}+n_{(0,\neq)},$ we have
\begin{equation}
	\begin{aligned}\label{eq:n00}
		\partial_t n_{(0,0)}-\frac{1}{A} \partial_{y y} n_{(0,0)}= & -\frac{1}{A}\big(\partial_y(n_{(0,0)} \partial_y c_{(0,0)})
		+\partial_y(n_{\neq} \partial_y c_{\neq})_{(0,0)}+\partial_y(u_{2, \neq} n_{\neq})_{(0,0)}\\
		& + \partial_y(n_{(0, \neq)} \partial_y c_{(0, \neq)})_{(0,0)}+\partial_y(u_{2,(0, \neq)} n_{(0, \neq)})_{(0,0)}\big)
	\end{aligned}
\end{equation}
and
\begin{equation}\label{eq n_0 neq}
	\begin{aligned}
		&\partial_t n_{(0, \neq)}-\frac{1}{A} \Delta n_{(0, \neq)} \\
		=&  -\frac{1}{A}\left[\nabla \cdot(n_{\neq} \nabla c_{\neq})_{(0, \neq)} +  \nabla\cdot (n_0 \nabla c_0)_{(0, \neq)} \right] 
		-\frac{1}{A}\left[\nabla \cdot(u_{\neq} n_{\neq})_{(0, \neq)} + \nabla\cdot (u_{0} n_0)_{(0, \neq)} \right] \\
		=& -\frac1A \lt[\nabla \cdot(n_{\neq} \nabla c_{\neq})_{(0, \neq)}  + \nabla \cdot(u_{\neq} n_{\neq})_{(0, \neq)}  \rt] \\
		& -\frac1A\lt[ \nabla \cdot ( n_{(0, 0)  } \nabla c_{(0, \neq)  } ) + \py ( n_{(0, \neq)  } \py c_{(0, 0)  } ) +  \nabla \cdot ( n_{(0, \neq)  } \nabla c_{(0, \neq)  } )  - \py ( n_{(0, \neq)  } \py c_{(0, \neq)  } )_{(0, 0)}\rt] \\
		& -\frac1A\lt[ \pz ( u_{3, (0, 0)  } n_{(0, \neq)  } ) + \nabla\cdot ( u_{(0, \neq)  } n_{(0, 0)  } ) +  \nabla \cdot ( u_{(0, \neq)  } n_{(0, \neq)  } ) 
		-\py(u_{2, (0, \neq)  } n_{(0, \neq)  }  )_{(0, 0)}\rt].
	\end{aligned}
\end{equation}

Employing the methodology established in \cite{CWW1}, we proved the following two results.
\begin{Lem} \label{lem:est of n00}
	Under the conditions of Theorem \ref{Thm:main} and the assumptions \eqref{assumption}, it holds that 
	\begin{equation*}\begin{aligned}
			\|n_{\left(0, 0\right)}\|_{L^\infty L^2}^2 \leq C\left(\|(n_{\rm in})_{(0,0)}\|_{L^2}^2
			+A^{-\frac23} F_3^2 F_2^2 + M^4 + M^\frac32 \|n_{(0, \neq)}\|_{L^\infty L^2}^\frac{7}{6}+A^{-\frac{7}{13} \epsilon} F_{1}^\frac{28}{13} \right).
	\end{aligned}\end{equation*}
\end{Lem}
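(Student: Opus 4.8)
The plan is to run an $L^2$ energy estimate directly on the one-dimensional profile $n_{(0,0)}(t,y)$ via its evolution equation \eqref{eq:n00}. Since every term on the right-hand side of \eqref{eq:n00} is a $\partial_y$-derivative and $n_{(0,0)}|_{y=\pm1}=0$, multiplying by $2n_{(0,0)}$ and integrating over $\mathbb{I}$ gives, after one integration by parts with no boundary contribution,
\begin{equation*}
\frac{d}{dt}\|n_{(0,0)}\|_{L^2}^2+\frac{2}{A}\|\partial_y n_{(0,0)}\|_{L^2}^2=\frac{2}{A}\int_{\mathbb{I}}\partial_y n_{(0,0)}\,\big(\mathrm{I}+\mathrm{II}+\mathrm{III}+\mathrm{IV}+\mathrm{V}\big)\,dy,
\end{equation*}
where $\mathrm{I}=n_{(0,0)}\partial_y c_{(0,0)}$, $\mathrm{II}=(n_\neq\partial_y c_\neq)_{(0,0)}$, $\mathrm{III}=(u_{2,\neq}n_\neq)_{(0,0)}$, $\mathrm{IV}=(n_{(0,\neq)}\partial_y c_{(0,\neq)})_{(0,0)}$ and $\mathrm{V}=(u_{2,(0,\neq)}n_{(0,\neq)})_{(0,0)}$. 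For each term I would use Young's inequality to absorb a small multiple of $\frac1A\|\partial_y n_{(0,0)}\|_{L^2}^2$ into the dissipation after estimating the companion factor. The terms $\mathrm{II}$--$\mathrm{V}$ produce genuinely time-integrable forcings, so they are handled by integrating in time; the cubic self-interaction $\mathrm{I}$, by contrast, leaves a time-constant forcing, which I would tame by splitting off the Poincaré coercivity $\frac{c}{A}\|n_{(0,0)}\|_{L^2}^2$ (available since $n_{(0,0)}|_{\pm1}=0$) and running Grönwall, so that the $\frac1A$ prefactor cancels against the $A$ from the relaxation time and an $A$-independent bound results.

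For $\mathrm{I}$ I would integrate by parts once more and invoke the elliptic identity $\partial_{yy}c_{(0,0)}=c_{(0,0)}-n_{(0,0)}$ from the second equation of \eqref{eq:main}, turning this contribution into $\frac1A\|n_{(0,0)}\|_{L^3}^3-\frac1A\int_{\mathbb I} n_{(0,0)}^2 c_{(0,0)}\,dy$; the second piece is sign-definite and discarded since $n_{(0,0)},c_{(0,0)}\ge 0$. The cubic term is then closed by the one-dimensional Gagliardo--Nirenberg inequality together with the mass control $\|n_{(0,0)}\|_{L^1}\le CM$ (Lemma~\ref{lem:n n^2}); being effectively a one-dimensional Keller--Segel term, it is subcritical and, after Young and the Grönwall step above, contributes only a polynomial power of $M$, which in Case~II is absorbed into $M^4$ because $M$ is bounded. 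This is precisely why $n_{(0,0)}$ needs no smallness of the mass on its own: the critical threshold enters only through the two-dimensional profile $n_{(0,\neq)}$ treated in the companion estimate.

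The mixed terms $\mathrm{II}$ and $\mathrm{III}$ involve the nonzero modes; here I would bound the $z$-average by the full $L^2(\mathbb{I}\times\mathbb{T})$ norm, split by Hölder into $\|n_\neq\|_{L^\infty}$ (controlled by $E_3\le 2F_3$) against $\|\nabla c_\neq\|_{L^2}$ and $\|u_{2,\neq}\|_{L^2}$, and then convert these into the enhanced-dissipation norms $\|\partial_x n_\neq\|_{Y_a}$, $\|\partial_x\omega_{2,\neq}\|_{Y_a}$ and $\|u_{2,\neq}\|_{X_a}$ through \eqref{eq:X_a}--\eqref{eq:Y_a} and Lemma~\ref{lem:est of the non-zero mode of c and n}, exactly as in the estimate \eqref{G(t)}; the weight ${\rm e}^{aA^{-1/3}t}$ together with time integration yields the gain $A^{-\frac23}F_3^2F_2^2$. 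Terms $\mathrm{IV}$ and $\mathrm{V}$ are two-dimensional: for $\mathrm{IV}$ I would use the elliptic bound $\|\partial_y c_{(0,\neq)}\|_{L^2}\le C\|n_{(0,\neq)}\|_{L^2}$ (Lemma~\ref{lem:the zero mode of c and n}) and the sharp two-dimensional Sobolev inequality $\|n_{(0,\neq)}\|_{L^3}\le C_*\|n_{(0,\neq)}\|_{L^1}^{1/3}\|\nabla n_{(0,\neq)}\|_{L^2}^{2/3}$, while for $\mathrm{V}$ I would use the coupling with $u_{2,(0,\neq)}$, whose norm carries the negative powers of $A$ built into $E_{1,3}$; interpolating these against the dissipation of $n_{(0,\neq)}$ and using $\|n_{(0,\neq)}\|_{L^1}\le CM$ produces the remaining contributions $M^{3/2}\|n_{(0,\neq)}\|_{L^\infty L^2}^{7/6}$ and $A^{-\frac{7}{13}\epsilon}F_1^{28/13}$.

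The main obstacle I anticipate is the velocity--density coupling term $\mathrm{V}$: one must extract a genuine negative power of $A$ from the $E_{1,3}$ bounds on $u_{2,(0,\neq)}$, which only appear with the weights $A^{\epsilon}$ and $A^{\epsilon/4}$ on the $Y_0$ norms, while simultaneously keeping the density factor at the level of $\|n_{(0,\neq)}\|_{L^\infty L^2}$; this balancing forces a delicate interpolation that is responsible for the fractional exponents $\tfrac{28}{13}$ and $\tfrac{7}{13}\epsilon$. The accompanying book-keeping challenge is to arrange all five Young splittings so that their cumulative $\frac1A\|\partial_y n_{(0,0)}\|_{L^2}^2$ contributions remain strictly below the available $\frac2A\|\partial_y n_{(0,0)}\|_{L^2}^2$ once $A$ exceeds a fixed threshold, leaving enough dissipation for the Poincaré--Grönwall closure that removes all $t$-growth.
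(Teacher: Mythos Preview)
There is a genuine gap in your closure mechanism. Your plan hinges on the Poincar\'e inequality to extract a \emph{linear} damping $-\tfrac{c}{A}\|n_{(0,0)}\|_{L^2}^2$ and then run Gr\"onwall. That route does give a bound on $\|n_{(0,0)}\|_{L^\infty L^2}^2$, but the forcing from term~IV (and similarly~V) is only $L^\infty_t$-bounded, not time-integrable: there is no enhanced dissipation on the $x$-zero modes, and the best pointwise bound available is $\|(n_{(0,\neq)}\partial_y c_{(0,\neq)})_{(0,0)}\|_{L^2}^2\le CM^{1/2}\|n_{(0,\neq)}\|_{L^2}^{7/2}$ (Lemma~\ref{lem:est of 00} plus Lemma~\ref{lem:c00}). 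Feeding this into Poincar\'e--Gr\"onwall produces a contribution $CM^{1/2}\|n_{(0,\neq)}\|_{L^\infty L^2}^{7/2}$, not the $M^{3/2}\|n_{(0,\neq)}\|_{L^\infty L^2}^{7/6}$ stated in the lemma. The exponent $7/6$ is not a detail: it is what allows the later coupling with the $n_{(0,\neq)}$ estimate to close (a power larger than $2$ in $\|n_{(0,\neq)}\|$ could not be absorbed there). Your suggestion to ``interpolate against the dissipation of $n_{(0,\neq)}$'' does not help, since $\|\nabla n_{(0,\neq)}\|_{L^2L^2}^2$ is controlled only through the \emph{other} energy estimate, making the argument circular.

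What the paper actually uses is the one-dimensional \emph{Nash} inequality
\[
\|\partial_y n_{(0,0)}\|_{L^2}^2 \;\ge\; \frac{\|n_{(0,0)}\|_{L^2}^6}{C\|n_{(0,0)}\|_{L^1}^4}\;\ge\;\frac{\|n_{(0,0)}\|_{L^2}^6}{CM^4},
\]
which turns the dissipation into \emph{cubic} damping. With $y=\|n_{(0,0)}\|_{L^2}^2$ the energy inequality becomes, after subtracting the time-integrable nonzero-mode piece $H(t)$,
\[
\frac{d}{dt}\big(y-H(t)\big)\;\le\;-\frac{1}{3CAM^4}\Big(\big(y-H(t)\big)^3 - K_1(t)\Big),
\]
and an ODE comparison yields $y-H\le y(0)+\|K_1\|_{L^\infty}^{1/3}$. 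The cube root of $K_1$, which contains $M^{9/2}\|n_{(0,\neq)}\|_{L^2}^{7/2}$ and $A^{-\epsilon/2}M^4F_1^2\|n_{(0,\neq)}\|_{L^2}^2$, is precisely what brings the exponent on $\|n_{(0,\neq)}\|$ down to $7/6$; a further Young inequality on the second piece then produces the $A^{-\frac{7}{13}\epsilon}F_1^{28/13}$ term. So the fractional exponents you ascribe to a ``delicate interpolation'' in term~V in fact come from the Nash--ODE comparison, and that step is not optional.
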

\begin{proof}
	The $L^2$ energy estimate for \eqref{eq:n00} implies that
	\begin{equation}
		\begin{aligned} \label{eq:est of n00}
			& \frac{d}{d t}\|n_{(0,0)}\|_{L^2}^2+\frac{1}{A}\|\partial_y n_{(0,0)}\|_{L^2}^2 \\
			\leq&     \frac{C}{A}\left(\|\left(n_{\neq} \py c_{\neq}\right)_{\left(0, 0\right)}\|_{L^2}^2+\|\left(u_{2, \neq} n_{\neq}\right)_{\left(0, 0\right)}\|_{L^2}^2\right)   +   \frac{C}{A}\|n_{(0,0)} \partial_y c_{(0,0)}\|_{L^2}^2 \\
			& + \frac{C}{A}  \|(n_{(0, \neq)} \partial_y c_{(0, \neq)})_{(0,0)}\|_{L^2}^2   +  \frac{C}{A}\|(u_{2,(0, \neq)} n_{(0, \neq)})_{(0,0)}\|_{L^2}^2.
		\end{aligned}
	\end{equation}
	It follows from Lemma \ref{lem:c00} that
	\begin{equation}
		\begin{aligned} \label{eq:est of n00 py c00}
			\|n_{(0,0)} \partial_y c_{(0,0)}\|_{L^2}^2  \leq  \| \partial_y c_{(0,0)}\|_{L^\infty}^2 \|n_{(0,0)} \|_{L^2}^2 \leq C\|  n_{(0,0)}\|_{L^2}^4,
		\end{aligned}
	\end{equation}
	from Lemma \ref{lem:est of 00} and Lemma \ref{lem:c00} that
	\begin{equation}
		\begin{aligned}  \label{eq:est of n0neq py c0neq}
			\|(n_{(0, \neq)} \partial_y c_{(0, \neq)})_{(0,0)}\|_{L^2}^2 & \leq C\|n_{(0, \neq)}\|_{L^2}^2\left(\|\partial_y c_{(0, \neq)}\|_{L^2}\|\partial_{y y} c_{(0, \neq)}\|_{L^2}+\|\partial_y c_{(0, \neq)}\|_{L^2}^2\right) \\
			& \leq C M^{\frac{1}{2}}\|n_{(0, \neq)}\|_{L^2}^{\frac{7}{2}}
		\end{aligned}
	\end{equation}
	and from Lemma \ref{lem:est of 00} that
	\begin{equation}
		\begin{aligned} \label{eq:est of u2n}
			\|(u_{2,(0, \neq)} n_{(0, \neq)})_{(0,0)}\|_{L^2}^2 & \leq C\|n_{(0, \neq)}\|_{L^2}^2\|u_{2,(0, \neq)}\|_{L^2}\|\partial_y u_{2,(0, \neq)}\|_{L^2} \\
			& \leq C\|n_{(0, \neq)}\|_{L^2}^2 \|\nabla u_{2, 0}\|_{L^\infty L^2}^2 \leq CA^{-\frac12 \epsilon} \|n_{(0, \neq)}\|_{L^2}^2 F_{1}^2.
		\end{aligned}
	\end{equation}
	Denote by
	\begin{equation*}
		\begin{aligned}
			H(t)  := \frac{C}{A} \int_{0}^{t}\left(\|\left(n_{\neq} \py c_{\neq}\right)_{\left(0, 0\right)}\|_{L^2}^2+\|\left(u_{2, \neq} n_{\neq}\right)_{\left(0, 0\right)}\|_{L^2}^2\right) ds,~~{\rm for}~~t\geq 0.
		\end{aligned}
	\end{equation*}
	Using Nash inequality
	\begin{equation*}
		-\frac1A\|\partial_y n_{(0,0)}\|_{L^2}^2 \leq-\frac{\|n_{(0,0)}\|_{L^2}^6}{CA\|n_{(0,0)}\|_{L^1}^4} =-\frac{\|n_{(0,0)}\|_{L^2}^6}{CA M^4}
	\end{equation*}
	and combining \eqref{eq:est of n00 py c00}, \eqref{eq:est of n0neq py c0neq} and \eqref{eq:est of u2n}, we rewrite \eqref{eq:est of n00} into
	\begin{equation*} 
		\begin{aligned}
			& \frac{d}{d t}\left(\|n_{(0,0)}\|_{L^2}^2-H(t)\right) \\
			\leq & -\frac{\|n_{(0,0)}\|_{L^2}^6}{C A M^4}+\frac{C}{A}\|n_{(0,0)}\|_{L^2}^4+\frac{CM^{\frac{1}{2}}}{A} \|n_{(0, \neq)}\|_{L^2}^{\frac{7}{2}}+\frac{C A^{-\frac12\epsilon} F_{1}^2 }{A}\|n_{(0, \neq)}\|_{L^2}^2   \\
			= & -\frac{1}{C A M^4}\left(\|n_{(0,0)}\|_{L^2}^6-C^2 M^4\|n_{(0,0)}\|_{L^2}^4-C^2 M^{\frac{9}{2}}\|n_{(0, \neq)}\|_{L^2}^{\frac{7}{2}}  -  C^2  A^{-\frac12\epsilon} M^4 F_{1}^2 \|n_{(0, \neq)}\|_{L^2}^2     \right) \\
			\leq & -\frac{1}{3C A M^4}\left( \|n_{(0,0)}\|_{L^2}^6 - \left(C^2 M^4\right)^3  -  3C^2 M^{\frac{9}{2}}\|n_{(0, \neq)}\|_{L^2}^{\frac{7}{2}}  -   3C^2  A^{-\frac12\epsilon} M^4 F_{1}^2 \|n_{(0, \neq)}\|_{L^2}^2 \right)\\
			\leq & -\frac{1}{3C A M^4}\left(\lt( \|n_{(0,0)}\|_{L^2}^2 - H(t)\rt)^3 - K_1(t) \right),
		\end{aligned}
	\end{equation*} 
	where $K_1(t) := \left(C^2 M^4\right)^3  +  3C^2 M^{\frac{9}{2}}\|n_{(0, \neq)}\|_{L^2}^{\frac{7}{2}}  +   3C^2  A^{-\frac12\epsilon} M^4 F_{1}^2 \|n_{(0, \neq)}\|_{L^2}^2$.
	Using the method of proof by contradiction, we infer from the inequality above that
	\begin{equation}\begin{aligned} \label{eq:claim1}
			\|n_{(0,0)}\|_{L^2}^2  - H(t) \leq \|(n_{\rm in})_{(0,0)}\|_{L^2}^2 + \| K_1(\cdot) \|_{L^\infty}^\frac13.
	\end{aligned}\end{equation}
	By \eqref{lem:est of the non-zero mode of c and n} and \eqref{eq:velocity transform}, we get that
	\begin{equation*}
		\begin{aligned}
			H(t) & \leq \frac{C}{A}\|n_{\neq}\|_{L^{\infty} L^{\infty}}^2(\|n_{\neq}\|_{L^2 L^2}^2  +  \|u_{2, \neq}\|_{L^2 L^2}^2) \\
			& \leq \frac{C}{A}\|n\|_{L^{\infty} L^{\infty}}^2 (\|\px n_{\neq}\|_{L^2 L^2}^2  +\|\partial_{x} \nabla u_{2, \neq}\|_{L^2 L^2}^2 ) \leq CA^{-\frac23} F_3^2 (\|\px n_{\neq}\|_{Y_a}^2   +  \|u_{2, \neq}\|_{X_a}^2),
		\end{aligned}
	\end{equation*}
	which together with \eqref{eq:claim1} gives
	\begin{equation*}
		\begin{aligned}
			\|n_{(0,0)}\|_{L^\infty L^2}^2 & \leq \|(n_{\rm in})_{(0,0)}\|_{L^2}^2 + CA^{-\frac23} F_3^2 
			(\|\px n_{\neq}\|_{Y_a}^2 +\|u_{2, \neq}\|_{X_a}^2) \\
			&\quad + CM^4 + CM^\frac32 \|n_{(0, \neq)}\|_{L^\infty L^2}^\frac{7}{6}  + CA^{-\frac16 \epsilon} M^{\frac43} F_{1}^\frac23  \|n_{(0, \neq)}\|_{L^\infty L^2}^\frac{2}{3}\\
			&\leq  \|(n_{\rm in})_{(0,0)}\|_{L^2}^2 + CA^{-\frac23} F_3^2 (\|\px n_{\neq}\|_{Y_a}^2+\|u_{2, \neq}\|_{X_a}^2)\\
			&\quad + CM^4 + CM^\frac32 \|n_{(0, \neq)}\|_{L^\infty L^2}^\frac{7}{6}  + CA^{-\frac{7}{13} \epsilon} F_{1}^\frac{28}{13}.
		\end{aligned}
	\end{equation*}

	The proof is complete by assumption (\ref{assumption}).
\end{proof}
\begin{Lem}\label{lem:est of n001}
	Under the conditions of Theorem \ref{Thm:main} and the assumptions \eqref{assumption}, as long as $M<\frac89\pi$, it holds that
	\begin{equation*}
		\begin{aligned}
			\|n_{\left(0, \neq\right)}\|_{L^\infty L^2}^2 \leq C 
			\left( \|(n_{\rm in})_{(0,\neq)}\|_{L^2}^2 + \|(n_{\rm in})_{(0,0)}\|_{L^2}^4 + A^{-2} F_2^6 F_3^6  + M^{10}  + A^{-2\epsilon}F_{1}^2 F_3^2 + A^{-7\epsilon} F_1^{28} + 1\right).
		\end{aligned}
	\end{equation*}
\end{Lem}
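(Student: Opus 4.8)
The plan is to run an $L^2$ energy estimate on the evolution equation \eqref{eq n_0 neq} for $n_{(0,\neq)}$, treating it as a two-dimensional Keller--Segel problem on $\mathbb{I}\times\mathbb{T}$ whose self-interaction becomes critical precisely at the mass $\tfrac{8\pi}{9}$. First I would test \eqref{eq n_0 neq} against $2n_{(0,\neq)}$ and integrate over $\mathbb{I}\times\mathbb{T}$, using the Dirichlet condition $n_{(0,\neq)}|_{y=\pm1}=0$ and the fact that $n_{(0,\neq)}$ is mean-zero in $z$ (so that any term whose $(0,0)$ projection is subtracted in \eqref{eq n_0 neq} pairs to zero against $n_{(0,\neq)}$). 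This produces
\[
\frac{d}{dt}\|n_{(0,\neq)}\|_{L^2}^2+\frac{2}{A}\|\nabla n_{(0,\neq)}\|_{L^2}^2=\text{(self-interaction)}+\text{(good terms)}.
\]
Integrating the self-interaction $-\tfrac1A\int n_{(0,\neq)}\nabla\cdot(n_{(0,\neq)}\nabla c_{(0,\neq)})$ by parts and replacing $\Delta c_{(0,\neq)}$ by $c_{(0,\neq)}-n_{(0,\neq)}$ from the elliptic equation yields the critical cubic $\tfrac1A\int n_{(0,\neq)}^3$ plus the subcritical remainder $-\tfrac1A\int n_{(0,\neq)}^2 c_{(0,\neq)}$.

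The crux is the cubic term. I would bound $\int n_{(0,\neq)}^3\le\|n_{(0,\neq)}\|_{L^3}^3$ and apply the sharp Gagliardo--Nirenberg--Sobolev inequality (Lemma A.3 in \cite{CWW1}, with $C_*^3=\tfrac94$) in the form $\|n_{(0,\neq)}\|_{L^3}^3\le C_*^3\|n_{(0,\neq)}\|_{L^1}\|\nabla n_{(0,\neq)}\|_{L^2}^2$, which is admissible exactly because $n_{(0,\neq)}$ satisfies both $n_{(0,\neq)}|_{y=\pm1}=0$ and $\int_{\mathbb{T}}n_{(0,\neq)}\,dz=0$. Since $n_0\ge0$, one has $\|n_{(0,\neq)}\|_{L^1(\mathbb{I}\times\mathbb{T})}\le\|n_0\|_{L^1}+\|n_{(0,0)}\|_{L^1}\le\tfrac{M}{\pi}$, so the cubic term is controlled by $\tfrac{C_*^3 M}{\pi A}\|\nabla n_{(0,\neq)}\|_{L^2}^2$. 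Under the hypothesis $M<\tfrac{8\pi}{9}=\tfrac{2\pi}{C_*^3}$ the number $\delta_0:=2-\tfrac{C_*^3 M}{\pi}$ is strictly positive, so the cubic term is strictly dominated by the dissipation, leaving a genuine damping $\tfrac{\delta_0}{A}\|\nabla n_{(0,\neq)}\|_{L^2}^2$ on the left. This residual dissipation is the budget that will absorb the Young-inequality remainders from the remaining terms.

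It then remains to estimate the good terms in \eqref{eq n_0 neq}. The nonzero-mode interactions $\nabla\cdot(n_{\neq}\nabla c_{\neq})_{(0,\neq)}$ and $\nabla\cdot(u_{\neq}n_{\neq})_{(0,\neq)}$ I would control via the enhanced-dissipation norms $X_a,Y_a$, Lemma \ref{lem:est of the non-zero mode of c and n} and Lemma \ref{lemma_u}, together with $E_3\le 2F_3$; this yields the factor $A^{-2}F_2^6F_3^6$. The cross terms with the $z$-average, namely $\nabla\cdot(n_{(0,0)}\nabla c_{(0,\neq)})$ and $\py(n_{(0,\neq)}\py c_{(0,0)})$, I would treat by the elliptic bounds of Lemma \ref{lem:c00}, interpolation, and Young's inequality, parking the gradient factor in the $\delta_0$-dissipation and invoking Lemma \ref{lem:est of n00} to trade $\|n_{(0,0)}\|$ for initial data; this is where $\|(n_{\rm in})_{(0,0)}\|_{L^2}^4$ and $M^{10}$ originate, and the subcritical remainder $-\tfrac1A\int n_{(0,\neq)}^2 c_{(0,\neq)}$ feeds into the same powers of $M$. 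Finally the velocity couplings $\pz(u_{3,(0,0)}n_{(0,\neq)})$, $\nabla\cdot(u_{(0,\neq)}n_{(0,0)})$ and $\nabla\cdot(u_{(0,\neq)}n_{(0,\neq)})$ are handled with the $Y_0$ control of $u_{2,0},u_{3,0}$ and the energy $E_1\le2F_1$, whose $A^{-\epsilon}$ weights produce $A^{-2\epsilon}F_1^2F_3^2$ and $A^{-7\epsilon}F_1^{28}$. Integrating over $[0,t]$ and using that the nonzero-mode pieces decay exponentially while the rest are time-integrable through the $L^2L^2$ parts of the norms then gives the asserted $L^\infty L^2$ bound.

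The main obstacle is the sharp balance in the second step: global existence in Case II hinges on tracking the optimal constant $C_*$ and the exact $L^1$ bound $\|n_{(0,\neq)}\|_{L^1}\le\tfrac{M}{\pi}$ so that they combine to give precisely $\tfrac{8\pi}{9}$, with no slack to waste. A related subtlety is the feedback with Lemma \ref{lem:est of n00}: removing $n_{(0,0)}$ from the cross terms reintroduces powers of $\|n_{(0,\neq)}\|_{L^\infty L^2}$ through its $M^{3/2}\|n_{(0,\neq)}\|_{L^\infty L^2}^{7/6}$ term, so the estimate must be closed self-consistently, absorbing these contributions by Young's inequality and the a priori bound \eqref{assumption} for $A$ large while being careful not to consume the positive residual $\delta_0$.
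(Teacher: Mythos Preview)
Your overall strategy matches the paper's: test \eqref{eq n_0 neq} against $2n_{(0,\neq)}$, isolate the critical self-interaction $\tfrac1A\int n_{(0,\neq)}^3$, control it by the sharp inequality $\|n_{(0,\neq)}\|_{L^3}^3\le C_*^3\|n_{(0,\neq)}\|_{L^1}\|\nabla n_{(0,\neq)}\|_{L^2}^2$ together with $\|n_{(0,\neq)}\|_{L^1}\le M/\pi$, and then invoke Lemma~\ref{lem:est of n00} for the cross terms with $n_{(0,0)}$. Your identification of the threshold $M<\tfrac{2\pi}{C_*^3}=\tfrac{8\pi}{9}$ and of the feedback loop with Lemma~\ref{lem:est of n00} is correct.

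The gap is in the last paragraph. After the Young-inequality absorptions, the differential inequality takes the form
\[
\frac{d}{dt}\|n_{(0,\neq)}\|_{L^2}^2+\frac{\delta_0'}{A}\|\nabla n_{(0,\neq)}\|_{L^2}^2
\le G'(t)+\frac{C}{A}\Big(M^2\|n_{(0,\neq)}\|_{L^2}^3+M^{1/2}\|n_{(0,\neq)}\|_{L^2}^{7/2}
+\|n_{(0,0)}\|_{L^2}^{\frac{2}{1-\tau}}\|n_{(0,\neq)}\|_{L^2}^2+\cdots\Big),
\]
and the point is that, in the case $\mu=0$, none of the displayed right-hand terms is time-integrable: there is no analogue of Lemma~\ref{lem:n n^2} to make $\tfrac1A\int_0^t\|n_{(0,0)}\|_{L^2}^2$ finite, and the superlinear terms $\|n_{(0,\neq)}\|_{L^2}^3$, $\|n_{(0,\neq)}\|_{L^2}^{7/2}$ cannot be absorbed into a Poincar\'e-type linear damping without smallness that is not available. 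Simply ``integrating over $[0,t]$'' therefore does not close the estimate.

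What the paper does instead, and what your outline is missing, is to spend the residual dissipation through the Nash inequality
\[
-\|\nabla n_{(0,\neq)}\|_{L^2}^2\le -\frac{\|n_{(0,\neq)}\|_{L^2}^4}{C\,\|n_{(0,\neq)}\|_{L^1}^2}\le -\frac{\|n_{(0,\neq)}\|_{L^2}^4}{CM^2},
\]
which converts the damping into a \emph{quartic} term in $\|n_{(0,\neq)}\|_{L^2}$. One then rewrites the inequality as
\[
\frac{d}{dt}\big(\|n_{(0,\neq)}\|_{L^2}^2-G(t)\big)\le -\frac{c}{AM^2}\Big[\big(\|n_{(0,\neq)}\|_{L^2}^2-G(t)\big)^2-K(t)\Big],
\]
with $K(t)$ built from $\|n_{(0,0)}\|_{L^\infty L^2}$, $M$, and the $A^{-\epsilon}$-weighted $F_j$'s. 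A comparison argument for this ODE yields $\|n_{(0,\neq)}\|_{L^2}^2-G(t)\le\|(n_{\rm in})_{(0,\neq)}\|_{L^2}^2+\|K\|_{L^\infty}^{1/2}$, after which one substitutes Lemma~\ref{lem:est of n00} (with $\tau=\tfrac16$) and absorbs the resulting $\|n_{(0,\neq)}\|_{L^\infty L^2}^{7/5}$ by Young. This Nash-plus-comparison step is the mechanism that produces the specific exponents $\|(n_{\rm in})_{(0,0)}\|_{L^2}^4$, $M^{10}$, $A^{-7\epsilon}F_1^{28}$ in the final bound; without it the argument does not close.
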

\begin{proof}
	As $n_{(0,\neq)}$ satisfies $(\ref{eq n_0 neq})$, the $L^2$ energy estimate gives
	\begin{equation} \label{eq:est of pt n0neq}
		\frac{d}{d t}\|n_{(0, \neq)}\|_{L^2}^2+\frac{2}{A}\|\nabla n_{(0, \neq)}\|_{L^2}^2 = \sum_{i=1}^{5} I_i,
	\end{equation}
	where
	\begin{equation*}
		\begin{aligned}
			&I_1 := -\frac2A \int_{  \mathbb{I} \times \mathbb{T}} n_{\left(0, \neq\right)} \nabla \cdot ( n_{\left(0, \neq\right)  } \nabla c_{\left(0, \neq\right)  } ) dydz, \\
			&I_2 := -\frac{2}{A} \int_{\mathbb{I} \times \mathbb{T}}  n_{(0, \neq)}  \left[\nabla \cdot\left(n_{\neq} \nabla c_{\neq}\right)_{(0, \neq)}+\nabla \cdot\left(u_{\neq} n_{\neq}\right)_{(0, \neq)}\right]  d y d z, \\
			&I_3 := -\frac{2}{A} \int_{\mathbb{I} \times \mathbb{T}}  n_{(0, \neq)}  \nabla \cdot ( n_{\left(0, 0\right)  } \nabla c_{\left(0, \neq\right)  } )  dydz, \\
			&I_4 := -\frac{2}{A} \int_{\mathbb{I} \times \mathbb{T}}  n_{(0, \neq)} \lt[\py ( n_{\left(0, \neq\right)  } \py c_{\left(0, 0\right)  } ) + \pz ( u_{3, \left(0, 0\right)  } n_{\left(0, \neq\right)  } ) + \nabla\cdot ( u_{\left(0, \neq\right)  } n_{\left(0, 0\right)  } )  \rt]  d y d z
		\end{aligned}
	\end{equation*}
	and
	\begin{equation*}\begin{aligned}
			I_5 := \frac{2}{A} \int_{\mathbb{I} \times \mathbb{T}}  n_{(0, \neq)} \lt[\py ( n_{\left(0, \neq\right)  } \py c_{\left(0, \neq\right)  } )_{\left(0, 0\right)}  + \py (u_{2, \left(0, \neq\right)  } n_{\left(0, \neq\right)  }  )_{\left(0, 0\right)} \rt]  d y d z.
	\end{aligned}\end{equation*}

	For $I_1$, noting that
	\begin{equation*}\begin{aligned}
			\|n_{(0, \neq)}\|_{L^3} \leq C_{*}\|n_{(0, \neq)}\|_{L^1}^{\frac{1}{3}}\|\nabla n_{(0, \neq)}\|_{L^2}^{\frac{2}{3}}.
	\end{aligned}\end{equation*}
Moreover, by Lemma \ref{lem:c00}, there holds
	\begin{equation*}\begin{aligned}
			\|c_{(0, \neq)}\|_{L^3} \leq C\|c_{(0, \neq)}\|_{L^2}^{\frac{2}{3}}\|\nabla c_{(0, \neq)}\|_{L^2}^{\frac{1}{3}} \leq C \|n_{(0, \neq)}\|_{L^2}.
	\end{aligned}\end{equation*}
	Then we have 
	\begin{equation}\begin{aligned} \label{eq:est of I1}
			I_1 & =\frac{1}{A} \int_{\mathbb{I} \times \mathbb{T}} n_{(0, \neq)}^3 d y d z-\frac{1}{A} \int_{\mathbb{I} \times \mathbb{T}} n_{(0, \neq)}^2 c_{(0, \neq)} d y d z \\
			& \leq \frac{C_{*}^3}{A}\|n_{(0, \neq)}\|_{L^1}\|\nabla n_{(0, \neq)}\|_{L^2}^2+\frac{1}{A}\|n_{(0, \neq)}\|_{L^3}^2\|c_{(0, \neq)}\|_{L^3} \\
			& \leq \frac{C_{*}^3 M}{\pi A}\|\nabla n_{(0, \neq)}\|_{L^2}^2+\frac{C}{A}\|n_{(0, \neq)}\|_{L^1}^{\frac{2}{3}}\|\nabla n_{(0, \neq)}\|_{L^2}^{\frac{4}{3}}\|n_{(0, \neq)}\|_{L^2} \\
			& \leq \frac{C_{*}^3 M}{\pi A}\|\nabla n_{(0, \neq)}\|_{L^2}^2+\frac{\delta}{A}\|\nabla n_{(0, \neq)}\|_{L^2}^2+\frac{C(\delta)}{A} M^2\|n_{(0, \neq)}\|_{L^2}^3,
	\end{aligned}\end{equation}
	where $\delta>0$ is a small constant and we use that $-\Delta c_{(0, \neq)}=n_{(0, \neq)}-c_{(0, \neq)}$ and $\|n_{(0, \neq)}\|_{L^1(\mathbb{I} \times \mathbb{T})} \leq \frac{2}{|\mathbb{T}|}\|n\|_{L^1(\mathbb{T} \times \mathbb{I} \times \mathbb{T})}=\frac{M}{\pi}$.
	
	For $I_2$, direct calculations indicate that
	\begin{equation}\begin{aligned} \label{eq:est of I2}
			I_2  \leq \frac{\delta}{A}\|\nabla n_{(0, \neq)}\|_{L^2}^2+\frac{C(\delta)}{A}\left(\|(n_{\neq} \nabla c_{\neq})_{0}\|_{L^2}^2+\|(u_{\neq} n_{\neq})_{0}\|_{L^2}^2\right).
	\end{aligned}\end{equation}

	For $I_3$, by  Lemma \ref{lem:c00} and Lemma \ref{lem:est of 0neq L^infty}, we arrive at
	\begin{equation*}\begin{aligned}
			\|\nabla c_{(0, \neq)}\|_{L^{\infty}} \leq C\|\partial_z \Delta c_{(0, \neq)}\|_{L^2}^\tau\|\Delta c_{(0, \neq)}\|_{L^2}^{1-\tau} \leq C\|\partial_z n_{(0, \neq)}\|_{L^2}^\tau\|n_{(0, \neq)}\|_{L^2}^{1-\tau},
	\end{aligned}\end{equation*}
	where $0<\tau \leq 1$. Then,
	\begin{equation}
		\begin{aligned} \label{eq:est of I3}
			I_3 
			& \leq \frac{C}{A}\|\nabla c_{(0, \neq)}\|_{L^{\infty}}\|n_{(0,0)}\|_{L^2}\|\nabla n_{(0, \neq)}\|_{L^2} \\
			& \leq \frac{C}{A}\|\partial_z n_{(0, \neq)}\|_{L^2}^\tau\|n_{(0, \neq)}\|_{L^2}^{1-\tau}\|n_{(0,0)}\|_{L^2}\|\nabla n_{(0, \neq)}\|_{L^2} \\
			& \leq \frac{\delta}{A}\|\nabla n_{(0, \neq)}\|_{L^2}^2+\frac{C(\delta)}{A}\|n_{(0, \neq)}\|_{L^2}^2\|n_{(0,0)}\|_{L^2}^{\frac{2}{1-\tau}}.
		\end{aligned}
	\end{equation}
	
	For $I_4$, using Lemma \ref{lem:c00}, there holds that
	\begin{equation}\begin{aligned} \label{eq:est of I4}
			I_4  
			& \leq \frac{2}{A}\|n_{(0, \neq)}\|_{L^2}\|\partial_y c_{(0,0)}\|_{L^{\infty}}\|\nabla n_{(0, \neq)}\|_{L^2} +  \frac{C}{A}\|n\|_{L^{\infty} L^{\infty}}\|(u_{2,0},u_{3,0})\|_{L^{\infty} L^2}\|\nabla n_{(0, \neq)}\|_{L^2} \\
			& \leq \frac{C}{A}\|n_{(0, \neq)}\|_{L^2}\|n_{(0,0)}\|_{L^2}\|\nabla n_{(0, \neq)}\|_{L^2} + \frac CA  A^{-\epsilon}F_{1}F_3 \|\nabla n_{(0, \neq)}\|_{L^2} \\
			& \leq \frac{\delta}{A}\|\nabla n_{(0, \neq)}\|_{L^2}^2+\frac{C(\delta)}{A}\|n_{(0, \neq)}\|_{L^2}^2\|n_{(0,0)}\|_{L^2}^2 + \frac {C(\delta)}{A} A^{-2\epsilon}F_{1}^2 F_3^2 .
	\end{aligned}\end{equation}

	For $I_5$, according to \eqref{eq:est of n0neq py c0neq} and \eqref{eq:est of u2n}, one deduces
	\begin{equation}\begin{aligned} \label{eq:est of I5}
			I_5 	& \leq \frac{2}{A}\|(n_{(0, \neq)} \partial_y c_{(0, \neq)})_{(0,0)}\|_{L^2}\|\nabla n_{(0, \neq)}\|_{L^2}  + \frac{2}{A}\|(u_{2,(0, \neq)} n_{(0, \neq)})_{(0,0)}\|_{L^2}\|\nabla n_{(0, \neq)}\|_{L^2}\\
			& \leq \frac{C}{A} M^{\frac{1}{4}}\|n_{(0, \neq)}\|_{L^2}^{\frac{7}{4}}\|\nabla n_{(0, \neq)}\|_{L^2}  +   \frac CA A^{-\frac14 \epsilon} F_{1} \|n_{(0, \neq)}\|_{L^2}  \|\nabla n_{(0, \neq)}\|_{L^2} \\
			& \leq \frac{\delta}{A}\|\nabla n_{(0, \neq)}\|_{L^2}^2  +  \frac{C(\delta)}{A} M^{\frac{1}{2}}\|n_{(0, \neq)}\|_{L^2}^{\frac{7}{2}}  +  \frac{C(\delta)}{A} A^{-\frac12 \epsilon} F_{1}^2 \|n_{(0, \neq)}\|_{L^2}^2 .
	\end{aligned}\end{equation}
	Summing up \eqref{eq:est of I1}, \eqref{eq:est of I2}, \eqref{eq:est of I3}, \eqref{eq:est of I4}, \eqref{eq:est of I5} and \eqref{eq:est of pt n0neq}, we get that
	\begin{equation} \label{eq:temp0.2}
		\begin{aligned}
			& \frac{d}{d t}\|n_{(0, \neq)}\|_{L^2}^2+\frac{2}{A}\|\nabla n_{(0, \neq)}\|_{L^2}^2 \\
			\leq&  \frac{C_{*}^3 M}{\pi A}\|\nabla n_{(0, \neq)}\|_{L^2}^2+\frac{5 \delta}{A}\|\nabla n_{(0, \neq)}\|_{L^2}^2  +  \frac{C(\delta)}{A}\left(\|(n_{\neq} \nabla c_{\neq})_{0}\|_{L^2}^2+\|(u_{\neq} n_{\neq})_{0}\|_{L^2}^2\right) \\
			&  +\frac{C(\delta)}{A}\|n_{(0, \neq)}\|_{L^2}^2\|n_{(0,0)}\|_{L^2}^{\frac{2}{1-\tau}} +\frac{C(\delta)}{A}\|n_{(0, \neq)}\|_{L^2}^2\|n_{(0,0)}\|_{L^2}^2 +\frac{C(\delta)}{A} M^{\frac{1}{2}}\|n_{(0, \neq)}\|_{L^2}^{\frac{7}{2}}\\
			& +\frac{C(\delta)}{A}  A^{-\frac12 \epsilon} F_{1}^2  \|n_{(0, \neq)}\|_{L^2}^2 +\frac{C(\delta)}{A} A^{-2\epsilon}F_{1}^2  F_3^2 +\frac{C(\delta)}{A} M^2\|n_{(0, \neq)}\|_{L^2}^3 .
		\end{aligned}
	\end{equation}
Letting $5\delta = 1 - \frac{C_{*}^3 M}{2\pi} > 0$, by Lemma A.3 in \cite{CWW1}, we can set $C_{*}^{3}=\frac94$, which provides an exact upper bound for the assumption on the initial mass $M$ , i.e., $M<\frac89\pi.$
	Besides, recalling the definition of $G(t)$ in (\ref{def G(t)}) and
	 using Nash inequality
	\begin{equation*}\begin{aligned}
			-\|\nabla n_{(0, \neq)}\|_{L^2}^2 \leq-\frac{\|n_{(0, \neq)}\|_{L^2}^4}{C\|n_{(0, \neq)}\|_{L^1}^2} \leq-\frac{\|n_{(0, \neq)}\|_{L^2}^4}{C M^2},
	\end{aligned}\end{equation*}
	we rewrite \eqref{eq:temp0.2} into 
	\begin{equation}
		\begin{aligned} \label{eq:temp0.3}
			& \frac{d}{d t}\left(\|n_{(0, \neq)}\|_{L^2}^2-G(t)\right) \\
			\leq & -\frac{5 \delta}{C A M^2}\|n_{(0, \neq)}\|_{L^2}^4  +  \frac{C}{A} M^{\frac{1}{2}}  \|n_{(0, \neq)}\|_{L^2}^{\frac{7}{2}}  +  \frac{C}{A} M^2\|n_{(0, \neq)}\|_{L^2}^3 \\
			& +\frac{C}{A}\left( A^{-\frac12 \epsilon} F_{1}^2  +  \|n_{(0,0)}\|_{L^2}^{\frac{2}{1-\tau}} + \|n_{(0,0)}\|_{L^2}^{2}\right)   \|n_{(0, \neq)}\|_{L^2}^2  +  \frac{C}{A} A^{-2\epsilon}F_{1}^2 F_3^2 \\
			\leq & -\frac{5 \delta}{4C A M^2} \left[  \|n_{(0, \neq)}\|_{L^2}^4  - \frac{7^7 C^{16} M^{20}}{2^{8} (5\delta)^8}   -  \frac{27C^8 M^{16}}{(5\delta)^4} \rt.\\
			&\lt. -  \frac{4C^4 M^4}{(5\delta)^2} \left( A^{-\frac12 \epsilon} F_{1}^2  +  \|n_{(0,0)}\|_{L^2}^{\frac{2}{1-\tau}} + \|n_{(0,0)}\|_{L^2}^{2}\right)^2 -  \frac{4C^2 M^2}{5\delta} A^{-2\epsilon}F_{1}^2 F_3^2    \right],
		\end{aligned}
	\end{equation}
	where we use that
	$\frac{C^2 M^\frac52}{5\delta} \|n_{(0, \neq)}\|_{L^2}^{\frac{7}{2}} \leq \frac14 \|n_{(0, \neq)}\|_{L^2}^{4}  + \frac{7^7 C^{16} M^{20}}{2^{10} (5\delta)^8} .$
	Denoting
	\begin{equation*}\begin{aligned}
			K_2 :=  & \frac{7^7 C^{16} M^{20}}{2^{8} (5\delta)^8}   +  \frac{27C^8 M^{16}}{(5\delta)^4}  +  \frac{4C^2 M^2}{5\delta} A^{-2\epsilon}F_{1}^2 F_3^2, \\
			K_3(t)  :=&\frac{4C^4 M^4}{(5\delta)^2} \left( A^{-\frac12 \epsilon} F_{1}^2  +  \|n_{(0,0)}\|_{L^2}^{\frac{2}{1-\tau}} + \|n_{(0,0)}\|_{L^2}^{2}\right)^2,
	\end{aligned}\end{equation*}
	we infer from (\ref{eq:temp0.3}) that 
	\begin{equation}\begin{aligned} \label{eq:claim2}
			\|n_{(0, \neq)} (t)\|_{L^2}^2 - G(t)  \leq \|(n_{\rm in})_{(0,\neq)}\|_{L^2}^2 + K_2^\frac12 + \| K_3(\cdot)\|_{L^\infty}^\frac12
	\end{aligned}\end{equation}
	for all $0\leq t \leq T$. 
	Then, using (\ref{G(t)}), \eqref{eq:claim2} and Lemma \ref{lem:est of n00}, by setting $\tau = \frac16$, there holds
	\begin{equation*}\begin{aligned} \label{eq:temp0.5}
			&\|n_{(0, \neq)}\|_{L^\infty L^2}^2 \\
			\leq& \|(n_{\rm in})_{(0,\neq)}\|_{L^2}^2 + CA^{-\frac23} F_2^2 F_3^2 + CM^{10}  +  C \left(A^{-\epsilon}F_{1} F_3\right)^\frac{10}{9} + C \\
			& +  C\left( A^{-\frac12\epsilon} F_1^2\right)^\frac54 +  CM^2 \|n_{(0,0)}\|_{L^2}^{\frac{12}{5}} \\
			\leq & \|(n_{\rm in})_{(0,\neq)}\|_{L^2}^2 + CA^{-\frac23} F_2^2 F_3^2 + CM^{10}  +  C \left(A^{-\epsilon}F_{1} F_3\right)^\frac{10}{9} +  C\left( A^{-\frac12\epsilon} F_1^2\right)^\frac54 + C \\
			& + \left(\|(n_{\rm in})_{(0,0)}\|_{L^2}^2 + CA^{-\frac23} F_3^2 F_2^2 + CM^4 + CM^\frac32 \|n_{(0, \neq)}\|_{L^\infty L^2}^\frac{7}{6}  + CA^{-\frac{7}{13} \epsilon} F_{1}^\frac{28}{13}\right)^\frac65 \\
			\leq  & 
			C \left( \|(n_{\rm in})_{(0,\neq)}\|_{L^2}^2 + \|(n_{\rm in})_{(0,0)}\|_{L^2}^4  + A^{-2} F_2^6 F_3^6  + M^{10}  + A^{-2\epsilon}F_{1}^2 F_3^2 + A^{-7\epsilon} F_1^{28} + 1\right),
	\end{aligned}\end{equation*}
	where we use
	\begin{equation*}\begin{aligned}
			CM^\frac95 \|n_{(0, \neq)} \|_{L^\infty L^2}^\frac75 \leq \frac12 \|n_{(0, \neq)}\|_{L^\infty L^2}^2 + CM^{6}.
	\end{aligned}\end{equation*}

	Using assumptions (\ref{assumption}), the proof is completed.
\end{proof}
By Lemma \ref{lem:est of n00} and Lemma \ref{lem:est of n001}, the $L^{2}$ estimate of the zero-mode density function can be obtained when $\mu=0.$
\begin{Lem}\label{lem:est of n0 mu=0}
	Under Lemma \ref{lem:est of n00} and Lemma \ref{lem:est of n001}, there exists a positive constant 
	\begin{equation*}
		\mathcal{B}_{1,2}:=F_1^{24}+F_2^{24}+F_3^{24}
	\end{equation*}
	independent of $t$ and $A$, such that if $A\geq \mathcal{B}_{1,2}$, there holds
	\begin{equation*}\begin{aligned}
			\|n_{0}\|_{L^{\infty}L^{2}} \leq C \left( \|(n_{\rm in})_{(0,\neq)}\|_{L^2} + \|(n_{\rm in})_{(0,0)}\|_{L^2}^2  + M^{5}  + 1\right).
	\end{aligned}\end{equation*}
\end{Lem}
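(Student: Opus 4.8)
The plan is to combine the two preceding lemmas through the orthogonal splitting $n_0=n_{(0,0)}+n_{(0,\neq)}$. Since $n_{(0,0)}$ depends only on $y$ while $n_{(0,\neq)}$ has vanishing $z$-average, the two pieces are orthogonal in $L^2(\mathbb{I}\times\mathbb{T})$, so that $\|n_0\|_{L^\infty L^2}^2\le \|n_{(0,0)}\|_{L^\infty L^2}^2+\|n_{(0,\neq)}\|_{L^\infty L^2}^2$. It then suffices to bound each piece by the right-hand side of the claimed inequality. The role of the hypothesis $A\ge \mathcal{B}_{1,2}=F_1^{24}+F_2^{24}+F_3^{24}$ is to neutralise every term in Lemma \ref{lem:est of n00} and Lemma \ref{lem:est of n001} that carries a positive power of $F_1$, $F_2$ or $F_3$: each such term appears multiplied by a strictly negative power of $A$, and $\mathcal{B}_{1,2}$ is tuned so that the net power of $A$ remains negative.

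To make this precise, I would first record that $A\ge \mathcal{B}_{1,2}$ gives $F_i\le A^{1/24}$ for $i\in\{1,2,3\}$, whence any monomial $A^{-\alpha}F_1^{a}F_2^{b}F_3^{c}\le A^{-\alpha+(a+b+c)/24}$. Auditing the relevant terms: $A^{-2/3}F_2^2F_3^2$ carries exponent $-\tfrac12$; $A^{-7\epsilon/13}F_1^{28/13}$ carries exponent $\tfrac{7}{13}\!\left(\tfrac16-\epsilon\right)$; $A^{-2}F_2^6F_3^6$ carries exponent $-\tfrac32$; $A^{-2\epsilon}F_1^2F_3^2$ carries exponent $\tfrac16-2\epsilon$; and $A^{-7\epsilon}F_1^{28}$ carries exponent $\tfrac76-7\epsilon$. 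Because $\epsilon>\tfrac23>\tfrac16$, each of these exponents is strictly negative, so (after enlarging $\mathcal{B}_{1,2}$ to exceed $1$ if necessary) all such monomials are bounded by an absolute constant $C$.

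With these reductions in hand, applying the audit to Lemma \ref{lem:est of n001} and using $M<\tfrac89\pi$ leaves $\|n_{(0,\neq)}\|_{L^\infty L^2}^2\le C\big(\|(n_{\rm in})_{(0,\neq)}\|_{L^2}^2+\|(n_{\rm in})_{(0,0)}\|_{L^2}^4+M^{10}+1\big)$. I would then insert this bound into the coupling term $M^{3/2}\|n_{(0,\neq)}\|_{L^\infty L^2}^{7/6}$ appearing in Lemma \ref{lem:est of n00}. Writing $\|n_{(0,\neq)}\|_{L^\infty L^2}^{7/6}=\big(\|n_{(0,\neq)}\|_{L^\infty L^2}^2\big)^{7/12}$ and expanding produces, for instance, the factor $M^{3/2}(M^{10})^{7/12}=M^{22/3}$, which is controlled by $C(M^{10}+1)$ for all $M>0$ since $\tfrac{22}{3}<10$; the analogous mixed initial-data factors are handled the same way by Young's inequality. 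This yields $\|n_{(0,0)}\|_{L^\infty L^2}^2\le C\big(\|(n_{\rm in})_{(0,\neq)}\|_{L^2}^2+\|(n_{\rm in})_{(0,0)}\|_{L^2}^4+M^{10}+1\big)$ as well. Summing the two estimates and taking a square root produces exactly the asserted bound.

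The computation is largely bookkeeping, so the one place that genuinely requires attention is the interaction just described: one must verify both that the worst $A$-exponent, namely the $-\tfrac12$ coming from $A^{-2/3}F_2^2F_3^2$, is still negative under the specific choice $\mathcal{B}_{1,2}=F_1^{24}+F_2^{24}+F_3^{24}$, and that substituting the $n_{(0,\neq)}$ bound into the cross term does not inflate the power of $M$ past $M^{10}$ (equivalently $M^5$ after the square root). Both checks succeed precisely because $\epsilon>\tfrac23$, which is why $\mathcal{B}_{1,2}$ is the exact threshold that closes the feedback loop between Lemma \ref{lem:est of n00} and Lemma \ref{lem:est of n001}.
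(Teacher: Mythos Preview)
Your proposal is correct and matches the paper's approach; the paper does not spell out a proof for this lemma but clearly intends exactly the combination of Lemma~\ref{lem:est of n00} and Lemma~\ref{lem:est of n001} via the orthogonal splitting $n_0=n_{(0,0)}+n_{(0,\neq)}$ that you carry out. One very minor remark: your identification of $-\tfrac12$ as the ``worst'' $A$-exponent is not quite right, since for $\epsilon$ close to $\tfrac23$ the term $A^{-7\epsilon/13}F_1^{28/13}$ contributes an exponent $\tfrac{7}{13}(\tfrac16-\epsilon)\approx -\tfrac{7}{26}$, which is closer to zero; but this does not affect the argument, as all that is needed is that every exponent is strictly negative, and your audit confirms this.
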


Adding Lemma \ref{lem:n n^2} and Lemma \ref{lem:est of n0 mu=0} together, we immediately derive the following estimates for the energy $E_{1,1}(t)$ in the cases when $\mu>0$ and $\mu=0.$
\begin{Cor}\label{cor:est of n0}
	Let $\mathcal{B}_{1}:=\max\{\mathcal{B}_{1,1}, \mathcal{B}_{1,2}\}.$ If $A\geq \mathcal{B}_{1},$ it follows from Lemma \ref{lem:n n^2} and Lemma \ref{lem:est of n0 mu=0} that
	\begin{equation*}
		E_{1,1}(t)\leq C{\rm e}^{C\|(n_{\rm in})_{0}\|_{L^{1}}}\left(\|(n_{\rm in})_{0}\|_{L^{2}}+\|(n_{\rm in})_{0}\|_{L^{1}}^{2}+1 \right),~~{\rm for}~~\mu>0,
	\end{equation*}
	and
	\begin{equation*}
		E_{1,1}(t)\leq C \left( \|(n_{\rm in})_{(0,\neq)}\|_{L^2} + \|(n_{\rm in})_{(0,0)}\|_{L^2}^2  + M^{5}  + 1\right),~~{\rm for}~~\mu=0.
	\end{equation*}
\end{Cor}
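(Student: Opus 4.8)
The plan is to recognize that this corollary is just the packaging of the two preceding $L^\infty L^2$ bounds for the zero mode, since by the definition of the energy functional one has $E_{1,1}(t) = \|n_0\|_{L^\infty L^2}$. No new analysis is required beyond invoking the cited lemmas and reconciling their thresholds on $A$.

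First I would treat the case $\mu > 0$. Here the claimed estimate is literally the conclusion of Lemma \ref{lem:n0 L2}, valid provided $A \geq \mathcal{B}_{1,1}$; that lemma already absorbs the $L^1$-decay and the $\frac{1}{A}\|n_0\|_{L^2 L^2}^2$ control furnished by Lemma \ref{lem:n n^2}, together with the Gagliardo--Nirenberg/Nash interpolation and the Gr\"onwall step. I would cite it directly rather than reproduce the computation, merely rewriting $\|n_0\|_{L^\infty L^2}$ as $E_{1,1}(t)$.

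Next I would treat the case $\mu = 0$. The desired bound is exactly the output of Lemma \ref{lem:est of n0 mu=0}, which holds once $A \geq \mathcal{B}_{1,2} = F_1^{24} + F_2^{24} + F_3^{24}$. There one decomposes $n_0 = n_{(0,0)} + n_{(0,\neq)}$ and uses the triangle-type bound $\|n_0\|_{L^\infty L^2} \leq C\bigl(\|n_{(0,0)}\|_{L^\infty L^2} + \|n_{(0,\neq)}\|_{L^\infty L^2}\bigr)$, the two summands being controlled by Lemma \ref{lem:est of n00} and Lemma \ref{lem:est of n001} respectively, the latter exploiting the critical-mass condition $M < \frac{8\pi}{9}$.

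Finally, to merge the two regimes I would set $\mathcal{B}_1 = \max\{\mathcal{B}_{1,1}, \mathcal{B}_{1,2}\}$, so that $A \geq \mathcal{B}_1$ simultaneously satisfies both thresholds and the corresponding estimate holds in each case. I expect no genuine obstacle in the corollary itself: the whole of the substantive difficulty---the enhanced dissipation of the non-zero mode, the control of the $L^2$-norm of the density by the initial mass, and the borderline mass $M < \frac{8\pi}{9}$---has already been isolated inside the lemmas I am allowed to assume, so the corollary is a bookkeeping combination of them.
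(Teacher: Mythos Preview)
Your proposal is correct and matches the paper's treatment exactly: the corollary carries no separate proof in the paper and is presented as an immediate consequence of the preceding lemmas, with $\mathcal{B}_1=\max\{\mathcal{B}_{1,1},\mathcal{B}_{1,2}\}$ reconciling the thresholds. The only wrinkle is that the paper's corollary statement cites Lemma~\ref{lem:n n^2} rather than Lemma~\ref{lem:n0 L2} for the $\mu>0$ bound, but since the displayed conclusion is literally that of Lemma~\ref{lem:n0 L2} (which in turn uses Lemma~\ref{lem:n n^2} as input), your attribution is the accurate one.
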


\subsection{Energy estimate for $E_{1,2}(t)$.}\

The following nonlinear interaction between non-zero modes of the velocity will be used in estimating $E_{1,2}(t)$.  
\begin{Lem}  \label{lem:velocity estimate 2}
	It holds that
	\begin{equation}\begin{aligned} \label{eq:velocity estimate 2}
			& \|\mathrm{e}^{2 a A^{-\frac{1}{3}} t}\left|u_{\neq}\right|^2\|_{L^2 L^2}^2 \leq C \left(A^{\frac{2}{3}}\|\partial_x \omega_{2,  \neq}\|_{Y_a}^4 + A^{\frac{4}{9}}\| u_{2, \neq}\|_{X_a}^4\right) , \\
			& \|\mathrm{e}^{2 a A^{-\frac{1}{3}} t} u_{\neq} \cdot \nabla u_{\neq}\|_{L^2 L^2}^2 \leq C A \left(\|\partial_x \omega_{2, \neq}\|_{Y_a}^4+\| u_{2, \neq}\|_{X_a}^4\right) , \\
			& \|\mathrm{e}^{2 a A^{-\frac{1}{3}} t} \partial_x\left(u_{\neq} \cdot \nabla u_{\neq}\right)\|_{L^2 L^2}^2 \leq C A\left(\|\partial_x \omega_{2, \neq}\|_{Y_a}^4+\| u_{2, \neq}\|_{X_a}^4\right), \\
			& \|\mathrm{e}^{2 a A^{-\frac{1}{3}} t} \partial_z\left(u_{\neq} \cdot \nabla u_{3, \neq}\right)\|_{L^2 L^2}^2 \leq C A\left(\|\partial_x \omega_{2, \neq}\|_{Y_a}^4+\| u_{2, \neq}\|_{X_a}^4\right), \\
			& \|\mathrm{e}^{2 a A^{-\frac{1}{3}} t} \nabla\left(u_{\neq} \cdot \nabla u_{2, \neq}\right)\|_{L^2 L^2}^2 \leq C A\left(\|\partial_x \omega_{2, \neq}\|_{Y_a}^4+\| u_{2, \neq}\|_{X_a}^4\right).
	\end{aligned}\end{equation}
\end{Lem}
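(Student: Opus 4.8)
The plan is to reduce all five estimates to space--time bounds on $u_{\neq}$ and $\nabla u_{\neq}$ in mixed Lebesgue norms, which are in turn controlled by $\|\partial_x\omega_{2,\neq}\|_{Y_a}$ and $\|u_{2,\neq}\|_{X_a}$. The starting point is the Biot--Savart-type recovery of the velocity from the good unknowns: taking the Fourier transform in $(x,z)$ and using $\nabla\cdot u=0$ together with $\omega_2=\partial_z u_1-\partial_x u_3$, one solves the resulting $2\times2$ linear system for $\widehat{u_1},\widehat{u_3}$ and finds that each equals $\eta^{-2}$ times a single derivative of $\widehat{u_2}$ and $\widehat{\omega_2}$. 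Since only modes with $k_1\neq0$ occur, we have $\eta\geq1$, so this representation loses no power of $\eta$ and shows that $u_{\neq}$ and $\nabla u_{\neq}$ are dominated by $u_{2,\neq}$, $\nabla u_{2,\neq}$, $\omega_{2,\neq}$ and one further derivative thereof. Feeding these into the norm equivalences \eqref{eq:X_a} and \eqref{eq:Y_a} (and invoking the already-established velocity estimate of Lemma \ref{lemma_u}) supplies the building blocks $\|u_{\neq}\|_{L^\infty L^2}$, $\|\nabla u_{\neq}\|_{L^\infty L^2}$, $\|u_{\neq}\|_{L^2 L^2}$ and $\|\nabla u_{\neq}\|_{L^2 L^2}$, each carrying an explicit power of $A$ inherited from the weights.

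For the first estimate I would write $\|\mathrm{e}^{2aA^{-1/3}t}|u_{\neq}|^2\|_{L^2 L^2}^2=\|\mathrm{e}^{aA^{-1/3}t}u_{\neq}\|_{L^4 L^4}^4$ and apply the three-dimensional Gagliardo--Nirenberg inequality $\|f\|_{L^4}^4\leq C\|f\|_{L^2}\|\nabla f\|_{L^2}^3$ pointwise in time, followed by a Hölder split in time that places one copy each of $\|u_{\neq}\|_{L^\infty L^2}$ and $\|\nabla u_{\neq}\|_{L^\infty L^2}$ outside while keeping two copies of $\|\nabla u_{\neq}\|_{L^2 L^2}$. Each $L^2 L^2$ factor is then converted through \eqref{eq:Y_a}/\eqref{eq:X_a}, where the inequality $A^{-1/3}\|f_{\neq}\|_{L^2 L^2}^2\leq C\|f\|_{Y_a}^2$ furnishes the enhanced-dissipation gain $A^{1/3}$; a careful count of these gains against the negative weights attached respectively to the vorticity and to $u_2$ produces the asymmetric exponents $A^{2/3}$ on the $\|\partial_x\omega_{2,\neq}\|_{Y_a}^4$ term and $A^{4/9}$ on the $\|u_{2,\neq}\|_{X_a}^4$ term, the smaller power on the $u_2$ contribution reflecting that $X_a$ is the stronger norm.

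The remaining four estimates bound $u_{\neq}\cdot\nabla u_{\neq}$ and three of its first derivatives. Each contains at least one differentiated velocity factor, and estimates three through five an additional outer derivative, so they are more singular than $|u_{\neq}|^2$. I would again use Hölder in space to put a low-order factor in $L^\infty_x$ — upgraded by an anisotropic Agmon/Sobolev inequality of the form $\|g\|_{L^\infty}^2\leq C\|g\|_{L^2}\|\nabla g\|_{L^2}$, using the one-dimensional periodic embedding in $x,z$ and the Dirichlet embedding in $y$ — and a high-order factor in $L^2_x$, followed by Hölder in time pairing an $L^\infty_t L^2$ norm with $L^2_t L^2$ norms. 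The outer derivatives in estimates three through five are absorbed by the higher-order components of $X_a$, which control $\Delta u_{2,\neq}$ and $\partial_y\Delta u_{2,\neq}$; consequently all four terms land on the uniform right-hand side $CA(\|\partial_x\omega_{2,\neq}\|_{Y_a}^4+\|u_{2,\neq}\|_{X_a}^4)$, with the two quadratic norms now entering symmetrically.

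The main obstacle I anticipate is the $A$-power bookkeeping together with the anisotropy, rather than any single inequality. One must decide term by term which velocity component and which derivative to place in $L^\infty_x$ versus $L^2_x$ and in $L^\infty_t$ versus $L^2_t$, while respecting that $X_a$ and $Y_a$ control $\partial_x,\partial_z$ derivatives and $\Delta u_2$ directly but control $\partial_y$ derivatives and all $L^2_t L^2$ norms only in weighted, negative-power-of-$A$ form. Forcing the final exponent to be exactly $A^{2/3}$, $A^{4/9}$ or $A^{1}$ — and not merely some admissible power of $A$ — requires balancing the enhanced-dissipation gains $A^{1/3}$ against the explicit weights in \eqref{eq:X_a}--\eqref{eq:Y_a}; once this balance is fixed, the functional inequalities themselves are routine.
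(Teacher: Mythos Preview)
Your outline for estimates $(4.2)_2$--$(4.2)_5$ is broadly on target: the paper indeed uses anisotropic Sobolev/Agmon embeddings in mixed $L^\infty$--$L^2$ norms, places one velocity factor in $L^\infty$ in some variables and the other in $L^2$, and then converts to $\|\partial_x\omega_{2,\neq}\|_{Y_a}$ and $\|u_{2,\neq}\|_{X_a}$ via Lemma~\ref{lemma_u} and \eqref{eq:X_a}--\eqref{eq:Y_a}. Your description there matches the paper's strategy.

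The gap is in your treatment of $(4.2)_1$. The isotropic Gagliardo--Nirenberg $\|f\|_{L^4}^4\leq C\|f\|_{L^2}\|\nabla f\|_{L^2}^3$ forces you to control $\|\nabla u_{\neq}\|_{L^\infty L^2}$, and in particular $\|\partial_y u_{\neq}\|_{L^\infty L^2}$. Via Lemma~\ref{lemma_u}$_3$ this requires $\|\partial_y\omega_{2,\neq}\|_{L^\infty L^2}$, but the $Y_a$ norm contains only the $L^2_tL^2$ quantity $A^{-1}\|{\rm e}^{aA^{-1/3}t}\partial_y f\|_{L^2L^2}^2$; there is no $L^\infty_t$ control on $\partial_y(\partial_x\omega_{2,\neq})$. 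So the split you propose cannot be closed. Even if one ignores this and feeds in the available $L^2_tL^2$ bound, the $\partial_y$ pieces of $\|\nabla u_{\neq}\|_{L^2L^2}$ carry a factor $A^{1/2}$ rather than the enhanced-dissipation $A^{1/6}$, and the vorticity term comes out as $A^{1}\|\partial_x\omega_{2,\neq}\|_{Y_a}^4$, not $A^{2/3}$.

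The paper avoids this by keeping the first estimate anisotropic as well: it writes
\[
\||u_{\neq}|^2\|_{L^2}^2 \leq \|u_{\neq}\|_{L^\infty_{x,z}L^2_y}^2\,\|u_{\neq}\|_{L^\infty_y L^2_{x,z}}^2,
\]
so that the first factor is controlled purely by $(\partial_x,\partial_z)$-derivatives (hence by $\|\partial_x\omega_{2,\neq}\|_{L^2}$ and $\|\partial_x\nabla u_{2,\neq}\|_{L^2}$, available in $L^\infty_t$), while the second factor absorbs the single $\partial_y$ (yielding $\|\partial_y\omega_{2,\neq}\|_{L^2}+\|\Delta u_{2,\neq}\|_{L^2}$, which is then placed in $L^2_t$). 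This is exactly the mechanism that produces the sharper powers $A^{2/3}$ and $A^{4/9}$. The moral is that the anisotropy you correctly invoke for the later estimates is already essential for the first one.
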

\begin{proof}
	{\bf Estimate of $\eqref{eq:velocity estimate 2}_1$.} By $\eqref{eq:velocity transform}_{1,2}$, (\ref{Sob})  and $\eqref{Sob neq}_{4}$, we have
	\begin{equation}\begin{aligned} \label{eq:temp0}
			\|u_{\neq}\|_{L_{x, z}^{\infty} L_y^2}^2 & \leq C(\|\partial_x u_{\neq}\|_{L^2}^2 + \|\partial_x \partial_z u_{\neq}\|_{L^2}^2) 
			\leq C(\|\partial_x \omega_{2, \neq}\|_{L^2}^2+\|\partial_x \nabla u_{2, \neq}\|_{L^2}^2).
	\end{aligned}\end{equation}
	Using (\ref{Sob}) and $\eqref{eq:velocity transform}_{1,3}$, it holds that
	\begin{equation}\begin{aligned} \label{eq:u_neq^2}
			\||u_{\neq}|^2\|_{L^2}^2&\leq \|u_{\neq}\|_{L_{x, z}^{\infty} L_y^2}^2\|u_{\neq}\|_{L_{y}^{\infty} L_{x,z}^2}^2 \\
			& \leq C(\|\partial_x \omega_{2, \neq}\|_{L^2}^3 +  \|\partial_x \nabla u_{2, \neq}\|_{L^2}^3)
			\left(\|\partial_{y} \omega_{2,\neq}\|_{L^2} + \|\Delta u_{2, \neq}\|_{L^2}\right),
	\end{aligned}\end{equation}
	which in conjunction with \eqref{eq:X_a} and \eqref{eq:Y_a} shows that 
	\begin{equation*}\begin{aligned}
			\|\mathrm{e}^{2 a A^{-\frac{1}{3}} t}\left|u_{\neq}\right|^2\|_{L^2 L^2}^2 
			\leq C\left(A^\frac23 \|\partial_{x}\omega_{2,\neq}\|_{Y_a}^4  +  A^\frac49 \|u_{2,\neq}\|_{X_a}^4 \right).
	\end{aligned}\end{equation*}
	
	{\bf Estimate of $\eqref{eq:velocity estimate 2}_2$.}
	Thanks to $\eqref{Sob g neq}_{4}$ and $\eqref{eq:velocity transform}_3$, we get
	\begin{equation}\begin{aligned} \label{eq:temp1}
			\|\left(\partial_x, \partial_z\right) u_{\neq}\|_{L_y^{\infty} L_{x, z}^2}^2 &\leq C( \|\left(\partial_x, \partial_z\right)\py u_{\neq}\|_{L^2}^2 
			+  \|\left(\partial_x, \partial_z\right) u_{\neq}\|_{L^2}^2)  \\
			&\leq C(\|\nabla \omega_{2, \neq}\|_{L^2}^2+\|\Delta u_{2, \neq}\|_{L^2}^2).
	\end{aligned}\end{equation}
	It follows from $\eqref{Sob g neq}_{4}$ that
	\begin{equation*}\begin{aligned}
			\|u_{2, \neq}\|_{L_{y}^{\infty} L_{x, z}^2}^2 \leq C \|\py u_{2, \neq}\|_{L^2} \| u_{2, \neq}\|_{L^2} 
			\leq C\|\nabla u_{2, \neq}\|_{L^2}^2.
	\end{aligned}\end{equation*}
	Using $\eqref{Sob neq}_{4}$ and $\eqref{eq:velocity transform}_4$, one has that
	\begin{equation}\begin{aligned} \label{eq:temp2}
			\|\partial_y u_{\neq}\|_{L_{x, z}^{\infty} L_{y}^2}^2 &\leq C (\|\px \py u_{\neq}\|_{L^2}^2   + \|\px \py \pz u_{\neq}\|_{L^2}^2)\\
			&\leq C(\|\px\nabla \omega_{2, \neq}\|_{L^2}^2+\|\px\Delta u_{2, \neq}\|_{L^2}^2).
	\end{aligned}\end{equation}
	Summing up \eqref{eq:temp1}-\eqref{eq:temp2}, \eqref{eq:temp0} yields
	\begin{equation}\begin{aligned} \label{eq:temp3}
			\|u_{\neq} \cdot \nabla u_{\neq}\|_{L^2}^2 & \leq\|u_{1, \neq} \partial_x u_{\neq}\|_{L^2}^2+\|u_{2, \neq} \partial_y u_{\neq}\|_{L^2}^2+\|u_{3, \neq} \partial_z u_{\neq}\|_{L^2}^2 \\
			& \leq\|u_{\neq}\|_{L_{x, z}^{\infty} L_y^2 }^2 \|\left(\partial_x, \partial_z\right) u_{\neq}\|_{L_y^{\infty} L_{x, z}^2}^2  +   \|u_{2, \neq}\|_{L_{y}^{\infty} L_{x, z}^2}^2\|\partial_y u_{\neq}\|_{L_{x, z}^{\infty} L_{y}^2}^2 \\
			& \leq C(\|\partial_x \omega_{2, \neq}\|_{L^2}^2+\|\px\nabla u_{2,  \neq}\|_{L^2}^2)(\|\px\nabla \omega_{2, \neq }\|_{L^2}^2  +  \|\px\Delta u_{2, \neq}\|_{L^2}^2).
	\end{aligned}\end{equation}    
	This gives 
	\begin{equation*}\begin{aligned}
			\|\mathrm{e}^{2 a A^{-\frac{1}{3}} t} u_{\neq} \cdot \nabla u_{\neq}\|_{L^2 L^2}^2 
			\leq C A(\|\partial_x \omega_{2, \neq}\|_{Y_a}^4+\| u_{2, \neq}\|_{X_a}^4).
	\end{aligned}\end{equation*}
	
	{\bf Estimate of $\eqref{eq:velocity estimate 2}_3$.}
    Using $\eqref{Sob neq}_{7}$ and $\eqref{eq:velocity transform}_4$, we get
	\begin{equation*}\begin{aligned} \label{eq:temp5}
			\|\px \left(\partial_x, \partial_z\right) u_{\neq}\|_{L_y^{\infty} L_{x, z}^2}^2 &\leq C( \|\px\left(\partial_x, \partial_z\right)\py u_{\neq}\|_{L^2}^2 +  \|\px\left(\partial_x, \partial_z\right) u_{\neq}\|_{L^2}^2)  \\
			&\leq C(\|\px\nabla \omega_{2, \neq}\|_{L^2}^2+\|\px\Delta u_{2, \neq}\|_{L^2}^2).
	\end{aligned}\end{equation*}
	By $\eqref{Sob neq}_{6}$ and $\eqref{eq:velocity transform}_2$, we have
	\begin{equation*}\begin{aligned} 
			\|\px u_{\neq}\|_{L_{z}^{\infty} L_{x, y}^2}^2 \leq C(\|\partial_x u_{\neq}\|_{L^2}^2  +  \|\partial_x\partial_z u_{\neq}\|_{L^2}^2) \leq C(\|\partial_x \omega_{2, \neq}\|_{L^2}^2+\|\px\nabla u_{2, \neq}\|_{L^2}^2).
	\end{aligned}\end{equation*}
	According to $\eqref{Sob g neq}_3$ and $\eqref{eq:velocity transform}_4$, one obtains 
	\begin{equation*}\begin{aligned}
			\|\left(\partial_x, \partial_z\right) u_{\neq}\|_{L_{x, y}^{\infty} L_{z}^2}^2 &\leq C \|\px\left(\partial_x, \partial_z\right)\py u_{\neq}\|_{L^2} \|\px\left(\partial_x, \partial_z\right) u_{\neq}\|_{L^2}  \\
			&\leq C(\|\px\nabla \omega_{2, \neq}\|_{L^2}^2+\|\px\Delta u_{2, \neq}\|_{L^2}^2).
	\end{aligned}\end{equation*}
	Due to $\eqref{Sob g neq}_{3,4}$, we arrive at
	\begin{equation*}\begin{aligned}
			\|\px u_{2, \neq}\|_{L_{y}^{\infty} L_{x, z}^2}^2 \leq C \|\py \px u_{2, \neq}\|_{L^2} \| \px u_{2, \neq}\|_{L^2} 
			\leq C\|\px\nabla u_{2, \neq}\|_{L^2}^2
	\end{aligned}\end{equation*}
	and
	\begin{equation}\begin{aligned}  \label{eq:temp5.5}
			\|u_{2, \neq}\|_{L_{x, y}^{\infty} L_{z}^2}^2 \leq C \|\px u_{2, \neq}\|_{L^2} \|\px\py u_{2, \neq}\|_{L^2}  
			\leq C\|\px\nabla u_{2, \neq}\|_{L^2}^2.
	\end{aligned}\end{equation}
	It follows from $\eqref{Sob neq}_6$ and $\eqref{eq:velocity transform}_4$ that
	\begin{equation*}\begin{aligned} 
			\|\px \partial_y u_{\neq}\|_{L_{ z}^{\infty} L_{x, y}^2}^2 \leq C (\|\px \py u_{\neq}\|_{L^2}^2   + \|\px \py \pz u_{\neq}\|_{L^2}^2)
			\leq C(\|\px\nabla \omega_{2, \neq}\|_{L^2}^2+\|\px\Delta u_{2, \neq}\|_{L^2}^2).
	\end{aligned}\end{equation*}
	Combining the above inequalities with \eqref{eq:temp1} and \eqref{eq:temp2}, we conclude that
	\begin{equation*}\begin{aligned} \label{eq:temp7}
			\|\px \lt(u_{\neq} \cdot \nabla u_{\neq}\rt)\|_{L^2}^2 & \leq \|\px\lt(u_{1, \neq} \partial_x u_{\neq}\rt)\|_{L^2}^2+\|\px\lt(u_{2, \neq} \partial_y u_{\neq}\rt)\|_{L^2}^2+\|\px\lt(u_{3, \neq} \partial_z u_{\neq}\rt)\|_{L^2}^2 \\
			& \leq\|u_{\neq}\|_{L_{x, z}^{\infty} L_y^2 }^2 \|\px\left(\partial_x, \partial_z\right) u_{\neq}\|_{L_y^{\infty} L_{x, z}^2}^2  + \|\px u_{\neq}\|_{L_{z}^{\infty} L_{x, y}^2 }^2 \|\left(\partial_x, \partial_z\right) u_{\neq}\|_{L_{x, y}^{\infty} L_{z}^2}^2  \\
			&\quad +   \|\px u_{2, \neq}\|_{L_{y}^{\infty} L_{x, z}^2}^2\|\partial_y u_{\neq}\|_{L_{x, z}^{\infty} L_{y}^2}^2 +  \| u_{2, \neq}\|_{L_{x, y}^{\infty} L_{ z}^2}^2\|\px\partial_y u_{\neq}\|_{L_{ z}^{\infty} L_{x, y}^2}^2\\
			& \leq C(\|\partial_x \omega_{2, \neq}\|_{L^2}^2+\|\px\nabla u_{2,  \neq}\|_{L^2}^2)(\|\px\nabla \omega_{2, \neq }\|_{L^2}^2  +  \|\px\Delta u_{2, \neq}\|_{L^2}^2), 
	\end{aligned}\end{equation*}           
	which shows that
	\begin{equation*}\begin{aligned}
			\|\mathrm{e}^{2 a A^{-\frac{1}{3}} t} \px \lt(u_{\neq} \cdot \nabla u_{\neq}\rt)\|_{L^2 L^2}^2 \leq C A(\|\partial_x \omega_{2, \neq}\|_{Y_a}^4+\| u_{2, \neq}\|_{X_a}^4).
	\end{aligned}\end{equation*}
	
	{\bf Estimate of $\eqref{eq:velocity estimate 2}_4$.}
	By $\eqref{Sob neq}_1$ and $\eqref{eq:velocity transform}$, it holds
	\begin{equation}\begin{aligned} \label{eq:temp8}
			\|u_{\neq}\|_{L^\infty}^2 &\leq C\big(\|\py\pz u_{\neq}\|_{L^2}\|\px\pz u_{\neq}\|_{L^2}^\frac12\|\px^2 u_{\neq}\|_{L^2}^\frac12  +   \|\px\py u_{\neq}\|_{L^2}\|\px u_{\neq}\|_{L^2}^\frac12\|u_{\neq}\|_{L^2}^\frac12\big) \\
			&\leq C(\|\px\nabla \omega_{2, \neq}\|_{L^2}^2+\|\px\Delta u_{2, \neq}\|_{L^2}^2).
	\end{aligned}\end{equation}
	Thanks to $\eqref{Sob neq}_6$ and $\eqref{eq:velocity transform}_6$, we have
	\begin{equation*}\begin{aligned}
			\|\left(\partial_x, \partial_z\right) u_{3, \neq}\|_{L_{x, y}^{2} L_{z}^\infty}^2 
			\leq C \|(\partial_x^2, \partial_z^2) u_{3, \neq}\|_{L^2}^2 \leq C(\|\partial_x \omega_{2, \neq}\|_{L^2}^2+\|\partial_z \nabla  u_{2, \neq}\|_{L^2}^2).
	\end{aligned}\end{equation*}
	Using $\eqref{Sob g neq}_{2,4}$, there holds
	\begin{equation}\begin{aligned} \label{eq:temp8.5}
			\|\pz u_{2, \neq}\|_{L_{y}^{\infty} L_{x, z}^2}^2+\| u_{2, \neq}\|_{L_{y, z}^{\infty} L_{ x}^2}^2 \leq C\|\left(\px, \pz\right) \nabla u_{2, \neq}\|_{L^2}^2.
	\end{aligned}\end{equation}
	Due to $\eqref{Sob neq}_5$ and $\eqref{Sob g neq}_3$, we get
	\begin{equation*}\begin{aligned} \label{eq:temp9}
			\|\pz u_{\neq}\|_{L_{x,y}^{\infty} L_{z}^2 }^2+\|\pz\partial_y u_{\neq}\|_{L_{x}^{\infty} L_{ y, z}^2}^2 \leq C \|\px\pz\nabla u_{\neq}\|_{L^2}^2 \leq  C(\|\px\nabla \omega_{2, \neq}\|_{L^2}^2+\|\px\Delta u_{2, \neq}\|_{L^2}^2).
	\end{aligned}\end{equation*}
	Combining the calculations above with \eqref{eq:temp2}, we infer that
	\begin{equation*}\begin{aligned} \label{eq:temp10}
			\|\pz \lt(u_{\neq} \cdot \nabla u_{3, \neq}\rt)\|_{L^2}^2 & \leq  \|\pz\lt(u_{1, \neq} \partial_x u_{3, \neq}\rt)\|_{L^2}^2+\|\pz\lt(u_{2, \neq} \partial_y u_{3, \neq}\rt)\|_{L^2}^2+\|\pz\lt(u_{3, \neq} \partial_z u_{3, \neq}\rt)\|_{L^2}^2 \\
			& \leq\|u_{\neq}\|_{L^\infty }^2 \|\pz\left(\partial_x, \partial_z\right) u_{3, \neq}\|_{L^2}^2  + \|\pz u_{\neq}\|_{L_{x,y}^{\infty} L_{z}^2 }^2 \|\left(\partial_x, \partial_z\right) u_{3, \neq}\|_{L_{x, y}^{2} L_{z}^\infty}^2  \\
			&\quad +   \|\pz u_{2, \neq}\|_{L_{y}^{\infty} L_{x, z}^2}^2\|\partial_y u_{3, \neq}\|_{L_{x, z}^{\infty} L_{y}^2}^2   
			+    \| u_{2, \neq}\|_{L_{y, z}^{\infty} L_{ x}^2}^2\|\pz\partial_y u_{\neq}\|_{L_{x}^{\infty} L_{ y, z}^2}^2\\
			& \leq C(\|\partial_x \omega_{2, \neq}\|_{L^2}^2+\|\left(\px, \pz\right)\nabla u_{2,\neq}\|_{L^2}^2)(\|\px\nabla \omega_{2, \neq }\|_{L^2}^2  +  \|\px\Delta u_{2, \neq}\|_{L^2}^2),
	\end{aligned}\end{equation*}    
	which implies that 
	\begin{equation*}\begin{aligned}
			\|\mathrm{e}^{2 a A^{-\frac{1}{3}} t} \pz \lt(u_{\neq} \cdot \nabla u_{\neq}\rt)\|_{L^2 L^2}^2 \leq C A(\|\partial_x \omega_{2, \neq}\|_{Y_a}^4+\| u_{2, \neq}\|_{X_a}^4).
	\end{aligned}\end{equation*}

	{\bf Estimate of $\eqref{eq:velocity estimate 2}_5$.}
	Thanks to $\eqref{Sob neq}_4$ and $\eqref{eq:velocity transform}_4$, we get
	\begin{equation*}\begin{aligned} \label{eq:temp12}
			\|\nabla u_{\neq}\|_{L_{x,z}^{\infty} L_{y}^2 }^2 \leq C(\|\px\nabla u_{\neq}\|_{L^2}^2  +   \|\px\pz\nabla u_{\neq}\|_{L^2}^2  )  
			\leq C(\|\px\nabla \omega_{2, \neq }\|_{L^2}^2  +  \|\px\Delta u_{2, \neq}\|_{L^2}^2).
	\end{aligned}\end{equation*}
	Due to  $\eqref{Sob g neq}_4$, we have
	\begin{equation*}\begin{aligned}
			\|\left(\partial_x, \partial_z\right) u_{2, \neq}\|_{L_{x, z}^{2} L_{y}^\infty}^2 \leq C\|\py\left(\partial_x, \partial_z\right) u_{2, \neq}\|_{L^2}\|\left(\partial_x, \partial_z\right) u_{2, \neq}\|_{L^2} \leq C\|\left(\partial_x, \partial_z\right)\nabla u_{2, \neq}\|_{L^2}^2 .
	\end{aligned}\end{equation*}
	By  $\eqref{Sob neq}_6$, it holds
	\begin{equation*}\begin{aligned}
			\|\nabla u_{2, \neq}\|_{L_{z}^{\infty} L_{x, y}^2}^2 \leq C(\|\nabla u_{2, \neq}\|_{L^2}^2  + \|\pz\nabla u_{2, \neq}\|_{L^2}^2 )  \leq C\|\left(\px, \pz\right)\nabla u_{2, \neq}\|_{L^2}^2.
	\end{aligned}\end{equation*}
	Using $\eqref{Sob neq}_5$ and $\eqref{Sob g neq}_3$, there holds
	\begin{equation*}\begin{aligned}
			&\|\partial_y u_{2, \neq}\|_{L_{x, y}^{\infty} L_{z}^2}^2 \leq C 	\|\px\partial_y u_{2, \neq}\|_{L^2}  \|\px\partial_y^2 u_{2, \neq}\|_{L^2}  \leq C\|\px \Delta u_{2, \neq}\|_{L^2}^2,\\
			&\|\partial_y\nabla u_{2, \neq}\|_{L_{x}^{\infty} L_{ y, z}^2}^2 \leq C \|\px\partial_y\nabla u_{2, \neq}\|_{L^2}^2 \leq C\|\partial_x\Delta u_{2, \neq}\|_{L^2}^2.
	\end{aligned}\end{equation*}
	Combining above with \eqref{eq:temp8} and \eqref{eq:temp8.5}, we get 
	\begin{equation*}\begin{aligned} 
			\|\nabla \lt(u_{\neq} \cdot \nabla u_{2, \neq}\rt)\|_{L^2}^2 & \leq \|\nabla \lt(u_{1, \neq} \partial_x u_{2, \neq}\rt)\|_{L^2}^2+\|\nabla \lt(u_{2, \neq} \partial_y u_{2, \neq}\rt)\|_{L^2}^2+\|\nabla \lt(u_{3, \neq} \partial_z u_{2, \neq}\rt)\|_{L^2}^2 \\
			& \leq\|u_{\neq}\|_{L^\infty }^2 \|\nabla \left(\partial_x, \partial_z\right) u_{2, \neq}\|_{L^2}^2  + \|\nabla u_{\neq}\|_{L_{x,z}^{\infty} L_{y}^2 }^2 \|\left(\partial_x, \partial_z\right) u_{2, \neq}\|_{L_{x, z}^{2} L_{y}^\infty}^2  \\
			&\quad +   \|\nabla u_{2, \neq}\|_{L_{z}^{\infty} L_{x, y}^2}^2   \|\partial_y u_{2, \neq}\|_{L_{x, y}^{\infty} L_{z}^2}^2   +    \| u_{2, \neq}\|_{L_{y, z}^{\infty} L_{ x}^2}^2  \|\partial_y\nabla u_{2, \neq}\|_{L_{x}^{\infty} L_{ y, z}^2}^2\\
			& \leq C\|\left(\px, \pz\right)\nabla u_{2,  \neq}\|_{L^2}^2(\|\px\nabla \omega_{2, \neq }\|_{L^2}^2  +  \|\px\Delta u_{2, \neq}\|_{L^2}^2),
	\end{aligned}\end{equation*} 
	which indicates  that
	\begin{equation*}\begin{aligned}
			\|\mathrm{e}^{2 a A^{-\frac{1}{3}} t} \nabla \lt(u_{\neq} \cdot \nabla u_{2, \neq}\rt)\|_{L^2 L^2}^2
			\leq C A(\|\partial_x \omega_{2, \neq}\|_{Y_a}^4+\| u_{2, \neq}\|_{X_a}^4).
	\end{aligned}\end{equation*}
	
	The proof is complete.
\end{proof}

\begin{Lem} \label{lem:est of E12}
Under the assumptions of \eqref{assumption}, there exists a positive constant $\mathcal{B}_{2}$ independent of $t$ and $A$, such that if $A\geq\mathcal{B}_{2}$, it holds that
	\begin{equation*}\begin{aligned}
			E_{1,2}(t)\leq C\left(\|(u_{1,\rm in})_{0}\|_{H^{1}}+\|n_{0}\|_{L^{\infty}L^{2}}+1 \right).
	\end{aligned}\end{equation*}
\end{Lem}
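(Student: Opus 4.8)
The plan is to run $L^2$ and $H^1$ energy estimates directly on the equation \eqref{eq: decompose of u10} for $\widetilde{u_{1,0}}$, exploiting the Dirichlet boundary condition to produce, via the Poincar\'e inequality, an exponential damping of rate $\sim A^{-1}$ that converts the time integral of the forcing $\tfrac1A n_0$ into the $L^\infty L^2$ norm of $n_0$ on the right-hand side. Write $g:=\widetilde{u_{1,0}}$, which depends only on $(t,y,z)$ and solves
\[
\partial_t g-\tfrac1A\Delta g=-\tfrac1A\big(u_{2,0}\py g+u_{3,0}\pz g\big)-\tfrac1A\big(u_{\neq}\cdot\nabla u_{1,\neq}\big)_0+\tfrac1A n_0,
\]
with $g|_{t=0}=(u_{1,\rm in})_0$ and $g|_{y=\pm1}=0$; recall $E_{1,2}(t)=\|g\|_{L^\infty H^1}$.

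First I would test with $g$. Since $\nabla\cdot(u_{2,0},u_{3,0})=0$ and $u_{2,0},u_{3,0}$ vanish at $y=\pm1$, the transport term integrates to zero, leaving $\tfrac12\tfrac{d}{dt}\|g\|_{L^2}^2+\tfrac1A\|\nabla g\|_{L^2}^2=-\tfrac1A\langle(u_{\neq}\cdot\nabla u_{1,\neq})_0,g\rangle+\tfrac1A\langle n_0,g\rangle$. Using the Poincar\'e inequality $\|g\|_{L^2}\le C\|\nabla g\|_{L^2}$ I retain half of the dissipation as a damping term $\tfrac{c}{A}\|g\|_{L^2}^2$, and bound the two source terms by $\tfrac1A(\|n_0\|_{L^2}+\|(u_{\neq}\cdot\nabla u_{1,\neq})_0\|_{L^2})\|g\|_{L^2}$, Young-splitting them against the damping. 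A Gr\"onwall argument then yields
\[
\|g(t)\|_{L^2}^2\le e^{-ct/A}\|(u_{1,\rm in})_0\|_{L^2}^2+\tfrac{C}{A}\int_0^t e^{-c(t-s)/A}\big(\|n_0\|_{L^2}^2+\|(u_{\neq}\cdot\nabla u_{1,\neq})_0\|_{L^2}^2\big)\,ds.
\]
Because $\tfrac1A\int_0^t e^{-c(t-s)/A}\,ds\le C$, the $n_0$ contribution is bounded by $C\|n_0\|_{L^\infty L^2}^2$; for the nonlinear one I use $e^{-c(t-s)/A}\le1$ and the second inequality of Lemma~\ref{lem:velocity estimate 2} together with $E_{2,2}\le2F_2$, giving $\tfrac{C}{A}\|(u_{\neq}\cdot\nabla u_{1,\neq})_0\|_{L^2L^2}^2\le C(\|\partial_x\omega_{2,\neq}\|_{Y_a}^4+\|u_{2,\neq}\|_{X_a}^4)\le CF_2^4A^{-\frac53\epsilon}\le C$ once $A$ is large. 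Hence $\|g\|_{L^\infty L^2}\le C(\|(u_{1,\rm in})_0\|_{L^2}+\|n_0\|_{L^\infty L^2}+1)$.

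Next I would test with $-\Delta g$ to control $\|\nabla g\|_{L^2}$; here the transport term no longer vanishes and is the crux of the argument. It is estimated by $\tfrac1A\|(u_{2,0},u_{3,0})\|_{L^\infty}\|\nabla g\|_{L^2}\|\Delta g\|_{L^2}$, and the key observation is that the $A^{-\epsilon}$-weighting in $E_{1,3}$ renders the coefficient small: by the embedding $\|u_{2,0}\|_{L^\infty}+\|u_{3,0}\|_{L^\infty}\le C(\|u_{2,0}\|_{H^2}+\|u_{3,0}\|_{H^1})$ recalled in Section~\ref{Sec 2} and $E_{1,3}\le2F_1$, one gets $\|(u_{2,0},u_{3,0})\|_{L^\infty L^\infty}\le CF_1A^{-\frac14\epsilon}$. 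Thus, after absorbing portions of $\tfrac1A\|\Delta g\|_{L^2}^2$ by Young's inequality and invoking $\|\nabla g\|_{L^2}\le C\|\Delta g\|_{L^2}$, the $\|\nabla g\|_{L^2}^2$ generated by the transport term carries the small factor $CF_1^2A^{-\frac12\epsilon-1}$ and is absorbed into the damping for $A$ large. The $n_0$ and nonlinear terms are handled exactly as before, so the same decaying-kernel computation gives $\|\nabla g\|_{L^\infty L^2}^2\le C(\|\nabla(u_{1,\rm in})_0\|_{L^2}^2+\|n_0\|_{L^\infty L^2}^2+1)$.

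Adding the two estimates and taking square roots yields $E_{1,2}(t)=\|g\|_{L^\infty H^1}\le C(\|(u_{1,\rm in})_0\|_{H^1}+\|n_0\|_{L^\infty L^2}+1)$, with $\mathcal{B}_2$ chosen large enough that the two smallness absorptions (the nonlinear $F_2^4A^{-\frac53\epsilon}$ term and the transport coefficient $F_1^2A^{-\frac12\epsilon-1}$) are valid. I expect the main obstacle to be precisely this transport term in the $H^1$ estimate: it cannot be removed by integration by parts as in the $L^2$ case, and must instead be dominated through the $L^\infty$ bound on $(u_{2,0},u_{3,0})$, whose smallness relies on reading off the $A^{-\epsilon}$ gains built into the definition of $E_{1,3}$.
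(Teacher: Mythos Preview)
Your proposal is correct and follows essentially the same route as the paper: test the $\widetilde{u_{1,0}}$ equation against $-\Delta\widetilde{u_{1,0}}$, use Poincar\'e to convert part of the dissipation into a damping of rate $\sim A^{-1}$ (so the $\tfrac1A n_0$ forcing produces $C\|n_0\|_{L^\infty L^2}$), bound the nonlinear piece via Lemma~\ref{lem:velocity estimate 2}, and control the transport term through the smallness of $(u_{2,0},u_{3,0})$ encoded in $E_{1,3}$.

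Two small differences worth noting: (i) your separate $L^2$ estimate is unnecessary, since the Dirichlet condition and Poincar\'e give $\|\widetilde{u_{1,0}}\|_{H^1}\le C\|\nabla\widetilde{u_{1,0}}\|_{L^2}$ directly, which is how the paper concludes; (ii) for the transport term the paper places $\|(u_{2,0},u_{3,0})\|_{L^\infty}$ in $L^2_t$ (using the time-integral part of the $Y_0$ norms in $E_{1,3}$) and puts $\|\nabla\widetilde{u_{1,0}}\|_{L^2}$ in $L^\infty_t$ via the bootstrap bound $E_{1,2}\le 2F_1$, whereas you place $\|(u_{2,0},u_{3,0})\|_{L^\infty}$ in $L^\infty_t$ (using the $L^\infty L^2$ parts of $E_{1,3}$) and absorb the resulting $\tfrac{C}{A}F_1^2A^{-\frac12\epsilon}\|\nabla g\|_{L^2}^2$ directly into the damping. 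Both variants are valid; yours avoids invoking the bootstrap hypothesis on the very quantity being estimated, at the cost of a slightly different threshold for $\mathcal{B}_2$.
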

\begin{proof}
	For \eqref{eq: decompose of u10}, the energy estimate of $\widetilde{u_{1,0}} $ gives that
	$$
	\begin{aligned}
		&\quad\frac{d}{d t} \| \nabla \widetilde{u_{1,0}} \|_{L^2}^2 + \frac2A \| \Delta\widetilde{u_{1,0}} \|_{L^2}^2   \\
		&=\frac2A  \left\langle u_{2,0} \partial_y \widetilde{u_{1,0}}+u_{3,0} \partial_z \widetilde{u_{1,0}}  +  \left(u_{\neq} \cdot \nabla u_{1, \neq}\right)_0  ,  \Delta\widetilde{u_{1,0}}\right\rangle -\frac2A  \left\langle n_0,  \Delta\widetilde{u_{1,0}}\right\rangle,
	\end{aligned}
	$$
	which implies 
	\begin{equation}\begin{aligned} \label{eq:temp18.9}
			\frac{d}{d t}\|\nabla \widetilde{u_{1,0}}   \|_{L^2}^2  \leq& \frac{-\|\nabla \widetilde{u_{1,0}}   \|_{L^2}^2   + 2C\|n_0\|_{L^2}^2 }{AC}   \\&+ \frac{ C\left( \|   u_{2,0} \partial_y \widetilde{u_{1,0}}  +   u_{3,0} \partial_z \widetilde{u_{1,0}}  \|_{L^2}^2 + \|\left(u_{\neq} \cdot \nabla u_{1, \neq}\right)_0\|_{L^2}^2\right)}{A},
	\end{aligned}\end{equation}
	where we use that
	\begin{equation*}\begin{aligned}
			\|\nabla \widetilde{u_{1,0}}  \|_{L^2}^2 \leq C
			\|\Delta \widetilde{u_{1,0}}  \|_{L^2}^2.
	\end{aligned}\end{equation*}
	Denoting 
	$$
	F(t) :=\frac{C}{A}\int_{0}^{t} \left(\|u_{2,0} \partial_y \widetilde{u_{1,0}}  +   u_{3,0} \partial_z \widetilde{u_{1,0}}  \|_{L^2}^2 + \|\left(u_{\neq} \cdot \nabla u_{1, \neq}\right)_0\|_{L^2}^2\right) ds,\quad{\rm for}~~t\geq 0.
	$$
		By $\eqref{eq:velocity estimate 2}_2$ and \eqref{eq:est of u20 u30 L_infty}, when
		$$ A\geq \mathcal{B}_{2}:=\left(F_{1}^{4}+F_{2}^{4}\right)^{\frac{2}{\epsilon}}, $$
		direct calculations indicate that
	\begin{equation}\begin{aligned}\label{F(t) bound}
			F(t) 
			\leq& \frac CA\left(\|\nabla u_{2,0}\|_{L^2L^2}^2 + \|\Delta u_{2,0}\|_{L^2L^2}^2 +  \|\nabla u_{3,0}\|_{L^2L^2}^2\right) \|\widetilde{u_{1,0}} \|_{L^\infty H^1}^2 \\& + \frac CA \|u_{\neq} \cdot \nabla u_{1, \neq}\|_{L^2L^2}^2\leq CA^{-\frac{\epsilon}{2}}F_{1}^{4} +  CA^{-\frac53 \epsilon} F_{2}^4\leq C.
	\end{aligned}\end{equation}
Then we rewrite \eqref{eq:temp18.9} into
	\begin{equation*}\begin{aligned} 
		\frac{d}{dt}\left(\|\nabla\widetilde{u_{1,0}}\|_{L^{2}}^{2}-F(t) \right)\leq -\frac{1}{AC}\left(\|\nabla\widetilde{u_{1,0}}\|_{L^{2}}^{2}-F(t)-2C\|n_{0}\|_{L^{2}}^{2} \right).
	\end{aligned}\end{equation*}
Using proof by contradiction, we deduce that
\begin{equation*}
\|\nabla\widetilde{u_{1,0}}\|_{L^{2}}^{2}-F(t)\leq \|(\nabla u_{1,\rm in})_{0}\|_{L^{2}}^{2}+3C\|n_{0}\|_{L^{\infty}L^{2}}^{2}.
\end{equation*}
Combining the above inequality with (\ref{F(t) bound}), there holds
\begin{equation}\label{widetilde u10}
	\|\nabla\widetilde{u_{1,0}}\|_{L^{\infty}L^{2}}\leq C\left(\|(u_{1,\rm in})_{0}\|_{H^{1}}+\|n_{0}\|_{L^{\infty}L^{2}}+1 \right).
\end{equation}
	Noting that $ \left.\widetilde{ u_{1,0}}\right|_{y=\pm1}  =0$ and using (\ref{widetilde u10}), we have
\begin{equation*}\begin{aligned}
		E_{1, 2}(t) &\leq C \|\nabla \widetilde{u_{1,0}}\|_{L^\infty L^2}\leq   C\left(\|(u_{1,\rm in})_{0}\|_{H^{1}}+\|n_{0}\|_{L^{\infty}L^{2}}+1 \right).
\end{aligned}\end{equation*}
The proof is complete.
\end{proof}

\subsection{Energy estimate for $E_{1,3}(t)$} \label{subsection4.3}

\begin{Lem}\label{lem:E13 14}
	Under the assumptions of \eqref{assumption}, there exists a positive constant $\mathcal{B}_{3}$ independent of $t$ and $A$, such that if $A\geq \mathcal{B}_{3}$, it holds that
	\begin{equation}\label{eq:E13 14}
		\begin{aligned}
		A^{\epsilon}\left(\|u_{2,0}\|_{Y_{0}}+\|u_{3,0}\|_{Y_{0}}\right)\leq C\left(A^{\epsilon}\|(u_{2,\rm in}, u_{3,\rm in})_{0}\|_{L^{2}}+1 \right),\\
			A^{\epsilon-1}\|\Delta\widehat{u_{1,0}}\|_{Y_{0}}\leq C\left(A^{\epsilon}\|(u_{2,\rm in}, u_{3,\rm in})_{0}\|_{L^{2}}+1 \right),\\
			A^{\frac{\epsilon}{4}}\|\nabla (u_{2,0}, u_{3,0})\|_{L^{\infty}L^{2}}+A^{\frac12+\frac{\epsilon}{4}} \|\pt \lt( u_{2, 0}, u_{3, 0}\rt)\|_{L^2 L^2} \leq C\left(A^{\frac{\epsilon}{4}}\|(u_{2,\rm in}, u_{3,\rm in})_{0}\|_{H^{1}}+1 \right),
			\\
			A^{\frac{\epsilon}{4}}\|\nabla u_{2,0}\|_{Y_{0}}\leq C\left(A^{\frac{\epsilon}{4}}\|(u_{2,\rm in}, u_{3,\rm in})_{0}\|_{H^{1}}+1\right),\\
				A^{\frac{\epsilon}{4}}\|\Delta u_{2,0}\|_{Y_{0}}\leq C\left(A^{\frac{\epsilon}{4}}\|(u_{2,\rm in}, u_{3, \rm in})_0\|_{H^2} + 1\right),\\
				A^{\frac{\epsilon}{4}-\frac12}\|\Delta u_{3,0}\|_{L^{2}L^{2}}\leq C\left(A^{\frac{\epsilon}{4}}\|(u_{2,\rm in}, u_{3, \rm in})_0\|_{H^2} + 1\right).
		\end{aligned}
	\end{equation}
\end{Lem}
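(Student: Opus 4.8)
The plan is to prove the six inequalities in order of increasing regularity, so that each estimate supplies the forcing control for the next, closing every nonlinear term through the bootstrap hypothesis \eqref{assumption} and the decay bounds of Lemma \ref{lem:velocity estimate 2}. Throughout, $(u_{2,0},u_{3,0})$ is a divergence-free field in $(y,z)\in\mathbb{I}\times\mathbb{T}$ with $u_{2,0}|_{y=\pm1}=u_{3,0}|_{y=\pm1}=0$, and $\partial_yu_{2,0}+\partial_zu_{3,0}=0$ together with $u_{3,0}|_{y=\pm1}=0$ also gives $\partial_yu_{2,0}|_{y=\pm1}=0$; these facts are used repeatedly. For the first inequality I test the $u_{2,0}$ and $u_{3,0}$ equations in \eqref{eq:u_k0} against $u_{2,0}$ and $u_{3,0}$ and add: the pressure and the self-transport terms drop by incompressibility and no-slip, leaving only $\frac1A(u_\neq\cdot\nabla u_\neq)_0$. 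By $\eqref{eq:velocity estimate 2}_2$ one has $\|u_\neq\cdot\nabla u_\neq\|_{L^2L^2}^2\le CA(\|\partial_x\omega_{2,\neq}\|_{Y_a}^4+\|u_{2,\neq}\|_{X_a}^4)$, which under \eqref{assumption} is $O(AF_2^4A^{-\frac53\epsilon})$; exploiting the exponential-in-time decay built into $Y_a,X_a$ gives $\int_0^t\|(u_\neq\cdot\nabla u_\neq)_0\|_{L^2}\,ds\le CA^{\frac23-\frac56\epsilon}F_2^2$, and Young's inequality yields $\|(u_{2,0},u_{3,0})\|_{Y_0}^2\le C(\|(u_{2,\mathrm{in}},u_{3,\mathrm{in}})_0\|_{L^2}^2+A^{-\frac23-\frac53\epsilon}F_2^4)$. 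Multiplying by $A^{2\epsilon}$ and noting $\tfrac13\epsilon-\tfrac23<0$ for $\epsilon\le\tfrac45$ gives the first inequality.

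For the $H^1$-level bounds (third and fourth inequalities) I test the same equations against $\partial_tu_{2,0},\partial_tu_{3,0}$; the pressure cancels again since $\partial_t(u_{2,0},u_{3,0})$ is divergence-free, producing $\tfrac12\|\partial_t(u_{2,0},u_{3,0})\|_{L^2}^2+\tfrac1{2A}\frac{d}{dt}\|\nabla(u_{2,0},u_{3,0})\|_{L^2}^2\le\tfrac{C}{A^2}\|\mathrm{nl}\|_{L^2}^2$. The self-interaction contributes $\tfrac{C}{A}\|(u_{2,0},u_{3,0})\|_{L^\infty}^2\|\nabla(u_{2,0},u_{3,0})\|_{L^2}^2$, which I absorb by Grönwall: the exponent $\tfrac{C}{A}\|(u_{2,0},u_{3,0})\|_{L^2L^\infty}^2\lesssim\tfrac1A\|(u_{2,0},u_{3,0})\|_{L^2H^2}^2$ is $O(A^{-\frac\epsilon2}F_1^2)$ by the a priori control of $\|\Delta u_{2,0}\|_{Y_0}$ and $\|\nabla u_{3,0}\|_{Y_0}$ from \eqref{assumption} and $2$D Sobolev embedding, hence bounded. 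This delivers $\|\nabla(u_{2,0},u_{3,0})\|_{L^\infty L^2}$ and $\|\partial_t(u_{2,0},u_{3,0})\|_{L^2L^2}$ with the stated weights, while the $L^2H^2$ part of $\|\nabla u_{2,0}\|_{Y_0}$ follows from the clean dissipation identity obtained by testing against $u_{2,0}$ (which, since $u_{2,0}\in H^2_0$, produces $\tfrac1A\|\Delta u_{2,0}\|_{L^2L^2}^2$ with no boundary term).

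For $\Delta u_{2,0}$ (fifth inequality) I use the pressure-eliminated equation obtained from the zero mode of the $\Delta u_2$ equation in \eqref{eq:main1}, namely $\partial_t\Delta u_{2,0}-\frac1A\Delta^2u_{2,0}=-\frac1A\partial_z^2(u\cdot\nabla u_2)_0+\frac1A\partial_y\partial_z(u\cdot\nabla u_3)_0$, and run the $L^2$ energy estimate of $\Delta u_{2,0}$, the forcing being controlled by Lemma \ref{lem:velocity estimate 2} and the previous bounds. For $\Delta u_{3,0}$ only the weaker target $A^{\frac\epsilon4-\frac12}\|\Delta u_{3,0}\|_{L^2L^2}$ is needed, which I recover from the $u_{3,0}$ equation via $\frac1A\Delta u_{3,0}=\partial_tu_{3,0}+\frac1A\partial_zP_0+\frac1A(\mathrm{nl})$, using the $\partial_t$-bound from the third inequality and an elliptic bound for $\nabla P_0$ from $\Delta P_0=-\nabla\cdot[(u_0\cdot\nabla)u_0+(u_\neq\cdot\nabla u_\neq)_0]$ (alternatively via the pressure-free $2$D vorticity $\omega_{1,0}=\partial_yu_{3,0}-\partial_zu_{2,0}$, for which $\|\nabla\omega_{1,0}\|_{L^2}^2=\|\Delta u_{2,0}\|_{L^2}^2+\|\Delta u_{3,0}\|_{L^2}^2$). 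Finally, the bound on $\Delta\widehat{u_{1,0}}$ (second inequality) comes from an $H^2$ estimate of \eqref{eq: decompose of u10}: since $\widehat{u_{1,0}}|_{y=\pm1}=0$ for all $t$ and the right-hand side of \eqref{eq: decompose of u10} vanishes at $y=\pm1$, the equation forces $\Delta\widehat{u_{1,0}}|_{y=\pm1}=0$, so testing against $\Delta^2\widehat{u_{1,0}}$ gives $\frac{d}{dt}\|\Delta\widehat{u_{1,0}}\|_{L^2}^2+\frac1A\|\nabla\Delta\widehat{u_{1,0}}\|_{L^2}^2\le A\|\nabla f\|_{L^2}^2$ with $f=-\frac1A(u_{2,0}\partial_y+u_{3,0}\partial_z)\widehat{u_{1,0}}-u_{2,0}$; the leading forcing $A\|\nabla u_{2,0}\|_{L^2L^2}^2\le A^2\|u_{2,0}\|_{Y_0}^2$ turns this into $A^{\epsilon-1}\|\Delta\widehat{u_{1,0}}\|_{Y_0}\lesssim A^\epsilon\|u_{2,0}\|_{Y_0}$, bounded by the first inequality, while the transport part is absorbed by Grönwall using the smallness of $(u_{2,0},u_{3,0})$.

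The step I expect to be the main obstacle is the $L^\infty_tL^2$ part of the fifth inequality, i.e.\ the top-order bound $\|\Delta u_{2,0}\|_{L^\infty L^2}$ for the wall-normal component. Because $u_{2,0}$ carries only the two boundary conditions $u_{2,0}=\partial_yu_{2,0}=0$, the trace $\Delta u_{2,0}|_{y=\pm1}=\partial_y^2u_{2,0}|_{y=\pm1}$ need not vanish, so testing the $\Delta u_{2,0}$ equation against $\Delta u_{2,0}$ leaves an uncontrolled boundary term $\frac1A\int_{y=\pm1}\Delta u_{2,0}\,\partial_y\Delta u_{2,0}\,dz$, and a naive trace-interpolation estimate brings in derivatives above the available dissipation $\frac1A\|\nabla\Delta u_{2,0}\|_{L^2}^2$. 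I would handle this following the resolvent/energy framework of Chen-Wei-Zhang \cite{CWZ1}, keeping the $A$-weights compatible with the target power $A^{\frac\epsilon4}$; this bookkeeping, rather than any single inequality, is the delicate part.
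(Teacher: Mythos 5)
Your overall strategy -- sequential energy estimates for $(u_{2,0},u_{3,0})$, $\widehat{u_{1,0}}$, and the higher derivatives, feeding each bound into the next and closing the nonlinear terms with Lemma \ref{lem:velocity estimate 2} and the bootstrap \eqref{assumption} -- is the same as the paper's, and your treatments of the first, second, third and sixth inequalities are essentially the paper's arguments with minor bookkeeping differences (e.g.\ you pair the nonzero-mode forcing with $u_{k,0}$ via Cauchy--Schwarz and the exponential decay, where the paper integrates by parts and uses $\eqref{eq:velocity estimate 2}_1$; both close).

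The genuine gap is exactly where you flag it: the fifth inequality. You correctly observe that $\Delta u_{2,0}|_{y=\pm1}=\partial_y^2u_{2,0}|_{y=\pm1}$ need not vanish, so testing the $\Delta u_{2,0}$ equation against $\Delta u_{2,0}$ produces an uncontrolled boundary flux -- but you then leave the resolution to ``the resolvent/energy framework of \cite{CWZ1},'' which is not the relevant tool here (those resolvent estimates handle the $ik_1y$ term for non-zero modes; the zero mode is a plain heat/Stokes problem). The paper's fix is elementary and you should state it: test the $\Delta u_{2,0}$ equation against $\partial_t u_{2,0}$ instead. Since $u_{2,0}=\partial_y u_{2,0}=0$ on $y=\pm1$ for all time, also $\partial_t u_{2,0}=\partial_t\partial_y u_{2,0}=0$ there, so both integrations by parts in $\langle\Delta^2 u_{2,0},\partial_t u_{2,0}\rangle=\langle\Delta u_{2,0},\partial_t\Delta u_{2,0}\rangle=\tfrac12\tfrac{d}{dt}\|\Delta u_{2,0}\|_{L^2}^2$ are boundary-term free; this yields $\|\Delta u_{2,0}\|_{L^\infty L^2}$ together with $\|\partial_t\nabla u_{2,0}\|_{L^2L^2}$ in one stroke. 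Your proposal also omits the second half of the $Y_0$-norm, $A^{-1}\|\nabla\Delta u_{2,0}\|_{L^2L^2}^2$, which does not come from any energy identity: the paper recovers it algebraically from the equations for $\partial_z u_{2,0}$ and $\partial_y u_{2,0}$ (expanding $\|\partial_y\partial_z P_0-\Delta\partial_z u_{2,0}\|_{L^2}^2$, using $\langle\partial_y\partial_z^2u_{2,0},\Delta P_0^{N_3}\rangle$ and Lemma \ref{lem:transform for p} to control $\partial_y^2P_0,\partial_z^2P_0$ by $\partial_y\partial_zP_0$ and $\Delta P_0^{N_3}$), with the time-derivative terms supplied by the $\partial_t$-test above. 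Without these two steps the fifth inequality -- and hence the $\|\Delta u_{2,0}\|_{Y_0}$ piece of $E_{1,3}$ -- is not established.
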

\begin{proof}
	{\bf Estimate of $(\ref{eq:E13 14})_{1}.$} 
	Due to $\left.u_{k, 0}\right|_{y= \pm 1}=0$ for $k\in\{2,3\}$ and $\nabla\cdot u_0=0$, the $L^2$ energy estimates of \eqref{eq:u_k0} give
	\begin{equation}\begin{aligned} \label{eq:temp0.23}
			\frac{d}{d t}\left(\|u_{2, 0}\|_{L^2}^2+\|u_{3, 0}\|_{L^2}^2\right)+\frac2A \left(\|\nabla u_{2, 0}\|_{L^2}^2+\|\nabla u_{3, 0}\|_{L^2}^2\right)
			= - \frac2A \sum_{k\in\{2,3\}}\left\langle \nabla\cdot \left(u_{\neq} u_{k, \neq}\right)_0, u_{k, 0}\right\rangle.
	\end{aligned}\end{equation}
	 Combining \eqref{eq:temp0.23} with $\eqref{eq:velocity estimate 2}_1$, we get
	\begin{equation*}
		\begin{aligned}
		&A^{2\epsilon}\left(\|u_{2,0}\|_{Y_{0}}^{2}+\|u_{3,0}\|_{Y_{0}}^{2}\right)\leq C\Big(A^{2\epsilon}\|(u_{2,\rm in}, u_{3,\rm in})_{0}\|_{L^{2}}^{2}+\frac{\|{\rm e}^{2aA^{-\frac13}t}|u_{\neq}|^{2}\|_{L^{2}L^{2}}^{2}}{A^{1-2\epsilon}} \Big)\\\leq& C\left(A^{2\epsilon}\|(u_{2,\rm in}, u_{3,\rm in})_{0}\|_{L^{2}}^{2}+A^{-\frac13+\frac{\epsilon}{3}}F_{2}^{4} \right)\leq C\left(A^{2\epsilon}\|(u_{2,\rm in}, u_{3,\rm in})_{0}\|_{L^{2}}^{2}+1 \right)
		\end{aligned}
	\end{equation*}
	provided with $A\geq \mathcal{B}_{3,1}:=F_{2}^{\frac{12}{1-\epsilon}}.$
	
	{\bf Estimate of $(\ref{eq:E13 14})_{2}.$} 	Recall that $\widehat{u_{1, 0}}$ satisfies
	\begin{equation} \left\{\begin{array}{l} \label{eq:widehat u_1,0}
			\partial_t \widehat{u_{1,0}}-\frac{1}{A} \Delta \widehat{u_{1,0}}=-\frac{1}{A}\left(u_{2,0} \partial_y \widehat{u_{1,0}}+u_{3,0} \partial_z \widehat{u_{1,0}}\right)-u_{2,0}, \\
			\left.\widehat{u_{1, 0}}\right|_{t=0} = 0, \quad \left.\widehat{u_{1, 0}}\right|_{y=\pm1} = 0, \quad \left.\Delta\widehat{u_{1, 0}}\right|_{y=\pm1} = 0.
		\end{array}\right. \end{equation}
	Taking ``$\Delta$" on both sides of $\eqref{eq:widehat u_1,0}_1$, we get
	\begin{equation*}\begin{aligned}
			\partial_{t} \Delta\widehat{u_{1,0}}-\frac{1}{A}  \Delta^2 \widehat{u_{1,0}}   =     -\frac{1}{A}\Delta\left(u_{2,0} \partial_y \widehat{u_{1,0}}  +   u_{3,0} \partial_z \widehat{u_{1,0}}\right)  -   \Delta u_{2, 0}.
	\end{aligned}  \end{equation*}
	Thanks to $\left.\Delta\widehat{ u_{1,0}}\right|_{y=\pm1}=0$, the energy estimate gives that
	\begin{equation}\begin{aligned} \label{eq:pt delta hat u10 energy}
			&\frac{d}{d t} \left(A^{2\epsilon-2}\| \Delta\widehat{u_{1,0}} \|_{L^2}^2  \right)
			+ \frac{\| \nabla \Delta\widehat{u_{1,0}} \|_{L^2}^2}{A^{3-2\epsilon}} 
			\\\leq& \frac{C}{A^{3-2\epsilon}}\left(  \|  \nabla\left(u_{2,0} \partial_y \widehat{u_{1,0}}  +   u_{3,0} \partial_z \widehat{u_{1,0}}  \right)    \|_{L^2}^2   
			+  A^2\| \nabla u_{2, 0}\|_{L^2}^2 \right).
	\end{aligned}\end{equation}
	Note that $\eqref{eq:0_norm_L_infty}_{3,4}$ and $\eqref{Sob f0}_3$ imply
	\begin{equation}
		\begin{aligned}  \label{eq:temp21.5}
			\| \nabla\left(u_{2,0} \partial_y \widehat{u_{1,0}}\right) \|_{L^2 L^2}^2 
			& \leq C\left(\| \nabla u_{2,0}\|_{L_t^2 L_y^\infty L_z^2}^2\|\partial_y \widehat{u_{1,0}}\|_{L_t^{\infty} L_y^2 L_z^{\infty}}^2+\|u_{2,0}\|_{L^2 L^{\infty}}^2\| \partial_y \nabla  \widehat{u_{1,0}}\|_{L^{\infty} L^2}^2\right)\\
			& \leq C\| u_{2,0}\|_{L^2 H^2}^2\|\widehat{u_{1,0}}\|_{L^{\infty} H^2}^2 
			\leq C A^{3-\frac52 \epsilon}  F_{1}^4.
		\end{aligned}
	\end{equation}
	By using $\left.\py u_{2, 0}\right|_{y=\pm1} = \left. u_{2, 0}\right|_{y=\pm1}  =0 $, the Gagliardo-Nirenberg inequality and $\nabla\cdot u_{0}  =  0$, we get
	\begin{equation}\begin{aligned}\label{eq:temp26}
			\|u_{3, 0}\|_{L^\infty L^\infty} \leq C\left(\|\nabla u_{3, 0}\|_{L^\infty L^2}  +  \|\Delta u_{2, 0}\|_{L^\infty L^2} \right)  .
	\end{aligned}\end{equation}
	By \eqref{eq:temp26}, one gets that
	\begin{equation}
		\begin{aligned} \label{eq:temp27}
			\| \nabla\left(u_{3,0} \partial_z \widehat{u_{1,0}}\right)\|_{L^2 L^2}^2 
			& \leq C\left( \|\nabla u_{3,0}\|_{L^{\infty}L^2 }^2   \| \partial_z \widehat{u_{1,0}}\|_{L^2L^{\infty} }^2+\|u_{3,0}\|_{L^{\infty} L^{\infty}}^2\| \nabla \partial_z \widehat{u_{1,0}}\|_{L^2 L^2}^2 \right) \\
			& \leq CA^{3-\frac52\epsilon} F_{1}^4,
		\end{aligned}
	\end{equation}
	where we use that
	\begin{equation*}
		\begin{aligned}
			\| \partial_z \widehat{u_{1,0}}\|_{L^2L^{\infty} }+
			\| \nabla \partial_z \widehat{u_{1,0}}\|_{L^2L^2}
			\leq C
			\| \partial_z \Delta\widehat{u_{1,0}} \|_{L^2L^2}.
		\end{aligned}
	\end{equation*}

	Summing up $\eqref{eq:E13 14}_{1}$, \eqref{eq:pt delta hat u10 energy}, \eqref{eq:temp21.5} and \eqref{eq:temp27}, when
	$$ A\geq \max\{\mathcal{B}_{3,1}, F_{1}^{\frac{8}{\epsilon}} \}=:\mathcal{B}_{3,2}, $$
	 we infer that
	\begin{equation*}\begin{aligned}
			A^{2\epsilon-2}\| \Delta\widehat{u_{1,0}} \|_{L^\infty L^2}^2 + A^{2\epsilon-3} \| \nabla \Delta\widehat{u_{1,0}} \|_{L^2 L^2}^2
			\leq &CA^{-\frac12\epsilon} F_{1}^4 + C\left(A^{2\epsilon}\|(u_{2,\rm in}, u_{3,\rm in})_{0}\|_{L^{2}}^{2}+1\right)\\\leq& C\left(A^{2\epsilon}\|(u_{2,\rm in}, u_{3,\rm in})_{0}\|_{L^{2}}^{2}+1 \right).
	\end{aligned}\end{equation*}

{\bf Estimate of $(\ref{eq:E13 14})_{3}.$} 
Multiplying by $\pt u_{k,0}$ on both sides of \eqref{eq:u_k0} and integrating over $\mathbb{I}\times \mathbb{T}$, we have
\begin{equation*}\begin{aligned}
		&\quad\frac1A\frac{d}{dt}\|\nabla \left(u_{2, 0}, u_{3, 0}\right)\|_{L^2}^2  +  2\|\pt\left(u_{2, 0}, u_{3, 0}\right)\|_{L^2}^2 \\
		&= -\frac2A \sum\limits_{k\in\{2,3\}} \langle\left(u_{2, 0} \partial_y+u_{3, 0} \partial_z\right) u_{k, 0}   + \lt(u_{\neq} \cdot \nabla u_{k, \neq}\rt)_0  , \pt u_{k, 0}\rangle,
\end{aligned}\end{equation*}
which gives
\begin{equation*}\begin{aligned} \label{eq:temp28}
		&\quad A^{\frac{\epsilon}{2}}\|\left(\nabla u_{2, 0}, \nabla u_{3, 0}\right)\|_{L^{\infty} L^2}^2  +  A^{1+\frac{\epsilon}{2}} \|\pt \lt( u_{2, 0}, u_{3, 0}\rt)\|_{L^2 L^2}^2 \\
		& \leq C
		\Big(A^{\frac{\epsilon}{2}}\|(u_{2,\rm in}, u_{3, \rm in})_0\|_{H^1}^2  +  \frac{\sum_{k\in\{2,3\}}( \| (u_{\neq} \cdot \nabla u_{k, \neq})_0  \|_{L^2 L^2}^2+\|(u_{2, 0} \partial_y+u_{3, 0} \partial_z) u_{k, 0} \|_{L^2 L^2}^2)}{A^{1-\frac{\epsilon}{2}}} 
		\Big).
\end{aligned}\end{equation*}
Using $\eqref{eq:velocity estimate 2}_2$, \eqref{eq:est of u20 u30 L_infty} and $\eqref{eq:E13 14}_{1}$, we have
\begin{equation}\begin{aligned}  \label{eq:temp29}
	\|\left(u_{2, 0} \partial_y+u_{3, 0} \partial_z\right) u_{k, 0} \|_{L^2L^2}^2 &\leq (\|u_{2, 0}\|_{L^\infty L^\infty}^2  +  \|u_{3, 0}\|_{L^\infty L^\infty}^2) \| \nabla u_{k, 0}\|_{L^2L^2}^2\\
		&\leq CA^{1-\epsilon}F_{1}^{2}\left(A^{\frac{\epsilon}{2}}\|(u_{2,\rm in}, u_{3, \rm in})_0\|_{L^2}^2 + 1\right)
\end{aligned}\end{equation}
and
\begin{equation}\begin{aligned}  \label{eq:temp30}
		\| \lt(u_{\neq} \cdot \nabla u_{k, \neq}\rt)_0  \|_{L^2 L^2}^2 \leq CA^{1- \frac53\epsilon}F_{2}^4.
\end{aligned}\end{equation}
Adding all the above estimations together, when
$$ A\geq \max\{ \mathcal{B}_{3,2}, F_{1}^{\frac{4}{\epsilon}}, F_{2}^{\frac{8}{3\epsilon}}\}=:\mathcal{B}_{3,3},$$
 there holds
\begin{equation*}\begin{aligned} \label{eq:nabla u20 nabla u30}
		A^{\frac{\epsilon}{2}}\|\left(\nabla u_{2, 0}, \nabla u_{3, 0}\right)\|_{L^{\infty} L^2}^2  +  A^{1+\frac{\epsilon}{2}} \|\pt \lt( u_{2, 0}, u_{3, 0}\rt)\|_{L^2 L^2}^2 
		\leq C\left(A^{\frac{\epsilon}{2}}\|(u_{2,\rm in}, u_{3,\rm in})_{0}\|_{H^{1}}^{2}+1\right).
\end{aligned}\end{equation*}

	{\bf Estimate of $(\ref{eq:E13 14})_{4}.$} 
As $\Delta u_{2, 0}$ satisfies
	\begin{equation}\begin{aligned} \label{eq:delta u20}
			\partial_t \Delta u_{2, 0}   - \frac1A \Delta^2 u_{2, 0}  +  \frac1A\partial_y\Delta P^{N_3}_0  +  \frac1A\Delta\left( u_{2, 0} \partial_yu_{2, 0}  +  u_{3, 0} \partial_z u_{2, 0}\right)   +   \frac1A\Delta \lt(u_{\neq} \cdot \nabla u_{2, \neq}\rt)_{0}=0,
	\end{aligned}\end{equation}
	the energy estimate implies 
	\begin{equation*}\begin{aligned}
			&\quad\frac{d}{dt} \|\nabla u_{2, 0}\|_{L^2}^2  +  \frac{2}{A} \|\Delta u_{2, 0}\|_{L^2}^2 + \frac2A \langle  \Delta P_0^{N_3}, \py u_{2, 0} \rangle \\
			&=\frac2A \langle u_{2, 0} \partial_y u_{2, 0}  +  u_{3, 0} \partial_z u_{2, 0} + \lt(u_{\neq} \cdot \nabla u_{2, \neq}\rt)_{0}, \Delta u_{2, 0}  \rangle,
	\end{aligned}\end{equation*}
    which gives that
	\begin{equation*}\begin{aligned}
			&\frac{d}{dt} \left(A^{\frac{\epsilon}{2}}\|\nabla u_{2, 0}\|_{L^2}^2 \right) +  \frac{\|\Delta u_{2, 0}\|_{L^2}^2}{A^{1-\frac{\epsilon}{2}}}   \\\leq& \frac{C( \|\Delta P_0^{N_3}\|_{L^2}^2    + \|u_{2, 0} \partial_y u_{2, 0}  +  u_{3, 0} \partial_z u_{2, 0}\|_{L^2}^2  +   \|\lt(u_{\neq} \cdot \nabla u_{2, \neq}\rt)_{0}\|_{L^2}^2 )}{A^{1-\frac{\epsilon}{2}}}.
	\end{aligned}\end{equation*}
	Integrating the above inequality on $(0, t)$, we get
	\begin{equation}\begin{aligned} \label{eq:delta u20 l2l2}
			A^{\frac{\epsilon}{2}}\|\nabla u_{2,0}\|_{Y_{0}}^{2}
			 \leq& C\Big(A^{\frac{\epsilon}{2}}\|(u_{2,\rm in})_0\|_{H^1}^2+ \frac{1}{A^{1-\frac{\epsilon}{2}}}\|\Delta P_0^{N_3}\|_{L^2 L^2}^2+ \frac{1}{A^{1-\frac{\epsilon}{2}}}\| \lt(u_{\neq} \cdot \nabla u_{2, \neq}\rt)_{0} \|_{L^2 L^2}^2 \\
			&+ \frac{1}{A^{1-\frac{\epsilon}{2}}}\|\left(u_{2, 0} \partial_y u_{2, 0}  +  u_{3, 0} \partial_z u_{2, 0}\right)\|_{L^2 L^2}^2     \Big).
	\end{aligned}\end{equation}
	 By \eqref{eq:pressure} and $\nabla\cdot u = 0$, we have
	\begin{equation}\begin{aligned} \label{eq:temp31}
			\Delta P_0^{N_3} = -\lt( \partial_j u_i \partial_i u_j   \rt)_{0} = -\partial_j u_{i , 0} \partial_i u_{j , 0} - \lt( \partial_j u_{i, \neq} \partial_i u_{j, \neq}  \rt)_{0}.
	\end{aligned}\end{equation}
	By $\eqref{eq:E13 14}_{3}$, when $A\geq \mathcal{B}_{3,3}$, one deduces
	\begin{equation}\begin{aligned} \label{eq:temp32}
			&\frac{1}{A^{1-\frac{\epsilon}{2}}}\| \partial_j u_{i , 0} \partial_i u_{j , 0} \|_{L^2L^2}^2 \leq \frac{C}{A^{1-\frac{\epsilon}{2}}}\sum_{k\in\{2,3\}} \| \partial_k \left(u_0\cdot\nabla u_{k ,0}\right) \|_{L^2L^2}^2 \\
			\leq& \frac{C}{A^{1-\frac{\epsilon}{2}}}\|\left(\nabla u_{2,0}, \nabla u_{3,0}\right)\|_{L^\infty L^2}^2  \|\left(\Delta u_{2,0}, \Delta u_{3, 0}, \nabla\Delta u_{2,0}\right)\|_{L^2 L^2}^2\\\leq& C\left(A^{\frac{\epsilon}{2}}\|(u_{2,\rm in}, u_{3,\rm in} )_0\|_{H^1}^2+1 \right) .
	\end{aligned}\end{equation}
	For $\lt( \partial_j u_{i, \neq} \partial_i u_{j, \neq}  \rt)_{0}$, by $\eqref{eq:velocity estimate 2}_{3,4,5}$, when 
	$$A\geq\max\{\mathcal{B}_{3,3}, F_{2}^{\frac{24}{7\epsilon}} \}=:\mathcal{B}_{3},$$ 
	there holds
	\begin{equation}\begin{aligned} \label{eq:temp33}
			\frac{\|\lt( \partial_j u_{i, \neq} \partial_i u_{j, \neq}  \rt)_{0}\|_{L^2L^2}^2 }{A^{1-\frac{\epsilon}{2}}} \leq \frac{C}{A^{1-\frac{\epsilon}{2}}}\sum_{j=1}^3 \|\lt(\partial_j (u_{\neq}\cdot\nabla u_{j ,\neq})\rt)_0 \|_{L^2L^2}^2 
			\leq C A^{-\frac76\epsilon}F_{2}^4\leq C.
	\end{aligned}\end{equation}
	Combining \eqref{eq:temp31}, \eqref{eq:temp32} and \eqref{eq:temp33}, we have
	\begin{equation}\begin{aligned} \label{eq:temp34}
			\frac{1}{A^{1-\frac{\epsilon}{2}}}\|\Delta P_0^{N_3}\|_{L^2 L^2}^2  \leq C\left(A^{\frac{\epsilon}{2}}\|(u_{2,\rm in}, u_{3,\rm in} )_0\|_{H^1}^2 + 1\right).
	\end{aligned}\end{equation}
	Hence, using \eqref{eq:temp29}, \eqref{eq:temp30}, \eqref{eq:delta u20 l2l2} and \eqref{eq:temp34}, as long as $A\geq \mathcal{B}_{3}$, we arrive at
	\begin{equation*}\begin{aligned}
			A^{\frac{\epsilon}{2}}\|\nabla u_{2,0}\|_{Y_{0}}^{2} \leq  C \left(A^{\frac{\epsilon}{2}}\|(u_{2,\rm in}, u_{3, \rm in})_0\|_{H^1}^2 + 1 \right).
	\end{aligned}\end{equation*}

	{\bf Estimate of $(\ref{eq:E13 14})_{5}.$} 
	By taking the $L^2$ inner product with $\pt u_{2,0}$ to (\ref{eq:delta u20}), we obtain
	$$
	2\|\partial_t \nabla u_{2, 0}\|_{L^2}^2   +   \frac{1}{A}\frac{d}{d t}\|\Delta u_{2, 0}\|_{L^2}^2    +  \frac2A \langle\Delta P_0^{N_3}, \partial_t \partial_y u_{2, 0}\rangle  +  \frac2A\left\langle\nabla\left(u \cdot \nabla u_2\right)_0, \partial_t \nabla u_{2, 0}\right\rangle=0,
	$$
	which implies
	$$
	A^{1+\frac{\epsilon}{2}}\|\partial_t \nabla u_{2, 0}\|_{L^2}^2+\frac{d}{dt}\left(A^{\frac{\epsilon}{2}}\|\Delta u_{2, 0}\|_{L^2}^2\right) \leq \frac{C}{A^{1-\frac{\epsilon}{2}}}\left(\|\Delta P_0^{N_3} \|_{L^2}^2+\|\nabla\left(u \cdot \nabla u_2\right)_0\|_{L^2}^2\right).
	$$
	Integrating the above inequality with time on $(0, t)$, we get
	\begin{equation}\begin{aligned} \label{eq:temp35}
		& A^{1+\frac{\epsilon}{2}} \|\partial_t \nabla u_{2, 0}\|_{L^2 L^2}^2 + A^{\frac{\epsilon}{2}}\| \Delta u_{2, 0}\|_{L^{\infty} L^2}^2  \\\leq& C\left(A^{\frac{\epsilon}{2}}\|(u_{2,\rm in})_0\|_{H^2}^2  +  \frac{1}{A^{1-\frac{\epsilon}{2}}}\|\Delta P_0^{N_3}\|_{L^2 L^2}^2  +  \frac{1}{A^{1-\frac{\epsilon}{2}}}\| \nabla\left(u \cdot \nabla u_{2}\right)_0\|_{L^2 L^2}^2 \right).
	\end{aligned}\end{equation}
	By $\eqref{eq:velocity estimate 2}_5$ and \eqref{eq:temp32}, when $A\geq \mathcal{B}_{3}$, it holds that
	\begin{equation*}\begin{aligned}
			\frac{1}{A^{1-\frac{\epsilon}{2}}}\| \nabla\left(u \cdot \nabla u_{2}\right)_0\|_{L^2 L^2}^2 
			&\leq \frac{1}{A^{1-\frac{\epsilon}{2}}}\| \nabla\left(u_0 \cdot \nabla u_{2, 0}\right)\|_{L^2 L^2}^2+ \frac{1}{A^{1-\frac{\epsilon}{2}}}\|\nabla\left(u_{\neq} \cdot \nabla u_{2, \neq}\right)\|_{L^2 L^2}^2\\
			&\leq C\left(A^{\frac{\epsilon}{2}}\|(u_{2,\rm in}, u_{3, \rm in})_0\|_{H^1}^2+1\right) ,
	\end{aligned}\end{equation*}
	which along with \eqref{eq:temp34} and \eqref{eq:temp35} gives
	\begin{equation}\begin{aligned} \label{eq:pt nabla u20}
		 A^{1+\frac{\epsilon}{2}} \| \partial_t \nabla u_{2, 0}\|_{L^2 L^2}^2 + A^{\frac{\epsilon}{2}}\|\Delta u_{2, 0}\|_{L^{\infty} L^2}^2 \leq C \left(A^{\frac{\epsilon}{2}}\|(u_{2,\rm in}, u_{3, \rm in})_0\|_{H^2}^2 + 1\right).
	\end{aligned}\end{equation}
	
	Next we estimate $A^{\frac{\epsilon}{2}-1}\|\nabla\Delta u_{2, 0}\|_{L^2L^2}^2$.
	Firstly, recall that $\pz u_{2, 0}$ satisfies
	\begin{equation*}\begin{aligned}
			\partial_t \pz u_{2, 0}   - \frac1A \Delta \pz u_{2, 0}  +  \frac1A\partial_y\pz P_0   +   \frac1A\pz\lt(u \cdot \nabla u_{2}\rt)_{0}=0,
	\end{aligned}\end{equation*}
	from which we get that
	\begin{equation*}\begin{aligned}
			\|A\partial_t \pz u_{2, 0}  + \pz\lt(u \cdot \nabla u_{2}\rt)_{0} \|_{L^2}^2  &=  \|\partial_y\pz P_0 - \Delta \pz u_{2, 0}\|_{L^2}^2\\
			&=  \| \partial_y\pz P_0 \|_{L^2}^2  + \| \Delta \pz u_{2, 0} \|_{L^2}^2 -2\langle\py\pz^2 u_{2, 0}, \Delta P^{N_3}_0\rangle,
	\end{aligned}\end{equation*}
	where we use $\left.\nabla u_{2,0}\right|_{y=\pm1} = 0$ and $\Delta P_{0}^{N_{1}}=\Delta P_{0}^{N_{2}}=0.$
	This implies
	\begin{equation*}\begin{aligned}
			&\quad\| \partial_y\pz P_0 \|_{L^2}^2  + \| \Delta \pz u_{2, 0} \|_{L^2}^2 \\
			&\leq  C \lt(\| \Delta P^{N_3}_0 \|_{L^2}^2 + A^2 \| \partial_t \pz u_{2, 0}\|_{L^2}^2 +  \| \pz\lt(u_0 \cdot \nabla u_{2, 0}\rt) \|_{L^2}^2  + \| \pz\lt(u_{\neq} \cdot \nabla u_{2, \neq}\rt) \|_{L^2}^2 \rt).
	\end{aligned}\end{equation*}
	Therefore, by $\eqref{eq:velocity estimate 2}_5$, \eqref{eq:temp32}, \eqref{eq:temp34} and \eqref{eq:pt nabla u20}, when $A\geq \mathcal{B}_{3}$, we get
	\begin{equation}\begin{aligned} \label{eq:delta pz u20}
		\frac{1}{A^{1-\frac{\epsilon}{2}}}	\| \partial_y\pz P_0 \|_{L^2L^2}^2  + \frac{1}{A^{1-\frac{\epsilon}{2}}}\|\Delta \pz u_{2, 0} \|_{L^2L^2}^2 \leq C \left(A^{\frac{\epsilon}{2}}\|(u_{2,\rm in}, u_{3, \rm in})_0\|_{H^2}^2 + 1 \right).
	\end{aligned}\end{equation}
	Secondly, it holds for $\py u_{2, 0}$ that
	\begin{equation*}\begin{aligned}
			\partial_t \py u_{2, 0}   - \frac1A \Delta \py u_{2, 0}  +  \frac1A\partial_y^2 P_0   +   \frac1A\py\lt(u \cdot \nabla u_{2}\rt)_{0}=0,
	\end{aligned}\end{equation*}
	which gives
	\begin{equation*}\begin{aligned}
			\frac{1}{A^{1-\frac{\epsilon}{2}}}\|\Delta\py u_{2, 0}\|_{L^2}^2 \leq C\left(A^{1+\frac{\epsilon}{2}}\|\pt\py u_{2, 0}\|_{L^2}^2  + \frac{1}{A^{1-\frac{\epsilon}{2}}}\|\partial_y^2 P_0\|_{L^2}^2  + \frac{1}{A^{1-\frac{\epsilon}{2}}}\|\py\lt(u \cdot \nabla u_{2}\rt)_{0}\|_{L^2}^2 \right).
	\end{aligned}\end{equation*}
	Noting that by Lemma \ref{lem:transform for p}, we have
	\begin{equation}\begin{aligned} \label{eq:pz2 py2 p}
			\|\py^2 P_0\|_{L^2}^2  +  \|\pz^2 P_0\|_{L^2}^2  \leq C\left(\|\py\pz P_0\|_{L^2}^2  +  \|\Delta P_0^{N_{3}}\|_{L^2}^2\right).
	\end{aligned}\end{equation}
	Hence, we get that
	\begin{equation*}\begin{aligned}
			\frac{\| \Delta\py u_{2, 0}\|_{L^2L^2}^2 }{A^{1-\frac{\epsilon}{2}}}
			&\leq	 C\left(A^{1+\frac{\epsilon}{2}}\|\pt\py u_{2, 0}\|_{L^2L^2}^2+ \frac{1}{A^{1-\frac{\epsilon}{2}}}\|\Delta P^{N_3}_0\|_{L^2L^2}^2 +\frac{1}{A^{1-\frac{\epsilon}{2}}}\|\partial_{y}\partial_{z}P_{0}\|_{L^{2}L^{2}}^{2} \right.\\
			&\left.\quad  + \frac{1}{A^{1-\frac{\epsilon}{2}}}\|\py\lt(u_0 \cdot \nabla u_{2, 0}\rt)\|_{L^2L^2}^2 + \frac{1}{A^{1-\frac{\epsilon}{2}}}\|\py\lt(u_{\neq} \cdot \nabla u_{2, \neq}\rt)\|_{L^2L^2}^2 \right),
	\end{aligned}\end{equation*}		
	which combined with  $\eqref{eq:velocity estimate 2}_5$, \eqref{eq:temp32}, \eqref{eq:temp34}, \eqref{eq:pt nabla u20} and \eqref{eq:delta pz u20} shows
	\begin{equation}\begin{aligned} \label{eq:delta py u20}
			\frac{1}{A^{1-\frac{\epsilon}{2}}}\|{\rm e}^{A^{-1}t} \Delta\py u_{2, 0}\|_{L^2L^2}^2 \leq   C \left( A^{\frac{\epsilon}{2}}\|(u_{2,\rm in}, u_{3, \rm in})_0\|_{H^2}^2   + 1 \right)
	\end{aligned}\end{equation}
	provided with $A\geq \mathcal{B}_{3}$.
	Together with \eqref{eq:temp35}, \eqref{eq:delta pz u20} and \eqref{eq:delta py u20}, we infer that
	\begin{equation*}\begin{aligned} \label{eq:nabla delta u20}
		A^{\frac{\epsilon}{2}}\|\Delta u_{2,0}\|_{Y_{0}}^{2}\leq C\left( A^{\frac{\epsilon}{2}} \|(u_{2,\rm in}, u_{3, \rm in})_0\|_{H^2}^2 + 1\right).
	\end{aligned}\end{equation*} 

	{\bf Estimate of $(\ref{eq:E13 14})_{6}.$} 	As $u_{3, 0}$ satisfies
	\begin{equation*}\begin{aligned}
			\partial_t  u_{3, 0}   - \frac1A \Delta  u_{3, 0}  +  \frac1A\partial_z P_0   +   \frac1A  \lt(u \cdot \nabla u_{3}\rt)_{0}=0,
	\end{aligned}\end{equation*}
	the energy estimate yields 
	\begin{equation*}\begin{aligned}
			\frac{1}{A^{1-\frac{\epsilon}{2}}}\|\Delta  u_{3, 0}\|_{L^2}^2 \leq& C\Big(A^{1+\frac{\epsilon}{2}}\|\pt  u_{3, 0}\|_{L^2}^2  + \frac{1}{A^{1-\frac{\epsilon}{2}}}\|\partial_z P_0\|_{L^2}^2  + \frac{1}{A^{1-\frac{\epsilon}{2}}}\|u_0 \cdot \nabla u_{3, 0}\|_{L^2}^2  \\&+ \frac{1}{A^{1-\frac{\epsilon}{2}}}\|u_{\neq} \cdot \nabla u_{3, \neq}\|_{L^2}^2\Big).
	\end{aligned}\end{equation*}
	Then we get by \eqref{eq:pz2 py2 p} that
	\begin{equation*}\begin{aligned}
			\frac{1}{A^{1-\frac{\epsilon}{2}}}\| \Delta  u_{3, 0}\|_{L^2L^2}^2 \leq& C\left(A^{1+\frac{\epsilon}{2}}\|\pt  u_{3, 0}\|_{L^2L^2}^2  + \frac{1}{A^{1-\frac{\epsilon}{2}}}\|\partial_z^2 P^{N_3}_0\|_{L^2L^2}^2 \right.\\
			&\left. + \frac{1}{A^{1-\frac{\epsilon}{2}}}\| u_0 \cdot \nabla u_{3, 0}\|_{L^2L^2}^2  + \frac{1}{A^{1-\frac{\epsilon}{2}}}\| u_{\neq} \cdot \nabla u_{3, \neq}\|_{L^2L^2}^2\right)\\
			\leq& C\left(A^{1+\frac{\epsilon}{2}}\| \pt  u_{3, 0}\|_{L^2L^2}^2  + \frac{1}{A^{1-\frac{\epsilon}{2}}}\|\partial_z\py P_0\|_{L^2L^2}^2 + \frac{1}{A^{1-\frac{\epsilon}{2}}}\|\Delta P_0^{N_{3}}\|_{L^2L^2}^2 \right.\\
			&\left. + \frac{1}{A^{1-\frac{\epsilon}{2}}}\|u_0 \cdot \nabla u_{3, 0}\|_{L^2L^2}^2  + \frac{1}{A^{1-\frac{\epsilon}{2}}}\|u_{\neq} \cdot \nabla u_{3, \neq}\|_{L^2L^2}^2\right),
	\end{aligned}\end{equation*}
	which coupled with $\eqref{eq:velocity estimate 2}_2$,  $\eqref{eq:E13 14}_{3}$, \eqref{eq:temp32}, \eqref{eq:temp34} and \eqref{eq:delta pz u20} implies
	\begin{equation}\begin{aligned} \label{eq:delta u30}
			\frac{1}{A^{1-\frac{\epsilon}{2}}}\|{\rm e}^{A^{-1}t} \Delta  u_{3, 0}\|_{L^2L^2}^2 \leq C \left(A^{\frac{\epsilon}{2}}\|(u_{2,\rm in}, u_{3,\rm in})_{0}\|_{H^{2}}^{2}+1 \right),
	\end{aligned}\end{equation}
	as long as $A\geq \mathcal{B}_{3}.$	
	
	To sum up, the proof is complete.
\end{proof}

Thanks to $\widehat{u_{1,0}}\big|_{y=\pm 1}=\Delta\widetilde{u_{1,0}}\big|_{y=\pm 1}=0,$ we infer that
\begin{equation*}
	\|\widetilde{u_{1,0}}\|_{L^{\infty}H^{2}}+A^{-\frac12}\|\nabla\widehat{u_{1,0}}\|_{L^{2}H^{2}}\leq C\|\Delta\widetilde{u_{1,0}}\|_{Y_{0}}.
\end{equation*}
Hence, by making use of Lemma \ref{lem:E13 14}, we directly obtain the estimate of the energy $E_{1,3}(t).$
\begin{Cor}\label{cor: E13}
Under the condition of \eqref{eq:smallness of u_in}, it follows from Lemma \ref{lem:E13 14} that
	\begin{equation*}
		E_{1,3}(t)\leq C\left(A^{\epsilon}\|(u_{2,\rm in}, u_{3,\rm in})_{0}\|_{L^{2}}+A^{\frac{\epsilon}{4}}\|(u_{2,\rm in}, u_{3,\rm in})_{0}\|_{H^{2}}+1 \right)\leq C.
	\end{equation*}
\end{Cor}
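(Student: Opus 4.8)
The plan is to unpack the definition of $E_{1,3}(t)$ as a weighted sum of norms of $\widehat{u_{1,0}}$, $u_{2,0}$ and $u_{3,0}$, control each summand by the matching estimate in Lemma~\ref{lem:E13 14}, and then let the smallness hypothesis \eqref{eq:smallness of u_in} collapse everything to a fixed constant. Recall
\[
E_{1,3}(t)=A^{\epsilon}\big(A^{-1}\|\widehat{u_{1,0}}\|_{L^{\infty}H^2}+A^{-\frac32}\|\nabla\widehat{u_{1,0}}\|_{L^2H^2}+\|u_{2,0}\|_{Y_0}+\|u_{3,0}\|_{Y_0}\big)+A^{\frac14\epsilon}\big(\|\nabla u_{2,0}\|_{Y_0}+\|\nabla u_{3,0}\|_{Y_0}+\|\Delta u_{2,0}\|_{Y_0}\big),
\]
so it suffices to treat these groups of terms one by one, always under the standing assumption $A\geq\mathcal{B}_3$.

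First I would dispose of the $\widehat{u_{1,0}}$ contribution. Writing $A^{\epsilon}A^{-\frac32}=A^{\epsilon-1}A^{-\frac12}$, the two $\widehat{u_{1,0}}$ terms sum to $A^{\epsilon-1}\big(\|\widehat{u_{1,0}}\|_{L^{\infty}H^2}+A^{-\frac12}\|\nabla\widehat{u_{1,0}}\|_{L^2H^2}\big)$, which by the elliptic bound displayed just before the corollary (using $\widehat{u_{1,0}}|_{y=\pm1}=\Delta\widehat{u_{1,0}}|_{y=\pm1}=0$) is $\leq CA^{\epsilon-1}\|\Delta\widehat{u_{1,0}}\|_{Y_0}$; this is exactly the quantity controlled by $(\ref{eq:E13 14})_2$, giving $C\big(A^{\epsilon}\|(u_{2,\rm in},u_{3,\rm in})_0\|_{L^2}+1\big)$. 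Next, $A^{\epsilon}(\|u_{2,0}\|_{Y_0}+\|u_{3,0}\|_{Y_0})$ is precisely $(\ref{eq:E13 14})_1$, while $A^{\frac14\epsilon}\|\nabla u_{2,0}\|_{Y_0}$ and $A^{\frac14\epsilon}\|\Delta u_{2,0}\|_{Y_0}$ are $(\ref{eq:E13 14})_4$ and $(\ref{eq:E13 14})_5$; each is already of the asserted form and needs no further work.

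The only term requiring assembly is $A^{\frac14\epsilon}\|\nabla u_{3,0}\|_{Y_0}$, since the lemma does not bound this norm directly. By definition $\|\nabla u_{3,0}\|_{Y_0}^2=\|\nabla u_{3,0}\|_{L^{\infty}L^2}^2+\frac1A\|\nabla^2 u_{3,0}\|_{L^2L^2}^2$; the first piece is contained in $(\ref{eq:E13 14})_3$, and for the second I would invoke the Dirichlet elliptic estimate $\|\nabla^2 u_{3,0}\|_{L^2}\leq C\|\Delta u_{3,0}\|_{L^2}$ (valid because $u_{3,0}|_{y=\pm1}=0$), so that $A^{\frac14\epsilon}A^{-\frac12}\|\nabla^2 u_{3,0}\|_{L^2L^2}\leq CA^{\frac14\epsilon-\frac12}\|\Delta u_{3,0}\|_{L^2L^2}$, which is exactly $(\ref{eq:E13 14})_6$. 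Adding all these estimates yields
\[
E_{1,3}(t)\leq C\big(A^{\epsilon}\|(u_{2,\rm in},u_{3,\rm in})_0\|_{L^2}+A^{\frac14\epsilon}\|(u_{2,\rm in},u_{3,\rm in})_0\|_{H^2}+1\big).
\]

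Finally I would close using the smallness condition. The definition of $\epsilon$ forces $\epsilon\leq\epsilon_1$ in both branches, and $A\geq\mathcal{B}\geq1$ gives $A^{\frac14\epsilon}\leq A^{\epsilon}\leq A^{\epsilon_1}$; hence both $A^{\epsilon}\|(u_{2,\rm in},u_{3,\rm in})_0\|_{L^2}$ and $A^{\frac14\epsilon}\|(u_{2,\rm in},u_{3,\rm in})_0\|_{H^2}$ are dominated by $A^{\epsilon_1}\|u_{\rm in}\|_{H^2}\leq C_0$ via \eqref{eq:smallness of u_in}, so $E_{1,3}(t)\leq C$. I expect no genuine obstacle here: the analytic content lives entirely in Lemma~\ref{lem:E13 14}, and the only care needed is the exponent bookkeeping (matching $A^{\epsilon-\frac32}=A^{\epsilon-1}A^{-\frac12}$ and the weight $A^{\frac14\epsilon-\frac12}$) together with the Dirichlet elliptic estimate used to trade $\nabla^2 u_{3,0}$ for $\Delta u_{3,0}$.
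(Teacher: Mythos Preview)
Your proof is correct and follows essentially the same approach as the paper. The paper's argument is extremely terse---it records the elliptic bound $\|\widehat{u_{1,0}}\|_{L^{\infty}H^2}+A^{-1/2}\|\nabla\widehat{u_{1,0}}\|_{L^2H^2}\leq C\|\Delta\widehat{u_{1,0}}\|_{Y_0}$ and then simply says the estimate for $E_{1,3}(t)$ follows directly from Lemma~\ref{lem:E13 14}; you have correctly filled in the bookkeeping, including the one nontrivial assembly step (combining $(\ref{eq:E13 14})_3$ and $(\ref{eq:E13 14})_6$ via the Dirichlet elliptic estimate to handle $A^{\frac14\epsilon}\|\nabla u_{3,0}\|_{Y_0}$) and the verification that $\epsilon\leq\epsilon_1$ so that the smallness hypothesis closes the bound.
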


Recall that $E_{1}(t)=E_{1,1}(t)+E_{1,2}(t)+E_{1,3}(t).$ Based on Corollary \ref{cor:est of n0}, Lemma \ref{lem:est of E12} and Corollary \ref{cor: E13}, the upper bound estimate for the zero mode energy $E_{1}(t)$ is stated below.
	\begin{Cor}\label{Cor:E1 end}
		Let $\mathcal{B}_{4}:=\max\{\mathcal{B}_{1}, \mathcal{B}_{2}, \mathcal{B}_{3}\}.$ If $A\geq \mathcal{B}_{4},$ the following estimates hold, as a consequence of Corollary \ref{cor:est of n0}, Lemma \ref{lem:est of E12} and Corollary \ref{cor: E13}:
		\begin{itemize}
			\item[(i)] \underline{ Case $\mu>0$:}
			\begin{equation*}
			\begin{aligned}
				E_{1}(t)\leq C{\rm e}^{C\|(n_{\rm in})_{0}\|_{L^{1}}}\left(\|(n_{\rm in})_{0}\|_{L^{2}}+\|(n_{\rm in})_{0}\|_{L^{1}}^{2}+\|(u_{1,\rm in})_{0}\|_{H^{1}}+1 \right)=:\mathcal{C}_{1},
			\end{aligned}
			\end{equation*}
			\item[(ii)] \underline{Case $\mu=0$:}
			\begin{equation*}
				\begin{aligned}
					E_{1}(t)\leq C\left(\|(n_{\rm in})_{(0,\neq)}\|_{L^{2}}+\|(n_{\rm in})_{(0,0)}\|_{L^{2}}^{2}+M^{5}+\|(u_{1,\rm in})_{0}\|_{H^{1}}+1 \right)=:\mathcal{C}_{2}.
				\end{aligned}
			\end{equation*}
		\end{itemize}
		Denote $ F_{1}:=\max\{\mathcal{C}_{1}, \mathcal{C}_{2}\}$.
		 For both $\mu>0$ and $\mu=0$, we can prove that $E_{1}(t)\leq F_{1}$.
	\end{Cor}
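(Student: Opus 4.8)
The plan is purely to assemble the three components of $E_1(t) = E_{1,1}(t) + E_{1,2}(t) + E_{1,3}(t)$, each already controlled under the bootstrap hypothesis \eqref{assumption}. Since $\mathcal{B}_4 = \max\{\mathcal{B}_1, \mathcal{B}_2, \mathcal{B}_3\}$, choosing $A \geq \mathcal{B}_4$ simultaneously activates Corollary \ref{cor:est of n0} (for $E_{1,1}$), Lemma \ref{lem:est of E12} (for $E_{1,2}$) and Corollary \ref{cor: E13} (for $E_{1,3}$), so all three bounds are available at once. I would treat the two regimes $\mu > 0$ and $\mu = 0$ in parallel, since they differ only in the input bound for $E_{1,1}$.

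First I would record the bound for $E_{1,1}(t) = \|n_0\|_{L^\infty L^2}$ directly from Corollary \ref{cor:est of n0}: for $\mu > 0$ it is $C\mathrm{e}^{C\|(n_{\rm in})_0\|_{L^1}}(\|(n_{\rm in})_0\|_{L^2} + \|(n_{\rm in})_0\|_{L^1}^2 + 1)$, and for $\mu = 0$ it is $C(\|(n_{\rm in})_{(0,\neq)}\|_{L^2} + \|(n_{\rm in})_{(0,0)}\|_{L^2}^2 + M^5 + 1)$; crucially, both right-hand sides depend only on the initial data. The key manipulation is then the chaining into $E_{1,2}$: Lemma \ref{lem:est of E12} gives $E_{1,2}(t) \leq C(\|(u_{1,\rm in})_0\|_{H^1} + \|n_0\|_{L^\infty L^2} + 1)$, and since $\|n_0\|_{L^\infty L^2}$ is exactly $E_{1,1}(t)$, I substitute the $E_{1,1}$ bound just obtained so that $E_{1,2}$ is again expressed purely in terms of initial data. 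Finally $E_{1,3}(t) \leq C$ comes for free from Corollary \ref{cor: E13}.

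Summing the three estimates and absorbing constants, I would define $\mathcal{C}_1$ and $\mathcal{C}_2$ as the resulting right-hand sides for $\mu > 0$ and $\mu = 0$ respectively (these already incorporate the $\|(u_{1,\rm in})_0\|_{H^1}$ contribution coming from $E_{1,2}$), set $F_1 := \max\{\mathcal{C}_1, \mathcal{C}_2\}$, and conclude $E_1(t) \leq F_1$ on $[0,T]$ in either case.

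The only point requiring care --- and what could superficially look like an obstacle --- is the apparent circularity: $\mathcal{B}_4$ depends on $F_1, F_2, F_3$ (through $\mathcal{B}_1, \mathcal{B}_2, \mathcal{B}_3$), while $F_1$ is being fixed here. This is resolved by observing that the negative powers of $A$ attached to every $F_1, F_2, F_3$ term in Lemma \ref{lem:est of n00}--Lemma \ref{lem:est of n001} and in the proof of Corollary \ref{cor: E13} are precisely what the thresholds $\mathcal{B}_1, \mathcal{B}_2, \mathcal{B}_3$ are engineered to absorb; once $A \geq \mathcal{B}_4$ those terms are bounded by $C$ and drop out of the final estimates. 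Consequently $\mathcal{C}_1$ and $\mathcal{C}_2$, hence $F_1$, are determined solely by the initial data and are independent of $F_2, F_3$, so the bootstrap constants can be fixed consistently and the reduction from $E_1(t) \leq 2F_1$ to $E_1(t) \leq F_1$ closes without loss.
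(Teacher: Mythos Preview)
Your proposal is correct and follows essentially the same approach as the paper: the corollary is stated there without a separate proof, simply as the direct assembly of Corollary \ref{cor:est of n0}, Lemma \ref{lem:est of E12}, and Corollary \ref{cor: E13}, which is precisely the chaining you describe (including the substitution of the $E_{1,1}$ bound into the $\|n_0\|_{L^\infty L^2}$ term of Lemma \ref{lem:est of E12}). Your additional remark on the consistency of the bootstrap constants is a helpful clarification of the overall scheme but is not part of the paper's treatment of this particular corollary.
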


\section{Estimates for non-zero modes $E_{2}(t)$: Proof of Proposition \ref{prop:F2}}\label{Sec 5}
Recalling $\widehat{\Delta}={\Delta}^{k_1, k_3}=\partial_y^2-k_1^2-k_3^2$, $\eta=\sqrt{k_1^2+k_3^2}$ and applying Fourier transform to \eqref{eq:main} and \eqref{eq:main1} with respect to $(x, z)$, we obtain
\begin{equation}\begin{aligned} \label{eq:mian eq after fourier}
		\left\{\begin{array}{l}
				\partial_t n^{k_1, k_3}-\frac{1}{A}\left(\partial_y^2-\eta^2\right) n^{k_1, k_3}+i k_1 y n^{k_1, k_3} \\
			\quad	=-\frac{1}{A}\left(i k_1, \partial_y, i k_3\right) \cdot(u n)^{k_1, k_3}-\frac{1}{A}\left(i k_1, \partial_y, i k_3\right) \cdot(n \nabla c)^{k_1, k_3}  - \frac{\mu}{A} (n^2)^{k_1, k_3},  \\
			\partial_t \omega_2^{k_1, k_3}-\frac{1}{A}\left(\partial_y^2-\eta^2\right) \omega_2^{k_1, k_3}+i k_1 y \omega_2^{k_1, k_3} \\
			\quad  =-i k_3 u_2^{k_1, k_3}+\frac{1}{A} i k_3 n^{k_1, k_3} - \frac1A ik_3 \left(u\cdot\nabla u_1\right)^{k_1, k_3}  +  \frac1A ik_1 \left(u\cdot\nabla u_3\right)^{k_1, k_3}, \\
			\partial_t \widehat{\Delta} u_2^{k_1, k_3}-\frac{1}{A}\left(\partial_y^2-\eta^2\right) \widehat{\Delta} u_2^{k_1, k_3}+i k_1 y u_2^{k_1, k_3}  \\
			\quad =  -\frac{1}{A} i k_1 \partial_y n^{k_1, k_3}  +  \frac1A \left(k_1^2  +  k_3^2\right)\left(u\cdot\nabla u_2\right)^{k_1, k_3} + \frac1A \py \left[\px\left(u\cdot\nabla u_1\right) +  \pz\left(u\cdot\nabla u_3\right)\right]^{k_1 , k_3} , \\
			n^{k_1, k_3}|_{y= \pm 1}= \omega_2^{k_1, k_3}\big|_{y = \pm 1}=0, \quad
			\partial_y u_2^{k_1, k_3}|_{y= \pm 1}=u_2^{k_1, k_3}\big|_{y= \pm 1}=0.
		\end{array}\right.
\end{aligned}\end{equation}
\subsection{Energy estimate for $E_{2, 1}(t)$.}

\begin{Lem}
	It holds that 
	\begin{equation}\begin{aligned} \label{eq:u and cell}
			& \|\mathrm{e}^{a A^{-\frac{1}{3}} t}\px \left(u_{\neq}n_{\neq}\right)  \|_{L^2 L^2}^2  \leq C  \left( A^\frac13 F_3^2 
			+A^\frac23 \|   \px n_{\neq}  \|_{Y_a}^2\right) (\|\px\omega_{2,  \neq}\|_{Y_a}^2  + \| u_{2, \neq}\|_{X_a}^2  ) ,\\
			& \|\mathrm{e}^{a A^{-\frac{1}{3}} t} \px\left( n_{\neq} \nabla c_{\neq}\right) \|_{L^2 L^2}^2 \leq  C\left( A^\frac13 F_3^2 
			+A^\frac56 \|   \px n_{\neq}  \|_{Y_a}^2\right) \|\px n_{  \neq}\|_{Y_a}^2.
	\end{aligned}\end{equation}
\end{Lem}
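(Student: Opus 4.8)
The plan is to expand both products by the Leibniz rule and split each resulting piece according to which factor is ``low order''. Write $\px(u_{\neq}n_{\neq}) = (\px u_{\neq})n_{\neq} + u_{\neq}\px n_{\neq}$ and $\px(n_{\neq}\nabla c_{\neq}) = (\px n_{\neq})\nabla c_{\neq} + n_{\neq}\px\nabla c_{\neq}$. In the two pieces carrying an undifferentiated $n_{\neq}$ I would pull $n_{\neq}$ out in $L^\infty_t L^\infty_{xyz}$, bounded by $\|n\|_{L^{\infty}L^{\infty}} \le 2F_3$, and measure the remaining factor in $L^2L^2$. For $(\px u_{\neq})n_{\neq}$ the velocity transform $\eqref{eq:velocity transform}$, together with Poincar\'e in $x$ (valid since $k_1\neq 0$), gives $\|\px u_{\neq}\|_{L^2} \le C(\|\px\omega_{2,\neq}\|_{L^2} + \|\px\nabla u_{2,\neq}\|_{L^2})$ as in $\eqref{eq:temp0}$; the weighted $L^2L^2$ norms of these are controlled by $A^{\frac13}\|\px\omega_{2,\neq}\|_{Y_a}^2$ (from the $A^{-\frac13}$ weight in $\eqref{eq:Y_a}$) and by $\|u_{2,\neq}\|_{X_a}^2$ (from $\eqref{eq:X_a}$). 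For $n_{\neq}\px\nabla c_{\neq}$, elliptic regularity for $\Delta c_{\neq}=c_{\neq}-n_{\neq}$ yields $\|\px\nabla c_{\neq}\|_{L^2} \le C\|\px n_{\neq}\|_{L^2}$, whose weighted $L^2L^2$ norm is $\le CA^{\frac13}\|\px n_{\neq}\|_{Y_a}^2$. These produce exactly the $A^{\frac13}F_3^2$ contributions in both inequalities.

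The two remaining pieces $u_{\neq}\px n_{\neq}$ and $(\px n_{\neq})\nabla c_{\neq}$ are the genuinely bilinear ones and require anisotropic H\"older. For $u_{\neq}\px n_{\neq}$ I would put $u_{\neq}$ in $L^\infty_t L^\infty_{x,z}L^2_y$ and $\px n_{\neq}$ in $L^2_t L^\infty_y L^2_{x,z}$: the first factor is controlled, via $\eqref{eq:temp0}$ and the $L^\infty L^2$ parts of $\eqref{eq:X_a}$--$\eqref{eq:Y_a}$, by $\|\px\omega_{2,\neq}\|_{Y_a}^2 + \|u_{2,\neq}\|_{X_a}^2$; the second, by the one-dimensional Gagliardo--Nirenberg inequality $\|\px n_{\neq}\|_{L^\infty_y L^2_{x,z}}^2 \le C\|\px n_{\neq}\|_{L^2}\|\py\px n_{\neq}\|_{L^2}$ (using $n_{\neq}|_{y=\pm1}=0$) together with Cauchy--Schwarz in time, reduces to the geometric mean of $\|\mathrm{e}^{aA^{-\frac13}t}\px n_{\neq}\|_{L^2L^2}^2 \le CA^{\frac13}\|\px n_{\neq}\|_{Y_a}^2$ and $\|\mathrm{e}^{aA^{-\frac13}t}\py\px n_{\neq}\|_{L^2L^2}^2 \le CA\|\px n_{\neq}\|_{Y_a}^2$, i.e.\ $CA^{\frac23}\|\px n_{\neq}\|_{Y_a}^2$. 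This yields the $A^{\frac23}\|\px n_{\neq}\|_{Y_a}^2$ factor. The term $(\px n_{\neq})\nabla c_{\neq}$ is handled analogously, converting $\nabla c_{\neq}$ to $n_{\neq}$ (hence to $\px n_{\neq}$ by Poincar\'e in $x$) through elliptic estimates; the slightly larger power $A^{\frac56}$ arises because the $c$-factor must be measured in $L^2$ in time in order to access $\nabla\px n_{\neq}$, which is available only in $L^2L^2$ through the $A^{-1}$ weight of $\eqref{eq:Y_a}$, and the sharp anisotropic interpolation exponent between the $A^{\frac13}$ and $A$ pieces produces $A^{\frac56}$ rather than $A^{\frac23}$.

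The main obstacle, and the part demanding the most care, is the bookkeeping of the fractional powers of $A$ while staying strictly within the derivative budget of $Y_a$, which controls only $\px n_{\neq}$ (in $L^\infty L^2$ and $L^2L^2$) and $\nabla\px n_{\neq}$ (in $L^2L^2$): no higher or mixed $y$-derivatives of $n_{\neq}$, and no $L^\infty_t L^2$ control of $\nabla\px n_{\neq}$. This forces each factor into the correct time-integrability class, $L^\infty_t$ versus $L^2_t$, \emph{before} H\"older is applied, and is precisely why the anisotropic split must route exactly one $\py$-derivative onto $n_{\neq}$ and defer all remaining regularity either to the velocity (through $\eqref{eq:velocity transform}$ and $\eqref{eq:X_a}$) or to $c$ (through elliptic regularity and Poincar\'e in $x$). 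Arranging the interpolation in the $(\px n_{\neq})\nabla c_{\neq}$ term so that it lands exactly on $A^{\frac56}$ is the delicate step, since a crude $H^2$ embedding $\nabla c_{\neq}\in L^\infty_{xyz}$ would only give $A$; a genuinely anisotropic Sobolev estimate for the $c$-factor is needed to recover the sharper exponent.
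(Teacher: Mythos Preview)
Your treatment of the first inequality is correct and gives a valid alternative to the paper's argument. For the piece $u_{\neq}\px n_{\neq}$ the paper takes the more direct route $\|u_{\neq}\px n_{\neq}\|_{L^2}^2\le\|u_{\neq}\|_{L^\infty}^2\|\px n_{\neq}\|_{L^2}^2$ and uses the anisotropic embedding $\eqref{Sob neq}_1$ to bound $\|u_{\neq}\|_{L^\infty}^2$ by $C\bigl(\|\px\omega_{2,\neq}\|_{L^2}+\|\px\nabla u_{2,\neq}\|_{L^2}\bigr)\bigl(\|\px\nabla\omega_{2,\neq}\|_{L^2}+\|\px\Delta u_{2,\neq}\|_{L^2}\bigr)$, then integrates in time; your $L^\infty_{x,z}L^2_y\times L^\infty_yL^2_{x,z}$ split reaches the same $A^{2/3}$ with essentially the same derivative budget. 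Both are fine.

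The gap is in the second inequality, specifically the term $(\px n_{\neq})\nabla c_{\neq}$. You correctly identify that the crude $L^\infty$ bound on $\nabla c_{\neq}$ gives only $A$, and you propose an unspecified ``genuinely anisotropic Sobolev estimate for the $c$-factor'' to recover $A^{5/6}$. But no such anisotropic split is carried out, and the natural attempts run into trouble: placing $\px n_{\neq}$ in $L^\infty_yL^2_{x,z}$ and $\nabla c_{\neq}$ in $L^2_yL^\infty_{x,z}$ forces two horizontal derivatives onto $\nabla c_{\neq}$ (hence, through elliptic regularity, onto $n_{\neq}$), which after Poincar\'e in $x$ produces a factor $\|\nabla\px n_{\neq}\|_{L^2}^2$ multiplied by $\|\px n_{\neq}\|_{L^2}\|\py\px n_{\neq}\|_{L^2}$; the resulting cubic power of $\nabla\px n_{\neq}$ cannot be integrated in time using only the $L^2_tL^2$ control in $Y_a$.

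The paper's argument for this term is \emph{not} anisotropic at all. It uses the isotropic H\"older split $\|(\px n_{\neq})\nabla c_{\neq}\|_{L^2}^2\le\|\px n_{\neq}\|_{L^4}^2\|\nabla c_{\neq}\|_{L^4}^2$, together with the 3D Gagliardo--Nirenberg inequality $\|\px n_{\neq}\|_{L^4}\le C\|\px n_{\neq}\|_{L^2}^{1/4}\|\nabla\px n_{\neq}\|_{L^2}^{3/4}$ and the elliptic $L^4$ bound $\|\nabla c_{\neq}\|_{L^4}\le C\|n_{\neq}\|_{L^2}\le C\|\px n_{\neq}\|_{L^2}$ from Lemma~\ref{lem:est of the non-zero mode of c and n}. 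This yields the pointwise bound $C\|\px n_{\neq}\|_{L^2}^{5/2}\|\nabla\px n_{\neq}\|_{L^2}^{3/2}$; integrating in time with H\"older (exponents $1/4,3/4$) against the $A^{1/3}$ and $A$ weights in $Y_a$ gives precisely $A^{1/12+3/4}=A^{5/6}$. The key point you were missing is that the sharp exponent comes from a \emph{three-dimensional} interpolation on $\px n_{\neq}$, not from an anisotropic splitting of the spatial norms.
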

\begin{proof}
	By $\eqref{eq:velocity transform}_{2}$ and \eqref{Sob}, there holds
	\begin{equation*}\begin{aligned}
			\| n_{\neq}   \px  u_{\neq}   \|_{L^2}^2  \leq \|\px u_{\neq}\|_{L^2}^2 \| n_{\neq}\|_{L^\infty}^2  \leq \|\partial_{x}^{2}u_{\neq}\|_{L^{2}}^{2}\|n_{\neq}\|_{L^{\infty}}^{2}
			\leq C \| n  \|_{L^\infty}^2 (\|\px\omega_{2,  \neq}\|_{L^2}^2  + \|\px\nabla u_{2, \neq}\|_{L^2}^2  ).
	\end{aligned}\end{equation*}
	Using Lemma \ref{lemma_u}, \eqref{Sob} and $\eqref{Sob neq}_{1}$, we arrive
	\begin{equation*}\begin{aligned}
			\|u_{\neq}\px n_{\neq}\|_{L^2}^2 &\leq \|\px n_{\neq}\|_{L^2}^2  \| u_{\neq}\|_{L^\infty}^2
			\leq C \|\px n_{\neq}\|_{L^2}^2 \| \px \left(\px, \pz \right)  u_{\neq}   \|_{L^2}\|  \left(\px, \pz\right)  u_{\neq}\|_{H^1}\\
			&\leq C \left(\|\px\omega_{2,  \neq}\|_{L^2}  + \|\px\nabla u_{2, \neq}\|_{L^2}  \right)  \left(\|\px\nabla \omega_{2,  \neq}\|_{L^2} 
			+ \|\px\Delta u_{2, \neq}\|_{L^2}  \right) \|   \px n_{\neq}   \|_{L^2}^2.
	\end{aligned}\end{equation*}
	Combining the two inequalities above with (\ref{eq:X_a}) and (\ref{eq:Y_a}), one deduces that
	\begin{equation*}\begin{aligned}
			\|\mathrm{e}^{a A^{-\frac{1}{3}} t}\px \left(u_{\neq}n_{\neq}\right)  \|_{L^2 L^2}^2 
			\leq C  \left( A^\frac13 F_3^2 +A^\frac23 \|   \px n_{\neq} \|_{Y_a}^2\right) (\|\px\omega_{2,  \neq}\|_{Y_a}^2  + \| u_{2, \neq}\|_{X_a}^2 ).
	\end{aligned}\end{equation*}
	It follows from the Gagliardo-Nirenberg inequality that
	\begin{equation*}\begin{aligned}
			\|\partial_{x}n_{\neq}\|_{L^{4}}\leq C\|\partial_{x}n_{\neq}\|_{L^{2}}^{\frac14}\|\nabla\partial_{x}n_{\neq}\|_{L^{2}}^{\frac34},
	\end{aligned}\end{equation*} 
	from which, combined with Lemma \ref{lem:est of the non-zero mode of c and n}, we get
	\begin{equation*}\begin{aligned}
			\| \px \left(n_{\neq}\nabla c_{\neq}\right) \|_{L^2}^2 &\leq C(\| n_{\neq} \px \nabla c_{\neq} \|_{ L^2}^2 
			+ \|\px n_{\neq} \nabla c_{\neq}\|_{ L^2}^2 ) \\
			&\leq C( \|\px  \nabla c_{\neq}\|_{L^2}^2\| n_{\neq}\|_{L^\infty}^2  + \|\px  n_{\neq}\|_{L^4}^2  \|\nabla c_{\neq} \|_{L^4}^2) \\
			&\leq C \big(\|\partial_{x}n_{\neq}\|_{L^{2}}^{2}\|n_{\neq}\|_{L^{\infty}}^{2}+\|\partial_{x}n_{\neq}\|_{L^{2}}^{\frac52}\|\nabla\partial_{x}n_{\neq}\|_{L^{2}}^{\frac32} \big).
	\end{aligned}\end{equation*}
	Hence, the above inequality along with (\ref{eq:X_a}) and (\ref{eq:Y_a}) gives
	\begin{equation*}\begin{aligned}
			\|\mathrm{e}^{a A^{-\frac{1}{3}} t}\px \left(n_{\neq} \nabla c_{\neq}\right) \|_{L^2 L^2}^2 \leq  C\left( A^\frac13 F_3^2 
			+A^\frac56 \|\px n_{\neq}\|_{Y_a}^2\right) \|\px n_{  \neq}\|_{Y_a}^2  .
	\end{aligned}\end{equation*}
	
	The proof is complete.
\end{proof}

\begin{Lem} 
	Under the assumptions of \eqref{assumption}, it holds that
	\begin{equation}\label{0 n_neq}
		\begin{aligned}
			&	\|\mathrm{e}^{ a A^{-\frac{1}{3}} t} u_{1,0} \px n_{ \neq}\|_{L^2 L^2}^2 \leq C A \lt(A^{-\frac16} F_{1}^2  +  A^{\frac43 - 2 \epsilon} F_{1}^2 \rt) \|\px n_{\neq}\|_{Y_a}^2,\\&
			\|\mathrm{e}^{ a A^{-\frac{1}{3}} t} (u_{2,0}, u_{3,0})\px n_{ \neq}\|_{L^2 L^2}^2  +  \|\mathrm{e}^{ a A^{-\frac{1}{3}} t} n_0 \px u_{ \neq}  \|_{L^2 L^2}^2 \\&\qquad\leq C  A^{\frac13 -\frac12 \epsilon }  F_{1}^2 \|\px n_{\neq}\|_{Y_a}^2+ C A^{ \frac13 }F_3^2  ( \|\px\omega_{2,  \neq}\|_{Y_a}^2 + \|u_{2, \neq}\|_{X_a}^2  ),\\&
			\|\mathrm{e}^{ a A^{-\frac{1}{3}} t}  \px n_{ \neq} \nabla c_0\|_{L^2 L^2}^2  +  
			\|\mathrm{e}^{ a A^{-\frac{1}{3}} t} n_0 \px\nabla c_{ \neq} \|_{L^2 L^2}^2 \leq C A^{\frac{5}{6}}(F_3^2 + M^2) \|\px n_{\neq}\|_{Y_a}^2  .
		\end{aligned}
	\end{equation}
\end{Lem}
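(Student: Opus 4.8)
The plan is to treat each of the three estimates in \eqref{0 n_neq} as a bilinear bound on a product of a zero mode (a function of $(y,z)$ alone) and a derivative of a non-zero mode. The crucial structural observation is that the exponential weight may be assigned entirely to the non-zero factor: writing $\mathrm{e}^{aA^{-1/3}t}(g\,\px h_{\neq}) = g\,(\mathrm{e}^{aA^{-1/3}t}\px h_{\neq})$ with $g$ a zero mode, one estimates $g$ in a time-uniform $L^\infty$-type norm (carrying no weight), controlled through $E_1$ and $E_3$, and absorbs the weighted non-zero factor into the $X_a$ or $Y_a$ norms via \eqref{eq:X_a} and \eqref{eq:Y_a}. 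In particular, \eqref{eq:Y_a} yields $\|\mathrm{e}^{aA^{-1/3}t}\px n_{\neq}\|_{L^2L^2}^2 \leq CA^{1/3}\|\px n_{\neq}\|_{Y_a}^2$ and $\|\mathrm{e}^{aA^{-1/3}t}\nabla\px n_{\neq}\|_{L^2L^2}^2 \leq CA\|\px n_{\neq}\|_{Y_a}^2$; these two bounds, together with Gagliardo-Nirenberg interpolation, generate all the powers of $A$ on the right-hand sides.

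For the terms not involving $u_{1,0}$ I would proceed directly. For $(u_{2,0},u_{3,0})\px n_{\neq}$, put the velocity in $L^\infty_tL^\infty_{y,z}$ (controlled by $CA^{-\epsilon/4}F_1$ through the $H^2$/$H^1$ bounds of Lemma \ref{lem:E13 14} and the Sobolev embedding) and $\px n_{\neq}$ in $L^2L^2$, which produces the factor $A^{1/3-\epsilon/2}F_1^2$. For $n_0\px u_{\neq}$, use $\|n_0\|_{L^\infty L^\infty}\leq\|n\|_{L^\infty L^\infty}\leq F_3$ since $n_0$ is an $x$-average of $n$, then recover $\px u_{\neq}$ from the decoupled pair $(\omega_{2,\neq},u_{2,\neq})$ by the Biot--Savart relations and bound it through $\|\px\omega_{2,\neq}\|_{Y_a}$ and $\|u_{2,\neq}\|_{X_a}$, yielding the $A^{1/3}F_3^2(\|\px\omega_{2,\neq}\|_{Y_a}^2+\|u_{2,\neq}\|_{X_a}^2)$ contribution. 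The third estimate is analogous once $\nabla c_0$ and $\px\nabla c_{\neq}$ are traded for $n_0$ and $\px n_{\neq}$ via the elliptic relation $\Delta c+n-c=0$: the zero-mode elliptic estimates bound $\|\nabla c_0\|_{L^\infty}$ by a multiple of $M+F_3$ (using $\|n_0\|_{L^1}\leq M$ and $\|n_0\|_{L^\infty}\leq F_3$), while the non-zero-mode elliptic estimates give $\|\px\nabla c_{\neq}\|\lesssim\|\px n_{\neq}\|$.

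The delicate term is $u_{1,0}\px n_{\neq}$, because $u_{1,0}$ carries the 3D lift-up effect. Here I would use the splitting $u_{1,0}=\widehat{u_{1,0}}+\widetilde{u_{1,0}}$ from \eqref{eq: decompose of u10}. For the bad part, the bound $\|\widehat{u_{1,0}}\|_{L^\infty H^2}\leq CA^{1-\epsilon}F_1$ (contained in the definition of $E_{1,3}$) and $H^2_{y,z}\hookrightarrow L^\infty_{y,z}$ control $\|\widehat{u_{1,0}}\px n_{\neq}\|$ by $A^{1-\epsilon}F_1\cdot(A^{1/3})^{1/2}\|\px n_{\neq}\|_{Y_a}$, i.e.\ the $A\cdot A^{4/3-2\epsilon}F_1^2$ term. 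For the good part, only $\|\widetilde{u_{1,0}}\|_{L^\infty H^1}\leq F_1$ is available, so I would invoke the 2D Ladyzhenskaya inequality $\|\widetilde{u_{1,0}}\|_{L^4_{y,z}}\leq C\|\widetilde{u_{1,0}}\|_{H^1}$, pair it with the mixed norm $\|\px n_{\neq}\|_{L^4_{y,z}L^2_x}\leq C\|\px n_{\neq}\|_{L^2}^{1/2}\|\nabla\px n_{\neq}\|_{L^2}^{1/2}$, and apply Cauchy--Schwarz in time; the two $Y_a$ bounds above then give a factor $A^{(1/3+1)/2}=A^{2/3}\leq A\cdot A^{-1/6}$, matching the $A^{-1/6}F_1^2$ term.

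The main obstacle is the bookkeeping of the powers of $A$ for the $u_{1,0}$ contribution: the bad part $\widehat{u_{1,0}}$ is genuinely large (of size $A^{1-\epsilon}$), and its product with $\px n_{\neq}$ is admissible only because the enhanced dissipation encoded in $Y_a$ supplies the favorable factor $A^{1/3}$ rather than the naive $A$, and because the choice of $\epsilon$ near $\tfrac45$ makes the exponent $\tfrac43-2\epsilon$ negative so that the resulting term becomes absorbable later under the smallness hypothesis \eqref{eq:smallness of u_in}. Selecting the correct anisotropic Sobolev inequality in each product, so that the built-in negative powers of $A$ in the $X_a$ and $Y_a$ norms exactly offset the positive powers produced by interpolation, is the one place where care is required; the remaining steps are routine Hölder and Gagliardo-Nirenberg estimates.
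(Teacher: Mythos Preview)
Your proposal is correct and follows the same strategy as the paper: split $u_{1,0}=\widehat{u_{1,0}}+\widetilde{u_{1,0}}$, control the bad part via $\|\widehat{u_{1,0}}\|_{L^\infty H^2}\leq CA^{1-\epsilon}F_1$ and the good part via $\|\widetilde{u_{1,0}}\|_{L^\infty H^1}\leq F_1$, put $(u_{2,0},u_{3,0})$ and $n_0$ in $L^\infty$, and reduce the $c$-terms to $n$ via the elliptic equation. The only tactical differences are in the choice of anisotropic pairing: for $\widetilde{u_{1,0}}\,\px n_{\neq}$ the paper uses $\|\widetilde{u_{1,0}}\|_{L^\infty_y L^2_z}\,\|\px n_{\neq}\|_{L^2_{x,y}L^\infty_z}$ (producing the exact factor $A^{5/6}$) whereas your $L^4_{y,z}$--$L^4_{y,z}$ pairing yields the sharper $A^{2/3}$; similarly, for $\nabla c_0\,\px n_{\neq}$ the paper uses $\|\nabla c_0\|_{L^\infty_y L^2_z}\,\|\px n_{\neq}\|_{L^\infty_z L^2_{x,y}}$ and bounds $\|\nabla c_0\|_{L^\infty_y L^2_z}^2$ by $CF_3M$ via $\|n_0\|_{L^2}^2\leq\|n_0\|_{L^1}\|n_0\|_{L^\infty}$, while you invoke the full $L^\infty$ bound on $\nabla c_0$ (which requires a $W^{2,p}$ elliptic estimate not written in the paper but standard) and again obtain a smaller power $A^{1/3}$. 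Both variants fit under the stated bounds, so either route closes the lemma.
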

\begin{proof}
	{\bf Estimate of $(\ref{0 n_neq})_{1}.$}
	Direct calculations show that
	\begin{equation}\begin{aligned} \label{eq:temp17.1}
			\|\mathrm{e}^{ a A^{-\frac{1}{3}} t} \widehat{ u_{1,0}} \px n_{ \neq}\|_{L^2 L^2}^2  \leq C  \| \widehat{ u_{1,0}} \|_{L^\infty H^2}^2 \| \mathrm{e}^{ a A^{-\frac{1}{3}} t} \px n_{ \neq}\|_{L^2 L^2}^2 \leq C A^{\frac73 - 2 \epsilon} F_{1}^2 \|\px n_{\neq}\|_{Y_a}^2.
	\end{aligned}\end{equation}
	Owing to
	\begin{equation*}\begin{aligned}
			\| \widetilde{ u_{1,0}} \px n_{ \neq}\|_{L^2}^2 
			\leq \| \widetilde{ u_{1,0}} \|_{L_y^\infty L_z^2 }^2 \| \px n_{ \neq}\|_{L_{x, y}^2 L_z^\infty}^2 
			\leq C \| \widetilde{ u_{1,0}} \|_{H^1}^2  \| \px n_{ \neq}\|_{L^2}^\frac12 \| \px\nabla n_{ \neq}\|_{L^2}^\frac32 ,
	\end{aligned}\end{equation*}
	we have that
	\begin{equation}\begin{aligned} \label{eq:temp17.2}
			\|\mathrm{e}^{ a A^{-\frac{1}{3}} t} \widetilde{ u_{1,0}} \px n_{ \neq}\|_{L^2 L^2}^2  &\leq C  \| \widetilde{ u_{1,0}} \|_{L^\infty H^1}^2 \| \mathrm{e}^{ a A^{-\frac{1}{3}} t} \px n_{ \neq}\|_{L^2 L^2}^\frac12 \| \mathrm{e}^{ a A^{-\frac{1}{3}} t} \px\nabla n_{ \neq}\|_{L^2 L^2}^\frac32\\
			&\leq C A^{\frac56} F_{1}^2 \|\px n_{\neq}\|_{Y_a}^2.
	\end{aligned}\end{equation}
	The proof of $(\ref{0 n_neq})_{1}$ is given by combining \eqref{eq:temp17.1} and \eqref{eq:temp17.2}.
	
	{\bf Estimate of $(\ref{0 n_neq})_{2}.$} Noticing that
	\begin{equation}
		\begin{aligned} \label{eq:est of u20 u30 L_infty}
			\|u_{2, 0}\|_{L^\infty}  +  \|u_{3, 0}\|_{L^\infty} &\leq C\left(\|u_{2, 0}\|_{H^1}  +  \|\pz u_{2, 0}\|_{H^1}  +  \|u_{3, 0}\|_{H^1}  +  \|\pz u_{3, 0}\|_{H^1}\right) \\
			&\leq  C\left(\|\nabla u_{2, 0}\|_{L^2}  +  \|\Delta u_{2, 	0}\|_{L^2}  +  \|\nabla u_{3, 0}\|_{L^2}  \right), 
		\end{aligned}
	\end{equation}
	for $k\in\{2,3\}$, we have
	\begin{equation*}
		\begin{aligned}
			\|\mathrm{e}^{ a A^{-\frac{1}{3}} t} u_{k, 0} \px n_{ \neq}\|_{L^2 L^2}^2 
			\leq \|u_{k, 0}\|_{L^\infty L^\infty}^2 \|\mathrm{e}^{ a A^{-\frac{1}{3}} t} \px n_{ \neq}\|_{L^2 L^2}^2
			\leq C  A^{\frac13 -\frac{\epsilon}{2}  } F_{1}^2\|\px n_{\neq}\|_{Y_a}^2.
		\end{aligned}
	\end{equation*}
	Similarly, by $\eqref{eq:velocity transform}_2$, we get
	\begin{equation*}
		\begin{aligned}
			\|\mathrm{e}^{ a A^{-\frac{1}{3}} t} n_0 \px u_{ \neq}  \|_{L^2 L^2}^2 &\leq \|n_0\|_{L^\infty L^\infty}^2 \|\mathrm{e}^{ a A^{-\frac{1}{3}} t} \px^2 u_{ \neq}\|_{L^2 L^2}^2 \leq C A^{ \frac13 }F_3^2  (\|\px\omega_{2,  \neq}\|_{Y_a}^2   + \|u_{2, \neq}\|_{X_a}^2).
		\end{aligned}
	\end{equation*}
	
	{\bf Estimate of $(\ref{0 n_neq})_{3}.$} 
	Using $\eqref{eq:0_norm_L_infty}_4$, $\eqref{Sob neq}_6$ and Lemma \ref{lem:the zero mode of c and n}, one has
	$$
	\begin{aligned}
		\|\mathrm{e}^{a A^{-\frac{1}{3}} t} \px n_{\neq} \nabla c_0\|_{L^2 	L^2}^2 & 
		\leq\|\nabla c_0\|_{L^{\infty}_{t,y} L^2_{z}}^2 \|\mathrm{e}^{a 	A^{-\frac{1}{3}} t} \px n_{\neq}\|_{L^{\infty}_z L^2_{t,x,y}}^2 \\
		& \leq C F_3 M\|\mathrm{e}^{a A^{-\frac{1}{3}} t} \px n_{\neq}\|_{L^2 	L^2}^{\frac{1}{2}}
		\|{\rm e}^{a A^{-\frac{1}{3}} t} \nabla \px n_{\neq}\|_{L^2 	L^2}^{\frac{3}{2}} \\
		& \leq C A^{\frac{5}{6}}(F_3^2 + M^2)\|\px n_{\neq}\|_{Y_a}^2 .
	\end{aligned}
	$$
	Then by Lemma \ref{lem:est of the non-zero mode of c and n}, we obtain
	$$
	\begin{aligned}
		\|\mathrm{e}^{a A^{-\frac{1}{3}} t} n_0 \px \nabla c_{\neq}\|_{L^2 	L^2}^2 & \leq \|n_0\|_{L^{\infty} L^{\infty}}^2  
		\|\mathrm{e}^{a A^{-\frac{1}{3}} t}\px \nabla c_{\neq}\|_{L^2 L^2}^2 \\
		& \leq C\|n\|_{L^{\infty} L^{\infty}}^2 \|\mathrm{e}^{a A^{-\frac{1}{3}} 	t} \px n_{\neq}\|_{L^2 L^2}^2 \leq C  A^{\frac{1}{3}} F_3^2 \|\px n_{\neq}\|_{Y_a}^2.
	\end{aligned}
	$$
	The proof is completed.
\end{proof}

\begin{Lem} \label{lem:est of E21}
Under the assumptions of \eqref{assumption}, there exists a positive constant $\mathcal{B}_{5}$ independent of $t$ and $A$, such that if $A\geq\mathcal{B}_{5}$,	it holds that 
	\begin{equation*}\begin{aligned}
			E_{2,1}(t)=\|\partial_{x}n_{\neq}\|_{Y_{a}}\leq C\left(\|(\partial_{x}n_{\rm in})_{\neq}\|_{L^{2}}+1\right).
	\end{aligned}\end{equation*}
\end{Lem}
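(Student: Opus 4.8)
The plan is to treat the equation for $\px n_{\neq}$ as a forced linearized advection--diffusion problem under the Couette flow and to apply the enhanced dissipation estimate for that operator, with every nonlinearity and cross-mode coupling already controlled by the two preceding lemmas. First I would apply $\px$ to the first line of \eqref{eq:mian eq after fourier} (equivalently multiply by $ik_1$), obtaining
\begin{equation*}
\pt(\px n_{\neq}) - \frac1A\left(\partial_y^2 - \eta^2\right)(\px n_{\neq}) + ik_1 y\,(\px n_{\neq}) = -\frac1A\nabla\cdot\big(\px(un)_{\neq}\big) - \frac1A\nabla\cdot\big(\px(n\nabla c)_{\neq}\big) - \frac{\mu}{A}\px(n^2)_{\neq},
\end{equation*}
together with the boundary condition $\px n_{\neq}|_{y=\pm1}=0$. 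Since the left-hand side is precisely the passive-scalar operator driven by the Couette flow, the resolvent method of \cite{CWZ1} yields a linear estimate bounding $\|\px n_{\neq}\|_{Y_a}^2$ by $C\|(\px n_{\rm in})_{\neq}\|_{L^2}^2$ plus a forcing contribution. Because the forcing is in (near-)divergence form, one integration by parts against $\px n_{\neq}$ lets the top-order derivative be absorbed by the dissipation built into $\|\cdot\|_{Y_a}$, so the net forcing contribution is $\frac{C}{A}$ times exactly the weighted $L^2L^2$ norms appearing in \eqref{eq:u and cell} and \eqref{0 n_neq}.

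Next I would decompose each product by frequency. The term $\px(un)_{\neq}$ splits into the self-interaction $\px(u_{\neq}n_{\neq})_{\neq}$ and the cross terms $u_0\px n_{\neq}$ and $n_0\px u_{\neq}$, while $\px(n\nabla c)_{\neq}$ splits into $\px(n_{\neq}\nabla c_{\neq})_{\neq}$, $\px n_{\neq}\nabla c_0$ and $n_0\px\nabla c_{\neq}$. These are precisely the quantities estimated in $\eqref{eq:u and cell}$ and $(\ref{0 n_neq})_{1,2,3}$; the logistic contribution $\frac\mu A\px(n^2)_{\neq}=\frac{2\mu}{A}n_0\px n_{\neq}+\frac\mu A\px(n_{\neq}^2)_{\neq}$ is handled in the same fashion, with the additional smallness coming from the factor $\mu/A$. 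Inserting these bounds and invoking the bootstrap hypotheses \eqref{assumption} to replace $E_2,E_3$ by $2F_2,2F_3$, every forcing term becomes a negative power of $A$ times $F$-factors multiplying either $1$ or $\|\px n_{\neq}\|_{Y_a}^2$; the terms proportional to $\|\px n_{\neq}\|_{Y_a}^2$ carry a coefficient that is small for $A$ large, so they can be absorbed into the left-hand side, and the remaining terms are bounded by $C(\|(\px n_{\rm in})_{\neq}\|_{L^2}^2+1)$.

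The main obstacle is the lift-up--affected cross term $u_{1,0}\px n_{\neq}$ arising from $(\ref{0 n_neq})_1$. After the $\frac1A$ prefactor its contribution reads $C\big(A^{-\frac16}F_1^2 + A^{\frac43-2\epsilon}F_1^2\big)\|\px n_{\neq}\|_{Y_a}^2$, and absorbing it demands that the exponent $\frac43-2\epsilon$ be negative, i.e. $\epsilon>\frac23$; this is exactly the role of the smallness condition $A^{\epsilon_1}\|u_{\rm in}\|_{H^2}\leq C_0$ with $\epsilon_1>\frac23$ in Theorem \ref{Thm:main}, and it is also why the splitting $u_{1,0}=\widehat{u_{1,0}}+\widetilde{u_{1,0}}$ is needed, since $\|\widehat{u_{1,0}}\|_{L^\infty H^2}$ is only controlled up to a positive power of $A$. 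Once all forcing terms are either absorbed or dominated by $C(\|(\px n_{\rm in})_{\neq}\|_{L^2}^2+1)$, choosing $A\geq\mathcal{B}_5$ sufficiently large closes the estimate and gives $E_{2,1}(t)=\|\px n_{\neq}\|_{Y_a}\leq C\left(\|(\px n_{\rm in})_{\neq}\|_{L^2}+1\right)$.
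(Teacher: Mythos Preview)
Your proposal is correct and follows essentially the same route as the paper: apply the space--time estimate of Proposition~\ref{prop:key prop for omega2neq} to the Fourier-mode equation $\eqref{eq:mian eq after fourier}_1$, multiply by $k_1^2$ and sum to obtain a bound for $\|\px n_{\neq}\|_{Y_a}^2$ in terms of $\|(\px n_{\rm in})_{\neq}\|_{L^2}^2$ plus $A^{-1}$ times the weighted $L^2L^2$ norms of $\px(un)_{\neq}$, $\px(n\nabla c)_{\neq}$ and $(n^2)_{\neq}$, then decompose each product into zero/non-zero mode interactions and invoke \eqref{eq:u and cell} and \eqref{0 n_neq}. The only cosmetic difference is that the paper uses the bootstrap assumption $E_2\leq 2F_2$ on \emph{all} occurrences of $\|\px n_{\neq}\|_{Y_a}$ in the forcing (so every right-hand side term is directly bounded by $C$ once $A\geq\mathcal{B}_5$), whereas you phrase the last step as absorbing the terms proportional to $\|\px n_{\neq}\|_{Y_a}^2$ into the left-hand side; both variants are equivalent under the bootstrap hypotheses and lead to the same threshold $\epsilon>\tfrac23$ coming from the $\widehat{u_{1,0}}$ contribution in $(\ref{0 n_neq})_1$.
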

\begin{proof}
	Applying Proposition \ref{prop:key prop for omega2neq} to $\eqref{eq:mian eq after fourier}_1$, we get for $k_1 \neq 0$ that
		\begin{equation*}\begin{aligned}
			& \|{\rm e}^{a A^{-\frac13} t} n^{k_1, k_3} \|_{L^{\infty} L^2}^2   +  \frac{\|{\rm e}^{a A^{-\frac13} t} \py n^{k_1, k_3}\|_{L^2 L^2}^2}{A}   +  \left(A^{-1} \eta^2+(A^{-1} k_1^2)^{1 / 3}\right)\|{\rm e}^{a A^{-\frac13} t} n^{k_1, k_3}\|_{L^2 L^2}^2 \\
			& \leq C\left(\|n^{k_1, k_3}_{\rm in}\|_{L^2}^2  +  \frac1A \|{\rm e}^{a A^{-\frac13} t} \left(u_2 n\right)^{k_1, k_3}\|_{L^2 L^2}^2  +   \frac1A \|{\rm e}^{a A^{-\frac13} t} \left(n\py c\right)^{k_1, k_3}\|_{L^2 L^2}^2   \right. \\
			& \left.\quad + \frac{1}{A^2}\min \left((A^{-1} \eta^2)^{-1},(A^{-1} k_1^2)^{-1 / 3}\right)\|{\rm e}^{a A^{-\frac13} t}\left(k_1 \left(u_1 n\right)^{k_1, k_3}  +  k_3 \left(u_3 n\right)^{k_1, k_3}\right)\|_{L^2 L^2}^2\right.\\
			&\left. \quad + \frac{1}{A^2}\min \left((A^{-1} \eta^2)^{-1},(A^{-1} k_1^2)^{-1 / 3}\right)\|{\rm e}^{a A^{-\frac13} t}\left(k_1 \left(n\px c\right)^{k_1, k_3}  +  k_3 \left(n\pz c\right)^{k_1, k_3}\right)\|_{L^2 L^2}^2\rt.\\
			&\lt. \quad+ \frac{1}{A^2}\min \left((A^{-1} \eta^2)^{-1},(A^{-1} k_1^2)^{-1 / 3}\right)\|{\rm e}^{a A^{-\frac13} t} (n^2)^{k_1, k_3} \|_{L^2 L^2}^2 \right),
	\end{aligned}\end{equation*}
	which gives
	\begin{equation*}\begin{aligned}
			&\|\px n_{\neq}\|_{Y_a}^2  \\
			\leq& C \sum_{k_1 \neq 0, k_3 \in \mathbb{Z}} \left(\| k_1 n^{k_1, k_3}_{\rm i n}\|_{L^2}^2  +  \frac1A \|{\rm e}^{a A^{-\frac13} t} k_1 \left(u_2 n\right)^{k_1, k_3}\|_{L^2 L^2}^2  +   \frac1A \|{\rm e}^{a A^{-\frac13} t} k_1 \left(n\py c\right)^{k_1, k_3}\|_{L^2  L^2}^2   \right. \\
			& \left. + \frac{1}{A} \|{\rm e}^{a A^{-\frac13} t}k_1 \left(  \left(u_1 n\right)^{k_1, k_3}  +   \left(u_3 n\right)^{k_1, k_3}\right)\|_{L^2 L^2}^2  +  \frac{1}{A} \|{\rm e}^{a A^{-\frac13} t} \left(n^2\right)^{k_1, k_3}  \|_{L^2 L^2}^2  \rt.\\
			&\lt. + \frac{1}{A}\|{\rm e}^{a A^{-\frac13} t}k_1 \left(  \left(n\px c\right)^{k_1, k_3}  +   \left(n\pz c\right)^{k_1, k_3}\right)\|_{L^2 L^2}^2\right) \\
			\leq& C\big( \|(\px n_{\rm in})_{\neq}\|_{L^2}^2  +  \frac{\|{\rm e}^{a A^{-\frac13} t} \px \left(u n\right)_{\neq}\|_{L^2 L^2}^2 }{A}  +   \frac{\|{\rm e}^{a A^{-\frac13} t} \px \left(n\nabla c\right)_{\neq}\|_{L^2 L^2}^2}{A}   +   \frac{\|{\rm e}^{a A^{-\frac13} t}  (n^2)_{\neq}\|_{L^2 L^2}^2}{A}  \big),
	\end{aligned}\end{equation*}
	where we use that
	$$
	\frac{k_1^2}{A^2}\min \left((A^{-1} \eta^2)^{-1},(A^{-1} k_1^2)^{-1 / 3}\right) \leq \frac1A,\quad\frac{k_3^2}{A^2}\min \left((A^{-1} \eta^2)^{-1},(A^{-1} k_1^2)^{-1 / 3}\right) \leq \frac1A.
	$$
		According to $\eqref{eq:u and cell}_1$ and $(\ref{0 n_neq})_{2}$, when
		$$ A\geq\max\{ F_{1}^{12}F_{2}^{12}, (F_{1}^{2}F_{2}^{2})^{\frac{3}{6\epsilon-4}}, (F_{2}^{2}F_{3}^{2})^{\frac{6}{5\epsilon+4}}, F_{2}^{\frac{24}{5\epsilon}}, 1\}=:\mathcal{B}_{5,1}, $$
		there holds
	\begin{equation*}\begin{aligned}
			&\frac1A \|{\rm e}^{a A^{-\frac13} t} \px \left(u n\right)_{\neq}\|_{L^2 L^2}^2 \\
			\leq&  \frac{C}{A}\left(\|{\rm e}^{a A^{-\frac13} t} u_0 \px n_{\neq}\|_{L^2 L^2}^2 + \|{\rm e}^{a A^{-\frac13} t} n_0 \px u_{\neq}\|_{L^2 L^2}^2  + \|{\rm e}^{a A^{-\frac13} t} \px \left(u_{\neq} n_{\neq}\right)_{\neq}\|_{L^2 L^2}^2  \right)  \\
			\leq&  C  \lt(A^{-\frac16} F_{1}^2  + A^{\frac43 - 2 \epsilon} F_{1}^2   +  A^{-\frac23 -\frac12 \epsilon } F_{1}^2 \rt) F_{2}^2 + CA^{-\frac56 \epsilon} \left( A^{-\frac23} F_3^2 +A^{-\frac13} F_{2}^2 \right) F_{2}^2\leq C.
	\end{aligned}\end{equation*}
	Using $\eqref{eq:u and cell}_2$ and $(\ref{0 n_neq})_{3}$, as long as $A\geq \max\{\mathcal{B}_{5,1}, F_{2}^{24}, (F_{3}^{2}+M^{2})^{6}F_{2}^{12}\}=:\mathcal{B}_{5}$, we obtain
	\begin{equation*}\begin{aligned}
			&\frac{1}{A}\|{\rm e}^{a A^{-\frac13} t} \px \left(n\nabla c\right)_{\neq}\|_{L^2 L^2}^2\\
			\leq&  \frac{1}{A}\left( \|{\rm e}^{a A^{-\frac13} t} n_0 \px \nabla c_{\neq}\|_{L^2 L^2}^2 +  \|{\rm e}^{a A^{-\frac13} t} \nabla c_0 \px n_{\neq}\|_{L^2 L^2}^2  +  \|{\rm e}^{a A^{-\frac13} t} \px \left( n_{\neq} \nabla c_{\neq}\right)_{\neq}\|_{L^2 L^2}^2\right)  \\
			\leq&  C A^{-\frac{1}{6}}(F_3^2 + M^2) \|\px n_{\neq}\|_{Y_a}^2 +  CA^{-\frac16}\|\px n_{  \neq}\|_{Y_a}^4
			\leq C A^{-\frac{1}{6}}(F_3^2 + M^2) F_{2}^2 +  CA^{-\frac16}F_{2}^4\leq C.
	\end{aligned}\end{equation*}
 By H\"{o}lder inequality, direct calculations indicate that
\begin{equation*}\begin{aligned}
		\frac1A \|{\rm e}^{a A^{-\frac13} t}  (n^2)_{\neq}\|_{L^2 L^2}^2 &\leq \frac{C}{A}\big( \|{\rm e}^{a A^{-\frac13} t} n_0n_{\neq}\|_{L^2 L^2}^2  + \|{\rm e}^{a A^{-\frac13} t}  \left(n_{\neq} n_{\neq}\right)_{\neq}\|_{L^2 L^2}^2\big)\\
		&\leq \frac C A \|n\|_{L^\infty L^\infty}^2 \|{\rm e}^{a A^{-\frac13} t} \px n_{\neq}\|_{L^2 L^2}^2 \leq CA^{-\frac23} F_3^2 F_{2}^2\leq C
\end{aligned}\end{equation*}
provided with $A\geq\mathcal{B}_{5}.$

	The proof is completed by summing up all above inequalities.
\end{proof}
\subsection{Energy estimate for $E_{2,2}(t)$.}\

The following nonlinear interactions between the zero and non-zero modes will be 
used in estimating $E_{2,2}(t).$

\begin{Lem} \label{lem:hatu10}
	Under the assumptions of \eqref{assumption}, it holds that
	\begin{equation*}\begin{aligned}
			&\quad\|\mathrm{e}^{ a A^{-\frac{1}{3}} t}  \widehat{ u_{1,0}} \px\left(\px, \pz\right)u_{\neq}\|_{L^2 L^2}^2  +  \|\mathrm{e}^{ a A^{-\frac{1}{3}} t}  |\nabla \widehat{ u_{1,0}}| \px u_{\neq}\|_{L^2 L^2}^2 \\
			&\leq CA^{\frac73 - 2\epsilon } F_{1}^2 ( \|\px\omega_{2,  \neq}\|_{Y_a}^2   + \|u_{2, \neq}\|_{X_a}^2  ).
	\end{aligned}\end{equation*}
\end{Lem}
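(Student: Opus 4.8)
The plan is to estimate each of the two products by the spatial $L^\infty$-size of the appropriate part of $\widehat{u_{1,0}}$ multiplied by a weighted $L^2L^2$ norm of a first-order-in-$x$ derivative of $u_{\neq}$, and then to convert that nonzero-mode factor into the energies $\|\px\omega_{2,\neq}\|_{Y_a}$ and $\|u_{2,\neq}\|_{X_a}$. The size of $\widehat{u_{1,0}}$ is supplied entirely by the bootstrap: from the definition of $E_{1,3}(t)$ together with \eqref{assumption} one has $A^{\epsilon-1}\|\widehat{u_{1,0}}\|_{L^\infty H^2}\le 2F_1$, hence $\|\widehat{u_{1,0}}\|_{L^\infty H^2}^2\le CF_1^2A^{2-2\epsilon}$. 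This is where the factor $F_1^2A^{2-2\epsilon}$ enters; the remaining $A^{\frac13}$ will be produced by the nonzero-mode norms.

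For the first product I would use the two-dimensional embedding $H^2(\mathbb{I}\times\mathbb{T})\hookrightarrow L^\infty$ to pull $\widehat{u_{1,0}}$ out in $L^\infty$, so that $\|\mathrm{e}^{aA^{-\frac13}t}\widehat{u_{1,0}}\px(\px,\pz)u_{\neq}\|_{L^2L^2}^2\le\|\widehat{u_{1,0}}\|_{L^\infty H^2}^2\,\|\mathrm{e}^{aA^{-\frac13}t}\px(\px,\pz)u_{\neq}\|_{L^2L^2}^2$. The velocity transform \eqref{eq:velocity transform} and the Sobolev bounds already exploited in \eqref{eq:temp0} give $\|\px(\px,\pz)u_{\neq}\|_{L^2}^2\le C(\|\px\omega_{2,\neq}\|_{L^2}^2+\|\px\nabla u_{2,\neq}\|_{L^2}^2)$. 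Finally \eqref{eq:X_a} yields $\|\mathrm{e}^{aA^{-\frac13}t}\px\nabla u_{2,\neq}\|_{L^2L^2}^2\le C\|u_{2,\neq}\|_{X_a}^2$, while the coefficient $(A^{-1}k_1^2)^{\frac13}\ge A^{-\frac13}$, valid since $|k_1|\ge 1$, built into the $Y_a$ norm gives $\|\mathrm{e}^{aA^{-\frac13}t}\px\omega_{2,\neq}\|_{L^2L^2}^2\le A^{\frac13}\|\px\omega_{2,\neq}\|_{Y_a}^2$. Multiplying the three bounds assembles $A^{2-2\epsilon}\cdot A^{\frac13}=A^{\frac73-2\epsilon}$, as required.

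The second product is the delicate one, and it is the main obstacle: I cannot afford to place $\nabla\widehat{u_{1,0}}$ in $L^\infty$, since that would demand an $H^3$ bound on $\widehat{u_{1,0}}$, whereas only $H^2$ is available. The remedy is the anisotropic splitting already used for $\widetilde{u_{1,0}}\px n_{\neq}$ in \eqref{eq:temp17.2}, namely the mixed-norm H\"older inequality $\||\nabla\widehat{u_{1,0}}|\,\px u_{\neq}\|_{L^2}^2\le\|\nabla\widehat{u_{1,0}}\|_{L_y^\infty L_z^2}^2\,\|\px u_{\neq}\|_{L_{x,y}^2L_z^\infty}^2$. The first factor is bounded by $C\|\widehat{u_{1,0}}\|_{H^2}^2$ through the two-dimensional anisotropic Sobolev inequality, which consumes only $H^2$; the second is controlled, by one-dimensional Sobolev embedding in $z$ followed by the velocity transform exactly as in \eqref{eq:temp0}, by $C(\|\px\omega_{2,\neq}\|_{L^2}^2+\|\px\nabla u_{2,\neq}\|_{L^2}^2)$. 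Integrating in time against the weight $\mathrm{e}^{aA^{-\frac13}t}$ and inserting the same $X_a$ and $Y_a$ conversions as above again produces $CA^{\frac73-2\epsilon}F_1^2(\|\px\omega_{2,\neq}\|_{Y_a}^2+\|u_{2,\neq}\|_{X_a}^2)$. Beyond this derivative-budget issue the argument is purely the bookkeeping of $A$-powers, and summing the two products completes the proof.
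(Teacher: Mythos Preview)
Your proposal is correct and follows essentially the same route as the paper: for the first product the paper also pulls $\widehat{u_{1,0}}$ out in $L^\infty$ via $H^2$, and for the second it uses exactly the anisotropic splitting $\|\nabla\widehat{u_{1,0}}\|_{L_y^\infty L_z^2}\|\px u_{\neq}\|_{L_{x,y}^2 L_z^\infty}$ that you describe, then converts the nonzero-mode factors with \eqref{eq:velocity transform}, \eqref{eq:X_a}, \eqref{eq:Y_a} to arrive at $CA^{\frac13}\|\widehat{u_{1,0}}\|_{L^\infty H^2}^2(\|\px\omega_{2,\neq}\|_{Y_a}^2+\|u_{2,\neq}\|_{X_a}^2)$ and hence the stated bound.
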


\begin{proof}
	By $\eqref{eq:0_norm_L_infty}_4$, $\eqref{Sob f0}_1$, $\eqref{Sob g neq}_3$ and $\eqref{eq:velocity transform}_2$,  one has
	\begin{equation*}\begin{aligned}
			\|\widehat{ u_{1,0}} \px\left(\px, \pz\right)u_{\neq}\|_{L^2}^2  &\leq \|\widehat{ u_{1,0}} \|_{L^\infty}^2  \| \px\left(\px, \pz\right)u_{\neq}\|_{L^2}^2\\
			&\leq C\|\widehat{ u_{1,0}} \|_{H^2}^2 (\|\px\omega_{2,  \neq} \|_{L^2}^2 + \|\px\nabla u_{2 ,\neq} \|_{L^2}^2)
	\end{aligned}\end{equation*}
	and
	\begin{equation*}\begin{aligned}
			\| |\nabla \widehat{ u_{1,0}}| \px u_{\neq} \|_{L^2}^2  &\leq \|\nabla \widehat{ u_{1,0}} \|_{L_y^\infty  L_z^2}^2\| \px u_{\neq}\|_{L_{x, y}^2 L_z^\infty }^2\\
			&\leq C\|\widehat{ u_{1,0}} \|_{H^2}^2 (\|\px\omega_{2,  \neq}\|_{L^2}^2 + \|\px\nabla u_{2 ,\neq}\|_{L^2}^2).
	\end{aligned}\end{equation*}
	Hence, we arrive at 
	\begin{equation*}\begin{aligned}
			&\quad\|\mathrm{e}^{ a A^{-\frac{1}{3}} t}  \widehat{ u_{1,0}} \px\left(\px, \pz\right)u_{\neq}\|_{L^2 L^2}^2  +  \|\mathrm{e}^{ a A^{-\frac{1}{3}} t} |\nabla \widehat{ u_{1,0}}| \px u_{\neq} \|_{L^2 L^2}^2 \\
			& \leq CA^\frac13 \|\widehat{ u_{1,0}} \|_{L^\infty H^2}^2( \|\px\omega_{2,  \neq}\|_{Y_a}^2   + \|u_{2, \neq}\|_{X_a}^2)
			\leq CA^{\frac73 - 2\epsilon } F_{1}^2 ( \|\px\omega_{2,  \neq}\|_{Y_a}^2   + \|u_{2, \neq}\|_{X_a}^2  ),
	\end{aligned}\end{equation*}
	which completes the proof.
\end{proof}

\begin{Lem} \label{lem:tildeu10}
	Under the assumptions of \eqref{assumption}, it holds that
	\begin{equation*}\begin{aligned}
			&\quad\|\mathrm{e}^{ a A^{-\frac{1}{3}} t}  \widetilde{ u_{1,0}} \px\left(\px, \pz\right)u_{\neq}\|_{L^2 L^2}^2  +  \|\mathrm{e}^{ a A^{-\frac{1}{3}} t} |\nabla \widetilde{ u_{1,0}}| \px u_{\neq} \|_{L^2 L^2}^2 \\
			&\leq CA^\frac56 F_{1}^2 ( \|\px\omega_{2,  \neq}\|_{Y_a}^2   + \|u_{2, \neq}\|_{X_a}^2  ).
	\end{aligned}\end{equation*}
\end{Lem}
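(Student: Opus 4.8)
The plan is to treat Lemma \ref{lem:tildeu10} as the ``good-part'' counterpart of Lemma \ref{lem:hatu10}: the two displayed quantities have exactly the same bilinear structure, but now the zero mode $\widetilde{u_{1,0}}$ is controlled only in $H^1$ through $E_{1,2}(t)=\|\widetilde{u_{1,0}}\|_{L^\infty H^1}\le 2F_1$ (rather than in $H^2$, as was the case for $\widehat{u_{1,0}}$). The one missing derivative on $\widetilde{u_{1,0}}$ therefore has to be transferred onto the non-zero mode $u_{\neq}$, and the price of that transfer is precisely the gap between the exponent $A^{\frac73-2\epsilon}$ in Lemma \ref{lem:hatu10} and $A^{\frac56}$ here. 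The model computation to imitate is the estimate of $\widetilde{u_{1,0}}\px n_{\neq}$ carried out in \eqref{eq:temp17.2}, where the $\tfrac12$--$\tfrac32$ interpolation already produced the factor $A^{\frac56}$.

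First I would handle the first term. Using the boundary condition $\widetilde{u_{1,0}}|_{y=\pm1}=0$ one has $\|\widetilde{u_{1,0}}\|_{L_y^\infty L_z^2}\le C\|\widetilde{u_{1,0}}\|_{H^1}$, so I split
$$
\|\widetilde{u_{1,0}}\,\px(\px,\pz)u_{\neq}\|_{L^2}\le \|\widetilde{u_{1,0}}\|_{L_y^\infty L_z^2}\,\|\px(\px,\pz)u_{\neq}\|_{L_{x,y}^2 L_z^\infty},
$$
and interpolate the last factor in the $z$-variable exactly as in \eqref{eq:temp17.2} to obtain $\|\px(\px,\pz)u_{\neq}\|_{L_{x,y}^2 L_z^\infty}^2\le C\|\px(\px,\pz)u_{\neq}\|_{L^2}^{\frac12}\|\nabla\px(\px,\pz)u_{\neq}\|_{L^2}^{\frac32}$. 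Converting all derivatives of $u_{\neq}$ into $\omega_{2,\neq}$ and $u_{2,\neq}$ via the velocity-transform identities \eqref{eq:velocity transform}, the first-order quantity is a combination of $\px\omega_{2,\neq}$ and $\px\nabla u_{2,\neq}$, while the second-order quantity is a combination of $\px\nabla\omega_{2,\neq}$ and $\px\Delta u_{2,\neq}$. Multiplying by the weight, integrating in time, and applying H\"older in time with exponents $(4,\tfrac43)$ gives
$$
\|\mathrm{e}^{aA^{-\frac13}t}\widetilde{u_{1,0}}\,\px(\px,\pz)u_{\neq}\|_{L^2L^2}^2\le C\|\widetilde{u_{1,0}}\|_{L^\infty H^1}^2\,\|\mathrm{e}^{aA^{-\frac13}t}\px(\px,\pz)u_{\neq}\|_{L^2L^2}^{\frac12}\,\|\mathrm{e}^{aA^{-\frac13}t}\nabla\px(\px,\pz)u_{\neq}\|_{L^2L^2}^{\frac32}.
$$
By \eqref{eq:X_a} and \eqref{eq:Y_a} the first $L^2L^2$ norm is $\le CA^{\frac13}(\|\px\omega_{2,\neq}\|_{Y_a}^2+\|u_{2,\neq}\|_{X_a}^2)$ and the second is $\le CA(\|\px\omega_{2,\neq}\|_{Y_a}^2+\|u_{2,\neq}\|_{X_a}^2)$; the weights $\tfrac14$ and $\tfrac34$ then produce $A^{\frac1{12}+\frac34}=A^{\frac56}$, the claimed power.

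For the second term $|\nabla\widetilde{u_{1,0}}|\,\px u_{\neq}$ the difficulty is that $\nabla\widetilde{u_{1,0}}$ carries no spare regularity beyond $L^2$, so---unlike the $\widehat{u_{1,0}}$ case, where $\|\nabla\widehat{u_{1,0}}\|_{L_y^\infty L_z^2}\le C\|\widehat{u_{1,0}}\|_{H^2}$ was available---one cannot take an $L^\infty$ norm of $\nabla\widetilde{u_{1,0}}$. Instead I keep $\nabla\widetilde{u_{1,0}}$ entirely in $L^2_{y,z}$ and load all the regularity onto $\px u_{\neq}$,
$$
\||\nabla\widetilde{u_{1,0}}|\,\px u_{\neq}\|_{L^2}\le \|\nabla\widetilde{u_{1,0}}\|_{L^2}\,\|\px u_{\neq}\|_{L_x^2 L_{y,z}^\infty},
$$
and then estimate $\|\px u_{\neq}\|_{L_x^2 L_{y,z}^\infty}$ by the anisotropic Sobolev inequalities collected in \eqref{Sob neq} and \eqref{Sob g neq}, again interpolating between the first-order quantities $\px\omega_{2,\neq},\px\nabla u_{2,\neq}$ and the second-order quantities $\px\nabla\omega_{2,\neq},\px\Delta u_{2,\neq}$ with weights $\tfrac14$ and $\tfrac34$ after H\"older in time. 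This reproduces the same $A^{\frac56}$ power, and combining with $\|\widetilde{u_{1,0}}\|_{L^\infty H^1}^2\le CF_1^2$ from \eqref{assumption} closes the estimate. I expect this second term to be the main obstacle: choosing the anisotropic embedding so that no more than two derivatives---hence no $A$-power worse than $A^{\frac56}$---land on $u_{\neq}$ is delicate, and it is exactly here that the absence of an $H^2$ bound for $\widetilde{u_{1,0}}$ must be compensated without spoiling the time-decay encoded in the $X_a$ and $Y_a$ norms.
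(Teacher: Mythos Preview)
Your proposal is correct and follows essentially the same route as the paper: for the first term you split exactly as the paper does via $\|\widetilde{u_{1,0}}\|_{L_y^\infty L_z^2}\,\|\px(\px,\pz)u_{\neq}\|_{L_{x,y}^2 L_z^\infty}$ and recover the $A^{5/6}$ from the $\tfrac14$--$\tfrac34$ time interpolation, and for the second term you keep $\nabla\widetilde{u_{1,0}}$ in $L^2_{y,z}$ and place $\px u_{\neq}$ in $L_x^2 L_{y,z}^\infty$, which is precisely what the paper does via $\eqref{Sob g neq}_2$. One small inaccuracy: for the second term the paper's Sobolev estimate actually produces a $\tfrac12$--$\tfrac12$ split (yielding $A^{2/3}$, not $A^{5/6}$), so the $A^{5/6}$ in the final bound comes entirely from the first term; your stated $\tfrac14$--$\tfrac34$ weights for the second term are not what $\eqref{Sob g neq}_2$ gives, but the resulting estimate is still within $A^{5/6}$ and the argument closes either way.
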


\begin{proof}
	By the use of $\eqref{Sob f0}_3$, $\eqref{Sob neq}_6$, $\eqref{Sob g neq}_2$ and $\eqref{eq:velocity transform}_4$, we get that
	\begin{equation*}\begin{aligned}
			&\quad \|\widetilde{ u_{1,0}} \px\left(\px, \pz\right)u_{\neq}\|_{L^2}^2
			\leq \|\widetilde{ u_{1,0}} \|_{L_y^\infty L_z^2}^2  \| \px\left(\px, \pz\right)u_{\neq}\|_{L_{x, y}^2 L_z^\infty}^2\\
			&\leq C\|\widetilde{ u_{1,0}} \|_{H^1}^2 \big(\|\px \omega_{2,  \neq} \|_{L^2}^\frac12 + \|\px\nabla u_{2 ,\neq} \|_{L^2}^\frac12\big)
			\big(\|\px\nabla \omega_{2,\neq}\|_{L^2}^\frac32 + \|\px\Delta u_{2 ,\neq}\|_{L^2}^\frac32\big)
	\end{aligned}\end{equation*}
	and
	\begin{equation*}\begin{aligned}
			&\quad\| |\nabla \widetilde{ u_{1,0}}| \px u_{\neq} \|_{L^2}^2 \leq \|\nabla \widetilde{ u_{1,0}} \|_{L^2}^2\| \px u_{\neq}\|_{L_{x}^2 L_{y,z}^\infty }^2\\
			&\leq C\|\widetilde{ u_{1,0}} \|_{H^1}^2 \left(\|\px\omega_{2,  \neq} \|_{L^2} + \|\px\nabla u_{2 ,\neq} \|_{L^2}\right)\left(\|\px \nabla \omega_{2,  \neq} \|_{L^2} 
			+ \|\px\Delta u_{2 ,\neq} \|_{L^2}\right),
	\end{aligned}\end{equation*}
	which indicate that
	\begin{equation*}\begin{aligned}
			&\quad\|\mathrm{e}^{ a A^{-\frac{1}{3}} t}  \widetilde{ u_{1,0}} \px\left(\px, \pz\right)u_{\neq}\|_{L^2 L^2}^2  +  \|\mathrm{e}^{ a A^{-\frac{1}{3}} t} \px u_{\neq} \cdot \nabla \widetilde{ u_{1,0}}\|_{L^2 L^2}^2 \\
			& \leq CA^\frac56 \|\widetilde{ u_{1,0}} \|_{L^\infty H^1}^2 (\|\px\omega_{2,  \neq}\|_{Y_a}^2   + \|u_{2, \neq}\|_{X_a}^2).
	\end{aligned}\end{equation*}

	The proof is complete.
\end{proof}

\begin{Lem}\label{lem:px u0u1}
	Under the assumptions of \eqref{assumption}, it holds that
	\begin{itemize}
		\item[(i)] 
		$
		\quad\|\mathrm{e}^{ a A^{-\frac{1}{3}} t} \px \left(u_0\cdot \nabla u_{1, \neq}\right)\|_{L^2 L^2}^2  +  \|\mathrm{e}^{ a A^{-\frac{1}{3}} t} \px \left(u_{\neq}\cdot \nabla u_{1, 0}\right)\|_{L^2 L^2}^2 \\
		\leq  C(A^\frac56 F_{1}^2   +  A^{\frac73 - 2\epsilon } F_{1}^2 +  A^{1 - \frac12\epsilon} F_{1}^2 )(\|\px \omega_{2,  \neq}\|_{Y_a}^2  + \| u_{2,  \neq}\|_{X_a}^2),$
		\item[(ii)] 
		$\quad\|\mathrm{e}^{ a A^{-\frac{1}{3}} t} \left(\px, \pz\right) \left(u_0\cdot \nabla u_{2, \neq}\right)\|_{L^2 L^2}^2  +  \|\mathrm{e}^{ a A^{-\frac{1}{3}} t} \left(\px, \pz\right) \left(u_{\neq}\cdot \nabla u_{2, 0}\right)\|_{L^2 L^2}^2 \\
		\leq  C(A^\frac56 F_{1}^2   +  A^{\frac73 - 2\epsilon } F_{1}^2 +  A^{1 - \frac12\epsilon} F_{1}^2 )(\|\px \omega_{2,  \neq}\|_{Y_a}^2  + \| u_{2,  \neq}\|_{X_a}^2  ),$
		\item[(iii)] 
		$
		\quad\|\mathrm{e}^{ a A^{-\frac{1}{3}} t} \left(\px, \pz\right) \left(u_0\cdot \nabla u_{3, \neq}\right)\|_{L^2 L^2}^2  +  \|\mathrm{e}^{ a A^{-\frac{1}{3}} t} \left(\px, \pz\right) \left(u_{\neq}\cdot \nabla u_{3, 0}\right)\|_{L^2 L^2}^2 \\
		\leq CA^{1- \frac32\epsilon}  F_{2}^2 \left( A^{-\frac16} F_{1}^2  +  A^{\frac43 - 2 \epsilon} F_{1}^2  +  A^{ - \frac12 \epsilon} F_{1}^2\right).
		$
	\end{itemize}
	
\end{Lem}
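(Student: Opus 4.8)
The plan is to bound each of the three bilinear interaction terms by splitting the product $u\cdot\nabla u_{j,\neq}$ (or $u_{\neq}\cdot\nabla u_{j,0}$) into pieces according to which factor carries the zero mode and which carries the nonzero mode, then place the zero-mode factor in an $L^\infty$-type norm and the nonzero-mode factor in an $L^2$-type norm, using anisotropic Sobolev embeddings. The whole strategy mirrors what was just done in Lemma \ref{lem:hatu10}, Lemma \ref{lem:tildeu10} and Lemma \ref{lem:velocity estimate 2}: reduce every product to the building blocks $\|\widehat{u_{1,0}}\|_{L^\infty H^2}$, $\|\widetilde{u_{1,0}}\|_{L^\infty H^1}$, $\|(u_{2,0},u_{3,0})\|$-type norms and the nonzero-mode quantities controlled by $\|\px\omega_{2,\neq}\|_{Y_a}$ and $\|u_{2,\neq}\|_{X_a}$, then convert into the $X_a,Y_a$ norms via \eqref{eq:X_a} and \eqref{eq:Y_a}, absorbing the resulting powers of $A$.

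Concretely, for part (i) I would decompose $u_0\cdot\nabla u_{1,\neq}$ using the zero-mode structure $u_0=(u_{1,0},u_{2,0},u_{3,0})$ and the splitting $u_{1,0}=\widehat{u_{1,0}}+\widetilde{u_{1,0}}$. The terms involving $\widehat{u_{1,0}}$ and $\widetilde{u_{1,0}}$ acting on $\px u_{1,\neq}$ are already essentially supplied by Lemma \ref{lem:hatu10} and Lemma \ref{lem:tildeu10}, yielding the $A^{7/3-2\epsilon}F_1^2$ and $A^{5/6}F_1^2$ contributions; the terms with $u_{2,0}\py$ and $u_{3,0}\pz$ are handled by the $L^\infty$ bound \eqref{eq:est of u20 u30 L_infty} on $(u_{2,0},u_{3,0})$, which after inserting \eqref{eq:E13 14} produces the $A^{1-\frac12\epsilon}F_1^2$ contribution. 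The reciprocal piece $u_{\neq}\cdot\nabla u_{1,0}$ is treated by placing $\nabla u_{1,0}=\nabla\widehat{u_{1,0}}+\nabla\widetilde{u_{1,0}}$ in an $L^\infty_y L^2_z$ (or $L^\infty$) norm and $u_{\neq}$ in the complementary norm, again reducing to the nonzero-mode norms. Part (ii) is identical in spirit, replacing $u_{1,\neq}$ by $u_{2,\neq}$ and adding the $\pz$ derivative, where the extra $z$-derivative is absorbed because $\|\px u_{2,\neq}\|$ and $\|\pz u_{2,\neq}\|$ are both controlled by $\|u_{2,\neq}\|_{X_a}$ through \eqref{eq:velocity transform} and \eqref{eq:X_a}.

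Part (iii) differs because the right-hand side is bounded purely in terms of $F_2$ and $F_1$ rather than $F_1$ alone, reflecting that the relevant nonzero-mode factor here is the \emph{dispersion-decayed} $u_{3,\neq}$ (controlled via $\px\omega_{2,\neq}$ and $u_{2,\neq}$) paired with the zero-mode $u_{3,0}$. I would place $u_{3,0}$ or its derivative in an $L^\infty$ norm via \eqref{eq:est of u20 u30 L_infty} and the velocity-deformation estimates, generating the $F_1^2$ prefactors, and pair with $\|(u_{2,\neq}, \omega_{2,\neq})\|$ quantities which contribute the overall $A^{1-\frac32\epsilon}F_2^2$ factor after conversion through \eqref{eq:X_a}--\eqref{eq:Y_a}; the three inner factors $A^{-1/6}F_1^2 + A^{4/3-2\epsilon}F_1^2 + A^{-\frac12\epsilon}F_1^2$ come from decomposing $u_0$ into $\widehat{u_{1,0}}, \widetilde{u_{1,0}}$ and $(u_{2,0},u_{3,0})$ respectively, exactly as in $(\ref{0 n_neq})_1$.

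\emph{The main obstacle} will be bookkeeping the anisotropic norm placements so that the worst power of $A$ in each term is no larger than the stated exponent. The delicate cases are the $\widehat{u_{1,0}}$ contributions, which carry the damaging factor $\|\widehat{u_{1,0}}\|_{L^\infty H^2}^2\lesssim A^{2-2\epsilon}F_1^2$ from \eqref{eq:E13 14}; one must check that this, multiplied by the $A^{1/3}$ arising from the time-weighted $L^2L^2\to X_a,Y_a$ conversion, indeed lands on $A^{7/3-2\epsilon}F_1^2$ and not worse. The requirement $\epsilon_1>\frac23$ is exactly what guarantees these exponents are eventually negative (after the prefactors $A^{5\epsilon/12}$ in $E_{2,2}$ are accounted for), so closing the energy at the level of Proposition \ref{prop:F2} hinges on this term; every other piece is less singular and follows by the same Hölder-plus-embedding routine.
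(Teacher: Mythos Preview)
Your plan for parts (i) and (ii) is exactly the paper's approach: split $u_{1,0}=\widehat{u_{1,0}}+\widetilde{u_{1,0}}$, invoke Lemmas \ref{lem:hatu10} and \ref{lem:tildeu10} for those pieces, and control the $u_{2,0},u_{3,0}$ contributions via \eqref{eq:est of u20 u30 L_infty} together with $\eqref{eq:velocity transform}_4$; the reciprocal terms $u_{\neq}\cdot\nabla u_{j,0}$ are handled by the same anisotropic H\"older splittings. Your bookkeeping of the $A$-exponents is also right.

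For part (iii), however, you are over-reading the statement. The paper does \emph{not} use a different mechanism here: its proof of (iii) is line-for-line the same decomposition as (i) and (ii), and in fact concludes with the identical bound
\[
C\big(A^{5/6}F_1^2+A^{7/3-2\epsilon}F_1^2+A^{1-\frac12\epsilon}F_1^2\big)\big(\|\px\omega_{2,\neq}\|_{Y_a}^2+\|u_{2,\neq}\|_{X_a}^2\big).
\]
The $F_2^2$ in the stated version of (iii) arises only afterward, by invoking the hypothesis \eqref{assumption} to replace $\|\px\omega_{2,\neq}\|_{Y_a}^2+\|u_{2,\neq}\|_{X_a}^2$ by $CA^{-5\epsilon/6}F_2^2$ (since $E_{2,2}=A^{5\epsilon/12}(\|u_{2,\neq}\|_{X_a}+\|\px\omega_{2,\neq}\|_{Y_a})\le 2F_2$). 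So there is no special ``dispersion-decayed $u_{3,\neq}$'' ingredient, and you should simply run the same argument as in (i)--(ii); your more elaborate interpretation is unnecessary but not wrong, and would lead to the same estimate.
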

\begin{proof}
	{\bf Estimate of (i).}	Using  $\eqref{eq:velocity transform}_4$ and \eqref{eq:est of u20 u30 L_infty}, we get
	\begin{equation*}\begin{aligned}
			&\quad\| \px \left(u_0\cdot \nabla u_{1, \neq}\right)\|_{ L^2}^2\\
			&\leq C\left(\| \widehat{ u_{1,0}} \px^2 u_{1, \neq}\|_{ L^2}^2 + \| \widetilde{ u_{1,0}} \px^2 u_{1, \neq}\|_{ L^2}^2  + \|u_{2, 0}\|_{ L^\infty }^2  
			\|\px\py u_{1, \neq}\|_{ L^2 }^2 +\|u_{3, 0}\|_{ L^\infty }^2  \|\px\pz u_{1, \neq}\|_{ L^2 }^2\right)\\
			&\leq C\| (\widehat{ u_{1,0}},\widetilde{ u_{1,0}} )\px^2 u_{ \neq}\|_{ L^2}^2 + C(\| u_{2, 0}\|_{H^2}^2 +  \|\nabla u_{3, 0}\|_{L^2}^2  ) (\|\px \nabla \omega_{2,  \neq}\|_{L^2}^2  + \|\px \Delta u_{2,  \neq}\|_{L^2}^2  ),
	\end{aligned}\end{equation*}
	from which, along with Lemma \ref{lem:hatu10} and Lemma \ref{lem:tildeu10}, we infer that
	\begin{equation*}\begin{aligned}
			&\quad\|\mathrm{e}^{a A^{-\frac{1}{3}} t} \px \left(u_0\cdot \nabla u_{1, \neq}\right)\|_{L^2 L^2}^2 +  \|\mathrm{e}^{ a A^{-\frac{1}{3}} t} \px \left(u_{\neq}\cdot \nabla u_{1, 0}\right)\|_{L^2 L^2}^2\\
			&\leq CA^{\frac73 - 2\epsilon } F_{1}^2 ( \|\px\omega_{2,  \neq}\|_{Y_a}^2   + \|u_{2, \neq}\|_{X_a}^2  )  +   CA^\frac56 F_{1}^2 ( \|\px\omega_{2,  \neq}\|_{Y_a}^2   + \|u_{2, \neq}\|_{X_a}^2  )\\
			&\quad + CA(\|\nabla u_{2, 0}\|_{L^\infty L^2}^2  +  \|\Delta u_{2, 0}\|_{L^\infty L^2}^2  +  \|\nabla u_{3, 0}\|_{L^\infty L^2}^2  ) (\|\px \omega_{2,  \neq}\|_{Y_a}^2  + \| u_{2,\neq}\|_{X_a}^2  ) \\
			&\leq C(A^\frac56 F_{1}^2   +  A^{\frac73 - 2\epsilon } F_{1}^2 +  A^{1 - \frac12\epsilon} F_{1}^2 )(\|\px \omega_{2, \neq}\|_{Y_a}^2  + \| u_{2, \neq}\|_{X_a}^2  ).
	\end{aligned}\end{equation*}
	
	{\bf Estimate of (ii).}
	There holds that
	\begin{equation*}\begin{aligned}
			\|\left(\px, \pz\right)\left(u_{1, 0}\px u_{2, \neq}\right)\|_{L^2}^2  \leq \|\pz u_{1, 0}\px u_{2, \neq}\|_{L^2}^2 + \|u_{1, 0} \px \left(\px, \pz\right) u_{2, \neq}\|_{L^2}^2.
	\end{aligned}\end{equation*}
	For $k\in\{2,3\}$, one gets
	\begin{equation*}\begin{aligned}
			\|\left(\px, \pz\right)\left(u_{k, 0}\partial_{k} u_{2, \neq}\right)\|_{L^2}^2  &\leq \|u_{k, 0}\|_{L^\infty}^2 \|\partial_{k}\left(\px, \pz\right)u_{2, \neq}\|_{L^2}^2  
			+ \|\pz u_{k, 0}\|_{L_y^\infty L_z^2}^2 \|\partial_{k} u_{2, \neq}\|_{L_z^\infty L_{x, y}^2}^2,
	\end{aligned}\end{equation*}
	which, combined with \eqref{eq:est of u20 u30 L_infty} and $\py u_{2, 0}  +  \pz u_{3, 0}  = 0$, implies
	\begin{equation*}\begin{aligned}
			\|\left(\px, \pz\right)\left(u_{k, 0}\partial_{k} u_{2, \neq}\right)\|_{L^2}^2  &\leq C(\|\nabla u_{2, 0}\|_{L^2}^2+  \|\Delta u_{2, 0}\|_{L^2}^2  +  \|\nabla u_{3, 0}\|_{L^2}^2) \|\Delta u_{2, \neq}\|_{L^2}^2.
	\end{aligned}\end{equation*}
	Similarly, by  $\eqref{eq:0_norm_L_infty}_4$, $\eqref{Sob neq}_6$, $\eqref{Sob g neq}_1$ and $\eqref{eq:velocity transform}_4$, we have
	\begin{equation*}\begin{aligned}
			\|\left(\px, \pz\right)\left(u_{\neq}\cdot\nabla u_{2, 0}\right)\|_{L^2}^2  &\leq C\left( \|\nabla u_{2, 0}\|_{L_{y}^\infty L_z^2}^2 \|\left(\px, \pz\right)u_{\neq}\|_{L_{x, y}^2 L_z^\infty}^2  + \|u_{\neq}\|_{L^\infty}^2 \|\pz\nabla u_{2, 0}\|_{ L^2}^2\right) \\
			& \leq C\| u_{2, 0}\|_{H^2}^2\left( \|\px\nabla \omega_{2, \neq}\|_{L^2}^2  +  \|\px\Delta  u_{2, \neq}\|_{L^2}^2 \right).
	\end{aligned}\end{equation*}
	By collecting the inequalities above and applying Lemma \ref{lem:hatu10} and Lemma \ref{lem:tildeu10}, we obtain that
	\begin{equation*}\begin{aligned}
			&\quad\|\mathrm{e}^{ a A^{-\frac{1}{3}} t} \left(\px, \pz\right) \left(u_0\cdot \nabla u_{2, \neq}\right)\|_{L^2 L^2}^2+  \|\mathrm{e}^{ a A^{-\frac{1}{3}} t} \left(\px, \pz\right) \left(u_{\neq}\cdot \nabla u_{2, 0}\right)\|_{L^2 L^2}^2 \\
			&\leq C\left(A^\frac56 F_{1}^2   +  A^{\frac73 - 2\epsilon } F_{1}^2 +  A^{1 - \frac12\epsilon} F_{1}^2 \right)(\|\px \omega_{2,  \neq}\|_{Y_a}^2  + \| u_{2,  \neq}\|_{X_a}^2  ).
	\end{aligned}\end{equation*}
	
	{\bf Estimate of (iii).} 
	Since
	\begin{equation*}\begin{aligned}
			\|\left(\px, \pz\right)\left(u_{1, 0}\px u_{3, \neq}\right)\|_{L^2}^2  \leq \| \pz u_{1, 0}\px u_{3, \neq}\|_{L^2}^2 + \|u_{1, 0} \px \left(\px, \pz\right)u_{3, \neq}\|_{L^2}^2.
	\end{aligned}\end{equation*}
	For $k\in\{2,3\}$, there holds
	\begin{equation*}\begin{aligned}
			\|\left(\px, \pz\right)\left(u_{k, 0}\partial_{k} u_{3, \neq}\right)\|_{L^2}^2  &\leq \|u_{k, 0}\|_{L^\infty}^2 \|\partial_{k}\left(\px, \pz\right)u_{3, \neq}\|_{L^2}^2  
			+ \|\pz u_{k, 0}\|_{L_y^\infty L_z^2}^2 \|\partial_{k} u_{3, \neq}\|_{L_z^\infty L_{x, y}^2}^2,
	\end{aligned}\end{equation*}
	which  implies
	\begin{equation*}\begin{aligned}
			\|\left(\px, \pz\right)\left(u_{k, 0}\partial_{k} u_{3, \neq}\right)\|_{L^2}^2  
			\leq C\left(\| u_{2, 0}\|_{H^2}^2  +   \| u_{3, 0}\|_{H^1}^2  \right) \left(  \|\px\nabla \omega_{2, \neq}\|_{L^2}^2 +  \|\px \Delta u_{2, \neq}\|_{L^2}^2\right).
	\end{aligned}\end{equation*}
	Similarly, by $\eqref{eq:0_norm_L_infty}_3$, $\eqref{Sob g neq}_{1,5}$ and $\eqref{eq:velocity transform}_4$, we get
	\begin{equation*}\begin{aligned}
			\|\left(\px, \pz\right)\left(u_{\neq}\cdot \nabla u_{3, 0}\right)\|_{L^2}^2  &\leq \|\nabla u_{3, 0}\|_{L_{z}^\infty L_y^2}^2 \|\left(\px, \pz\right)u_{\neq}\|_{L_{x, z}^2 L_y^\infty}^2  + \|u_{\neq}\|_{L^\infty}^2 \|\pz\nabla u_{3, 0}\|_{ L^2}^2 \\
			& \leq C\left(\|\Delta u_{2, 0}\|_{L^2}^2  +  \|\nabla u_{3, 0}\|_{L^2}^2 \right)\left( \|\px\nabla \omega_{2, \neq}\|_{L^2}^2  +  \|\px\Delta  u_{2, \neq}\|_{L^2}^2 \right).
	\end{aligned}\end{equation*}
	By Lemma \ref{lem:hatu10} and Lemma \ref{lem:tildeu10}, we infer from above results that
	\begin{equation*}\begin{aligned}
			&\quad\|\mathrm{e}^{ a A^{-\frac{1}{3}} t} \left(\px, \pz\right) \left(u_0\cdot \nabla u_{3, \neq}\right)\|_{L^2 L^2}^2  +  \|\mathrm{e}^{ a A^{-\frac{1}{3}} t} \left(\px, \pz\right) \left(u_{\neq}\cdot \nabla u_{3, 0}\right)\|_{L^2 L^2}^2 \\
			&\leq  C\left(A^\frac56 F_{1}^2   +  A^{\frac73 - 2\epsilon } F_{1}^2 +  A^{1 - \frac12\epsilon} F_{1}^2 \right)\left(\|\px \omega_{2,  \neq}\|_{Y_a}^2  + \| u_{2,  \neq}\|_{X_a}^2  \right).
	\end{aligned}\end{equation*}
	The proof is complete.
\end{proof}

\begin{Lem} \label{lem:est of E22}
	Under the assumptions of \eqref{eq:smallness of u_in} and \eqref{assumption}, if $A\geq \mathcal{B}_{5}$, it holds that
	\begin{equation*}\begin{aligned}
			E_{2,2}(t)\leq C\left(\|(\partial_{x}n_{\rm in})_{\neq}\|_{L^{2}}+1\right).
	\end{aligned}\end{equation*}
\end{Lem}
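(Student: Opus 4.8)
The plan is to control the decoupled pair $\{\partial_x \omega_{2, \neq}, \Delta u_{2, \neq}\}$ by the resolvent-estimate machinery of Chen-Wei-Zhang \cite{CWZ1}, in exact parallel with the treatment of $\|\partial_x n_{\neq}\|_{Y_a}$ in Lemma \ref{lem:est of E21}. Concretely, I would apply the coupled space-time estimate attached to $\eqref{eq:mian eq after fourier}_{2,3}$ — the estimate that simultaneously absorbs the lift-up coupling $-ik_3 u_2$ linking the $\omega_2$ and $\Delta u_2$ equations — and then sum over $k_1 \neq 0,\ k_3 \in \mathbb{Z}$. This produces a bound of the schematic shape
\begin{equation*}
	\|u_{2, \neq}\|_{X_a}^2 + \|\partial_x \omega_{2, \neq}\|_{Y_a}^2 \leq C\Big( \mathcal{I}_{\rm in} + \tfrac1A \|\text{density forcing}\|_{L^2 L^2}^2 + \tfrac1A \|\text{velocity forcing}\|_{L^2 L^2}^2 \Big),
\end{equation*}
where $\mathcal{I}_{\rm in}$ gathers the (weighted) initial data $\|(u_{2,\rm in})_{\neq}\|$ and $\|(\omega_{2,\rm in})_{\neq}\|$, and the two forcing groups are read off the right-hand sides of $\eqref{eq:mian eq after fourier}_{2,3}$.

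The work then splits into three families of forcing terms. First, the density contributions $\tfrac1A ik_3 n$ and $\tfrac1A ik_1 \partial_y n$: after the resolvent weights these are controlled by $\|\partial_x n_{\neq}\|_{Y_a} = E_{2,1}$, which by Lemma \ref{lem:est of E21} is already $\leq C(\|(\partial_x n_{\rm in})_{\neq}\|_{L^2}+1)$; this is precisely the channel through which the initial density norm enters the final estimate. Second, the zero--non-zero velocity interactions $u_0\cdot \nabla u_{j,\neq}$ and $u_{\neq}\cdot\nabla u_{j,0}$ $(j=1,2,3)$, which are exactly the quantities estimated in Lemma \ref{lem:px u0u1} (itself built on Lemma \ref{lem:hatu10} and Lemma \ref{lem:tildeu10}), giving contributions of the form $C(A^{5/6}+A^{7/3-2\epsilon}+A^{1-\epsilon/2})F_1^2(\|\partial_x\omega_{2,\neq}\|_{Y_a}^2+\|u_{2,\neq}\|_{X_a}^2)$. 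Third, the non-zero--non-zero self-interactions $(u_{\neq}\cdot\nabla u_{j,\neq})_{\neq}$, handled by Lemma \ref{lem:velocity estimate 2}, contributing $CA(\|\partial_x\omega_{2,\neq}\|_{Y_a}^4+\|u_{2,\neq}\|_{X_a}^4)$, which after inserting \eqref{assumption} is $\leq CA^{1-5\epsilon/3}F_2^4$.

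With these estimates assembled I would carry out the closing bookkeeping. After the common factor $\tfrac1A$, the zero--non-zero terms carry the prefactor $(A^{-1/6}+A^{4/3-2\epsilon}+A^{-\epsilon/2})F_1^2$; since $\epsilon > \tfrac23$ every exponent here is strictly negative (the dangerous one being $A^{4/3-2\epsilon}$), so for $A$ large this self-referential contribution is absorbed into the left-hand side. The non-zero--non-zero self-interactions become a negligible power of $A$, the initial velocity data is rendered small by \eqref{eq:smallness of u_in} (indeed $A^\epsilon\|u_{\rm in}\|_{H^2}\leq A^{\epsilon_1}\|u_{\rm in}\|_{H^2}\leq C_0$), and the density forcing supplies the surviving $\|(\partial_x n_{\rm in})_{\neq}\|_{L^2}$ contribution through Lemma \ref{lem:est of E21}. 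Collecting everything yields $\|u_{2,\neq}\|_{X_a}+\|\partial_x\omega_{2,\neq}\|_{Y_a}\leq CA^{-\frac{5}{12}\epsilon}(\|(\partial_x n_{\rm in})_{\neq}\|_{L^2}+1)$, and multiplying by the weight $A^{\frac{5}{12}\epsilon}$ in the definition of $E_{2,2}$ gives the claim.

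I expect the main obstacle to be this very balancing of powers of $A$. The lift-up-affected bad part $\widehat{u_{1,0}}$ only obeys $\|\widehat{u_{1,0}}\|_{L^\infty H^2}\lesssim A^{1-\epsilon}F_1$, which is what forces the large exponent $A^{7/3-2\epsilon}$ in the zero--non-zero interactions; it is exactly the hypothesis $\epsilon_1 > \tfrac23$ (hence $\epsilon > \tfrac23$) that makes every such contribution, after the $A^{-1}$ coupling gain and against the $A^{\frac{5}{12}\epsilon}$ weight, have a non-positive power of $A$ and therefore close. A secondary delicate point is matching the precise $X_a/Y_a$ weights of the density forcing against $A^{\frac{5}{12}\epsilon}$, but this is routine once the coupled resolvent estimate is in place.
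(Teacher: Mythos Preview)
Your proposal is correct and follows essentially the same route as the paper. The only presentational difference is that the paper applies Proposition \ref{prop:key prop for omega2neq} to $\omega_2^{k_1,k_3}$ (treating $-ik_3 u_2^{k_1,k_3}$ as the source $f_4$, which then contributes $\|u_{2,\neq}\|_{X_a}^2$ on the right-hand side) and Proposition \ref{prop:key prop for delta u2neq} to $\widehat{\Delta}u_2^{k_1,k_3}$ separately, and then adds the two resulting inequalities, rather than invoking a single ``coupled'' estimate; the subsequent handling of the density forcing via Lemma \ref{lem:est of E21}, the zero--non-zero interactions via Lemma \ref{lem:px u0u1}, the self-interactions via Lemma \ref{lem:velocity estimate 2}, and the closing power-of-$A$ bookkeeping are exactly as you describe.
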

\begin{proof}
	Applying Proposition \ref{prop:key prop for omega2neq} to $\eqref{eq:mian eq after fourier}_2$, we get
	\begin{equation}\begin{aligned} \label{eq:est of omega2neq}
			&\|{\rm e}^{a A^{-\frac13} t} \omega_2^{k_1, k_3}\|_{L^{\infty} L^2}^2   +  \frac1A\|{\rm e}^{a A^{-\frac13} t} \py\omega_2^{k_1, k_3}\|_{L^2 L^2}^2  
			+\big(A^{-1} \eta^2+(A^{-1} k_1^2)^{1 / 3}\big)\|{\rm e}^{a A^{-\frac13} t} \omega_2^{k_1, k_3}\|_{L^2 L^2}^2 \\
			\leq& C\left(\|\omega_{2,\rm in}^{k_1, k_3}\|_{L^2}^2    +  k_3^2(\eta|k_1|)^{-1}\|{\rm e}^{a A^{-\frac13} t} \partial_y u_2^{k_1, k_3}\|_{L^2 L^2}^2
			+  k_3^2\eta|k_1|^{-1}\|{\rm e}^{a A^{-\frac13} t} u_2^{k_1, k_3}\|_{L^2 L^2}^2\right) \\
			&+ C\frac{ \min \big((A^{-1} \eta^2)^{-1},(A^{-1} k_1^2)^{-1 / 3}\big)}{A^2}\|{\rm e}^{a A^{-\frac13} t}\big(k_1 (u\cdot\nabla u_3)^{k_1, k_3}+k_3 n^{k_1, k_3}\big)\|_{L^2 L^2}^2\\&+C\frac{ \min \big((A^{-1} \eta^2)^{-1},(A^{-1} k_1^2)^{-1 / 3}\big)}{A^2}\|{\rm e}^{a A^{-\frac13} t}k_3 (u\cdot\nabla u_1)^{k_1, k_3}\|_{L^2 L^2}^2.
	\end{aligned}\end{equation}
	Then, noting that for $k_1\neq 0$, it holds
	$$
	\frac{k_1^2}{A^2}\min \big((A^{-1} \eta^2)^{-1},(A^{-1} k_1^2)^{-1 / 3}\big) \leq \frac1A,\quad\frac{k_3^2}{A^2}\min \big((A^{-1} \eta^2)^{-1},(A^{-1} k_1^2)^{-1 / 3}\big) \leq \frac1A
	$$
	and 
	\begin{equation*}\begin{aligned}
			&\sum_{k_1\neq 0, k_3 \in \mathbb{Z}}  k_1^2 \left( k_3^2(\eta|k_1|)^{-1} \|{\rm e}^{a A^{-\frac13} t} \partial_y u_2^{k_1, k_3}\|_{L^2 L^2}^2
			+  k_3^2\eta|k_1|^{-1}\|{\rm e}^{a A^{-\frac13} t} u_2^{k_1, k_3}\|_{L^2 L^2}^2 \right) \\
			&\leq \sum_{k_1\neq 0, k_3 \in \mathbb{Z}} \abs{k_1} \eta \|{\rm e}^{a A^{-\frac13} t} \left(-\partial_y, i\eta\right) u_2^{k_1, k_3}\|_{L^2 L^2}^2 \leq C \|u_{2, \neq}\|_{X_a}^2.
	\end{aligned}\end{equation*}
	Multiplying $k_1^2$ on both sides of \eqref{eq:est of omega2neq} and summing over $k_1, k_3$, we infer that
	\begin{equation}\begin{aligned} \label{eq:est of px omega2neq Ya}
			&\|\px \omega_{2, \neq}\|_{Y_a}^2 \leq C\Big(  \sum_{k_1\neq 0, k_3 \in \mathbb{Z}} \|k_1\omega_{2, \rm in}^{k_1, k_3}\|_{L^2}^2   +   \|u_{2, \neq}\|_{X_a}^2 \\
			&+ \frac1A \sum_{k_1\neq 0, k_3 \in \mathbb{Z}} \|{\rm e}^{a A^{-\frac13} t}  \left(k_1 \left(u\cdot\nabla u_3\right)^{k_1, k_3}  +  k_1 n^{k_1, k_3} - k_1 \left(u\cdot\nabla u_1\right)^{k_1, k_3}\right)\|_{L^2 L^2}^2 \Big) \\
			\leq& C\left(  \|(\omega_{2, \rm in})_{\neq}\|_{H^1}^2  +  \frac1A \|{\rm e}^{a A^{-\frac13} t} \px\left(u\cdot \nabla u_1\right)_{\neq}\|_{L^2L^2}^2   +  \frac1A \|{\rm e}^{a A^{-\frac13} t} \px\left(u\cdot \nabla u_3\right)_{\neq}\|_{L^2L^2}^2  \rt.\\
			&\lt. +  \frac1A \|{\rm e}^{a A^{-\frac13} t} \px n_{\neq}\|_{L^2L^2}^2  + \|u_{2, \neq}\|_{X_a}^2 \right).
	\end{aligned}\end{equation}
	Applying Proposition \ref{prop:key prop for delta u2neq} to $\eqref{eq:mian eq after fourier}_3$, we get
	$$
	\begin{aligned}
		& |k_1 \eta|\|{\rm e}^{a A^{-\frac{1}{3}} t}\left(\partial_y, \eta\right) u_2^{k_1, k_3}\|_{L^2 L^2}^2 +  A^{-\frac{3}{2}}\|{\rm e}^{a A^{-\frac{1}{3}} t} \partial_y {\Delta} u_2^{k_1, k_3} \|_{L^2 L^2}^2 + A^{-1} \eta^2\|{\rm e}^{a A^{-\frac{1}{3}} t} {\Delta} u_2^{k_1, k_3} \|_{L^2 L^2}^2 \\
		& +   \eta^2\|{\rm e}^{a A^{-\frac{1}{3}} t}\left(\partial_y, \eta\right)  u_2^{k_1, k_3}\|_{L^{\infty} L^2}^2  + A^{-\frac{1}{2}}\|{\rm e}^{a A^{-\frac{1}{3}} t} {\Delta} u_2^{k_1, k_3}\|_{L^{\infty} L^2}^2 \\
		\leq&  C\left(\eta^{-2}\|\partial_y {\Delta} u_{2, \rm in}^{k_1, k_3}\|_{L^2}^2  +  \|{\Delta} u_{2, \rm in}^{k_1, k_3}\|_{L^2}^2\right)  +   \frac CA \|{\rm e}^{a A^{-\frac{1}{3}} t}\left(k_1, k_3\right)\left(u\cdot\nabla u_2\right)^{k_1, k_3}\|_{L^2 L^2}^2 \\
		&+ \frac CA  \|{\rm e}^{a A^{-\frac{1}{3}} t}  \left[\px\left(u\cdot\nabla u_1\right) +  \pz\left(u\cdot\nabla u_3\right)\right]^{k_1 , k_3}  \|_{L^2 L^2}^2  + \frac CA  \|{\rm e}^{a A^{-\frac{1}{3}} t} k_1 n^{k_1, k_3}  \|_{L^2 L^2}^2,
	\end{aligned}
	$$
	which implies
	\begin{equation*}\begin{aligned}  \label{eq:est of u2neq Xa}
			\| u_{2, \neq} \|_{X_a}^2  \leq& C \left(   \| (u_{\rm in})_{\neq}\|_{H^2}^2  + \frac1A \|{\rm e}^{a A^{-\frac{1}{3}} t}\px\left(u\cdot\nabla u_1\right)_{\neq}\|_{L^2 L^2}^2
			+  \frac1A \|{\rm e}^{a A^{-\frac{1}{3}} t}\pz\left(u\cdot\nabla u_3\right)_{\neq}\|_{L^2 L^2}^2  \rt.\\
			&\lt.+  \frac1A \|{\rm e}^{a A^{-\frac{1}{3}} t}\left(\px, \pz\right)\left(u\cdot\nabla u_2\right)_{\neq}\|_{L^2 L^2}^2
			+  \frac1A \|{\rm e}^{a A^{-\frac13} t} \px n_{\neq}\|_{L^2L^2}^2 \right).
	\end{aligned}\end{equation*}
	Combining the above inequality with \eqref{eq:est of px omega2neq Ya}, it holds
	\begin{equation}\begin{aligned} \label{eq:est of omega2neq Ya and u2neq Xa}
			&A^{\frac56\epsilon}\left(\|\px \omega_{2, \neq}\|_{Y_a}^2  +\| u_{2, \neq} \|_{X_a}^2 \right)
			\\\leq& C\Big(A^{\frac56\epsilon}\|(u_{\rm in})_{\neq}\|_{H^2}^2   +  \frac{1}{A^{1-\frac56\epsilon}} \|{\rm e}^{a A^{-\frac13} t} \px\left(u\cdot \nabla u_1\right)_{\neq}\|_{L^2L^2}^2+  \frac{1}{A^{1-\frac56\epsilon}} \|{\rm e}^{a A^{-\frac13} t} \px n_{\neq}\|_{L^2L^2}^2 \\
			&+\frac{1}{A^{1-\frac56\epsilon}}\|{\rm e}^{a A^{-\frac{1}{3}} t}\left(\px, \pz\right)\left(u\cdot\nabla u_2\right)_{\neq}\|_{L^2 L^2}^2  +   \frac{1}{A^{1-\frac56\epsilon}} \|{\rm e}^{a A^{-\frac13} t} \left(\px, \pz\right)\left(u\cdot \nabla u_3\right)_{\neq}\|_{L^2L^2}^2\Big).
	\end{aligned}\end{equation}
	Next, we estimate each term on the right-hand side of equation (\ref{eq:est of omega2neq Ya and u2neq Xa}).
	
	{\bf Estimate of $\frac{1}{A^{1-\frac56\epsilon}} \|{\rm e}^{a A^{-\frac13} t} \px\left(u\cdot \nabla u_1\right)_{\neq}\|_{L^2L^2}^2$.}
	By Lemma \ref{lem:px u0u1}, one gets that
	\begin{equation*}\begin{aligned} 
			&\|{\rm e}^{a A^{-\frac13} t} \px\left(u\cdot \nabla u_1\right)_{\neq}\|_{L^2L^2}^2
			\\\leq& C\Big( \|{\rm e}^{a A^{-\frac13} t} \px\left(u_0\cdot \nabla u_{1, \neq }\right)\|_{L^2L^2}^2 + \|{\rm e}^{a A^{-\frac13} t} \px\left(u_{\neq}\cdot \nabla u_{1, 0}\right)\|_{L^2L^2}^2
			\\&+  \|{\rm e}^{a A^{-\frac13} t} \px\left(u_{\neq}\cdot \nabla u_{1, \neq}\right)_{\neq}\|_{L^2L^2}^2\Big) \\
			\leq& CA^{1-\frac56\epsilon}\left(A^{-\frac16} F_{1}^2   +  A^{\frac43 - 2\epsilon } F_{1}^2 +  A^{ - \frac12\epsilon} F_{1}^2 \right)F_{2}^2 +  \|{\rm e}^{a A^{-\frac13} t} \px\left(u_{\neq}\cdot \nabla u_{1, \neq}\right)\|_{L^2L^2}^2,
	\end{aligned}\end{equation*}
	which along with $\eqref{eq:velocity estimate 2}_3$ shows that
	\begin{equation}\begin{aligned} \label{eq: est of non u1}
			\frac{\|{\rm e}^{a A^{-\frac13} t} \px\left(u\cdot \nabla u_1\right)_{\neq}\|_{L^2L^2}^2}{A^{1-\frac56\epsilon}}
			\leq C F_{2}^2 \left(A^{-\frac16} F_{1}^2   +  A^{\frac43 - 2\epsilon } F_{1}^2 +  A^{- \frac12\epsilon} F_{1}^2 + A^{- \frac56\epsilon}  F_{2}^2\right)\leq C
	\end{aligned}\end{equation}
	provided with $A\geq\mathcal{B}_{5}.$
	
	{\bf Estimate of $\frac{1}{A^{1-\frac56\epsilon}} \|{\rm e}^{a A^{-\frac{1}{3}} t}\partial_{x}n_{\neq}\|_{L^2 L^2}^2$.} When
	$ A\geq\mathcal{B}_{5},$ by Lemma \ref{lem:est of E21},
	one deduces
	\begin{equation}\label{n}
		\frac{1}{A^{1-\frac56\epsilon}}\|{\rm e}^{aA^{-\frac13}t}\partial_{x}n_{\neq}\|_{L^{2}L^{2}}^{2}\leq\frac{C}{A^{\frac23-\frac56\epsilon}}\|\partial_{x}n_{\neq}\|_{Y_{a}}^{2}
		\leq C\left(\|(\partial_{x}n_{\rm in})_{\neq}\|_{L^{2}}^2+1\right).
	\end{equation}
	
	{\bf Estimate of $\frac{1}{A^{1-\frac56\epsilon}} \|{\rm e}^{a A^{-\frac{1}{3}} t}\left(\px, \pz\right)\left(u\cdot\nabla u_2\right)_{\neq}\|_{L^2 L^2}^2$.}
	It holds that
	\begin{equation*}\begin{aligned} 
			&\|{\rm e}^{a A^{-\frac{1}{3}} t}\left(\px, \pz\right)\left(u\cdot\nabla u_2\right)_{\neq}\|_{L^2 L^2}^2 
			\leq  C\big( \|{\rm e}^{a A^{-\frac{1}{3}} t}\left(\px, \pz\right)\left(u_0 \cdot\nabla u_{2, \neq}\right)\|_{L^2 L^2}^2 
			\\&+  \|{\rm e}^{a A^{-\frac{1}{3}} t}\left(\px, \pz\right)\left(u_{\neq }\cdot\nabla u_{2, 0}\right)\|_{L^2 L^2}^2
			+ \|{\rm e}^{a A^{-\frac{1}{3}} t}\left(\px, \pz\right)\left(u_{\neq}\cdot\nabla u_{2, \neq}\right)_{\neq}\|_{L^2 L^2}^2\big).
	\end{aligned}\end{equation*}
	Then using (ii) of Lemma \ref{lem:px u0u1} and $\eqref{eq:velocity estimate 2}_5$, when $A\geq\mathcal{B}_{5}$, we obtain that
	\begin{equation}\begin{aligned} \label{eq: est of non u2}
			\frac{\|{\rm e}^{a A^{-\frac{1}{3}} t}\left(\px, \pz\right)\left(u\cdot\nabla u_2\right)_{\neq}\|_{L^2 L^2}^2}{A^{1-\frac56\epsilon}} 
			\leq C F_{2}^2 \left(A^{-\frac16} F_{1,}^2   +  A^{\frac43 - 2\epsilon } F_{1}^2 +  A^{ - \frac12\epsilon} F_{1}^2 + A^{- \frac56\epsilon}  F_{2}^2\right)\leq C.
	\end{aligned}\end{equation}
	
	{\bf Estimate of $\frac{1}{A^{1-\frac56\epsilon}} \|{\rm e}^{a A^{-\frac13} t} \left(\px, \pz\right)\left(u\cdot \nabla u_3\right)_{\neq}\|_{L^2L^2}^2$.} 
	By (iii) of Lemma \ref{lem:px u0u1} and $\eqref{eq:velocity estimate 2}_{3, 4}$, as long as $A\geq\mathcal{B}_{5}$, we get
	\begin{equation}\begin{aligned} \label{eq: est of non u3}
			& \frac{1}{A^{1-\frac56\epsilon}}\|{\rm e}^{a A^{-\frac{1}{3}} t}\left(\px, \pz\right)\left(u\cdot\nabla u_3\right)_{\neq}\|_{L^2 L^2}^2 
			\leq \frac{C}{A^{1-\frac56\epsilon}}\big(  \|{\rm e}^{a A^{-\frac{1}{3}} t}\left(\px, \pz\right)\left(u_0 \cdot\nabla u_{3, \neq}\right)\|_{L^2 L^2}^2 
			\\&+  \|{\rm e}^{a A^{-\frac{1}{3}} t}\left(\px, \pz\right)\left(u_{\neq }\cdot\nabla u_{3, 0}\right)\|_{L^2 L^2}^2 
			+ \|{\rm e}^{a A^{-\frac{1}{3}} t}\left(\px, \pz\right)\left(u_{\neq}\cdot\nabla u_{3, \neq}\right)_{\neq}\|_{L^2 L^2}^2\big)\\&
			\leq	 C F_{2}^2 \left(A^{-\frac16} F_{1}^2   +  A^{\frac43 - 2\epsilon } F_{1}^2 +  A^{ - \frac12\epsilon} F_{1}^2 + A^{- \frac56\epsilon}  F_{2}^2\right)\leq C.
	\end{aligned}\end{equation}
	Adding  \eqref{eq:est of omega2neq Ya and u2neq Xa}, \eqref{eq: est of non u1}, \eqref{n}, \eqref{eq: est of non u2} and \eqref{eq: est of non u3} together, and using assumption (\ref{eq:smallness of u_in}), there holds
	\begin{equation*}\begin{aligned}
			E_{2,2}(t)&=A^{\frac{5}{12}\epsilon}\left(\|\partial_{x}\omega_{2,  \neq}\|_{Y_{a}}+\|u_{2,\neq}\|_{X_{a}} \right)\leq C\left(A^{\frac{5}{12}\epsilon}\|(u_{\rm in})_{\neq}\|_{H^{2}}
			+\|(\partial_{x}n_{\rm in})_{\neq}\|_{L^{2}}+1\right)\\
			&\leq C\left(\|(\partial_{x}n_{\rm in})_{\neq}\|_{L^{2}}+1\right). 		
	\end{aligned}\end{equation*}
	
	The proof is complete.
\end{proof}

Adding Lemma \ref{lem:est of E21} and Lemma \ref{lem:est of E22} together, we obtain the following estimate for the energy $E_{2}(t)$ of the non-zero modes.

\begin{Cor}\label{Cor:E2 end}
	As long as $A\geq \mathcal{B}_{5},$	it follows from Lemma \ref{lem:est of E21} and Lemma \ref{lem:est of E22} that
	\begin{equation*}
		E_{2}(t)\leq C\left(\|(\partial_{x}n_{\rm in})_{\neq}\|_{L^{2}}+1 \right)=:F_{2}.
	\end{equation*}
\end{Cor}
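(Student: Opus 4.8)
The statement is an immediate consequence of the two lemmas just established, so the plan is essentially bookkeeping: all the genuine analysis has already been carried out in the proofs of Lemma \ref{lem:est of E21} and Lemma \ref{lem:est of E22}. Recalling the definition
$E_{2}(t)=E_{2,1}(t)+E_{2,2}(t)$, where $E_{2,1}(t)=\|\partial_x n_{\neq}\|_{Y_a}$ and $E_{2,2}(t)=A^{\frac{5}{12}\epsilon}\big(\|u_{2,\neq}\|_{X_a}+\|\partial_x\omega_{2,\neq}\|_{Y_a}\big)$, I would first invoke Lemma \ref{lem:est of E21} to obtain $E_{2,1}(t)\leq C\big(\|(\partial_x n_{\rm in})_{\neq}\|_{L^2}+1\big)$ for $A\geq\mathcal{B}_5$, and then Lemma \ref{lem:est of E22}, valid under the smallness assumption \eqref{eq:smallness of u_in} together with \eqref{assumption} and the same threshold $A\geq\mathcal{B}_5$, to obtain the bound $E_{2,2}(t)\leq C\big(\|(\partial_x n_{\rm in})_{\neq}\|_{L^2}+1\big)$ of identical form. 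Adding these two inequalities and absorbing the two constants into a single $C$ yields $E_{2}(t)\leq C\big(\|(\partial_x n_{\rm in})_{\neq}\|_{L^2}+1\big)$, and I would define $F_2$ to be exactly this right-hand side.

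The only point that genuinely requires care — and the closest thing here to an obstacle — is the self-referential character of the threshold. The constant $\mathcal{B}_5$ is built (through $\mathcal{B}_{5,1}$) from positive powers of $F_1$, $F_2$, $F_3$ and $M$, so defining $F_2$ in terms of the bound above appears at first to close a loop. This circularity is only apparent: once $F_1$ has been fixed in Corollary \ref{Cor:E1 end} and $F_3$ in Proposition \ref{prop:F3}, both purely in terms of the initial data, and once $F_2$ is declared to be the data-dependent quantity $C\big(\|(\partial_x n_{\rm in})_{\neq}\|_{L^2}+1\big)$, the expression for $\mathcal{B}_5$ becomes a finite constant depending only on the initial data. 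Choosing $A\geq\mathcal{B}_5$ (as permitted by the freedom to take the Couette strength large) then makes every inequality used in the two lemmas simultaneously valid, so no circular dependence remains.

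Finally, I would record that the resulting bound $E_2(t)\le F_2$ is strictly stronger than the bootstrap hypothesis $E_2(t)\le 2F_2$ imposed in \eqref{assumption}, which is precisely the improvement needed to close the continuity argument; together with $\mathcal{B}_5$ being independent of $t$, this completes the proof of Proposition \ref{prop:F2}. I do not expect any computational difficulty in this step, since the heavy nonlinear-interaction estimates (Lemma \ref{lem:velocity estimate 2}, the interaction bounds \eqref{eq:u and cell}, \eqref{0 n_neq}, and Lemmas \ref{lem:hatu10}--\ref{lem:px u0u1}) and the resolvent-type estimates from \cite{CWZ1} have already been consumed inside Lemmas \ref{lem:est of E21} and \ref{lem:est of E22}.
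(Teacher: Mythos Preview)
Your proposal is correct and matches the paper's approach exactly: the corollary is stated without an explicit proof in the paper, simply as the result of ``adding Lemma \ref{lem:est of E21} and Lemma \ref{lem:est of E22} together,'' which is precisely what you do. Your additional remark on the apparent circularity of $\mathcal{B}_5$ depending on $F_2$ is sound and handled correctly (the only minor ordering slip is that $F_3$ in Proposition \ref{prop:F3} is built from $F_1$ and $F_2$ rather than being fixed independently, but since all three ultimately depend only on the initial data this does not affect the argument).
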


	\section{Estimate for $L^{\infty}$-norm of the density $E_{3}(t)$: Proof of Proposition \ref{prop:F3}}\label{Sec 6}
\begin{proof}[Proof of Proposition \ref{prop:F3}]
	For $p=2^j~(j \in \mathbb{Z}^+)$, multiplying $\eqref{eq:main1}_1$ by $2 p n^{2 p-1}$, and integrating over $\mathbb{T} \times \mathbb{I} \times \mathbb{T}$, it holds
		\begin{equation}
		\begin{aligned}
			&\frac{d}{dt}\|n^p\|^2_{L^2}+\frac{2(2p-1)}{Ap}\|\nabla n^{p}\|_{L^2}^2
		\\=&\frac{2(2p-1)}{A}\int_{\mathbb{T} \times \mathbb{I} \times \mathbb{T}}n^{p}\nabla c\cdot\nabla n^{p}dxdydz-\frac{2p\mu}{A}\int_{\mathbb{T} \times \mathbb{I} \times \mathbb{T}}n^{2p+1}dxdydz \\
			\leq&\frac{2(2p-1)}{A}\|n^p\nabla c\|_{L^2}\|\nabla n^p\|_{L^2} 
			\leq\frac{2p-1}{Ap}\|\nabla n^p\|_{L^2}^2
			+\frac{(2p-1)p}{A}\|n^p\nabla c\|^2_{L^2},\nonumber
		\end{aligned}
	\end{equation}
	where we use $-\frac{2p\mu}{A}\int_{\mathbb{T}^{3}}n^{2p+1}dxdydz\leq 0$ due to $n_{\rm in}\geq 0$ and $\mu\geq 0.$
	Using H\"{o}lder inequality and the Gagliardo-Nirenberg inequality, we get
	\begin{equation}
		\begin{aligned}
			\|n^p\nabla c\|_{L^2}^2
			\leq \|n^p\|_{L^4}^2\|\nabla c\|_{L^4}^2
			\leq C\|n^p\|_{L^2}^{\frac{1}{2}}\|\nabla n^p\|_{L^2}^\frac{3}{2}\|\nabla c\|_{L^4}^2, \nonumber
		\end{aligned}
	\end{equation}
	which implies that
	\begin{equation}
		\begin{aligned}
			\frac{d}{dt}\|n^p\|^2_{L^2}+\frac{1}{2A}\|\nabla n^{p}\|_{L^2}^2\leq
			\frac{Cp^8}{A}\|n^p\|_{L^2}^2\left(1+\|\nabla c\|_{L^4}^8 \right).
			\label{58}
		\end{aligned}
	\end{equation}
		Thanks to the Gagliardo-Nirenberg inequality 
	\begin{equation}
		\begin{aligned}
			\|n^p\|_{L^2}
			\leq C\|n^p\|_{L^1}^{\frac{2}{5}}\|\nabla n^p\|_{L^2}^{\frac{3}{5}},
			\nonumber
		\end{aligned}
	\end{equation}
	we infer from (\ref{58}) that
	\begin{equation}
		\begin{aligned}
			\frac{d}{dt}\|n^p\|^2_{L^2}
			\leq-\frac{\|n^p\|_{L^2}^{\frac{10}{3}}}{2AC\|n^p\|_{L^1}^{\frac{4}{3}}}
			+\frac{Cp^8}{A}\|n^p\|_{L^2}^2\left(1+\|\nabla c\|_{L^{\infty}L^4}^8 \right).
			\nonumber
		\end{aligned}
	\end{equation}
	Applying Lemma \ref{lem:the zero mode of c and n}, Lemma \ref{lem:est of the non-zero mode of c and n}, Corollary \ref{Cor:E1 end}  and Corollary \ref{Cor:E2 end}, there holds
	\begin{equation*}
		\begin{aligned}
			\|\nabla c\|_{L^{\infty}L^4}\leq& \|\nabla c_{\neq}\|_{L^{\infty}L^4}+\|\nabla c_0\|_{L^{\infty}L^4}
			\\\leq& C\left(\|\partial_xn_{\neq}\|_{L^{\infty}L^{2}}+\|n_{0}\|_{L^{\infty}L^{2}} \right) \leq C\left(F_{1}+F_{2} \right).
		\end{aligned}
	\end{equation*}
	Therefore, we get that
	\begin{equation*}\label{np}
		\begin{aligned}
			\frac{d}{dt}\|n^p\|^2_{L^2}
			\leq-\frac{\|n^p\|^{\frac{10}{3}}_{L^2}}{2CA\|n^p\|_{L^1}^{\frac{4}{3}}}
			+\frac{Cp^8}{A}\|n^p\|^2_{L^2}(1+F_{1}^8+F_{2}^8),
		\end{aligned}
	\end{equation*}
	which indicates that 
	\begin{equation}
		\begin{aligned}
			\sup_{t\geq 0}\|n^p\|_{L^2}^2\leq \max\big\{8C^3(1+F_{1}^{8}+F_{2}^{8})^{\frac32}p^{12}\sup_{t\geq0}\|n^p\|^2_{L^{1}}, 2\|n_{\rm in}^p\|_{L^2}^2\big\}\label{n1_1}.
		\end{aligned}
	\end{equation}
		Next, the Moser-Alikakos iteration is used to determine $F_{3}$. Recalling that $ p=2^{j} $ with $ j\geq 1, $
	    we rewrite (\ref{n1_1}) into
	\begin{equation}\label{n1_2}
		\begin{aligned}
			&\sup_{t\geq 0}\int_{\mathbb{T}\times\mathbb{I}\times\mathbb{T}}|n(t)|^{2^{j+1}}dxdydz 
			\\\leq& \max\Big\{H_{1}p^{12}\Big(\sup_{t\geq0}\int_{\mathbb{T}\times\mathbb{I}\times\mathbb{T}}|n(t)|^{2^{j}}dxdydz\Big)^2, 2\int_{\mathbb{T}\times\mathbb{I}\times\mathbb{T}}|n_{\rm in}|^{2^{j+1}}dxdydz\Big\},
		\end{aligned}
	\end{equation}
	where $H_{1}=8C^3(1+F_{1}^{8}+F_{2}^{8})^{\frac32}.$
	By Corollary \ref{Cor:E1 end}, we note that
	\begin{equation*}
		\|n_{0}\|_{L^{\infty}L^{2}}\leq F_{1}.
	\end{equation*}
	Hence, one obtains that 
	$$\sup_{t\geq0}\|n(t)\|_{L^2}\leq|\mathbb{T}| \|n_{0}\|_{L^\infty L^2}+\|n_{\neq}\|_{L^\infty L^2}\leq |\mathbb{T}|F_1+F_2.$$
	By interpolation inequality, 	for $j\geq1$, we have
	$$\|n_{\rm in}\|_{L^{2^{j+1}}}\leq\|n_{\rm in}\|^{\theta_j}_{L^2}
	\|n_{\rm in}\|^{1-\theta_j}_{L^\infty}
	\leq\|n_{\rm in}\|_{L^2}+\|n_{\rm in}\|_{L^\infty}\leq|\mathbb{T}|F_1+F_2+\|n_{\rm in}\|_{L^\infty},$$
	for some $0<\theta_j<1$,
	which yields
	$$2\int_{\mathbb{T}\times\mathbb{I}\times\mathbb{T}}|n_{\rm in}|^{2^{j+1}}dxdydz
	\leq2\left(|\mathbb{T}|F_1+F_2+\|n_{\rm in}\|_{L^\infty}\right)^{2^{j+1}}\leq K^{2^{j+1}},$$
	where $K:=2(|\mathbb{T}|F_1+F_2+\|n_{\rm in}\|_{L^\infty}).$
	
	Now, we rewrite (\ref{n1_2}) as
	\begin{equation}
		\begin{aligned}
			\sup_{t\geq0}\int_{\mathbb{T}\times\mathbb{I}\times\mathbb{T}}|n(t)|^{2^{j+1}}dxdydz\leq \max\left\{H_1	4096^{j}\left(\sup_{t\geq0}\int_{\mathbb{T}\times\mathbb{I}\times\mathbb{T}}|n(t)|^{2^{j}}dxdydz\right)^2, K^{2^{j+1}} \right\}.\nonumber
		\end{aligned}
	\end{equation}
	For $j=k$, we get
	\begin{equation}
		\begin{aligned}
			\sup_{t\geq0}\int_{\mathbb{T}\times\mathbb{I}\times\mathbb{T}}|n(t)|^{2^{k+1}}dxdydz\leq H_{1}^{a_k}	4096^{b_k}K^{2^{k+1}},\nonumber
		\end{aligned}
	\end{equation}
	where $a_k=1+2a_{k-1}$ and $b_k=k+2b_{k-1}$.
	
	Generally, one can obtain the following formulas
	$$a_k=2^k-1,\ {\rm and}\ \ b_k=2^{k+1}-k-2.$$
	Therefore, we deduce that
	\begin{equation}
		\begin{aligned}
			\sup_{t\geq0}\left(\int_{\mathbb{T}\times\mathbb{I}\times\mathbb{T}}|n(t)|^{2^{k+1}}dxdydz\right)^{\frac{1}{2^{k+1}}}\leq H_1^{\frac{2^k-1}{2^{k+1}}}	4096^{\frac{2^{k+1}-k-2}{2^{k+1}}}K.\nonumber
		\end{aligned}
	\end{equation}
	Letting $k\rightarrow\infty$, it holds that
	$$\sup_{t\geq0}\|n(t)\|_{L^\infty}\leq C(1+F_{1}^{8}+F_{2}^{8})^{\frac{3}{4}}(|\mathbb{T}|F_1+F_2+\|n_{\rm in}\|_{L^\infty})=:F_3,$$
	which completes the proof.
\end{proof}

\appendix 
\section{}

For a given function $f=f(x, y, z)$, by Fourier series it holds
\begin{equation}\label{Sob}
	\begin{aligned}
		\|f_{\neq}\|_{L^2}^2 \leq\|\partial_x^j f_{\neq}\|_{L^2}^2, 
		\quad\|\partial_x f\|_{L^2}^2 \leq\|\partial_x^j f\|_{L^2}^2 \quad \text{and}\quad 
		\|\partial_z f\|_{L^2}^2 \leq\|\partial_z^j f\|_{L^2}^2,
	\end{aligned}
\end{equation}
where $j$ is a positive integer with $j \geq 1$.

The following lemma gives some relationships between origin velocity $u_{\neq}$, new vorticity $\omega_2=\partial_z u_1-\partial_x u_3$ and new velocity $\Delta u_2$, which will be frequently used in calculations.
\begin{Lem}\label{lemma_u}
	For $j\in\{1,3\},$ it holds
	\begin{equation}\begin{aligned} \label{eq:velocity transform}
			& \|\left(\partial_x, \partial_z\right) u_{\neq}\|_{L^2} \leq C\left(\|\omega_{2, \neq}\|_{L^2}+\|\nabla u_{2, \neq}\|_{L^2}\right), \\
			& \|\left(\partial_x, \partial_z\right) \partial_x u_{\neq}\|_{L^2} \leq C\left(\|\partial_x \omega_{2, \neq}\|_{L^2}+\|\partial_{x} \nabla u_{2, \neq}\|_{L^2}\right), \\
			& \|\left(\partial_x, \partial_z\right) \partial_y u_{\neq}\|_{L^2} \leq C\left(\|\partial_y \omega_{2, \neq}\|_{L^2}+\|\Delta u_{2, \neq}\|_{L^2}\right), \\
			& \|\left(\partial_x, \partial_z\right) \partial_j \nabla u_{\neq}\|_{L^2} \leq C\left(\|\partial_j \nabla \omega_{2, \neq}\|_{L^2}+\|\partial_{j} \Delta u_{2, \neq}\|_{L^2}\right), \\
			& \|(\partial_x^2, \partial_z^2) u_{3, \neq}\|_{L^2} \leq C\left(\|\partial_x \omega_{2, \neq}\|_{L^2}
			+\|\partial_z \nabla u_{2, \neq}\|_{L^2}\right).
	\end{aligned}\end{equation}
\end{Lem}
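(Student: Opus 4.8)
The plan is to reduce everything to the Fourier side in the tangential variables $(x,z)$ and solve an elementary linear system. Fix a nonzero mode $(k_1,k_3)$ with $k_1\neq0$, so that $\eta=\sqrt{k_1^2+k_3^2}>0$. Writing $\px\mapsto ik_1$ and $\pz\mapsto ik_3$, the incompressibility condition $\nabla\cdot u=0$ together with the definition $\omega_2=\pz u_1-\px u_3$ becomes the $2\times2$ system
\begin{equation*}
	ik_1 u_1^{k_1,k_3}+ik_3 u_3^{k_1,k_3}=-\py u_2^{k_1,k_3},\qquad ik_3 u_1^{k_1,k_3}-ik_1 u_3^{k_1,k_3}=\omega_2^{k_1,k_3},
\end{equation*}
whose coefficient matrix has determinant $\eta^2\neq0$. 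Inverting it gives the explicit representation
\begin{equation*}
	u_1^{k_1,k_3}=\frac{ik_1\py u_2^{k_1,k_3}-ik_3\omega_2^{k_1,k_3}}{\eta^2},\qquad u_3^{k_1,k_3}=\frac{ik_1\omega_2^{k_1,k_3}+ik_3\py u_2^{k_1,k_3}}{\eta^2}.
\end{equation*}
Every quantity on the left-hand side of \eqref{eq:velocity transform} is obtained by applying a Fourier multiplier built from $ik_1$, $ik_3$ and $\py$ to these two formulas (and, for the $u_2$-component, directly to $u_2^{k_1,k_3}$); the whole lemma then follows from pointwise-in-$(k_1,k_3)$ bounds together with Plancherel's theorem in $(x,z)$ and integration in $y$.

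For the estimates $\eqref{eq:velocity transform}_{1,2,5}$ no integration by parts is needed: the point is simply that $|k_1|/\eta\le1$ and $|k_3|/\eta\le1$, so that, for instance, $\eta|u_1^{k_1,k_3}|\le|\py u_2^{k_1,k_3}|+|\omega_2^{k_1,k_3}|$. Since $\|(\px,\pz)u_1^{k_1,k_3}\|=\eta\|u_1^{k_1,k_3}\|$ and $\py u_2$ is a component of $\nabla u_2$, this yields $\eqref{eq:velocity transform}_1$ after noting $\|(\px,\pz)u_2^{k_1,k_3}\|\le\|\nabla u_2^{k_1,k_3}\|$. Estimate $\eqref{eq:velocity transform}_2$ is identical after inserting an extra factor $ik_1$, and $\eqref{eq:velocity transform}_5$ follows by multiplying the formula for $u_3^{k_1,k_3}$ by $-k_1^2$ or $-k_3^2$ and using $k_1^2/\eta^2\le1$, $k_3^2/\eta^2\le1$ to land on $|\px\omega_2|$ and $|\pz\py u_2|$.

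The only real work is in $\eqref{eq:velocity transform}_{3,4}$, where applying $\py$ to the representation produces the terms $\py^2 u_2^{k_1,k_3}$ and, after rewriting $\py^2=\widehat{\Delta}+\eta^2$, the factor $\eta^2 u_2^{k_1,k_3}$, which must be absorbed into $\|\widehat{\Delta}u_2\|$. This is where the boundary conditions $u_2^{k_1,k_3}|_{y=\pm1}=\py u_2^{k_1,k_3}|_{y=\pm1}=0$ enter: integrating by parts and using $\langle\py^2 u_2^{k_1,k_3},u_2^{k_1,k_3}\rangle=-\|\py u_2^{k_1,k_3}\|_{L_y^2}^2$ gives
\begin{equation*}
	\|\widehat{\Delta}u_2^{k_1,k_3}\|_{L_y^2}^2=\|\py^2 u_2^{k_1,k_3}\|_{L_y^2}^2+2\eta^2\|\py u_2^{k_1,k_3}\|_{L_y^2}^2+\eta^4\|u_2^{k_1,k_3}\|_{L_y^2}^2,
\end{equation*}
whence $\eta^2\|u_2^{k_1,k_3}\|_{L_y^2}\le\|\widehat{\Delta}u_2^{k_1,k_3}\|_{L_y^2}$, $\eta\|\py u_2^{k_1,k_3}\|_{L_y^2}\le\|\widehat{\Delta}u_2^{k_1,k_3}\|_{L_y^2}$ and $\|\py^2 u_2^{k_1,k_3}\|_{L_y^2}\le\|\widehat{\Delta}u_2^{k_1,k_3}\|_{L_y^2}$. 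Feeding these coercivity bounds back, together with $|k_1|,|k_3|\le\eta$, converts the $u_2$-contributions in $\eqref{eq:velocity transform}_{3,4}$ into $\|\Delta u_{2,\neq}\|$ and $\|\partial_j\Delta u_{2,\neq}\|$, while the $\omega_2$-contributions are controlled directly by $\|\py\omega_{2,\neq}\|$ and $\|\partial_j\nabla\omega_{2,\neq}\|$. Summing over all modes $k_1\neq0$, $k_3\in\mathbb{Z}$ then completes the proof. I expect this absorption of $\py^2 u_2$ and $\eta^2 u_2$ into $\widehat{\Delta}u_2$ through the boundary conditions to be the main (and essentially the only) subtlety, everything else being bookkeeping with the symbols $ik_1$, $ik_3$.
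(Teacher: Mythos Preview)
Your proposal is correct and essentially coincides with the paper's argument. The paper cites the first three inequalities from an earlier work and proves $\eqref{eq:velocity transform}_4$ via the physical-space identity $\|(\px,\pz)(f_1,f_2)\|_{L^2}^2=\|(\pz f_1-\px f_2,\ \px f_1+\pz f_2)\|_{L^2}^2$ applied to $(f_1,f_2)=\partial_j\nabla(u_1,u_3)$, together with $\|\nabla^2\partial_j u_{2,\neq}\|_{L^2}\le C\|\partial_j\Delta u_{2,\neq}\|_{L^2}$ from integration by parts; this is exactly the Plancherel/physical-space version of your mode-by-mode inversion and coercivity of $\widehat{\Delta}$, and for $\eqref{eq:velocity transform}_5$ the paper uses $\px\omega_{2,\neq}=-\pz\py u_{2,\neq}-(\px^2+\pz^2)u_{3,\neq}$, which is the same relation you read off from the formula for $u_3^{k_1,k_3}$.
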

\begin{proof}
	The first three results can be found in Lemma 3.13 in \cite{CWW2025}. 
	Due to $\partial_{y}u_{2,\neq}\big|_{y=\pm 1}=0,$ 
	using integration by parts, we get
	\begin{equation*}
		\|\nabla^{2}\partial_{j}u_{2,\neq}\|_{L^{2}}^{2}\leq C\|\partial_{j}\Delta u_{2,\neq}\|_{L^{2}}^{2}.
	\end{equation*}
	The above inequality along with
	$
	\|(\partial_{x},\partial_{z})(f_{1}, f_{2})\|_{L^{2}}^{2}=\|(\partial_{z}f_{1}-\partial_{x}f_{2}, \partial_{x}f_{1}+\partial_{z}f_{2})\|_{L^{2}}^{2}
	$
	implies that
	\begin{equation*}
		\begin{aligned}
			\|(\partial_{x},\partial_{z})\partial_{j}\nabla u_{\neq}\|_{L^{2}}^{2}\leq&\|(\partial_{x},\partial_{z})\partial_{j}\nabla u_{2,\neq}\|_{L^{2}}^{2}+\|(\partial_{x},\partial_{z})\nabla\partial_{j}(u_{1,\neq}, u_{3,\neq})\|_{L^{2}}^{2}\\\leq&C\left(\|\partial_{j}\nabla\omega_{2,  \neq}\|_{L^{2}}^{2}+\|\partial_{j}\Delta u_{2,\neq}\|_{L^{2}}^{2} \right),
		\end{aligned}
	\end{equation*}
	which gives $(\ref{eq:velocity transform})_{4}.$ Moreover, $(\ref{eq:velocity transform})_{5}$ follows from $\partial_{x}\omega_{2,  \neq}=-\partial_{z}\partial_{y}u_{2,
		\neq}-(\partial_{x}^{2}+\partial_{z}^{2})u_{3,
		\neq}.$ 
\end{proof}

Inspired by \cite{CWW2025} and \cite{CWW20251}, we give some $L^{\infty}$-norm Sobolev embeddings in domain $\mathbb{T}\times\mathbb{I}\times\mathbb{T}$. 
\begin{Lem}[Lemma 3.1 in \cite{CWW2025} and Lemma A.2 in \cite{CWW20251}]
	For a given function $f_0=\frac{1}{|\mathbb{T}|} \int_{\mathbb{T}} f(t, x, y, z) d x$ and a constant $\alpha \in\left(\frac{1}{2}, 1\right]$,
	 we have
	\begin{equation} \label{eq:0_norm_L_infty}
		\begin{aligned}
			& \|f_0\|_{L^{\infty}} \leq C\big(\|\partial_y f_0\|_{L^2}^{\frac{1}{2}}\|f_0\|_{L^2}^{\frac{1}{2}}
			+\|\partial_z\nabla f_0\|_{L^2}^{\frac{1}{2}}\|\partial_z f_0\|_{L^2}^{\alpha-\frac{1}{2}}\|f_0\|_{L^2}^{1-\alpha}+\|f_{0}\|_{L^{2}}\big) ,\\
			& \|f_0\|_{L^{\infty}} \leq C\big(\|\partial_y f_0\|_{L^2}^{\frac{1}{2}}\|f_0\|_{L^2}^{\frac{1}{2}}
			+\|\partial_z\nabla f_0\|_{L^2}^{\alpha-\frac{1}{2}}\|\partial_z f_0\|_{L^2}^{\frac{1}{2}}\|\partial_y f_0\|_{L^2}^{1-\alpha}+\|f_{0}\|_{L^{2}}\big) ,\\
			& \|f_0\|_{L_z^{\infty} L_y^2} \leq C\left(\|f_0\|_{L^2}+\|\partial_z f_0\|_{L^2}^\alpha\|f_0\|_{L^2}^{1-\alpha}\right) ,\\
			& \|f_0\|_{L_y^{\infty} L_z^2} \leq C\left(\|f_{0}\|_{L^{2}}+\|\partial_y f_0\|_{L^2}^{\frac{1}{2}}\|f_0\|_{L^2}^{\frac{1}{2}} \right).
		\end{aligned}
	\end{equation}

		Specially, when  $f_0|_{y=\pm 1} = 0 $, we rewrite $\eqref{eq:0_norm_L_infty}_{1,2,4}$ as
	\begin{equation}\label{Sob f0}
		\begin{aligned}
				& \|f_0\|_{L^{\infty}} \leq C\big(\|\partial_y f_0\|_{L^2}^{\frac{1}{2}}\|f_0\|_{L^2}^{\frac{1}{2}}
			+\|\partial_z\partial_{y} f_0\|_{L^2}^{\frac{1}{2}}\|\partial_z f_0\|_{L^2}^{\alpha-\frac{1}{2}}\|f_0\|_{L^2}^{1-\alpha}\big) ,\\
			& \|f_0\|_{L^{\infty}} \leq C\big(\|\partial_y f_0\|_{L^2}^{\frac{1}{2}}\|f_0\|_{L^2}^{\frac{1}{2}}
			+\|\partial_z\partial_{y} f_0\|_{L^2}^{\alpha-\frac{1}{2}}\|\partial_z f_0\|_{L^2}^{\frac{1}{2}}\|\partial_y f_0\|_{L^2}^{1-\alpha}\big) ,\\
			& \|f_0\|_{L_y^{\infty} L_z^2} \leq C\|\partial_y f_0\|_{L^2}^{\frac{1}{2}}\|f_0\|_{L^2}^{\frac{1}{2}}.
		\end{aligned}
	\end{equation}
\end{Lem}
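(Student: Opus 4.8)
The plan is to read \eqref{eq:0_norm_L_infty} and \eqref{Sob f0} as anisotropic Agmon--Gagliardo--Nirenberg inequalities on the two-dimensional cross section $\mathbb{I}\times\mathbb{T}$ for the $x$-averaged (hence $x$-independent) profile $f_0=f_0(y,z)$, and to obtain them by combining one-dimensional inequalities in the two directions, which play asymmetric roles: $y$ runs over the bounded interval $\mathbb{I}$ with Dirichlet endpoints available, while $z$ runs over the circle $\mathbb{T}$. First I would record the two scalar ingredients. On the interval one has the Agmon inequality
\[
\|g\|_{L^\infty(\mathbb{I})}^2\le C\|g\|_{L^2(\mathbb{I})}\big(\|g\|_{L^2(\mathbb{I})}+\|g'\|_{L^2(\mathbb{I})}\big),
\]
which sharpens, when $g(\pm1)=0$, to $\|g\|_{L^\infty(\mathbb{I})}^2\le C\|g\|_{L^2(\mathbb{I})}\|g'\|_{L^2(\mathbb{I})}$ (no stand-alone $L^2$ term); this fixes the exponent $\tfrac12$ in the $y$-direction. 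On the circle one has, for $\alpha\in(\tfrac12,1]$, the embedding $H^\alpha(\mathbb{T})\hookrightarrow L^\infty(\mathbb{T})$ together with the interpolation $\|g\|_{H^\alpha}\le\|g\|_{L^2}^{1-\alpha}\|g\|_{H^1}^\alpha$, giving
\[
\|g\|_{L^\infty(\mathbb{T})}\le C\big(\|g\|_{L^2(\mathbb{T})}+\|g\|_{L^2(\mathbb{T})}^{1-\alpha}\|\partial_z g\|_{L^2(\mathbb{T})}^{\alpha}\big);
\]
this is where the tunable exponent $\alpha$ enters.

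Next I would prove the mixed-norm estimates $\eqref{eq:0_norm_L_infty}_{3,4}$, which are the one-directional building blocks. For $\eqref{eq:0_norm_L_infty}_{4}$ I apply the interval inequality to the profile $P(y):=\|f_0(y,\cdot)\|_{L^2_z}$, using the elementary fact $|P'(y)|\le\|\partial_y f_0(y,\cdot)\|_{L^2_z}$ (so $\|P'\|_{L^2_y}\le\|\partial_y f_0\|_{L^2}$); for $\eqref{eq:0_norm_L_infty}_{3}$ I apply the circle inequality to $H(z):=\|f_0(\cdot,z)\|_{L^2_y}$, with the analogous bound $\|\partial_z H\|_{L^2_z}\le\|\partial_z f_0\|_{L^2}$. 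Then the full $L^\infty$ estimates $\eqref{eq:0_norm_L_infty}_{1,2}$ follow by iterating the two directions: reduce $\|f_0\|_{L^\infty_{y,z}}$ to $L^\infty_y L^2_z$-type quantities via the circle interpolation applied fiberwise in $y$ (splitting $f_0$ and $\partial_z f_0$ with weights $1-\alpha$ and $\alpha$), and then control the resulting $y$-profiles of $f_0$ and $\partial_z f_0$ by the interval Agmon inequality. Since $\partial_z f_0$ again vanishes at $y=\pm1$ whenever $f_0$ does, the Dirichlet version of the interval inequality applies to it, producing the derivative $\partial_y\partial_z f_0$ rather than the full $\partial_z\nabla f_0$; this is precisely the difference between $\eqref{eq:0_norm_L_infty}_{1,2}$ and their boundary versions $\eqref{Sob f0}_{1,2}$, so the latter come out of the same scheme by dropping the $\|f_0\|_{L^2}$ corrections and the $\partial_z^2$ contribution.

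The step I expect to be the main obstacle is the exponent bookkeeping needed to collapse the iterated estimate into the precise three-factor forms displayed in $\eqref{eq:0_norm_L_infty}_{1,2}$, namely $\|\partial_z\nabla f_0\|_{L^2}^{1/2}\|\partial_z f_0\|_{L^2}^{\alpha-1/2}\|f_0\|_{L^2}^{1-\alpha}$: a naive iteration distributes the circle exponent $\alpha$ onto the top derivative as $\alpha/2$ and leaves a four-factor product, so one must organize the interpolation carefully (choosing which direction carries the fractional exponent, and exploiting the unit length scale of the bounded domain together with Young's inequality) in order to recover the stated powers. I would carry out this reduction following the computations of \cite{CWW2025} and \cite{CWW20251}. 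The remaining verifications — the continuity and almost-everywhere differentiability of the profiles $P$ and $H$ (handled by density or difference quotients) and the treatment of the nonzero $z$-mean on the circle — are routine.
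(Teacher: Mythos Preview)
The paper does not give its own proof of this lemma: it is stated in the appendix purely as a citation of Lemma~3.1 in \cite{CWW2025} and Lemma~A.2 in \cite{CWW20251}, with no argument reproduced. So there is no in-paper proof to compare your proposal against.

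Your strategy---one-dimensional Agmon on $\mathbb{I}$ (with the Dirichlet refinement when $f_0|_{y=\pm1}=0$), fractional Sobolev embedding plus interpolation on $\mathbb{T}$, then iterating the two directions via mixed norms---is exactly the standard route and is what those references do. Regarding the exponent bookkeeping you flag as the main obstacle: it becomes transparent if you expand $f_0=f_{(0,0)}+\sum_{k_3\ne0}\hat f_{k_3}(y)e^{ik_3z}$ in Fourier series in $z$. The $z$-mean $f_{(0,0)}$ gives the first term via 1D Agmon. For the remainder, bound $\|f_{(0,\neq)}\|_{L^\infty}\le\sum_{k_3\ne0}\|\hat f_{k_3}\|_{L^\infty_y}\le C\sum_{k_3\ne0}\|\hat f_{k_3}\|_{L^2_y}^{1/2}\|\hat f_{k_3}\|_{H^1_y}^{1/2}$, insert the weight $|k_3|^{-\alpha}\cdot|k_3|^{\alpha}$ and split $|k_3|^{\alpha}=|k_3|^{\alpha-1/2}\cdot|k_3|^{1/2}$, then apply H\"older in the sum with exponents $\big(2,\tfrac{2}{1-\alpha},\tfrac{2}{\alpha-1/2},4\big)$. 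This yields precisely $\|\partial_z\nabla f_0\|_{L^2}^{1/2}\|\partial_z f_0\|_{L^2}^{\alpha-1/2}\|f_0\|_{L^2}^{1-\alpha}$ (the summability of $|k_3|^{-2\alpha}$ uses $\alpha>\tfrac12$), and the variant $\eqref{eq:0_norm_L_infty}_2$ comes from the analogous splitting with the roles of the $L^2_y$ and $H^1_y$ factors interchanged. No four-factor intermediate product or extra Young step is needed.
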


\begin{Lem}[Lemma 3.2 in \cite{CWW2025} and Lemma A.3 in \cite{CWW20251}] \label{eq:neq infty form}
	For a given function $g(x,y,z)=g_{\neq}(x,y,z)$,
	we have
	\begin{equation}\label{Sob neq}
		\begin{aligned}
			& \|g\|_{L^{\infty}} \leq C\big(\|\partial_z\nabla g\|_{L^2}^{\frac{1}{2}}\|\partial_x \partial_z g\|_{L^2}^{\alpha-\frac{1}{2}}
			\|\partial_x^2 g\|_{L^2}^{\alpha-\frac{1}{2}}\|\partial_x g\|_{L^2}^{\frac{3}{2}-2 \alpha}+\|\partial_x \nabla g\|_{L^2}^{\frac{1}{2}}
			\|\partial_x g\|_{L^2}^{\alpha-\frac{1}{2}}\|g\|_{L^2}^{1-\alpha}\big), \\
			& \|g\|_{L_{y, z}^{\infty} L_x^2} \leq C\big(\|\partial_yg\|^{\frac{1}{2}}_{L^2}
			\|g\|^{\frac{1}{2}}_{L^2}
			+\|\partial_zg\|^{\frac{1}{2}}_{L^2}
			\|\partial_z\nabla g\|^{\alpha-\frac{1}{2}}_{L^2}
			\|\partial_yg\|^{1-\alpha}_{L^2}+\|g\|_{L^{2}}\big), \\
			& \|g\|_{L_{x, y}^{\infty} L_z^2} \leq C\big(\|\partial_x g\|_{L^2}^{\frac{1}{2}}\|\partial_x \partial_y g\|_{L^2}^{\alpha-\frac{1}{2}}
			\|\partial_y g\|_{L^2}^{1-\alpha}+\|\partial_{x}g\|_{L^{2}} \big), \\
			& \|g\|_{L_{x, z}^{\infty} L_y^2} \leq C\big(\|\partial_x g\|_{L^2}^\alpha\|g\|_{L^2}^{1-\alpha}+\|\partial_x \partial_z g\|_{L^2}^\alpha\|g\|_{L^2}^{1-\alpha}\big), \\
			& \|g\|_{L_x^{\infty} L_{y, z}^2} \leq C\|\partial_x g\|_{L^2}^\alpha\|g\|_{L^2}^{1-\alpha}, \\
			& \|g\|_{L_z^{\infty} L_{y, x}^2} \leq C\left(\|g\|_{L^2}+\|\partial_z g\|_{L^2}^\alpha\|g\|_{L^2}^{1-\alpha}\right), \\
			& \|g\|_{L_y^{\infty} L_{x, z}^2} \leq C\big(\|g\|_{L^{2}}+\|\partial_y g\|_{L^2}^{\frac{1}{2}}\|g\|_{L^2}^{\frac{1}{2}} \big),
		\end{aligned}
	\end{equation}
	where $\alpha \in\left(\frac{1}{2}, \frac{3}{4}\right]$ for $\eqref{Sob neq}_1$ and $\alpha \in\left(\frac{1}{2}, 1\right]$ for the others.
	
	Specially, when $g_{\neq}|_{y=\pm 1} = 0$, we rewrite $\eqref{Sob neq}_{1,2,3,7}$ as
		\begin{equation} \label{Sob g neq}
		\begin{aligned}
			& \|g\|_{L^{\infty}} \leq C\big(\|\partial_z\partial_{y} g\|_{L^2}^{\frac{1}{2}}\|\partial_x \partial_z g\|_{L^2}^{\alpha-\frac{1}{2}}
			\|\partial_x^2 g\|_{L^2}^{\alpha-\frac{1}{2}}\|\partial_x g\|_{L^2}^{\frac{3}{2}-2 \alpha}+\|\partial_x \partial_{y} g\|_{L^2}^{\frac{1}{2}}
			\|\partial_x g\|_{L^2}^{\alpha-\frac{1}{2}}\|g\|_{L^2}^{1-\alpha}\big), \\
			& \|g\|_{L_{y, z}^{\infty} L_x^2} \leq C\big(\|\partial_yg\|^{\frac{1}{2}}_{L^2}
			\|g\|^{\frac{1}{2}}_{L^2}
			+\|\partial_zg\|^{\frac{1}{2}}_{L^2}
			\|\partial_z\partial_{y} g\|^{\alpha-\frac{1}{2}}_{L^2}
			\|\partial_yg\|^{1-\alpha}_{L^2}\big),\\
			& \|g\|_{L_{x, y}^{\infty} L_z^2} \leq C\|\partial_x g\|_{L^2}^{\frac{1}{2}}\|\partial_x \partial_y g\|_{L^2}^{\alpha-\frac{1}{2}}
			\|\partial_y g\|_{L^2}^{1-\alpha}, \\
			& \|g\|_{L_y^{\infty} L_{x, z}^2} \leq C\|\partial_y g\|_{L^2}^{\frac{1}{2}}\|g\|_{L^2}^{\frac{1}{2}},
		\end{aligned}
	\end{equation}
	where $\alpha \in\left(\frac{1}{2}, \frac{3}{4}\right]$ for $\eqref{Sob g neq}_1$ and $\alpha \in\left(\frac{1}{2}, 1\right]$ for the others.
\end{Lem}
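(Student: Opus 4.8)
The plan is to prove each inequality by tensorizing one-dimensional Agmon/Gagliardo--Nirenberg estimates across the three factors of $\mathbb{T}\times\mathbb{I}\times\mathbb{T}$, and then to exploit the fact that $g=g_{\neq}$ carries no zero $x$-frequency. The two one-dimensional building blocks I would record first are: on the periodic factor $\mathbb{T}$, Agmon's inequality $\|h\|_{L^\infty}\le C\|h\|_{L^2}^{1/2}\|\partial h\|_{L^2}^{1/2}$ together with the Poincar\'e bound $\|h\|_{L^2}\le C\|\partial h\|_{L^2}$ for mean-zero $h$ (this is exactly \eqref{Sob}); and on the bounded factor $\mathbb{I}$, the Gagliardo--Nirenberg inequality $\|h\|_{L^\infty}\le C\|h\|_{L^2}^{1/2}\|h\|_{H^1}^{1/2}$, which reduces to $\|h\|_{L^\infty}\le C\|h\|_{L^2}^{1/2}\|\partial_y h\|_{L^2}^{1/2}$ when $h|_{y=\pm1}=0$. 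The role of $g=g_{\neq}$ is that the vanishing $x$-average lets each bare $\|g\|_{L^2}$ be controlled by $\|\partial_x g\|_{L^2}$, so the leading multiplicative terms always carry $x$-derivatives; the removal of the additive remainders in the Dirichlet versions $\eqref{Sob g neq}$ will instead come from the boundary condition $g|_{y=\pm1}=0$.

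For the full $L^\infty$ bound $\eqref{Sob neq}_1$ I would apply the one-dimensional estimates successively, choosing the order of directions so that $x$ is handled with its two derivatives and $z$ with the mixed $\partial_z\nabla$ term. Concretely, estimate $\|g\|_{L^\infty_{x,z}}$ pointwise in $y$ by Agmon in $x$ and in $z$, then take $L^\infty_y$ and apply the $y$-inequality; at each step Minkowski's integral inequality is used to interchange an $L^\infty$ in one variable with an $L^2$ in another, which is legitimate because the exponents are ordered correctly. The free parameter $\alpha\in(\tfrac12,\tfrac34]$ enters as the interpolation weight in the $z$-direction between using $\partial_z g$ alone and using it together with one transverse derivative; matching the overall $L^2$-homogeneity (the norm exponents must sum to $1$) together with the separate anisotropic scaling in each of $x,y,z$ then fixes the exponents $\tfrac12,\ \alpha-\tfrac12,\ \alpha-\tfrac12,\ \tfrac32-2\alpha$ uniquely. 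The mixed-norm estimates $\eqref{Sob neq}_{2\text{--}7}$ follow by the same scheme, applying the one-dimensional inequalities only in the directions appearing in the outer $L^\infty$ while leaving the inner $L^2$ norm intact and pulling it through by Minkowski.

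Finally, the Dirichlet versions $\eqref{Sob g neq}$ follow directly from $\eqref{Sob neq}_{1,2,3,7}$ once $g|_{y=\pm1}=0$ is invoked: wherever a one-dimensional estimate on $\mathbb{I}$ was used, the trace-free version replaces $\|\cdot\|_{H^1_y}$ by $\|\partial_y\cdot\|_{L^2_y}$, which deletes the additive $\|g\|_{L^2}$-type remainder and leaves the purely multiplicative right-hand sides stated there. I expect the main obstacle to be purely combinatorial bookkeeping: selecting, for each of the seven inequalities, the correct order in which to apply the three one-dimensional estimates and the correct pairing of Minkowski interchanges, and then checking that the interpolation exponents close over the whole stated range of $\alpha$. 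Since these anisotropic embeddings are precisely of the type established in \cite{CWW2025,CWW20251}, no analytic input beyond this careful tensorization and the zero-$x$-mode Poincar\'e trick is needed.
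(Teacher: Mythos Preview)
Your proposal is correct and follows the standard approach of tensorizing one-dimensional Agmon/Gagliardo--Nirenberg inequalities together with the Poincar\'e bound \eqref{Sob} for the zero-$x$-mode-free function; this is precisely the method used in the references \cite{CWW2025,CWW20251} from which the lemma is quoted. The paper itself provides no proof here---it cites the result directly---so there is nothing further to compare.
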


\begin{Lem}[Lemma A.2 in \cite{CWW1}] \label{lem:est of 0neq L^infty}
	Let $f(y, z)$ be a function such that $f_{(0, \neq)} \in H^2(\mathbb{I} \times \mathbb{T})$ , then there holds
	$$
	\|f_{(0, \neq)}\|_{L^{\infty}} \leq C\|\nabla f_{(0, \neq)}\|_{L^2}^{1-\tau}\|\partial_z \nabla f_{(0, \neq)}\|_{L^2}^\tau,
	$$
	where $\tau \in(0,1]$.
\end{Lem}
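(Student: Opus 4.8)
The plan is to exploit the defining feature of the $(0,\neq)$ mode, namely that $f_{(0,\neq)}$ has zero average in $z$, and to run a Fourier decomposition in the periodic variable $z$ combined with a one-dimensional Gagliardo--Nirenberg (Agmon) inequality in $y$. Writing $g := f_{(0,\neq)}$, I would expand $g(y,z) = \sum_{k\neq 0} g^k(y)\mathrm{e}^{ikz}$; the sum omits $k=0$ precisely because $\int_{\mathbb{T}} g\,dz = 0$, and this is the structural reason the right-hand side may involve only derivatives (a $y$-only component would have nonzero $L^\infty$ but vanishing $\partial_z$, so it could never be controlled by $\partial_z$-quantities). First I would record the pointwise bound $\|g\|_{L^\infty} \leq \sum_{k\neq 0}\|g^k\|_{L^\infty_y}$ and introduce $a_k^2 := \|\partial_y g^k\|_{L^2(\mathbb{I})}^2 + k^2\|g^k\|_{L^2(\mathbb{I})}^2$, so that $\|\nabla g\|_{L^2}^2 = \sum_{k\neq 0} a_k^2$ and $\|\partial_z\nabla g\|_{L^2}^2 = \sum_{k\neq 0} k^2 a_k^2$.

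Next, for each fixed $k \neq 0$ I would apply the one-dimensional inequality on $\mathbb{I}$, $\|g^k\|_{L^\infty_y} \leq C\|g^k\|_{L^2_y}^{1/2}\|g^k\|_{H^1_y}^{1/2}$, and convert both norms into $a_k$: since $|k|\geq 1$, one has $\|g^k\|_{L^2_y} \leq a_k/|k|$ and $\|g^k\|_{H^1_y} \leq C a_k$, whence $\|g^k\|_{L^\infty_y} \leq C\,a_k/|k|^{1/2}$. Summing over $k$ yields $\|g\|_{L^\infty} \leq C\sum_{k\neq 0} a_k/|k|^{1/2}$. Note that this step uses only the full $H^1$ norm in $y$, so no boundary conditions at $y=\pm1$ are required.

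Finally I would close the estimate by a weighted Cauchy--Schwarz together with discrete interpolation. For $\tau \in (0,1]$ I would write
\[\sum_{k\neq 0}\frac{a_k}{|k|^{1/2}} = \sum_{k\neq 0}\frac{1}{|k|^{1/2+\tau}}\,|k|^{\tau}a_k \leq \Big(\sum_{k\neq 0}\frac{1}{|k|^{1+2\tau}}\Big)^{1/2}\Big(\sum_{k\neq 0}|k|^{2\tau}a_k^2\Big)^{1/2},\]
where the first factor is finite exactly because $1+2\tau>1$ for $\tau>0$ (including the endpoint $\tau=1$, where $\sum_k|k|^{-3}$ still converges). Then Hölder's inequality on the index sum gives the interpolation $\sum_{k\neq 0}|k|^{2\tau}a_k^2 \leq \big(\sum_k a_k^2\big)^{1-\tau}\big(\sum_k k^2 a_k^2\big)^\tau = \|\nabla g\|_{L^2}^{2(1-\tau)}\|\partial_z\nabla g\|_{L^2}^{2\tau}$, which assembles into the claimed bound $\|g\|_{L^\infty} \leq C\|\nabla g\|_{L^2}^{1-\tau}\|\partial_z\nabla g\|_{L^2}^{\tau}$.

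The main obstacle is not any single hard estimate but rather the bookkeeping of exponents: one must recognize that the zero-$z$-mean hypothesis is exactly what permits a derivative-only right-hand side (and thus dictates the Fourier-in-$z$ strategy), and then verify that the $|k|^{-1/2}$ decay produced by the Agmon step combines with the $|k|^{2\tau}$ weight so as to land on precisely the stated interpolation exponents across the full range $\tau\in(0,1]$. Once the weights are chosen correctly, the remaining manipulations are routine.
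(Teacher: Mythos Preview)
Your proof is correct. The paper does not supply its own proof of this lemma---it is quoted from \cite{CWW1}---but your argument (Fourier decomposition in the periodic variable $z$, one-dimensional Agmon in $y$, then weighted Cauchy--Schwarz and discrete H\"older interpolation on the Fourier side) is the standard route and matches what one expects the cited proof to do.
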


\begin{Lem}[Lemma 3.6 in \cite{CWW2025}] \label{lem:est of 00}
	It holds that
	\begin{equation*}\begin{aligned} \label{eq:est of 00_1}
			\|(f g)_{(0,0)}\|_{L^2} \leq C\|f\|_{L^2}\|g\|_{L^2}^{\frac{1}{2}}\|\partial_y g\|_{L^2}^{\frac{1}{2}}
	\end{aligned}\end{equation*}
	for $\lt. g\rt|_{y=\pm1}  =  0$ or
	\begin{equation*}\begin{aligned} \label{eq:est of 00_2}
			\|(f g)_{(0,0)}\|_{L^2} \leq C\|f\|_{L^2}\|g\|_{L^2}^{\frac{1}{2}}\|\partial_y g\|_{L^2}^{\frac{1}{2}} +  C\|f\|_{L^2}\|g\|_{L^2}.
	\end{aligned}\end{equation*}
\end{Lem}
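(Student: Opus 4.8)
The plan is to reduce this product estimate to a one-dimensional Agmon-type inequality in the $y$-variable after freezing $x$ and $z$. Since the $(0,0)$-mode averages out the $x$- and $z$-dependence, $(fg)_{(0,0)}$ is a function of $y$ alone, namely $(fg)_{(0,0)}(y)=\frac{1}{|\mathbb{T}|^{2}}\int_{\mathbb{T}\times\mathbb{T}}f(x,y,z)\,g(x,y,z)\,dx\,dz$. First I would apply the Cauchy--Schwarz inequality in $(x,z)$ for each fixed $y$, which yields the pointwise bound
\[
|(fg)_{(0,0)}(y)|\leq \frac{1}{|\mathbb{T}|^{2}}\,F(y)\,G(y),\qquad F(y):=\|f(\cdot,y,\cdot)\|_{L^{2}(\mathbb{T}\times\mathbb{T})},\quad G(y):=\|g(\cdot,y,\cdot)\|_{L^{2}(\mathbb{T}\times\mathbb{T})}.
\]
Taking the $L^{2}$-norm in $y$ and pulling $G$ out in $L^{\infty}_{y}$ then gives $\|(fg)_{(0,0)}\|_{L^{2}}\leq C\,\|G\|_{L^{\infty}(\mathbb{I})}\,\|f\|_{L^{2}}$, because $\bigl(\int_{\mathbb{I}}F(y)^{2}\,dy\bigr)^{1/2}=\|f\|_{L^{2}}$ and the constant-in-$(x,z)$ factors are absorbed into $C$. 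Thus the whole estimate is reduced to controlling $\|G\|_{L^{\infty}(\mathbb{I})}$.

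The key step is to bound $\sup_{y}G(y)$ through the one-dimensional structure in $y$. Rather than differentiating $G$ itself (which involves a square root), I would work with $G(y)^{2}=\int_{\mathbb{T}\times\mathbb{T}}|g(x,y,z)|^{2}\,dx\,dz$, whose weak derivative is $\frac{d}{dy}G^{2}=2\int_{\mathbb{T}\times\mathbb{T}}g\,\py g\,dx\,dz$, so that by Cauchy--Schwarz on the full domain $\int_{\mathbb{I}}\bigl|\tfrac{d}{dy}G^{2}\bigr|\,dy\leq 2\int_{\mathbb{T}\times\mathbb{I}\times\mathbb{T}}|g|\,|\py g|\leq 2\|g\|_{L^{2}}\|\py g\|_{L^{2}}$. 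When $g|_{y=\pm1}=0$ we have $G(\pm1)=0$, so the fundamental theorem of calculus gives $G(y)^{2}=\int_{-1}^{y}\tfrac{d}{ds}G^{2}\,ds\leq 2\|g\|_{L^{2}}\|\py g\|_{L^{2}}$, hence $\|G\|_{L^{\infty}(\mathbb{I})}\leq C\|g\|_{L^{2}}^{1/2}\|\py g\|_{L^{2}}^{1/2}$; inserting this into the reduction above yields the first inequality.

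For the general case (no boundary condition), I would instead choose a base point $y_{0}\in\mathbb{I}$ at which $G(y_{0})^{2}$ does not exceed its average $\tfrac12\|g\|_{L^{2}}^{2}$, and integrate from $y_{0}$:
\[
G(y)^{2}\leq G(y_{0})^{2}+\int_{\mathbb{I}}\Bigl|\tfrac{d}{ds}G^{2}\Bigr|\,ds\leq \tfrac12\|g\|_{L^{2}}^{2}+2\|g\|_{L^{2}}\|\py g\|_{L^{2}},
\]
which gives $\|G\|_{L^{\infty}(\mathbb{I})}\leq C\bigl(\|g\|_{L^{2}}^{1/2}\|\py g\|_{L^{2}}^{1/2}+\|g\|_{L^{2}}\bigr)$ and hence the second inequality. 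I do not expect a genuine obstacle here; the only points requiring care are that one should differentiate $G^{2}$ rather than $G$ so as to stay within $W^{1,1}(\mathbb{I})$ regularity, and that the generic constant $C$ must absorb the $|\mathbb{T}|$-factors arising both from the $(x,z)$-averaging and from identifying $\|(fg)_{(0,0)}\|_{L^{2}(\mathbb{T}\times\mathbb{I}\times\mathbb{T})}$ with a norm in $y$ alone.
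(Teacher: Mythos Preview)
Your argument is correct. The paper itself does not prove this lemma; it is quoted as Lemma~3.6 of \cite{CWW2025} and stated in the appendix without proof, so there is nothing in the present paper to compare against beyond the statement. What you wrote is exactly the standard route one would expect: Cauchy--Schwarz in the transverse variables to reduce to a product $F(y)G(y)$, then a one-dimensional Agmon inequality for $G^2$ (using the boundary condition in the first case and a mean-value base point in the second). The only cosmetic remark is that in the applications in this paper $f$ and $g$ are already $x$-independent, so the $(x,z)$-averaging effectively reduces to a $z$-average, but your treatment covers this as a special case and the constants absorb the $|\mathbb{T}|$-factors exactly as you noted.
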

\begin{Lem}[Lemma A.7 in \cite{CWZ1}] \label{lem:transform for p}
	If $f$ is a function in $[-1,1] \times \mathbb{T}$, i.e., $f=f(y, z)$, then we have 
	$$
	\|\partial_y^2 f\|_{L^2}+\|\partial_z^2 f\|_{L^2} \leq C\left(\|\partial_z \partial_y f\|_{L^2}+\|\Delta f\|_{L^2}\right).
	$$
\end{Lem}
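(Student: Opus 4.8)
The plan is to exploit the $z$-periodicity by a Fourier decomposition in $z$, which reduces the two-dimensional estimate to a family of one-dimensional inequalities on the interval $\mathbb{I}=[-1,1]$, indexed by the $z$-frequency $k\in\mathbb{Z}$. Writing $f(y,z)=\sum_{k}h_k(y)\mathrm{e}^{ikz}$ and using Parseval, the relevant operators act modewise as $\py^2 f\leftrightarrow h_k''$, $\pz^2 f\leftrightarrow -k^2h_k$, $\py\pz f\leftrightarrow ik\,h_k'$ and $\Delta f\leftrightarrow h_k''-k^2h_k$. Hence it suffices to prove, uniformly in $k$, the scalar inequality
\[
\|h''\|_{L^2(\mathbb{I})}^2+k^4\|h\|_{L^2(\mathbb{I})}^2\leq C\left(k^2\|h'\|_{L^2(\mathbb{I})}^2+\|h''-k^2h\|_{L^2(\mathbb{I})}^2\right)
\]
for every $h\in H^2(\mathbb{I})$ (no boundary condition is available, since the lemma is applied to the pressure $P_0$; treating the conjugate pair $\pm k$ together, one may take $h$ real). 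Summing this modewise bound over $k$ and then using $\pz^2 f=\Delta f-\py^2 f$ with the triangle inequality recovers the stated inequality, with $\|\pz^2 f\|$ obtained for free once $\|\py^2 f\|$ is controlled.

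The mode $k=0$ is immediate, since there $\Delta$ and $\py^2$ coincide and the inequality is an equality. For $k\neq0$ I would expand the target by integration by parts in $y$: from $\langle h'',h\rangle=-\|h'\|^2+[h'h]_{-1}^{1}$ one obtains the exact identity
\[
\|h''\|^2+k^4\|h\|^2=\|h''-k^2h\|^2-2k^2\|h'\|^2+2k^2[h'h]_{-1}^{1},
\]
where $[h'h]_{-1}^1=h'(1)h(1)-h'(-1)h(-1)$. Every term on the right-hand side is manifestly dominated by $k^2\|h'\|^2+\|h''-k^2h\|^2$, with the favorable sign $-2k^2\|h'\|^2\leq0$, except the boundary term $2k^2[h'h]_{-1}^1$. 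Thus the entire proof collapses to estimating this single boundary contribution, and it is crucial \emph{not} to discard the negative term $-2k^2\|h'\|^2$.

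The hard part is precisely this boundary term, which is genuinely borderline: for the extremal profile $h=\cosh(ky)$ one has $\|h''-k^2h\|=0$ while $2k^2[h'h]_{-1}^1=2k^3\sinh(2k)$ is of exactly the same order as the whole left-hand side $\|h''\|^2+k^4\|h\|^2$. Consequently a crude trace bound such as $|h(\pm1)|^2\leq C(\|h\|^2+\|h\|\|h'\|)$ combined with Young's inequality does not close, because the resulting high-frequency $\|h\|$- and $\|h''\|$-terms cannot be absorbed simultaneously. The remedy I would use is a \emph{frequency-adapted} trace estimate: evaluate $h(\pm1)$ and $h'(\pm1)$ through the exponentially weighted identities, e.g.\ $|h(1)|^2=\int_{\mathbb{I}}\mathrm{e}^{2k(y-1)}\bigl(2kh^2+2hh'\bigr)\,dy$ and the analogue for $|h'(1)|^2$ at $y=1$ (and the symmetric ones with weight $\mathrm{e}^{-2k(y+1)}$ at $y=-1$), substituting $h''=(h''-k^2h)+k^2h$ wherever a second derivative appears. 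Because the weight $\mathrm{e}^{2k(y-1)}\leq1$ localizes at the scale $1/k$ near the boundary, this conversion produces exactly the admissible quantities $k^2\|h'\|^2$ and $\|h''-k^2h\|^2$, together with a residual multiple of $k^4\|h\|^2$; retaining the identity's negative term $-2k^2\|h'\|^2$ is what renders this residual absorbable into the left-hand side, uniformly in $k\geq1$.

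The main obstacle, then, is verifying that the weighted trace estimate closes with $k$-independent constants in this tight regime — the boundary term is of full strength, so the argument has no slack and succeeds only because the exponential weight and the elliptic structure $h''-k^2h=q$ conspire to supply the correct powers of $k$. Once this uniform modewise bound is established, reassembling the Parseval series and applying $\pz^2 f=\Delta f-\py^2 f$ completes the proof routinely.
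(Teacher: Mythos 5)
The paper contains no proof of this statement at all: it is quoted verbatim as Lemma A.7 of the cited memoir \cite{CWZ1}, so your blind attempt can only be compared with that external argument. Your route is sound and can be completed as written: the Fourier reduction in $z$, the trivial $k=0$ mode, the exact identity
\begin{equation*}
\|h''\|_{L^2}^2+k^4\|h\|_{L^2}^2=\|h''-k^2h\|_{L^2}^2-2k^2\|h'\|_{L^2}^2+2k^2\left[h'h\right]_{-1}^{1}
\end{equation*}
and the observation (via the extremal $h=\cosh(ky)$) that the boundary term carries the full strength of the left-hand side are all correct, and your exponentially weighted trace identities do close the estimate uniformly in $k\geq 1$.

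Two inaccuracies are worth flagging, though neither is fatal. First, your claim that the crude Agmon traces $|h(\pm1)|^2\leq C(\|h\|_{L^2}^2+\|h\|_{L^2}\|h'\|_{L^2})$ and $|h'(\pm1)|^2\leq C(\|h'\|_{L^2}^2+\|h'\|_{L^2}\|h''\|_{L^2})$ cannot close is wrong: expanding $2k^2|[h'h]_{-1}^1|$ into the four resulting products, each is an interpolation of the admissible blocks $k^4\|h\|_{L^2}^2$, $k^2\|h'\|_{L^2}^2$, $\|h''\|_{L^2}^2$ with total $k$-power at most $2$ (the critical one being $k^2\|h\|_{L^2}^{1/2}\|h'\|_{L^2}\|h''\|_{L^2}^{1/2}=(k^4\|h\|_{L^2}^2)^{1/4}(k^2\|h'\|_{L^2}^2)^{1/2}(\|h''\|_{L^2}^2)^{1/4}$), so Young's inequality with a small parameter absorbs everything, using $k\geq1$. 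The reason the tightness you observed causes no obstruction is that on near-extremal profiles $\|h\|_{L^2}\|h'\|_{L^2}\sim k\|h\|_{L^2}^2$, so the crude trace already supplies the needed factor of $k$, and the admissible quantity $k^2\|h'\|_{L^2}^2$ is itself of leading order there; your frequency-adapted weights are a valid but unnecessary refinement. Second, your stated mechanism --- that retaining $-2k^2\|h'\|_{L^2}^2$ is what renders the residual multiple of $k^4\|h\|_{L^2}^2$ absorbable --- is not the operative one: that term may simply be dropped, and absorption comes from choosing the Young parameter small against the left-hand side. Finally, a cleaner modewise proof avoiding traces altogether (and closer in spirit to \cite{CWZ1}): split $h=h_1+h_2$ with $h_2''-k^2h_2=q$, $h_2(\pm1)=0$, so the Dirichlet energy identity gives $k^2\|h_2\|_{L^2}+k\|h_2'\|_{L^2}+\|h_2''\|_{L^2}\leq C\|q\|_{L^2}$, while $h_1\in\mathrm{span}\{{\rm e}^{ky},{\rm e}^{-ky}\}$ satisfies $\|h_1'\|_{L^2}\geq ck\|h_1\|_{L^2}$ uniformly for $k\geq1$ by the even/odd orthogonal decomposition; combining the two pieces yields the modewise bound directly.
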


\subsection{Elliptic Estimates}
\begin{Lem}[Lemma 3.1 in \cite{CWW1}] \label{lem:the zero mode of c and n}
	Let $c_0$ and $n_0$ be the zero mode of $c$ and $n$, respectively, satisfying
	$$
	-\Delta c_0+c_0=n_0,\left.\quad c_0\right|_{y= \pm 1}=0.
	$$
	Then there hold
	$$\|\Delta c_0(t)\|_{L^2}^2   +  2\|\nabla c_0(t)\|_{L^2}^2  + \|c_0(t)\|_{L^2}^2 = \|n_0(t)\|_{L^2}^2$$
	and
	$$\|\nabla c_0(t)\|_{L^4} \leq C\|n_0(t)\|_{L^2},$$
	for any $t \geq 0$.
\end{Lem}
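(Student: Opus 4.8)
The plan is to treat this as a self-contained elliptic problem for the $(y,z)$-function $c_0$ on $\mathbb{I}\times\mathbb{T}$, exploiting the fact that both the identity and the $L^4$ bound reduce to integration by parts once the Dirichlet condition $c_0|_{y=\pm1}=0$ and the $z$-periodicity are used to annihilate all boundary terms. No regularity machinery beyond this is needed.

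First I would establish the energy identity. Taking the $L^2(\mathbb{I}\times\mathbb{T})$ norm of both sides of $-\Delta c_0+c_0=n_0$ gives
$$\|n_0\|_{L^2}^2=\|\Delta c_0\|_{L^2}^2-2\langle \Delta c_0,c_0\rangle+\|c_0\|_{L^2}^2.$$
Since $c_0$ vanishes on $y=\pm1$ and is periodic in $z$, integration by parts yields $-\langle\Delta c_0,c_0\rangle=\|\nabla c_0\|_{L^2}^2$ with no boundary contribution, producing the claimed identity $\|\Delta c_0\|_{L^2}^2+2\|\nabla c_0\|_{L^2}^2+\|c_0\|_{L^2}^2=\|n_0\|_{L^2}^2$. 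In particular this immediately furnishes the a priori bounds $\|\nabla c_0\|_{L^2}\le\|n_0\|_{L^2}$ and $\|\Delta c_0\|_{L^2}\le\|n_0\|_{L^2}$.

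For the second estimate I would first upgrade the Laplacian control to full Hessian control. The key observation is that on this rectangular geometry the cross term is nonnegative: writing $\|\Delta c_0\|_{L^2}^2=\|\partial_{yy}c_0\|_{L^2}^2+2\langle\partial_{yy}c_0,\partial_{zz}c_0\rangle+\|\partial_{zz}c_0\|_{L^2}^2$ and integrating by parts twice—once in $z$ using periodicity, once in $y$—the boundary terms vanish because $c_0|_{y=\pm1}=0$ forces $\partial_z c_0|_{y=\pm1}=0$, so that $\langle\partial_{yy}c_0,\partial_{zz}c_0\rangle=\|\partial_{yz}c_0\|_{L^2}^2\ge0$. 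This gives the exact identity $\|\nabla^2 c_0\|_{L^2}=\|\Delta c_0\|_{L^2}\le\|n_0\|_{L^2}$. Then applying the two-dimensional Gagliardo--Nirenberg inequality $\|g\|_{L^4}^2\le C\|g\|_{L^2}\|g\|_{H^1}$ to $g=\nabla c_0$ and combining with the bounds $\|\nabla c_0\|_{L^2},\|\nabla^2 c_0\|_{L^2}\le\|n_0\|_{L^2}$ yields $\|\nabla c_0\|_{L^4}^2\le C\|\nabla c_0\|_{L^2}\bigl(\|\nabla c_0\|_{L^2}+\|\nabla^2 c_0\|_{L^2}\bigr)\le C\|n_0\|_{L^2}^2$, which is the desired bound.

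The computations are all elementary; the only point requiring care—and hence the main obstacle—is the bookkeeping of boundary terms in the Hessian identity, specifically justifying that $\partial_z c_0$ (not merely $c_0$) vanishes on $y=\pm1$ so that the mixed-derivative integration by parts produces no boundary contribution. Everything else follows directly from the scalar elliptic equation together with the interpolation inequality.
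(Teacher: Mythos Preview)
Your proof is correct. The paper does not give an in-text proof of this lemma at all---it simply cites Lemma~3.1 of \cite{CWW1}---so there is nothing to compare against beyond noting that your argument is exactly the standard one: expand $\|n_0\|_{L^2}^2=\|{-\Delta c_0+c_0}\|_{L^2}^2$ and integrate the cross term by parts for the identity, then use the rectangular-domain Hessian identity $\|\nabla^2 c_0\|_{L^2}=\|\Delta c_0\|_{L^2}$ together with the 2D Ladyzhenskaya/Gagliardo--Nirenberg inequality for the $L^4$ bound.
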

\begin{Lem}[Lemma 3.2 in \cite{CWW1} or Lemma 3.9 in \cite{CWW2025}] \label{lem:est of the non-zero mode of c and n}
	Let $c_{\neq}$ and $n_{\neq}$ be the non-zero mode of $c$ and $n$, respectively, satisfying
	$$-\Delta c_{\neq}+c_{\neq}=n_{\neq},\left.\quad c_{\neq}\right|_{y= \pm 1}=0.$$
	Then there holds
	\begin{equation}\label{ellip c}
		\begin{aligned}
			\|\partial_{x}^{j} \pz^i \Delta c_{\neq}(t)\|_{L^2}+\|\partial_{x}^{j} \pz^i \nabla c_{\neq}(t)\|_{L^2} &\leq C\|\partial_{x}^{j} \pz^i n_{\neq}(t)\|_{L^2},\\
			\|\partial_{x}^{j}\pz^i \nabla c_{\neq}(t)\|_{L^4} &\leq C\|\partial_{x}^{j} \pz^i n_{\neq}(t)\|_{L^2}
		\end{aligned}
	\end{equation}
	for any $t \geq 0$, where $i, j\in\{0, 1, 2\}$.
\end{Lem}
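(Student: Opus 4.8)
The plan is to reduce the whole family of estimates \eqref{ellip c} to the single base case $j=i=0$ by exploiting that $\px$ and $\pz$ are tangential, constant-coefficient derivatives. Concretely, I would set $w:=\px^j\pz^i c_{\neq}$ and $g:=\px^j\pz^i n_{\neq}$. Since $\px,\pz$ commute with the operator $-\Delta+1$ and differentiate along the flat boundary planes $\{y=\pm1\}$, applying $\px^j\pz^i$ to $-\Delta c_{\neq}+c_{\neq}=n_{\neq}$ shows that $w$ solves the identical problem
$$-\Delta w + w = g,\qquad w|_{y=\pm1}=0,$$
and, crucially, $w$ together with all its purely tangential derivatives vanishes on $\{y=\pm1\}$. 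Hence it suffices to bound $\|\nabla w\|_{L^2}+\|\Delta w\|_{L^2}$ and $\|\nabla w\|_{L^4}$ in terms of $\|g\|_{L^2}$, and then undo the substitution.

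First I would run the basic energy estimate: testing the equation against $w$ and integrating by parts (the boundary term drops because $w|_{y=\pm1}=0$) gives $\|\nabla w\|_{L^2}^2+\|w\|_{L^2}^2=\langle g,w\rangle$, whence $\|w\|_{L^2}\le\|g\|_{L^2}$ and $\|\nabla w\|_{L^2}\le C\|g\|_{L^2}$. The Laplacian bound is then immediate from the equation itself, $\Delta w=w-g$, so $\|\Delta w\|_{L^2}\le\|w\|_{L^2}+\|g\|_{L^2}\le C\|g\|_{L^2}$. Returning to $c_{\neq}$ via $\px^j\pz^i\Delta c_{\neq}=\Delta w$ and $\px^j\pz^i\nabla c_{\neq}=\nabla w$ yields the first line of \eqref{ellip c}.

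For the $L^4$ bound I would first upgrade to full Hessian control. Writing $\|\Delta w\|_{L^2}^2-\|\nabla^2 w\|_{L^2}^2=\sum_{k\ne l}\int(\partial_k^2 w\,\partial_l^2 w-(\partial_k\partial_l w)^2)$ and integrating by parts twice in each mixed pair, every boundary integral on $\{y=\pm1\}$ involves either $w$ or a purely tangential derivative of $w$, all of which vanish there; thus $\|\nabla^2 w\|_{L^2}=\|\Delta w\|_{L^2}\le C\|g\|_{L^2}$. Then the three-dimensional Gagliardo–Nirenberg/Sobolev interpolation $\|\nabla w\|_{L^4}\le C\|\nabla w\|_{L^2}^{1/4}\|\nabla w\|_{H^1}^{3/4}\le C(\|\nabla w\|_{L^2}+\|\nabla^2 w\|_{L^2})$ closes the estimate, giving $\|\px^j\pz^i\nabla c_{\neq}\|_{L^4}\le C\|\px^j\pz^i n_{\neq}\|_{L^2}$.

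The argument is essentially routine elliptic regularity; the only point requiring care is the Hessian identity, where the mixed boundary conditions (Dirichlet in $y$, periodic in $x,z$) must be handled so that no boundary term survives. This is exactly where the observation that tangential derivatives of $w$ vanish on $\{y=\pm1\}$ is used, and it is the step I would write out most carefully.
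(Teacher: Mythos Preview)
Your argument is correct and is the standard elliptic-regularity proof one expects for this statement. Note, however, that the paper does not actually supply its own proof of this lemma: it is quoted verbatim from the cited references \cite{CWW1,CWW2025} and left unproved in the appendix. So there is nothing to compare against here beyond observing that your write-up is essentially what those references do as well --- reduce to the base case by tangential differentiation, run the $L^2$ energy identity, read off $\|\Delta w\|_{L^2}$ from the equation, and then use $H^2\hookrightarrow W^{1,4}$ (your Hessian identity plus Sobolev) for the $L^4$ gradient bound.
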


\begin{Lem} \label{lem:c00}
	It holds that
	$$
	\|\partial_{y}^2 c_{(0,0)}(t)\|_{L^2}^2+2\|\partial_y c_{(0,0)}(t)\|_{L^2}^2+\|c_{(0,0)}(t)\|_{L^2}^2=\|n_{(0,0)}(t)\|_{L^2}^2,
	$$
	$$
	\|\Delta c_{(0, \neq)}(t)\|_{L^2}^2+2\|\nabla c_{(0, \neq)}(t)\|_{L^2}^2  
	+\|c_{(0, \neq)}(t)\|_{L^2}^2=\|n_{(0, \neq)}(t)\|_{L^2}^2
	$$
	and
	$$
	\|\partial_y c_{(0,0)}(t)\|_{L^{\infty}} \leq C\|n_{(0,0)}(t)\|_{L^2}.
	$$
\end{Lem}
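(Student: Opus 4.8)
The plan is to obtain all three statements directly from the elliptic relation $-\Delta c + c = n$ furnished by the second equation of \eqref{eq:main}, after projecting onto the appropriate modes. First I would record that the full zero mode $c_0$ solves $-\Delta c_0 + c_0 = n_0$ with $c_0|_{y=\pm1}=0$, where $\Delta=\partial_{yy}+\partial_{zz}$ acts on functions of $(y,z)$. Decomposing $c_0 = c_{(0,0)} + c_{(0,\neq)}$ and $n_0 = n_{(0,0)} + n_{(0,\neq)}$ and invoking the orthogonality of the $z$-Fourier modes, this single equation decouples into the one-dimensional problem
\begin{equation*}
-\partial_{yy} c_{(0,0)} + c_{(0,0)} = n_{(0,0)}, \qquad c_{(0,0)}|_{y=\pm1}=0
\end{equation*}
(the $z$-independent component annihilates $\partial_{zz}$), and the genuinely two-dimensional problem on $\mathbb{I}\times\mathbb{T}$
\begin{equation*}
-\Delta c_{(0,\neq)} + c_{(0,\neq)} = n_{(0,\neq)}, \qquad c_{(0,\neq)}|_{y=\pm1}=0 .
\end{equation*}

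For the two energy identities I would follow verbatim the computation behind Lemma \ref{lem:the zero mode of c and n}. Taking the squared $L^2$ norm of the $(0,0)$ equation gives
\begin{equation*}
\|n_{(0,0)}\|_{L^2}^2 = \|\partial_{yy} c_{(0,0)}\|_{L^2}^2 - 2\langle \partial_{yy} c_{(0,0)}, c_{(0,0)}\rangle + \|c_{(0,0)}\|_{L^2}^2 ,
\end{equation*}
and integrating the cross term by parts, the boundary contribution $\partial_y c_{(0,0)}\, c_{(0,0)}\big|_{y=\pm1}$ vanishes by the Dirichlet condition, so that $-2\langle \partial_{yy} c_{(0,0)}, c_{(0,0)}\rangle = 2\|\partial_y c_{(0,0)}\|_{L^2}^2$; this is the first identity. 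The identical argument applied to the $(0,\neq)$ equation, integrating $-2\langle \Delta c_{(0,\neq)}, c_{(0,\neq)}\rangle$ by parts (the $y$-boundary term vanishing by the Dirichlet condition and the $z$-integration being periodic), produces $2\|\nabla c_{(0,\neq)}\|_{L^2}^2$ and hence the second identity.

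For the $L^\infty$ bound I would exploit that $c_{(0,0)}$ is a function of $y$ alone on $\mathbb{I}=[-1,1]$, so $\partial_y c_{(0,0)}$ is a one-dimensional $H^1$ function and the embedding $H^1(\mathbb{I})\hookrightarrow L^\infty(\mathbb{I})$ yields
\begin{equation*}
\|\partial_y c_{(0,0)}\|_{L^\infty} \le C\left(\|\partial_y c_{(0,0)}\|_{L^2} + \|\partial_{yy} c_{(0,0)}\|_{L^2}\right) .
\end{equation*}
The first identity bounds both norms on the right by $\|n_{(0,0)}\|_{L^2}$, which closes the estimate. I expect no serious obstacle here: the only points demanding care are verifying that the $z$-mode projection truly decouples the elliptic operator, so that no cross terms survive in the two identities, and tracking the vanishing of the boundary terms under the Dirichlet condition. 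Both are routine, and the statement is essentially the $z$-refinement of Lemma \ref{lem:the zero mode of c and n}.
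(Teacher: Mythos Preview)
Your proposal is correct and follows essentially the same approach as the paper: the two energy identities are obtained by squaring the elliptic relation and integrating the cross term by parts (the paper simply records this as ``direct calculations''), and the $L^\infty$ bound is deduced from the first identity via a one-dimensional embedding. The only cosmetic difference is that the paper states the embedding in its Gagliardo--Nirenberg interpolation form $\|\partial_y c_{(0,0)}\|_{L^\infty}\le C\big(\|\partial_y c_{(0,0)}\|_{L^2}^{1/2}\|\partial_{yy} c_{(0,0)}\|_{L^2}^{1/2}+\|\partial_y c_{(0,0)}\|_{L^2}\big)$ rather than the additive $H^1\hookrightarrow L^\infty$ version you use, but the conclusion is identical.
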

\begin{proof}
	Direct calculations yield that
	\begin{equation*}
		\begin{aligned}
			&\|\partial_{y}^2 c_{(0,0)}(t)\|_{L^2}^2+2\|\partial_y c_{(0,0)}(t)\|_{L^2}^2+\|c_{(0,0)}(t)\|_{L^2}^2=\|n_{(0,0)}(t)\|_{L^2}^2,\\
			&\|\Delta c_{(0, \neq)}(t)\|_{L^2}^2+2\|\nabla c_{(0, \neq)}(t)\|_{L^2}^2  
			+\|c_{(0, \neq)}(t)\|_{L^2}^2=\|n_{(0, \neq)}(t)\|_{L^2}^2.
		\end{aligned}
	\end{equation*}
By the Gagliardo-Nirenberg inequality, one obtains
	$$
	\|\partial_y c_{(0,0)}(t)\|_{L^{\infty}} 
	\leq C\big(\|\partial_y c_{(0,0)}(t)\|_{L^2}^\frac12 \|\partial_{y}^2 c_{(0,0)}(t)\|_{L^2}^\frac12  + \|\partial_y c_{(0,0)}(t)\|_{L^2}  \big)
	\leq C\|n_{(0,0)}(t)\|_{L^2}.
	$$
\end{proof}

\subsection{Time-space estimates}
\begin{Prop}[Theorem 1.1 in \cite{CWZ1}]\label{prop:key prop for omega2neq}
	Let $ f $ be a solution of 
	\begin{equation*}
		\left\{
		\begin{array}{lr}
			\partial_tf-\frac{1}{A}\left(\partial_{y}^{2}-\eta^{2} \right)f+ik_{1}yf=-ik_{1}f_{1}-\partial_{y}f_{2}-ik_{3}f_{3}-f_{4}, \\
			f|_{y=\pm 1}=0,~~f|_{t=0}=f_{\rm in}, 
		\end{array}
		\right.
	\end{equation*}
	with $ f_{4}(t, \pm 1)=0 $ and $ f_{\rm in}(\pm 1)=0. $ Then it holds
	\begin{equation*}
		\begin{aligned}
			&\|{\rm e}^{aA^{-\frac13}t}f\|_{L^{\infty}L^{2}}^{2}+\frac{1}{A}\|{\rm e}^{aA^{-\frac13}t}\partial_{y}f\|_{L^{2}L^{2}}^{2}+\big(A^{-1}\eta^{2}+(A^{-1}k_{1}^{2})^{\frac13} \big)\|{\rm e}^{aA^{-\frac13}t}f\|_{L^{2}L^{2}}^{2}\\\leq&C\big(\|f_{\rm in}\|_{L^{2}}^{2}+A\|{\rm e}^{aA^{-\frac13}t}f_{2}\|_{L^{2}L^{2}}^{2}+(\eta|k_{1}|)^{-1}\|{\rm e}^{aA^{-\frac13}t}\partial_{y}f_{4}\|_{L^{2}L^{2}}^{2}+\eta|k_{1}|^{-1}\|{\rm e}^{aA^{-\frac13}t}f_{4}\|_{L^{2}L^{2}}^{2}\\&+\min\{(A^{-1}\eta^{2})^{-1}, (A^{-1}k_{1}^{2})^{-\frac13} \}\|{\rm e}^{aA^{-\frac13}t}(k_{1}f_{1}+k_{3}f_{3} )\|_{L^{2}L^{2}}^{2} \big),
		\end{aligned}
	\end{equation*}	
	where ``a'' is a non-negative constant, and $  f_{1}, f_{2}, f_{3}, f_{4} $ are given functions.	
\end{Prop}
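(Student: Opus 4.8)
The plan is to exploit the non-normal structure of the convection--diffusion operator $\mathcal{L}:=-\frac1A(\partial_y^2-\eta^2)+ik_1y$ governing the evolution, whose interaction between the shear $ik_1y$ and the weak diffusion $\frac1A\partial_y^2$ produces enhanced dissipation. First I would remove the weight by setting $g:=\mathrm{e}^{aA^{-1/3}t}f$, so that $g$ solves the same problem with $\mathcal{L}$ replaced by $\mathcal{L}-aA^{-1/3}$ and the forcing $R:=-ik_1f_1-\partial_y f_2-ik_3f_3-f_4$ multiplied by $\mathrm{e}^{aA^{-1/3}t}$; this reduces everything to an unweighted estimate in which $a$ is taken small enough that the constant shift $-aA^{-1/3}$ is dominated by the enhanced dissipation. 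Pairing the equation for $g$ with $\bar g$ and taking the real part---the transport term $ik_1y$ being purely imaginary and the boundary terms vanishing by $g|_{y=\pm1}=0$---yields $\frac12\frac{d}{dt}\|g\|_{L^2}^2+\frac1A\big(\|\partial_y g\|_{L^2}^2+\eta^2\|g\|_{L^2}^2\big)=aA^{-1/3}\|g\|_{L^2}^2+\mathrm{Re}\,\langle\mathrm{e}^{aA^{-1/3}t}R,g\rangle$. After integration in time this already supplies the $L^\infty L^2$ norm, the diffusion term $\frac1A\|\partial_y g\|_{L^2L^2}^2$, and crucially the $A^{-1}\eta^2\|g\|_{L^2L^2}^2$ \emph{half} of the dissipation term; Young's inequality on $R$ then explains the weights, with $-\partial_y f_2$ integrated by parts and absorbed into $\frac1A\|\partial_y g\|^2$ to produce the factor $A$ on $\|f_2\|^2$, and $f_4$ (using $f_4|_{y=\pm1}=0$) paired so that its contributions carry the weights $(\eta|k_1|)^{-1}$ and $\eta|k_1|^{-1}$ once balanced against the enhanced-dissipation norm.

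The genuinely new ingredient is the shear-enhanced rate $(A^{-1}k_1^2)^{1/3}$, which does not come from the energy identity but from a resolvent estimate. Concretely, I would prove that for every $\omega\in\mathbb{R}$ the solution of the stationary problem $(\mathcal{L}-i\omega)w=h$, $w|_{y=\pm1}=0$, satisfies $\|w\|_{L^2}^2\lesssim\min\{(A^{-1}\eta^2)^{-1},(A^{-1}k_1^2)^{-1/3}\}\|h\|_{L^2}^2$ uniformly in $\omega$. Taking real and imaginary parts of $\langle h,w\rangle$ gives $\frac1A(\|\partial_y w\|_{L^2}^2+\eta^2\|w\|_{L^2}^2)=\mathrm{Re}\langle h,w\rangle$ and $\langle(k_1y-\omega)w,w\rangle=\mathrm{Im}\langle h,w\rangle$, but the sharp $\tfrac13$-power is extracted only by localizing to the critical layer $\{|y-\omega/k_1|\lesssim(A|k_1|)^{-1/3}\}$---the scale at which the diffusion $A^{-1}\partial_y^2$ balances the shear $k_1(y-\omega/k_1)$---and estimating inside and outside this layer separately. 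The $\min$ encodes the crossover between the transverse-diffusion rate $A^{-1}\eta^2$ and the shear-induced rate $(A^{-1}k_1^2)^{1/3}$.

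With the resolvent bound in hand I would convert it into the space--time estimate by a Gearhart--Pr\"uss/Plancherel-in-time argument: extending the weighted equation to $t\in\mathbb{R}$ and taking the Fourier transform in time replaces the evolution by the resolvent family $(\mathcal{L}-aA^{-1/3}-i\omega)^{-1}$, so the uniform-in-$\omega$ bound upgrades, by Plancherel, to $L^2L^2$ control of $g$ carrying exactly the weight $\min\{(A^{-1}\eta^2)^{-1},(A^{-1}k_1^2)^{-1/3}\}$ on the divergence-form forcing $(k_1f_1+k_3f_3)$. Combining this with the energy identity of the first paragraph and choosing $a$ small enough (so the shift $-aA^{-1/3}$ is absorbed rather than destroying the decay) yields the full stated inequality.

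The main obstacle will be the resolvent estimate at the critical layer: proving the sharp $(A^{-1}k_1^2)^{1/3}$ rate uniformly in the spectral parameter $\omega$ and uniformly across the regime where $A^{-1}\eta^2$ and $(A^{-1}k_1^2)^{1/3}$ cross over (hence the $\min$) requires a delicate two-region analysis with test functions adapted to the shear, and is precisely the technical core that I would import from Chen--Wei--Zhang \cite{CWZ1}. A secondary difficulty is making the passage from resolvent to space--time bounds rigorous---generating the semigroup, justifying the time-Fourier transform for the inhomogeneous problem, and tracking the weight $\mathrm{e}^{aA^{-1/3}t}$ so that the shift $-aA^{-1/3}$ stays subordinate to the enhanced dissipation.
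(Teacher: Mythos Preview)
Your sketch is a faithful outline of the Chen--Wei--Zhang resolvent approach---energy identity for the parabolic part, sharp resolvent bound via critical-layer analysis at scale $(A|k_1|)^{-1/3}$, and conversion to space--time norms by a Gearhart--Pr\"uss/Plancherel argument---and the obstacles you flag (the critical-layer estimate and the semigroup/time-Fourier passage) are exactly the technical cores.

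That said, there is nothing to compare against in this paper: the proposition is stated in the appendix as Theorem~1.1 of \cite{CWZ1} and is quoted without proof. The authors use it as a black box. So your proposal is not an alternative to the paper's proof but rather a summary of the proof in the cited source; as such it is appropriate, and indeed you acknowledge this when you say the critical-layer analysis is ``precisely the technical core that I would import from Chen--Wei--Zhang.''
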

\begin{Prop}[Theorem 1.2 in \cite{CWZ1}]\label{prop:key prop for delta u2neq}
	Let $f$	be a solution of
	\begin{equation*}
		\left\{
		\begin{array}{lr}
			\partial_tf-\frac{1}{A}\left(\partial_{y}^{2}-\eta^{2} \right)f+ik_{1}yf=ik_{1}f_{1}+\partial_{y}f_{2}+ik_{3}f_{3}, \\
			\left(\partial_{y}^{2}-\eta^{2} \right)\varphi=f,\quad \partial_{y}\varphi|_{y=\pm 1}=\varphi|_{y=\pm 1}=0, \\
			f|_{t=0}=f_{\rm in}, 
		\end{array}
		\right.
	\end{equation*}
	with $\partial_{y}\varphi_{\rm in}|_{y=\pm 1}=0$. Then it holds
	\begin{equation*}
		\begin{aligned}
			&|k_{1}\eta|^{\frac12}\|{\rm e}^{aA^{-\frac13}t}(\partial_{y}, \eta)\varphi\|_{L^{2}L^{2}}+A^{-\frac34}\|{\rm e}^{aA^{-\frac13}t}\partial_{y}f\|_{L^{2}L^{2}}+A^{-\frac12}\eta\|{\rm e}^{aA^{-\frac13}t}f\|_{L^{2}L^{2}}\\&+\eta\|{\rm e}^{aA^{-\frac13}t}(\partial_{y},\eta)\varphi\|_{L^{\infty}L^{2}}+A^{-\frac14}\|{\rm e}^{aA^{-\frac13}t}f\|_{L^{\infty}L^{2}}\\\leq&CA^{\frac12}\|{\rm e}^{aA^{-\frac13}t}(f_{1}, f_{2}, f_{3})\|_{L^{2}L^{2}}+C\left(\eta^{-1}\|\partial_{y}f_{\rm in}\|_{L^{2}}+\|f_{\rm in}\|_{L^{2}} \right),	
		\end{aligned}
	\end{equation*}
	where ``a'' is a non-negative constant, and $  f_{1}, f_{2}, f_{3} $ are given functions.
\end{Prop}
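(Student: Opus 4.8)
The plan is to reduce the space--time estimate to a resolvent estimate for the Orr--Sommerfeld operator and then prove the latter by the multiplier method. First I would absorb the exponential weight by setting $g={\rm e}^{aA^{-\frac13}t}f$ and $\psi={\rm e}^{aA^{-\frac13}t}\varphi$; then $g$ solves the original equation with the extra zeroth-order term $-aA^{-\frac13}g$ and with $\widehat{\Delta}\psi=g$, $\psi|_{y=\pm1}=\partial_y\psi|_{y=\pm1}=0$. Taking the Fourier transform in $t$, it suffices to bound, uniformly for $\lambda\in\mathbb R$, the solution $\hat g$ of
\begin{equation*}
-\tfrac1A(\partial_y^2-\eta^2)\hat g+i(k_1 y-\lambda)\hat g-aA^{-\frac13}\hat g=\hat F+f_{\rm in},\qquad (\partial_y^2-\eta^2)\hat\psi=\hat g,
\end{equation*}
where $\hat F$ collects the transforms of $ik_1 f_1+\partial_y f_2+ik_3 f_3$; the small real shift $-aA^{-\frac13}$ is harmless once $a$ lies below the enhanced-dissipation rate. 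The stated $L^2L^2$ and $L^\infty L^2$ norms then follow by Plancherel in $\lambda$, the $f_{\rm in}$-terms producing exactly the $\eta^{-1}\|\partial_y f_{\rm in}\|_{L^2}+\|f_{\rm in}\|_{L^2}$ contributions.

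Second, I would establish the core resolvent bound by testing the resolvent equation against several multipliers. Pairing with $\overline{\hat g}$ and taking real parts yields the basic dissipation $\tfrac1A\|(\partial_y,\eta)\hat g\|_{L^2}^2$ and, after integration in $\lambda$, the $L^\infty L^2$ control of $f$. Pairing instead with $\overline{\hat\psi}$ and integrating by parts (using $\hat\psi|_{\pm1}=0$) converts the $f$-norms into the stream-function norms $|k_1\eta|^{\frac12}\|(\partial_y,\eta)\hat\psi\|$ that appear on the left-hand side. Finally a shear-weighted test function built from $(k_1 y-\lambda)$ extracts the enhanced dissipation: the balance between $\tfrac1A\partial_y^2$ and the shear $i(k_1 y-\lambda)$ sets the characteristic interior length scale $(A|k_1|)^{-\frac13}$, which is the origin of every factor $A^{-\frac13}$ (hence the admissible weight rate $a$) and, after the $\hat\psi$-pairing, of the factors $A^{-\frac34}$, $A^{-\frac12}$, $A^{-\frac14}$ in front of the various norms of $f$, and of the overall $A^{\frac12}$ on the forcing.

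The hard part will be making the resolvent estimate sharp in $A$ and \emph{uniform in $\lambda$}, especially in the critical-layer regime $|\lambda|\le|k_1|$. There the operator degenerates at $y_c=\lambda/k_1$, and one must localize near $y_c$ and perform an Airy-type rescaling $\zeta=(A k_1)^{\frac13}(y-y_c)$ to exhibit the $A^{-\frac13}$ gain, while away from $y_c$ the operator is coercive with a spectral gap. For the no-slip problem this interior critical layer coexists with boundary layers of width $\sim A^{-\frac12}$ at $y=\pm1$; reconciling the two scales and controlling the boundary contributions --- where the extra condition $\partial_y\hat\psi|_{\pm1}=0$ is essential to cancel otherwise uncontrolled terms in the integration by parts --- is the delicate technical core. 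This is precisely the content of \cite{CWZ1}, whose resolvent analysis I would invoke directly; assembling the individual bounds over $\lambda$ by Plancherel then yields the full weighted estimate.
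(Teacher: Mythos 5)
The paper offers no proof of this proposition at all: it is quoted verbatim as Theorem 1.2 of \cite{CWZ1}, and your argument likewise ends by invoking the resolvent analysis of \cite{CWZ1} directly, so the two approaches coincide. Your preliminary sketch (absorbing the weight ${\rm e}^{aA^{-\frac13}t}$, Fourier transform in $t$ with $f_{\rm in}$ entering as a constant-in-$\lambda$ source, multiplier pairings against $\overline{\hat g}$ and $\overline{\hat\psi}$, and the Airy rescaling at the critical layer versus the $A^{-\frac12}$ boundary layers) is a reasonable summary of how that theorem is proved in \cite{CWZ1}.
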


\section*{Acknowledgement}
W. Wang was supported by National Key R\&D Program of China (No.2023YFA1009200) and NSFC under grant 12471219 and 12071054.  The research of J. Wei is partially supported by GRF from RGC of Hong Kong
entitled ``New frontiers in singularity formations in nonlinear partial differential equations".

\end{document}